\definecolor{cite}{rgb}{0.45,0.55,0.6}
\definecolor{link}{rgb}{0.45,0.55,0.6}
\definecolor{url}{rgb}{0.4,0.4,0.4}
\DeclareSymbolFontAlphabet{\mathbbl}{bbold}
\g@addto@macro\bfseries{\boldmath}
\let\theoldbibliography\thebibliography
\renewcommand{\thebibliography}[1]{%
  \theoldbibliography{#1}%
  \setlength{\parskip}{0ex}
  \setlength{\itemsep}{0.5ex plus 0.2ex minus 0.2ex}
  \small
}
\apptocmd{\thebibliography}{\raggedright}{}{}
\renewcommand{\title}[1]{\newcommand{\thetitle}{#1}}
\renewcommand{\author}[1]{\newcommand{\theauthor}{#1}}
\renewcommand{\maketitle}{%
  \begin{center}
    {\linespread{1.15}%
      \bfseries\MakeTextUppercase%
      \thetitle\par} \bigskip
    \footnotesize
    {\MakeUppercase \theauthor}
  \end{center}
  \bigskip
  \thispagestyle{fancy}
}
\renewenvironment{abstract}{\noindent\begin{center}\begin{minipage}{0.8\linewidth}\small{\scshape Abstract.}}{\end{minipage}\end{center}\bigskip}
\newlength{\tagsep}
\def\fullwidthdisplay{\displayindent\z@ \displaywidth\columnwidth}
\edef\@tempa{\noexpand\fullwidthdisplay\the\everydisplay}
\everydisplay\expandafter{\@tempa}
\titleformat{\part}{\centering\titlerule\vspace{1.0ex}\bfseries}{Part \thepart.}{1.5\tagsep}{}[\vspace{1.0ex}\titlerule]
\titleformat{\section}{\centering}{\textsection\thesection.}{1.5\tagsep}{\scshape}
\titleformat{\subsection}[runin]{}{\fontseries{b}\selectfont\textsection\bfseries\thesubsection.}{1.5\tagsep}{\bfseries}[.]
\titleformat{\subsubsection}[runin]{}{\textsection\thesubsubsection.}{\tagsep}{\itshape}[.]
\titlespacing*{\part}{0pt}{0ex}{4ex}
\titlespacing*{\section}{0pt}{4ex}{\medskipamount}
\titlespacing*{\subsection}{0pt}{\bigskipamount}{0.5em}
\titlespacing*{\subsubsection}{0pt}{\medskipamount}{0.5em}
\newcommand{\crefeqfmt}[1]{
  \crefformat{#1}{(##2##1##3)}
  \Crefformat{#1}{(##2##1##3)}
  \crefrangeformat{#1}{(##3##1##4--##5##2##6)}
  \Crefrangeformat{#1}{(##3##1##4--##5##2##6)}
  \crefmultiformat{#1}{(##2##1##3}{, ##2##1##3)}{, ##2##1##3}{, ##2##1##3)}
  \Crefmultiformat{#1}{(##2##1##3}{, ##2##1##3)}{, ##2##1##3}{, ##2##1##3)}
  \crefrangemultiformat{#1}{(##3##1##4--##5##2##6}{, ##3##1##4--##5##2##6)}{, ##3##1##4--##5##2##6}{, ##3##1##4--##5##2##6)}
  \Crefrangemultiformat{#1}{(##3##1##4--##5##2##6}{, ##3##1##4--##5##2##6)}{, ##3##1##4--##5##2##6}{, ##3##1##4--##5##2##6)}
}
\newcommand{\crefsecfmt}[1]{%
  \crefformat{#1}{\textsection##2##1##3}
  \Crefformat{#1}{\textsection##2##1##3}
  \crefrangeformat{#1}{\textsection\textsection##3##1##4--##5##2##6}
  \Crefrangeformat{#1}{\textsection\textsection##3##1##4--##5##2##6}
  \crefmultiformat{#1}{\textsection\textsection##2##1##3}{--##2##1##3}{, ##2##1##3}{ and~##2##1##3}
  \Crefmultiformat{#1}{\textsection\textsection##2##1##3}{--##2##1##3}{, ##2##1##3}{ and~##2##1##3}
  \crefrangemultiformat{#1}{\textsection\textsection##3##1##4--##5##2##6}{ and~##3##1##4--##5##2##6}{, ##3##1##4--##5##2##6}{ and~##3##1##4--##5##2##6}
  \Crefrangemultiformat{#1}{\textsection\textsectionXS##3##1##4--##5##2##6}{ and~##3##1##4--##5##2##6}{, ##3##1##4--##5##2##6}{ and~##3##1##4--##5##2##6}
}
\crefname{part}{Part}{Parts}
\crefname{chapter}{Chapter}{Chapters}
\crefname{figure}{Figure}{Figures}
\newcommand{\blocknumfont}{\bfseries}
\newcommand{\blockheadfont}{\bfseries}
\newcommand{\blocknotefont}{\normalfont}
\newcommand{\blockspecialfont}{\itshape}
\newcommand{\blockhorizspace}{0.4em}
\newcommand{\blocknotespace}{0.4em}
\newcommand{\blocknumsep}{}
\newcommand{\blocksep}{.}
\newcommand{\blockvertspace}{\medskipamount}
\newtheoremstyle{block}%
  {\blockvertspace}% Space above
  {\blockvertspace}% Space below
  {}% Body font
  {}% Indent amount
  {} % Theorem head font
  {}% Punctuation after theorem head
  {0em}% Space after theorem head
  {\thmname{{\blockheadfont#1}}%
    \@ifnotempty{#1}{\@ifnotempty{#2}{ }}%
    \thmnumber{{\blocknumfont #2\blocknumsep}}%
    \@ifnotempty{#1#2}{{\blockheadfont\blocksep}}%
    \thmnote{\hspace{\blocknotespace}[{\blocknotefont#3}]}%
    \hspace{\blockhorizspace}%
  }
\newtheoremstyle{blockspecial}%
  {\blockvertspace}% Space above
  {\blockvertspace}% Space below
  {\blockspecialfont}% Body font
  {}% Indent amount
  {} % Theorem head font
  {}% Punctuation after theorem head
  {0em}% Space after theorem head
  {\thmname{{\blockheadfont#1}}%
    \@ifnotempty{#1}{\@ifnotempty{#2}{ }}%
    \thmnumber{{\blocknumfont #2\blocknumsep}}%
    \@ifnotempty{#1#2}{{\blockheadfont\blocksep}}%
    \thmnote{\hspace{\blocknotespace}[{\blocknotefont#3}]}%
    \hspace{\blockhorizspace}%
  }
\newtheoremstyle{blocknamed}%
  {\blockvertspace}% Space above
  {\blockvertspace}% Space below
  {}% Body font
  {}% Indent amount
  {} % Theorem head font
  {}% Punctuation after theorem head
  {0em}% Space after theorem head
  {\@ifempty{#3}{\thmname{\blockheadfont#1}}{\thmname{\blockheadfont#3}}%
    \@ifnotempty{#1#3}{\blockheadfont\blocksep}\hspace{\blockhorizspace}%
  }
\theoremstyle{blocknamed}
\newtheorem*{pf}{Proof}
\renewenvironment{proof}[1][]{%
  \pushQED{\qed}%
  \begin{pf}[#1]
  % \item[\@ifnotempty{#1}{\hskip\labelsep\blockheadfont#1\@addpunct{\blocksep}}]\ignorespaces
}{%
  \popQED\endtrivlist\@endpefalse%
  \end{pf}
}
\newcommand{\defblock}[2]{%
  \theoremstyle{block}
  \newtheorem{#1}[block]{#2}%
  \Crefname{#1}{#2}{{#2}s}
  \newtheorem*{#1*}{#2}%
}
\newcommand{\defblockspecial}[2]{%
  \theoremstyle{blockspecial}
  \newtheorem{#1}[block]{#2}%
  \Crefname{#1}{#2}{{#2}s}
  \newtheorem*{#1*}{#2}%
}
\setlist{%
  parsep=0ex, listparindent=\parindent,%
  itemsep=0.75ex, topsep=0.75ex,%
  leftmargin=2.3em,%
}
\setlist[enumerate, 1]{%
  label={\upshape(\arabic*)},%
  ref={\arabic*},%
  widest=9,
}
\setlist[enumerate, 2]{%
  leftmargin=*,%
  label={\upshape(\roman*)},%
  ref=\roman*,%
  widest=iii,
}
\setlist[itemize, 1]{%
  label={\textbf{--}},%
}
\setlist[itemize, 2]{%
  label=--,%
}
\newcommand{\from}{\leftarrow}
\newcommand{\fromto}{\rightleftarrows}
\newcommand{\lblto}[1]{\xrightarrow{#1}}
\newcommand{\isoto}{\lblto{\raisebox{-0.3ex}[0pt]{$\scriptstyle \sim$}}}
\newcommand{\iso}{\simeq}
\newcommand{\lbliso}[1]{\overset{#1}{\simeq}}
\numberwithin{equation}{subsection}
\numberwithin{block}{subsection}
    \let\c@equation\c@block
\definecolor{arpon}{rgb}{0.8,0.5,0.8}
\let\amp=&
\newcommand{\cat}[1]{\mathcal{#1}}
\newcommand{\num}[1]{\mathbb{#1}}
\newcommand{\shf}[1]{\mathcal{#1}}
\newcommand{\A}{\num{A}}
\newcommand{\B}{\mathrm{B}}
\newcommand{\C}{\mathrm{C}}
\newcommand{\Conj}{\mathrm{Conj}}
\newcommand{\CAlg}{\mathrm{CAlg}}
\newcommand{\CP}{\num{CP}}
\newcommand{\Cp}{\mathrm{C}_p}
\newcommand{\CycSpt}{\mathrm{CycSpt}}
\newcommand{\E}{\num{E}}
\newcommand{\F}{\num{F}}
\newcommand{\FilSpt}{\mathrm{FilSpt}}
\newcommand{\Fun}{\operatorname{Fun}}
\newcommand{\HC}{\mathrm{HC}}
\newcommand{\HP}{\mathrm{HP}}
\newcommand{\HT}{\mathrm{HT}}
\newcommand{\J}{\mathrm{J}}
\newcommand{\K}{\mathrm{K}}
\newcommand{\KU}{\mathrm{KU}}
\renewcommand{\L}{\mathrm{L}}
\newcommand{\Map}{\operatorname{Map}}
\newcommand{\Mod}{\mathrm{Mod}}
\newcommand{\MU}{\mathrm{MU}}
\newcommand{\N}{\num{N}}
\newcommand{\Nm}{\mathrm{Nm}}
\newcommand{\Nyg}{\mathrm{Nyg}}
\renewcommand{\O}{\shf{O}}
\newcommand{\PrL}{\mathrm{Pr}^{\mathrm{L}}}
\newcommand{\Q}{\num{Q}}
\newcommand{\RGamma}{\mathrm{R}\Gamma}
\renewcommand{\S}{\num{S}}
\newcommand{\Spt}{\mathrm{Spt}}
\newcommand{\Spc}{\mathrm{Spc}}
\newcommand{\Syn}{\mathrm{Syn}}
\newcommand{\TC}{\mathrm{TC}}
\newcommand{\THH}{\mathrm{THH}}
\newcommand{\TP}{\mathrm{TP}}
\newcommand{\Th}{\mathrm{Th}}
\newcommand{\U}{\mathrm{U}}
\newcommand{\V}{\shf{L}}
\newcommand{\X}{\mathrm{X}}
\newcommand{\Y}{\mathrm{Y}}
\newcommand{\Z}{\num{Z}}
\newcommand{\can}{\mathrm{can}}
\newcommand{\cofib}{\operatorname*{cofib}}
\newcommand{\colim}{\operatorname*{colim}}
\newcommand{\cpl}[1]{#1^\wedge}
\newcommand{\cir}{\mathrm{S}^1}
\newcommand{\dual}[1]{#1^\vee}
\newcommand{\fib}{\operatorname*{fib}}
\newcommand{\fil}{\operatorname{fil}}
\newcommand{\gr}{\operatorname{gr}}
\newcommand{\h}{\mathrm{h}}
\newcommand{\id}{\mathrm{id}}
\newcommand{\im}{\mathrm{im}}
\renewcommand{\j}{\mathrm{j}}
\newcommand{\ku}{\mathrm{ku}}
\newcommand{\lau}[1]{(\!\!(#1)\!\!)}
\newcommand{\mot}{\mathrm{mot}}
\renewcommand{\o}[1]{\overline{#1}}
\newcommand{\op}{\mathrm{op}}
\newcommand{\pcpl}[1]{#1^\wedge_p}
\newcommand{\pow}[1]{\lsem #1 \rsem}
\newcommand{\prism}{{\mathbbl{\Delta}}}
\newenvironment{psmallmatrix}{\left(\begin{smallmatrix}}{\end{smallmatrix}\right)}
\newcommand{\sh}{\mathrm{sh}}
\newcommand{\sph}{\mathrm{S}}
\renewcommand{\t}{\mathrm{t}}
\newcommand{\tr}{\mathrm{tr}}
\newcommand{\triv}{\mathrm{triv}}
\renewcommand{\u}{\underline}
\newcommand{\uMap}{\u{\smash{\Map}}}
\title{$\THH(\Z)$ and the image of J}
\author{Sanath K. Devalapurkar and Arpon Raksit}
\date{}
\begin{document}
\maketitle

\begin{abstract}
  Let $p$ be an odd prime number and $\j_p$ the $p$-complete connective image of J spectrum. We establish an equivalence of cyclotomic $\E_\infty$-rings $\pcpl{\THH(\Z)} \iso \sh(\j_p^\triv)$ and an equivalence of $\E_\infty$-rings $\pcpl{\TP(\Z)} \iso \smash{\j_p^{\t\cir}}$. We also record a few applications of this: a new perspective, with some new information, on the description of $\pcpl{\TC(\Z)}$ as a spectrum; height $1$ analogues of the fiber squares of Antieau--Mathew--Morrow--Nikolaus, resulting in new calculations in $\K(1)$-localized algebraic K-theory; and a proof of a slight refinement of the noncommutative crystalline--de Rham comparison result of Petrov--Vologodsky.
\end{abstract}

{\setcounter{tocdepth}{1}
  \small\tableofcontents\bigskip}

% --------------------------------------------------------------------

\addtocounter{section}{-1}
\label{i}
\section{Introduction}

% --------------------------------------------------------------------

\subsection{Background and main result}
\label{i--main}

Let $p$ be a prime number. Many of the recent developments concerning topological Hochschild homology and algebraic K-theory rest on a calculation due to B\"okstedt \cite{bokstedt--thhfp} for the finite field $\F_p$, which establishes an isomorphism of graded rings
\[
  \pi_*(\THH(\F_p)) \iso \F_p[u],
\]
where $u$ has degree $2$. In the same work, B\"okstedt also established isomorphisms
\[
  \pi_*(\THH(\Z)) \iso
  \begin{cases}
    \Z & \text{if}\ * = 0 \\
    \Z/n & \text{if}\ * = 2n-1\ (n \in \Z_{\ge 1}) \\
    0 & \text{otherwise};
  \end{cases}
\]
this second calculation has been revisited by Franjou--Pirashvili \cite{franjou-pirashvili--maclane-Z}, Lindenstrauss--Madsen \cite{lindenstrauss-madsen--number-rings}, Krause--Nikolaus \cite{krause-nikolaus--dvrs}, Liu--Wang \cite{liu-wang--TC-local-fields}, and Bhatt--Lurie \cite{bhatt-lurie--APC}.

In \cite{nikolaus-scholze--TC}, Nikolaus--Scholze refined B\"okstedt's calculation of $\pi_*(\THH(\F_p))$ to a description of $\THH(\F_p)$ as a cyclotomic $\E_\infty$-ring. To state this result, let us recall from their work two constructions, other than topological Hochschild homology, that produce cyclotomic spectra (the details of which will be reviewed in the main text below):
\begin{enumerate}
\item A spectrum $X$ determines a ``trivial'' cyclotomic spectrum $X^\triv$, with underlying spectrum $X$.
\item A connective cyclotomic spectrum $Y$ can be ``shifted'' to obtain another cyclotomic spectrum $\sh(Y)$, with underlying spectrum $\tau_{\ge 0}(Y^{\t\Cp})$. Moreover, there is a canonical map of cyclotomic spectra $Y \to \sh(Y)$.
\end{enumerate}
Both of these constructions are compatible with $\E_\infty$-ring structures, so for $A$ a connective $\E_\infty$-ring, $A^\triv$ and $\sh(A^\triv)$ are naturally cyclotomic $\E_\infty$-rings, and there is a natural map of such $A^\triv \to \sh(A^\triv)$. The Nikolaus--Scholze description of $\THH(\F_p)$ can now be stated as follows:

\begin{theorem}[Nikolaus--Scholze, {\citebare[Corollary IV.4.16]{nikolaus-scholze--TC}}]
  \label{i--main--thhFp}
  There is a canonical equivalence of cyclotomic $\E_\infty$-rings
  \[
    \THH(\F_p) \iso \sh(\Z_p^\triv).
  \]
  In addition, the canonical map $\Z_p^\triv \to \sh(\Z_p^\triv)$ becomes an equivalence upon applying $(-)^{\t\Cp}$ and hence upon applying $\smash{(-)^{\t\cir}}$. In particular, there is a canonical equivalence of $\E_\infty$-rings
  \[
    \TP(\F_p) \iso \smash{\Z_p^{\t\cir}}.
  \]
\end{theorem}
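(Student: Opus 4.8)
The plan is to reduce everything to B\"okstedt's calculation of $\pi_*\THH(\F_p)$ together with the Segal conjecture for $\THH(\F_p)$, and then to match the two cyclotomic $\E_\infty$-rings via a comparison map coming from the adjunction between the trivial-cyclotomic-spectrum functor and $\TC$.

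\emph{Unwinding $\sh(\Z_p^\triv)$.} Its underlying spectrum is $\tau_{\ge 0}\bigl((\Z_p^\triv)^{\t\Cp}\bigr) = \tau_{\ge 0}(\Z_p^{\t\Cp})$, and since $\hat H^{-*}(\Cp;\Z_p)$ is $\F_p$ in each even degree and $0$ in each odd degree, this is connective with $\pi_*\cong\F_p[u]$, $|u|=2$ — exactly B\"okstedt's answer for $\pi_*\THH(\F_p)$. So on underlying graded $\E_\infty$-rings the two sides already agree; the content is to match the circle actions and the cyclotomic Frobenii. I would emphasize from the outset that the residual circle action on $\Z_p^{\t\Cp}$ (and hence on its connective cover) is \emph{not} trivial — recognizing it is where the difficulty lies, and it is exactly what makes $\TP(\F_p)$ have $\pi_0=\Z_p$ rather than an $\F_p$-algebra.

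\emph{The Segal conjecture and the comparison map.} The central input is the Segal conjecture for $\THH(\F_p)$: the cyclotomic Frobenius $\varphi_p\colon\THH(\F_p)\to\THH(\F_p)^{\t\Cp}$ is an isomorphism on $\pi_i$ for all $i\ge 0$. As $\THH(\F_p)$ is connective, $\varphi_p$ factors through the connective cover, so this says precisely that the canonical map $\THH(\F_p)\to\sh(\THH(\F_p))$ is an equivalence of cyclotomic spectra, i.e.\ that $\THH(\F_p)$ is fixed by $\sh$. I would prove it by computing $\pi_*\THH(\F_p)^{\t\Cp}$ with the $\Cp$-Tate spectral sequence — the nontrivial residual circle action supplying the required differentials — or by reducing to Lin's theorem (the classical Segal conjecture for $\Cp$) via the presentation $\THH(\F_p)=\F_p\otimes_{\F_p\otimes_\S\F_p}\F_p$. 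The same computation traced through the shift shows $\sh(\Z_p^\triv)$ is likewise $\sh$-fixed, which is precisely the second assertion of the theorem: $\Z_p^\triv\to\sh(\Z_p^\triv)$ becomes an equivalence after $(-)^{\t\Cp}$, equivalently $(\sh(\Z_p^\triv))^{\t\Cp}\iso(\Z_p^\triv)^{\t\Cp}=\Z_p^{\t\Cp}$. Next, using the adjunction $(-)^\triv\dashv\TC$ (which lifts to $\E_\infty$-rings since $(-)^\triv$ is symmetric monoidal), a map of cyclotomic $\E_\infty$-rings $\Z_p^\triv\to\THH(\F_p)$ is the same datum as an $\E_\infty$-ring map $\Z_p\to\TC(\F_p)$; such a map exists because $\TC(\F_p)$ is $0$-truncated (classically $\TC(\F_p)\iso\Z_p\oplus\Sigma^{-1}\Z_p$) and hence receives the unit. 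Applying $\sh$ and composing with the equivalence $\sh(\THH(\F_p))\iso\THH(\F_p)$ yields a map $\sh(\Z_p^\triv)\to\THH(\F_p)$ of cyclotomic $\E_\infty$-rings; since both sides have $\pi_*=\F_p[u]$, it suffices to check it is surjective on $\pi_2$, a low-degree computation tracing the polynomial generator.

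\emph{The $\TP$ statement, and the obstacle.} Finally, because $\cir/\Cp\iso\cir$ and all the spectra involved are bounded below, the Tate orbit lemma gives $(-)^{\t\cir}\iso\bigl((-)^{\t\Cp}\bigr)^{\h\cir}$; applying $(-)^{\h\cir}$ to the $(-)^{\t\Cp}$-equivalence $\Z_p^\triv\to\sh(\Z_p^\triv)$ shows it is an equivalence after $(-)^{\t\cir}$ as well, whence $\TP(\F_p)=\THH(\F_p)^{\t\cir}\iso(\Z_p^\triv)^{\t\cir}=\smash{\Z_p^{\t\cir}}$. I expect the main obstacle to be the Segal conjecture for $\THH(\F_p)$ — equivalently, pinning down $\THH(\F_p)^{\t\Cp}$ with its residual circle action, which carries the genuine content; the remaining steps are homotopy-group bookkeeping on top of B\"okstedt's theorem together with the (formal but not wholly mechanical) check that the comparison map respects the cyclotomic Frobenii.
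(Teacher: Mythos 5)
This statement is quoted from Nikolaus--Scholze rather than proved in the paper, but your proposal is essentially their argument, and it is also exactly the template the paper follows for its height-one analogue (\cref{i--main--thhZp}) in \cref{t--pf}: a canonical map $\Z_p^\triv \to \THH(\F_p)$ from the adjunction $(-)^\triv \dashv \TC$ together with the computation of $\TC(\F_p)$, the Segal conjecture identifying $\THH(\F_p) \iso \sh(\THH(\F_p))$, a multiplicative homotopy-group check that the comparison map becomes an equivalence after $(-)^{\t\Cp}$, and the Tate orbit lemma for the passage to $(-)^{\t\cir}$. Your plan is correct as stated; the only quibble is the aside suggesting the Segal conjecture for $\THH(\F_p)$ could be reduced to Lin's theorem, which is not a standard route and is not needed, since your primary route (the Tate spectral sequence computation of $\THH(\F_p)^{\t\Cp}$ with its residual circle action) is the usual one.
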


One can see in \cref{i--main--thhFp} a rather precise instance of the ``redshift'' and ``blueshift'' phenomena of chromatic homotopy theory, as well as their interaction: $\F_p$ has chromatic height $-1$, while $\Z_p$ has chromatic height $0$. The central result of this paper is a description of $\pcpl{\THH(\Z)}$ for $p$ odd, analogous to \cref{i--main--thhFp}, but taking place at one chromatic height higher.

\begin{notation}
  \label{i--main--j}
  Assume that $p$ is odd. We define $\E_\infty$-rings
  \[
    \smash{\J_p := \L_{\K(1)}\S, \qquad \j_p := \tau_{\ge0}(\J_p)}.
  \]
  Here $\L_{\K(1)}\S$ denotes the $\K(1)$-local sphere spectrum at the prime $p$, which we recall may be described as follows. Let $\KU_p$ denote the $p$-completed complex K-theory spectrum. Recall that $\KU_p$ admits a unique $\E_\infty$-ring structure and that, as an $\E_\infty$-ring, it admits a canonical action of the group of $p$-adic units $\Z_p^\times$. Let $\Gamma \subseteq \Z_p^\times$ be the subgroup generated by the $(p-1)$-st roots of unity and the element $1+p$, so $\Gamma \iso \F_p^\times \times \Z$. Then we have an equivalence of $\E_\infty$-rings
  \[
    \J_p \iso \KU_p^{\h\Gamma}.
  \]
\end{notation}

\begin{remark}
  \label{i--main--j-description}
  The spectra $\J_p$ and $\j_p$ are traditional objects of algebraic topology, arising originally from work of Milnor \cite{milnor--J}, Adams \cite{adams--j-iv}, and Mahowald \cite{mahowald--J-announcement} on the image of the J-homomorphism. The precise relationship is as follows. Letting $\U := \colim_{n \in \N} \U(n)$ be the infinite unitary group and $\S$ the sphere spectrum, the classical J-homomorphism construction of Whitehead \cite{whitehead--J} provides a map of graded abelian groups $J_* : \pi_*(\U) \to \pi_*(\S)$. The unit map $\S \to \j_p$ induces an isomorphism $\pcpl{\im(J_*)} \to \pi_*(\j_p)$ in degrees $* \ge 1$, so that $\j_p$ is an $\E_\infty$-ring whose positive degree homotopy groups encode the $p$-primary image of the J-homomorphism in the stable homotopy groups of spheres (here, it is important that $p$ is odd).
\end{remark}

\begin{theorem}
  \label{i--main--thhZp}
  Assume that $p$ is odd. Then there is a canonical equivalence of cyclotomic $\E_\infty$-rings
  \[
    \pcpl{\THH(\Z)} \iso \sh(\j_p^\triv).
  \]
  In addition, the canonical map $\j_p^\triv \to \sh(\j_p^\triv)$ becomes an equivalence upon applying $(-)^{\t\Cp}$ and hence upon applying $\smash{(-)^{\t\cir}}$. In particular, there is a canonical equivalence of $\E_\infty$-rings
  \[
    \pcpl{\TP(\Z)} \iso \smash{\j_p^{\t\cir}}.
  \]
\end{theorem}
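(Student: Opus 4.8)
The plan is to follow the template of Nikolaus--Scholze's proof of \cref{i--main--thhFp}, run one chromatic height higher with $\j_p$ replacing $\Z_p$. The ``in addition'' and ``in particular'' clauses will be formal consequences of the first equivalence together with the $(-)^{\t\Cp}$-statement: from the fact that $\j_p^\triv \to \sh(\j_p^\triv)$ is a $(-)^{\t\Cp}$-equivalence one deduces it is a $(-)^{\t\cir}$-equivalence by the Nikolaus--Scholze comparison of $(-)^{\t\cir}$ with $(-)^{\t\Cp}$ on $p$-complete bounded-below $\cir$-spectra (all spectra in sight being connective and $p$-complete); and then, using $\pcpl{\TP(\Z)} \iso (\pcpl{\THH(\Z)})^{\t\cir}$ (the Tate construction commuting with $p$-completion for bounded-below spectra) together with the first equivalence, one gets $\pcpl{\TP(\Z)} \iso \sh(\j_p^\triv)^{\t\cir} \iso (\j_p^\triv)^{\t\cir} = \j_p^{\t\cir}$. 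So the real content lies in (A) the identification $\pcpl{\THH(\Z)} \iso \sh(\j_p^\triv)$ of cyclotomic $\E_\infty$-rings and (B) the claim that $\j_p^\triv \to \sh(\j_p^\triv)$ is a $(-)^{\t\Cp}$-equivalence.

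For (A) I would work through the motivic (even) filtration. After reducing to $\Z_p$ (using that $p$-complete $\THH$, $\TC^-$, $\TP$ depend only on the $p$-completion of the ring), both sides carry multiplicative $\cir$-equivariant filtrations: on $\THH(\Z_p;\Z_p)$, $\TC^-(\Z_p;\Z_p)$, $\TP(\Z_p;\Z_p)$ the Bhatt--Morrow--Scholze motivic filtration, with graded pieces Breuil--Kisin-twisted absolute prismatic cohomology $\RGamma_{\prism}(\Z_p)\{*\}$ (together with the appropriate pieces of the Nygaard filtration for $\THH$ and $\TC^-$); and on $\j_p$, via the $\triv$- and $\sh$-constructions, the filtration induced from the homotopy-fixed-point filtration of $\j_p = \tau_{\ge 0}(\KU_p^{\h\Gamma})$. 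The heart of (A) is then the computation, following Bhatt--Lurie, of $\RGamma_{\prism}(\Z_p)\{i\}$ with its Nygaard filtration, and the recognition that after the Breuil--Kisin twists and shifts these assemble precisely to $\pi_*\j_p$ and to $\pi_*$ of the relevant Tate constructions of $\j_p$ --- this is where ``the image of J'' enters, the $p$-primary image of $J$ in $\pi_{>0}\j_p$ being described by the same $\Z/p^{1+v_p(i)}$-type groups that occur in absolute prismatic cohomology of $\Z_p$. Concretely, I expect the comparison map to be built from Bhatt--Lurie's description of the (Nygaard-filtered) prismatization of $\Z_p$, whose relationship to $\KU_p$ with its $\Z_p^\times$-action of Adams operations accounts for the passage from $\KU_p$ to $\j_p$; given such a map of filtered $\E_\infty$-rings one checks it is an equivalence on associated graded and then passes to the filtered level and to underlying objects.

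The \textbf{main obstacle} I anticipate is executing this at the level of \emph{cyclotomic} $\E_\infty$-rings rather than of spectra: one must match the cyclotomic Frobenius, which on the $\THH(\Z_p)$ side is the prismatic (divided-Nygaard) Frobenius but on the $\sh(\j_p^\triv)$ side must be produced purely from the $\triv/\sh$ formalism and shown to agree; pinning down multiplicativity at the same time is cleanest if one works throughout in a category of synthetic (filtered) cyclotomic $\E_\infty$-rings, so that $\cir$-action, Frobenius, and ring structure travel together. The structural input making $\sh$ behave well here is the \emph{Segal conjecture for} $\THH(\Z)$ --- that the cyclotomic Frobenius $\pcpl{\THH(\Z)} \to \pcpl{\THH(\Z)}^{\t\Cp}$ is an equivalence in all sufficiently high degrees, equivalently that $\sh$ carries $\pcpl{\THH(\Z)}$ to itself --- the height-$1$ analogue of the input used in \cref{i--main--thhFp}. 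Statement (B) then comes out either directly, from the filtered computation of $(\j_p^\triv)^{\t\Cp}$ (where one sees the Tate construction is again governed by the image of J and agrees with $\sh(\j_p^\triv)^{\t\Cp}$), or by transporting it across (A) together with general properties of $\sh$ on connective cyclotomic spectra.
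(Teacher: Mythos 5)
There is a genuine gap, and it is precisely the point you flag as your ``main obstacle'' without resolving: your proposal never produces the comparison map as a map of \emph{cyclotomic} $\E_\infty$-rings. The paper's solution to this is not filtration-theoretic at all but a short formal argument: since $(-)^\triv$ is left adjoint to $\TC$, a map of cyclotomic $\E_\infty$-rings $\j_p^\triv \to \pcpl{\THH(\Z)}$ is the same datum as a map of $\E_\infty$-rings $\j_p \to \pcpl{\TC(\Z)}$, and such a map exists and is \emph{unique} because $\pcpl{\TC(\Z)}$ satisfies a $\TC$-theoretic Lichtenbaum--Quillen property ($\pi_m \to \pi_m \L_{\K(1)}$ is an isomorphism for $m \ge 2$; see \cref{t--pf--LQ-uniqueness,t--pf--map}). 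This one move disposes of the Frobenius- and multiplicativity-matching problem that your synthetic/filtered framework is meant to address but does not: you would still have to endow $\sh(\j_p^\triv)$ with a motivic-type filtration compatible with its cyclotomic structure and match the divided-Nygaard Frobenius against the $\sh$-Frobenius, and no mechanism for this is given. Worse, the input you propose to build the map from --- a direct relationship between the (Nygaard-filtered) prismatization of $\Z_p$ and $\KU_p$ with its Adams $\Z_p^\times$-action --- is exactly the content the authors defer to forthcoming work with Hahn and Yuan (\cref{i--pr}); it is not an available result you can cite, so route (A) as written rests on machinery that does not yet exist in usable form.

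Once the map $\alpha$ exists and is unique, the paper's remaining argument is much more elementary than your filtered strategy: apply $\sh$, use the Segal conjecture for $\pcpl{\THH(\Z)}$ (which you correctly identify as the structural input), and check that $\alpha^{\t\Cp}/p$ is an equivalence by comparing the direct computation $\pi_*(\j_p^{\t\Cp}/p) \iso \F_p[u_1^{\pm1},w]/(w^2)$ (obtained in \cref{t--tCp--j} from the pullback square defining $\j_p$ and the Adams-operation formulas, not from prismatic cohomology) with B\"okstedt--Madsen's computation of $\pi_*(\THH(\Z)^{\t\Cp}/p)$, using the ring structure and the Bockstein. Note also that your fallback for (B) --- ``transporting it across (A) together with general properties of $\sh$'' --- is not formal: an abstract equivalence $\pcpl{\THH(\Z)} \iso \sh(\j_p^\triv)$ does not identify the canonical map $\phi^0 : \j_p^\triv \to \sh(\j_p^\triv)$ with anything unless the equivalence is known to be compatible with the map from $\j_p^\triv$, and that compatibility is again exactly what the uniqueness of $\alpha$ (via Lichtenbaum--Quillen) provides; in the paper (B) is what is actually proved, since the equivalence is implemented by $\alpha$ and $\sh(\alpha)$. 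Your reduction of the $(-)^{\t\cir}$ and $\pcpl{\TP(\Z)}$ clauses to (B) is fine.
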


The proof of \cref{i--main--thhZp} is the subject of \cref{t}. The main ingredients are a calculation of the homotopy groups of $\smash{\j_p^{\t\Cp}}$ and a few important, known facts about $\THH(\Z)$, namely the calculation of the homotopy groups of $\smash{\THH(\Z)^{\t\Cp}}$ and forms of the Segal conjecture for $\pcpl{\THH(\Z)}$ and Lichtenbaum--Quillen conjecture for $\pcpl{\TC(\Z)}$. These facts about $\THH(\Z)$ are all originally due to B\"okstedt--Madsen \cite{bokstedt-madsen--TCZ-again,bokstedt-madsen--TCZ}; they have also since been understood anew from other perspectives.

\begin{remark}
  \label{i--main--ku}
  In \cite[Theorem 6.4.1]{skd--thesis}, the first author uses \cref{i--main--thhZp} to provide a similar calculation of the relative topological Hochschild homology $\pcpl{\THH(\Z_p[\zeta_p]/\S\pow{q-1})}$ as $\sh(\ku_p^\triv)$, following an argument suggested by Lurie. That an equivalence of this form holds at the level of $\cir$-equivariant $\E_1$-rings was first communicated to us by Nikolaus, and though this does not play a logical role in this paper, it was part of the basis on which we found \cref{i--main--thhZp}. We note that these ideas are also central to the recent work of Meyer--Wagner \cite{meyer-wagner--q-hodge} on $q$--de Rham/Hodge cohomology.
\end{remark}

% --------------------------------------------------------------------

\subsection{Application: revisiting $\TC(\Z)$}
\label{i--tcz}

As was just indicated, the cyclotomic structure of $\pcpl{\THH(\Z)}$ was studied thoroughly in the work of B\"okstedt--Madsen \cite{bokstedt-madsen--TCZ-again,bokstedt-madsen--TCZ}. Moreover, the $\K(1)$-local sphere spectrum plays a key role in their analysis. \cref{i--main--thhZp} gives a more structured articulation to some of their ideas.

B\"okstedt--Madsen's study culminates in the following calculation of the spectrum $\pcpl{\TC(\Z)}$.\footnote{Technically, it is only the connective cover of the spectrum that they address, and $\pcpl{\TC(\Z)}$ is $(-1)$-connective but not connective. We are assured however that this was a matter of culture/exposition at the time of their writing. A written account of the relevant analysis in degree $-1$ may be found in work of Rognes \cite[Proof of Proposition 3.3]{rognes--whitehead}.}

\begin{notation}
  \label{i--tcz--ell}
  For $p$ odd, we let $\smash{\ell_p := \tau_{\ge 0}(\KU_p^{\h\F_p^\times})}$ be the connective, $p$-complete Adams summand (here $\F_p^\times$ is acting as the group of $(p-1)$-st roots of unity in $\Z_p^\times$; see \cref{i--main--j}).
\end{notation}

\begin{theorem}[B\"okstedt--Madsen \citebare{bokstedt-madsen--TCZ-again,bokstedt-madsen--TCZ}]
  \label{i--tcz--old}
  Assume that $p$ is odd. Then there exists an equivalence of spectra
  \[
    \pcpl{\TC(\Z)} \iso \j_p \oplus \j_p[1] \oplus \bigoplus_{0 \le k \le p, k \notin \{1,p-1\}} \ell_p[2k-1].
  \]
\end{theorem}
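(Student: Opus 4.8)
The plan is to derive this from \cref{i--main--thhZp} via the Nikolaus--Scholze formula for $\TC$. Writing $\X := \pcpl{\THH(\Z)} \iso \sh(\j_p^\triv)$ for the cyclotomic $\E_\infty$-ring of \cref{i--main--thhZp}, we have a fiber sequence
\[
  \pcpl{\TC(\Z)} \longrightarrow \X^{\h\cir} \xrightarrow{\ \varphi^{\h\cir} - \can\ } \X^{\t\cir},
\]
and \cref{i--main--thhZp} identifies the target with $\j_p^{\t\cir}$, compatibly with $\varphi$. So the problem is to analyze this map and take its fiber. The crucial simplification afforded by \cref{i--main--thhZp} is that, since $\X = \sh(\j_p^\triv)$, the homotopy-fixed-point and Tate spectral sequences computing $\pi_\ast(\X^{\h\cir})$ and $\pi_\ast(\j_p^{\t\cir})$ are governed entirely by the very small ring $\pi_\ast(\j_p)$ --- a single copy of $\Z_p$ in degree $0$ together with the cyclic image-of-J groups $\Z/p^{1+\v_p(k)}$ in degrees $2k(p-1)-1$ for $k\ge 1$ (the valuations here coming from the action of $\psi^{1+p}-1$ on the homotopy of the Adams summand; cf.\ \cref{i--main--j,i--main--j-description}) --- and that the differentials are dictated by the cyclotomic Frobenius. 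This makes the whole calculation explicit, and is how \cref{i--main--thhZp} streamlines the original analysis of B\"okstedt--Madsen.

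The next step is to split the calculation into a ``periodic'' and a ``bounded'' part. The Segal conjecture for $\X$ (an input to \cref{i--main--thhZp}) says $\varphi\colon \X \to \X^{\t\Cp}$ is an equivalence in degrees above a bound linear in $p$, so that $\varphi^{\h\cir}$ has bounded-above fiber; the Lichtenbaum--Quillen property for $\pcpl{\TC(\Z)}$ says $\pcpl{\TC(\Z)}\to\L_{\K(1)}\pcpl{\TC(\Z)}$ is an equivalence in high degrees. In the periodic range one is then computing the fiber of an operator $1-\psi$, where $\psi$ is an Adams-operation-type self-map built from $\varphi$ and $\can$ after inverting $v_1$; unwinding this via $\J_p\iso\KU_p^{\h\Gamma}$ and the descent filtration along $\Gamma\iso\F_p^\times\times\Z$ expresses the answer, in high degrees, as a finite direct sum of shifts of $\J_p$ and of the periodic Adams summand $\L_{\K(1)}\ell_p$. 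Book-keeping with this filtration and the valuations above pins down which shifts occur: two copies $\J_p$ and $\Sigma\J_p$ --- realized respectively by the unit $\S\to\TC(\Z)$ and by a companion class in $\pi_1$ --- together with $\L_{\K(1)}\ell_p[2k-1]$ for $0\le k\le p$, $k\notin\{1,p-1\}$. One then computes the bounded correction directly from the (now explicit) low-degree homotopy of $\X^{\h\cir}$ and $\j_p^{\t\cir}$, matches it with the small-$k$ part of $\bigoplus_k\ell_p[2k-1]$, and passes to connective covers --- using that $\pcpl{\TC(\Z)}$ is $(-1)$-connective --- which replaces each $\L_{\K(1)}\ell_p$ by $\ell_p$ and $\J_p$ by $\j_p$.

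The hard part will be upgrading this description of the homotopy groups and associated graded to an honest splitting of spectra --- i.e.\ showing the relevant $k$-invariants vanish. For the periodic part I would argue $\K(1)$-locally: $\L_{\K(1)}\pcpl{\TC(\Z)}$ is a module over $\J_p=\L_{\K(1)}\S$ and is assembled from $\KU_p$-modules (which split into even and odd shifts), so, using the sparseness of $\pi_\ast\j_p$ and $\pi_\ast\ell_p$ and the fact that the unit and companion class realize the two $\J_p$-summands, one obtains the splitting $\J_p\oplus\Sigma\J_p\oplus\bigoplus_k\L_{\K(1)}\ell_p[2k-1]$ on the nose; it then remains to splice in the bounded correction and to check the gluing is trivial, a direct obstruction-theoretic point. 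An alternative --- and the route fitting the rest of the paper --- is to recast this periodic/bounded decomposition as the height-$1$ analogue of the fiber square of Antieau--Mathew--Morrow--Nikolaus, which realizes $\pcpl{\TC(\Z)}$ as an explicit gluing of a $\K(1)$-local spectrum with a bounded one, in a form from which the stated splitting is manifest.
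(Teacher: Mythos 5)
Your outline does track the paper's own route (\cref{z}): start from \cref{i--main--thhZp}, identify $\L_{\K(1)}\pcpl{\TC(\Z)}$ as the fiber of $\id$ minus an Adams-type operator on $\pcpl{(\J_p^{\t\cir})}$, and then glue in the bounded part using $(-1)$-connectivity of $\pcpl{\TC(\Z)}$ and the Lichtenbaum--Quillen property. But the two places where you wave hands are exactly where the content lies, and at least the second is a genuine gap. First, identifying the $\K(1)$-local fiber as $\J_p \oplus \J_p[1] \oplus (\text{something} \simeq \KU_p[-1])$ is not ``book-keeping with the descent filtration'' plus sparseness: $\L_{\K(1)}\TC(\Z)$ is only a $\J_p$-module, not a $\KU_p$-module, and its homotopy groups do not determine it. The paper has to identify the operator with the ambidextrous wrong-way map $[p]^*$ (Westerland's splitting of $\pcpl{\J_p[\B\cir]}$ and the Carmeli--Schlank--Yanovski identification of $\nu$, \cref{z--ta}), and then prove that the resulting transfer-type map $\tau:\KU_p\to\J_p[-1]$ is \emph{not divisible by $p$} (\cref{z--tr--J-nonzero,z--tr--J-description}, via non-complex-orientability of $\J_p/p$ over $\J_p$); only this forces the third summand to be $\KU_p[-1]$ (\cref{z--tr--noncanonical}). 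Nothing in your filtration argument produces this nondivisibility.

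Second, and more decisively, the splice of the bounded correction is not ``a direct obstruction-theoretic point.'' In the paper $\X$ is the pullback of $\Z_p[-1]\to\tau_{\le1}(\ku_p[-1])\leftarrow \ku_p[-1]$, and one checks that the homotopy groups of this pullback are the \emph{same} no matter what the map $\Z_p[-1]\to\tau_{\le1}(\ku_p[-1])$ does on $\pi_{-1}$; if that map were zero (or divisible by $p$) one would get $\Z_p[-1]\oplus\tau_{\ge 2p-3}(\ell_p[-1])\oplus\cdots$ rather than $\bigoplus_{0\le k\le p,\,k\notin\{1,p-1\}}\ell_p[2k-1]$ --- a different spectrum with identical homotopy. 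So the splitting cannot be decided by obstruction theory or sparseness; one needs the non-formal input that $v_1$ acts nontrivially on $\pi_{-1}(\X/p)$, which the paper imports from Rognes's degree $-1$ analysis or from Liu--Wang's theorem that $\pi_*(\TC(\Z)/p)$ is free over $\F_p[v_1]$. Your proposal supplies no argument at this point, and your suggested alternative (the height-$1$ Antieau--Mathew--Morrow--Nikolaus square of \cref{k1k}) does not make the splitting ``manifest'' either --- the paper does not, and could not, extract the splitting from those squares without the same two inputs. To close the proof you must add: (i) the identification of the operator and the $p$-indivisibility of the transfer, and (ii) a $v_1$-nontriviality statement in degree $-1$ (or an equivalent citation).
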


This calculation is relevant to studying the algebraic K-theory of $\S$ via the Dundas--Goodwillie--McCarthy theorem, as done in work of Klein--Rognes \cite{klein-rognes--linearization}, Madsen--Schlichtkrull \cite[Corollary 1.2]{madsen-schlichtkrull--transfer}, Rognes \cite{rognes--whitehead,rognes--KS2}, and Blumberg--Mandell \cite{blumberg-mandell--KS}. More precisely, that involves the map $\pcpl{\TC(\S)} \to \pcpl{\TC(\Z)}$ induced by the unit map $\S \to \Z$, and so it is of interest how the description of \cref{i--tcz--old} interacts with the following description of $\pcpl{\TC(\S)}$.

\begin{theorem}[B\"okstedt--Hsiang--Madsen \citebare{bokstedt-hsiang-madsen--TC}]
  \label{i--tcz--tcs}
  Assume that $p$ is odd. Then there is a canonical equivalence of spectra
  \[
    \pcpl{\TC(\S)} \iso \S_p \oplus \S_p[1] \oplus \pcpl{\Y},
  \]
  where $\Y$ denotes the fiber of the reduced $\cir$-transfer map $\o\tr : \Sigma^{\infty+1}\B\cir \to \S$.\footnote{The canonicity of this equivalence seems to not be fully justified in \cite{bokstedt-hsiang-madsen--TC}, but see the review by Rognes \cite[\textsection 1.12]{rognes--KS2}, as well as the treatment of Nikolaus--Scholze \cite[\textsection IV.3]{nikolaus-scholze--TC}. It will be recalled from the latter point of view in the proof of \cref{z--ol--k1} here.}
\end{theorem}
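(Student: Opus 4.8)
The plan is to recover this from the Nikolaus--Scholze machinery, as the footnote indicates. The starting point is the identification of $\THH(\S)$ as a cyclotomic $\E_\infty$-ring: its underlying spectrum is $\S$ with the trivial $\cir$-action, and its cyclotomic Frobenius is the Tate diagonal $\varphi_p \colon \S \to (\S^{\otimes p})^{\t\Cp} = \S^{\t\Cp}$. Feeding this into the Nikolaus--Scholze fiber sequence for $p$-typical $\TC$ gives
\[ \pcpl{\TC(\S)} \iso \fib\bigl(\varphi - \can \colon \pcpl{\TC^-(\S)} \to \pcpl{\TP(\S)}\bigr), \]
with $\pcpl{\TC^-(\S)} \iso (\S^{\h\cir})^\wedge_p$ and $\pcpl{\TP(\S)} \iso (\S^{\t\cir})^\wedge_p$. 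So the whole question reduces to understanding the two maps $\varphi, \can \colon (\S^{\h\cir})^\wedge_p \to (\S^{\t\cir})^\wedge_p$ and the fiber of their difference.

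The essential input is the Segal conjecture for the cyclic group $\Cp$ --- for $p$ odd this is Gunawardena's theorem; at $p = 2$ it is Lin's --- which says that the Tate diagonal $\S \to \S^{\t\Cp}$ is an equivalence after $p$-completion. Applying $(-)^{\h\cir}$ and using the Tate-orbit identification $\pcpl{\TP(\S)} \iso (\S^{\t\Cp})^{\h\cir}$ built into the definition of $\varphi$, one concludes that $\varphi$ is itself an equivalence of $p$-complete spectra. Hence
\[ \pcpl{\TC(\S)} \iso \fib\bigl(\id - \psi \colon \pcpl{\TC^-(\S)} \to \pcpl{\TC^-(\S)}\bigr), \qquad \psi := \varphi^{-1}\circ\can, \]
and the problem collapses to analyzing the single endomorphism $\psi$ of $(\S^{\h\cir})^\wedge_p$.

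The splitting into three summands then emerges as follows. First, $\varphi$ and $\can$ are maps of $\E_\infty$-rings, so they agree on the unit $\S \to \TC^-(\S)$; hence $\psi$ fixes the unit, $\id - \psi$ annihilates it, and the composite $\pcpl{\TC(\S)} \to \pcpl{\TC^-(\S)} \to \S_p$ with restriction to the trivial subgroup (evaluation at a basepoint of $\B\cir$) retracts the unit map $\S_p \to \pcpl{\TC(\S)}$, splitting off a copy of $\S_p$. For the rest, one feeds in the norm cofiber sequence $(\Sigma\THH(\S)_{\h\cir})^\wedge_p \xrightarrow{\Nm} \pcpl{\TC^-(\S)} \xrightarrow{\can} \pcpl{\TP(\S)}$, in which $\THH(\S)_{\h\cir} \iso \Sigma^\infty_+\B\cir \iso \S \vee \Sigma^\infty\B\cir$. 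Tracking $\id - \psi$ through this, the ``$\S$''-summand of $\Sigma\THH(\S)_{\h\cir}$ contributes the term $\S_p[1]$, while the reduced part $(\Sigma^{\infty+1}\B\cir)^\wedge_p$ acquires a correction built from the norm map, $\varphi$, and restriction to the trivial subgroup; recognizing this correction as (a unit multiple of) the reduced $\cir$-transfer $\o\tr \colon \Sigma^{\infty+1}\B\cir \to \S$ identifies the remaining summand with $\fib(\o\tr) = \pcpl{\Y}$.

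The main obstacle is exactly that last step: pinning down the endomorphism $\psi$ --- equivalently, understanding how the Tate diagonal for $\S$ interacts with $\cir$-homotopy fixed points --- and matching the resulting ``$\id - \psi$'' construction against the Becker--Gottlieb transfer of the universal $\cir$-bundle, which is what $\o\tr$ is. This is where the Segal conjecture does its real work: it forces $\pcpl{\TC^-(\S)}$ and $\pcpl{\TP(\S)}$ to be ``the same size'', so that $\fib(\id - \psi)$ comes out as small as the statement predicts, and the transfer is precisely the measure of the discrepancy between $\varphi$ and $\can$. A secondary issue, already flagged in the statement, is the canonicity of the equivalence; this is handled by running the entire argument through the functorial constructions involved, as in the accounts of Rognes and of Nikolaus--Scholze cited there.
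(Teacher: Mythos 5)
Your outline follows the same route the paper points to (Nikolaus--Scholze \textsection IV.3, recalled $\K(1)$-locally in the proof of \cref{z--ol--k1}): the fixed-point formula for $\TC$, the Segal conjecture making $\varphi$ a $p$-adic equivalence, the norm cofiber sequence, and the transfer. But the decisive step is left as an acknowledged ``obstacle'' rather than carried out, and the resolution is misattributed. Since $\THH(\S) \iso \S^\triv$, the Frobenius is the composite $\S \to \S^{\B\Cp} \to \S^{\t\Cp}$, so after applying $(-)^{\h\cir}$ one has $\varphi \iso \can \circ [p]^*$, with $[p]^* : \S^{\B\cir} \to \S^{\B\cir}$ restriction along the $p$-fold cover; what makes the fiber come out ``small'' is then not the Segal conjecture again but the cell-by-cell fact that $[p]^*$ acts by $p^n$ on the $2n$-cell subquotient, so that $\id - [p]^*$ is a $p$-adic equivalence on $\fib(b^* : \S^{\B\cir} \to \S)$ (this is \cref{z--ol--p-cover-graded} together with the cartesian squares in the proof of \cref{z--ol--k1}). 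That is precisely what gives $\fib(\id-[p]^*) \iso \S_p \oplus \S_p[-1]$ and hence, via the norm sequence and $\tr = b^* \circ \Nm$, the cofiber sequence $\pcpl{\S[\B\cir]} \to \S_p \oplus \S_p[-1] \to \pcpl{\TC(\S)}$ from which the splitting is read off. Your ``same size'' heuristic and the unresolved ``pinning down $\psi$'' do not supply this computation, which is the actual content of the step you flag as the main obstacle.

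Second, your argument never uses that $p$ is odd, yet the three-fold splitting requires it. The claim that the basepoint summand of $\Sigma\THH(\S)_{\h\cir}$ ``contributes $\S_p[1]$'' while the reduced part contributes $\pcpl{\Y}$ amounts to diagonalizing the map $(0,\tr) : \pcpl{\S[\B\cir]} \to \S_p \oplus \S_p[-1]$ with respect to the basepoint splitting $\S[\B\cir] \iso \S \oplus \S\{\B\cir\}$, and this needs the restriction $\tr \circ b_*$, which is the Hopf map $\eta$ (\cref{z--tr--trivial}), to be (canonically) null --- true $p$-completely for $p$ odd, false at $p=2$, where one only obtains the unsplit extension $\fib(\tr) \iso \Sigma\Th_{\B\cir}(-\V)$ rather than $\S[1] \oplus \Y$. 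As written, your sketch would ``prove'' the same statement at $p=2$, so this vanishing must be added explicitly; with it, and with the $\can\circ[p]^*$ analysis above, your outline becomes the argument of \cref{z--ol--k1}.
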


Understanding the compatibility between the above two results is intertwined with understanding the canonicity of the equivalence in \cref{i--tcz--old}. The proof of B\"okstedt--Madsen specifies the inclusions of the summands $\j_p$ and $\j_p[1]$---they are indeed induced by the maps from the summands $\S_p$ and $\S_p[1]$ of $\pcpl{\TC(\S)}$---but the proof does not make canonical the inclusion and identification of the remainder. A posteriori, though, one can calculate that the retractions of these maps in from $\j_p$ and $\j_p[1]$ are unique up to homotopy, giving a canonical decomposition of $\pcpl{\TC(\Z)}$ into these two summands and a third that is noncanonically equivalent to $\bigoplus_{0 \le k \le p, k \notin \{1,p-1\}} \ell_p[2k-1]$; these calculations appear in the work of Blumberg--Mandell \cite{blumberg-mandell--KS}. Subsequent work of Blumberg--Mandell \cite{blumberg-mandell--eigensplitting} gives a canonical description of the third summand (see \cref{i--tcz--noncanonical} for more on this matter).

With these canonical decompositions in mind, we may ponder the diagonalizability of the map $\pcpl{\TC(\S)} \to \pcpl{\TC(\Z)}$. The remaining question is the behavior of the map on the summand $\pcpl{\Y}$ of the source. As of \cite{blumberg-mandell--KS}, this question has not been completely settled: the compatibility of the projection maps $\pcpl{\TC(\Z)} \to \j_p$ and $\pcpl{\TC(\S)} \to \S_p$ was left open there.

In \cref{z}, we explain a new method for describing the spectrum $\pcpl{\TC(\Z)}$, using \cref{i--main--thhZp} as a starting point. It proves the following refinement of \cref{i--tcz--old}, resolving the above open end from \cite{blumberg-mandell--KS}. Moreover, what we learn along the way helps us make new calculations of the $\K(1)$-local K-theory of certain ring spectra, discussed in \cref{i--k1k}.

\begin{theorem}
  \label{i--tcz--thm}
  Assume that $p$ is odd. Then:
  \begin{enumerate}
  \item \label{i--tcz--thm--description}
    There is a canonical equivalence of spectra
    \[
      \pcpl{\TC(\Z)} \iso \j_p \oplus \j_p[1] \oplus \X,
    \]
    where $\X$ is a spectrum that is noncanonically equivalent to $\bigoplus_{0 \le k \le p, k \notin \{1,p-1\}} \ell_p[2k-1]$.
  \item \label{i--tcz--thm--linearization}
    With respect to the equivalences in \cref{i--tcz--thm--description} and \cref{i--tcz--tcs}, the map $\pcpl{\TC(\S)} \to \pcpl{\TC(\Z)}$ induced by the unit map $\S \to \Z$ is canonically equivalent to the direct sum of the unit map $\S_p \to \j_p$, the suspension thereof $\S_p[1] \to \j_p[1]$, and a certain map $\pcpl{\Y} \to \X$.
  \end{enumerate}
\end{theorem}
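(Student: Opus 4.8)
The plan is to derive \cref{i--tcz--thm} from \cref{i--main--thhZp} by analyzing $\TC$ of the cyclotomic $\E_\infty$-ring $\sh(\j_p^\triv)$ directly, in parallel with --- and one chromatic height above --- the Nikolaus--Scholze computation of $\TC(\F_p) = \TC(\sh(\Z_p^\triv))$ \cite{nikolaus-scholze--TC}. The organizing point for part~\cref{i--tcz--thm--linearization} is that $\THH(\S) = \S^\triv$, with its natural (trivial) cyclotomic structure, is the initial cyclotomic $\E_\infty$-ring. Hence there is a \emph{unique} map of cyclotomic $\E_\infty$-rings $\S_p^\triv \to \sh(\j_p^\triv)$; it agrees with the map $\pcpl{\THH(\S)} \to \pcpl{\THH(\Z)}$ induced by the unit $\S \to \Z$; and, as the canonical map $c \colon \j_p^\triv \to \sh(\j_p^\triv)$ is one of cyclotomic $\E_\infty$-rings, it factors uniquely as $\S_p^\triv \to \j_p^\triv \lblto{c} \sh(\j_p^\triv)$, the first arrow being the trivial cyclotomic structure applied to the unit $\S_p \to \j_p$. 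Applying $\TC$, the linearization map $\pcpl{\TC(\S)} \to \pcpl{\TC(\Z)}$ is therefore tautologically the composite $\pcpl{\TC(\S)} = \TC(\S_p^\triv) \to \TC(\j_p^\triv) \lblto{\TC(c)} \TC(\sh(\j_p^\triv)) = \pcpl{\TC(\Z)}$.

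For the computation of the spectrum $\pcpl{\TC(\Z)}$ I would use the fiber sequence $\pcpl{\TC(\Z)} \iso \fib\bigl(\pcpl{\TC^-(\Z)} \lblto{\varphi - \can} \pcpl{\TP(\Z)}\bigr)$ of \cite{nikolaus-scholze--TC}. Its target is already identified by \cref{i--main--thhZp}: $\pcpl{\TP(\Z)} \iso \j_p^{\t\cir}$, the Tate construction for the trivial $\cir$-action on $\j_p$, which --- via the description $\j_p = \tau_{\ge 0}(\KU_p^{\h\Gamma})$ of \cref{i--main--j} --- reduces to a classical complex-orientable computation with $\KU_p$. The source $\pcpl{\TC^-(\Z)}$, the $\cir$-homotopy fixed points of $\pcpl{\THH(\Z)}$, and the cyclotomic Frobenius $\varphi$ are to be extracted from the shift presentation, whereby $\sh(\j_p^\triv)$ has underlying spectrum $\tau_{\ge 0}(\j_p^{\t\Cp})$ with a specified residual $\cir$-action and Frobenius, together with the known homotopy groups of $\THH(\Z)^{\t\Cp}$ (B\"okstedt--Madsen \cite{bokstedt-madsen--TCZ}; equivalently, via \cref{i--main--thhZp}, those of $\j_p^{\t\Cp}$, which again flow from the complex orientation of $\KU_p$). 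Running the resulting $\cir$-homotopy-fixed-point spectral sequences and analyzing $\varphi - \can$ on homotopy, in the manner of the $\F_p$ computation, should then produce the fiber together with a splitting $\pcpl{\TC(\Z)} \iso \j_p \oplus \j_p[1] \oplus \X$ with $\X$ noncanonically $\bigoplus_{0 \le k \le p,\, k \notin \{1, p-1\}} \ell_p[2k-1]$ (\cref{i--tcz--ell}), recovering \cref{i--tcz--old}.

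To upgrade this to the canonical statements of \cref{i--tcz--thm}, one proceeds as in Blumberg--Mandell \cite{blumberg-mandell--KS}: the inclusion $\j_p \hookrightarrow \pcpl{\TC(\Z)}$ of the first summand is induced by the linearized trace $\S_p \to \pcpl{\TC(\Z)}$ --- which factors through $\j_p$ by the calculation --- similarly for $\j_p[1]$, and the corresponding retractions $\pcpl{\TC(\Z)} \to \j_p, \j_p[1]$ are unique up to homotopy, so that $\X$ is well defined up to equivalence and the decomposition of part~\cref{i--tcz--thm--description} is canonical. Part~\cref{i--tcz--thm--linearization} then follows by tracing the composite of the first paragraph through these summands: the first component is the unit $\S_p \to \j_p$ (by the construction of the inclusion and uniqueness of the retraction), the second its suspension, and the third the residual map $\pcpl{\Y} \to \X$; in particular this settles the compatibility of $\pcpl{\TC(\Z)} \to \j_p$ with $\pcpl{\TC(\S)} \to \S_p$ left open in \cite{blumberg-mandell--KS}.

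The main obstacle is the computation in the second paragraph: controlling the cyclotomic Frobenius of $\sh(\j_p^\triv)$ --- hence the map $\varphi - \can$ out of $\pcpl{\TC^-(\Z)}$, and in particular its cofiber --- precisely enough to read off the \emph{exact} summands $\j_p$, $\j_p[1]$, and $\bigoplus_k \ell_p[2k-1]$ rather than merely their homotopy groups, and to see that the trace factors through $\j_p$ as claimed. This is where genuine input beyond formal manipulation of the equivalence $\pcpl{\THH(\Z)} \iso \sh(\j_p^\triv)$ enters: the Segal-conjecture-type behavior of $\varphi$ and the Lichtenbaum--Quillen-type boundedness of $\pcpl{\TC(\Z)}$ --- the same circle of facts underlying the proof of \cref{i--main--thhZp} --- together with the explicit structure of $\j_p^{\t\Cp}$ and $\j_p^{\t\cir}$ furnished by $\j_p = \tau_{\ge 0}(\KU_p^{\h\Gamma})$.
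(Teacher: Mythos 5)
There is a genuine gap, and it sits exactly at the point the theorem is meant to resolve. Your part~\cref{i--tcz--thm--linearization} is claimed to ``follow by tracing the composite through these summands,'' with the off-diagonal components dismissed ``by the construction of the inclusion and uniqueness of the retraction.'' But uniqueness up to homotopy of the retraction $\pcpl{\TC(\Z)} \to \j_p$ only pins down that retraction; it says nothing about its composite with the linearization map restricted to the summand $\pcpl{\Y}$ of $\pcpl{\TC(\S)}$. Diagonality requires the vanishing of the components $\pcpl{\Y} \to \j_p$ and $\pcpl{\Y} \to \j_p[1]$ (equivalently, the compatibility of the projections $\pcpl{\TC(\Z)} \to \j_p$ and $\pcpl{\TC(\S)} \to \S_p$), and this is not formal --- mapping spectra like $[\pcpl{\Y},\j_p]$ are far from zero --- nor does it follow from knowing both sides and the map on homotopy groups. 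As the paper notes in \cref{i--tcz}, this is precisely the question left open by Blumberg--Mandell after their a posteriori canonicity argument, so appealing to that argument is circular here. The paper gets diagonality structurally: it first proves \cref{z--ol--phi-TC}, identifying $\L_{\K(1)}\TC(\Z)$ with $\fib(\id-\varphi)$ on $\pcpl{(\J_p^{\t\cir})}$ and $\L_{\K(1)}\TC(\S)$ with a compatible fiber, identifies $\varphi$ on orbits with the ambidexterity map $\nu$ (\cref{z--ol--orbits}, via Hovey--Sadofsky and Carmeli--Schlank--Yanovski), splits $\pcpl{\J_p[\B\cir]}$ \`a la Westerland (\cref{z--ol--Bcir-splitting,z--ol--Bcir-infinite-splitting}), and computes the $\cir$-transfer ($\tau$ not divisible by $p$, \cref{z--tr--J-description}); only then are the two fiber sequences visibly carried to one another by diagonal matrices (\cref{z--ol--k1}), and the integral statement is recovered by gluing $\tau_{\ge-1}$ of the $\K(1)$-localization with $\tau_{\le1}(\pcpl{\TC(\S)})$ using the Lichtenbaum--Quillen property and the $(2p-4)$-connectivity of the linearization map.

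Your second paragraph is also not yet an argument: extracting from $\sh(\j_p^\triv)$ the map $\varphi-\can$ on $\pcpl{\TC^-(\Z)}$ and reading off the \emph{spectrum-level} summands $\j_p$, $\j_p[1]$, $\bigoplus_k \ell_p[2k-1]$ from homotopy fixed point spectral sequences would essentially amount to redoing B\"okstedt--Madsen, and (as you acknowledge) the identification of exact summands rather than homotopy groups is the hard part; the height~$1$ situation is nothing like the even, concentrated $\F_p$ case. Even granting that computation, one would recover \cref{i--tcz--old} plus the Blumberg--Mandell canonicity of \cref{i--tcz--thm--description}, but not \cref{i--tcz--thm--linearization}. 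Note also that the paper's noncanonical identification of $\X$ with $\bigoplus_{0 \le k \le p,\, k \notin \{1,p-1\}} \ell_p[2k-1]$ needs the nontrivial input that $v_1$ acts nontrivially on $\pi_{-1}(\X/p)$ (Rognes, or Liu--Wang), which your sketch does not supply. Your first paragraph (initiality of $\S^\triv$, factorization through $\j_p^\triv$) is fine and does match the paper's starting point, but by itself it only organizes the map; the content of the theorem lies in the $\K(1)$-local analysis and the gluing, which the proposal does not carry out.
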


\begin{remark}
  \label{i--tcz--not-new}
  As indicated in the preceding discussion, statement \cref{i--tcz--thm--description} of \cref{i--tcz--thm} is not new to this paper. However, our proof is new, and furthermore establishes \cref{i--tcz--thm--linearization}.
\end{remark}

\begin{remark}
  \label{i--tcz--ring}
  \cref{i--main--thhZp} describes $\pcpl{\THH(\Z)}$ as a cyclotomic $\E_\infty$-ring, so it is possible that it could be used to study the $\E_\infty$-ring structure of $\pcpl{\TC(\Z)}$. We do not attempt to do so here.
\end{remark}

\begin{remark}
  \label{i--tcz--noncanonical}
  As mentioned above, in \cite{blumberg-mandell--eigensplitting}, Blumberg--Mandell give a canonical description of the spectrum called $\X$ in \cref{i--tcz--thm}; their description has a Galois-theoretic nature (building on work of Soul\'e \cite{soule--regulators}, Thomason \cite{thomason--etale}, and Dwyer--Mitchell \cite{dwyer-mitchell--K-integers}). The proof of \cref{i--tcz--thm} here gives a different, homotopy-theoretic characterization of $\X$. There is presumably something to be learned by comparing the two perspectives.
\end{remark}

\begin{remark}
  \label{i--tcz--two}
  Rognes \cite{rognes--KZ2,rognes--TCZ2} has analyzed the structure of the spectrum $\cpl{\TC(\Z)}_2$: it is similar to but more complicated than that of $\pcpl{\TC(\Z)}$ for $p$ odd. While a certain analogue of the spectrum $\j_p$ does feature in that result, it seems to us that there is not an analogue of $\j_p$ that is related to $\THH(\Z)^\wedge_2$ precisely as in \cref{i--main--thhZp}. See \cref{i--pv--two} for a concrete warning to this effect, and see \cref{i--pr--two} for a more positive perspective.
\end{remark}

% --------------------------------------------------------------------

\subsection{A technical but useful refinement of the main result}
\label{i--nil}

Let us return to the following assertion contained in \cref{i--main--thhFp}: there is a map of cyclotomic $\E_\infty$-rings
\[
  \Z_p^\triv \to \sh(\Z_p^\triv) \iso \THH(\F_p)
\]
satisfying the property that it becomes an equivalence upon applying $\smash{(-)^{\t\cir}/p}$. In \cite{ammn--beilinson}, Antieau--Mathew--Morrow--Nikolaus observed that the same property holds for the induced map $\Z_p^\triv \otimes X \to \THH(\F_p) \otimes X$, where $X$ is any $\cir$-equivariant spectrum; more precisely, they showed that the fiber of the canonical map $\Z_p^\triv \to \sh(\Z_p^\triv)$ is ``nilpotent'' as an $\cir$-equivariant spectrum. They then applied this observation to prove certain results concerning rationalized $p$-adic topological cyclic homology and algebraic K-theory (we will come to this application in \cref{i--k1k}).

Now, \cref{i--main--thhZp} contains the parallel assertion that, for $p$ odd, there is a map of cyclotomic $\E_\infty$-rings
\[
  \j_p^\triv \to \sh(\j_p^\triv) \iso \pcpl{\THH(\Z)}
\]
that becomes an equivalence upon applying $\smash{(-)^{\t\cir}/p}$. In the applications to be discussed in \cref{i--k1k,i--pv}, we will use the following refinement of this assertion, analogous to the observation of Antieau--Mathew--Morrow--Nikolaus stated above. This refinement is proved in \cref{j} (where we also review the notion of nilpotence appearing here), by direct calculations with $\j_p$, independent of the theory of topological Hochschild homology.

\begin{notation}
  \label{i--nil--v1}
  Recall that, for $p$ odd, there is a canonical element $v_1 \in \pi_{2p-2}(\S/p)$. We will only really be using the image of this element in $\pi_{2p-2}(\j_p/p)$. It is uniquely characterized by its image in $\pi_{2p-2}(\KU_p/p)$, which is $\smash{\o{\beta}^{p-1}}$, where $\o\beta \in \pi_2(\KU_p/p)$ is the residue of the Bott class $\beta \in \pi_2(\KU_p)$.
\end{notation}

\begin{theorem}
  \label{i--nil--thm}
  Assume that $p$ is odd. Let $\smash{K_{\j_p^\triv}}$ denote the cofiber of the canonical map $\j_p^\triv \to \sh(\j_p^\triv)$. Then the $\cir$-equivariant spectrum $\smash{K_{\j_p^\triv}/(p,v_1)}$ is nilpotent. In particular, for any $\cir$-equivariant spectrum $X$, the canonical map $\j_p^\triv \otimes X \to \sh(\j_p^\triv) \otimes X$ becomes an equivalence upon applying $\smash{(-)^{\t\cir}/(p,v_1)}$.
\end{theorem}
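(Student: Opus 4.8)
The plan is to reduce, via the theory of nilpotence for $\cir$-equivariant spectra recalled in \cref{j}, to a single statement about the Tate-valued Frobenius of $\j_p^\triv$ modulo $(p,v_1)$, and then to prove that statement by a direct $\K(1)$-local computation with $\j_p$.

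First I would dispose of the formalities. The nilpotent $\cir$-equivariant spectra form a thick $\otimes$-ideal, so the ``in particular'' clause is automatic: if $M$ is nilpotent then so is $M \otimes X$, whence $(M \otimes X)^{\t\cir} = 0$; taking $M = \smash{K_{\j_p^\triv}}/(p,v_1)$ and using that $(-)^{\t\cir}$ commutes with $-\otimes\S/(p,v_1)$ (as $\S/(p,v_1)$ is finite) gives precisely that $\j_p^\triv \otimes X \to \sh(\j_p^\triv) \otimes X$ is an equivalence after $(-)^{\t\cir}/(p,v_1)$. For the first assertion, observe that $\smash{K_{\j_p^\triv}}$ is the fiber of a map of connective spectra, hence is $(-1)$-connective, so $K := \smash{K_{\j_p^\triv}}/(p,v_1)$ is a bounded-below, $p$-torsion $\cir$-equivariant spectrum. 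By the nilpotence criterion of \cref{j} it then suffices to show $K^{\t\Cp} = 0$: for a bounded-below $p$-torsion object, $\Cp$-Tate vanishing implies nilpotence, since the $C_{p^n}$-Tate constructions then all vanish by the Tate orbit lemma of Nikolaus--Scholze and the prime-to-$p$ ones vanish automatically. As $(-)^{\t\Cp}$ also commutes with $-\otimes\S/(p,v_1)$, and $\smash{K_{\j_p^\triv}}$ is the fiber of the canonical map $c\colon \j_p^\triv \to \sh(\j_p^\triv)$, what remains is to prove that $c$ becomes an equivalence after applying $(-)^{\t\Cp}/(p,v_1)$.

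This last point is the heart of the proof, and I would attack it by explicit computation, using nothing about $\THH$. Write $\j_p = \tau_{\ge0}(\J_p)$ with $\J_p = \L_{\K(1)}\S$. The finite spectrum $\S/(p,v_1)$ is $\K(1)$-acyclic, since $v_1$ is invertible in $\K(1)_*$, so $\J_p \otimes \S/(p,v_1) = \L_{\K(1)}(\S/(p,v_1)) = 0$; inserting this into the connective-cover fiber sequence shows that $\j_p/(p,v_1)$ is \emph{bounded}, with homotopy $\F_p$ in degrees $0$ and $2p-3$ (the unit and the first image-of-J class) and nothing else. Unwinding the definition of $\sh$, the claim that $c$ is a $(-)^{\t\Cp}/(p,v_1)$-equivalence becomes a statement about the Tate-valued Frobenius $\varphi\colon \j_p \to \j_p^{\t\Cp}$ of the trivial cyclotomic $\E_\infty$-ring $\j_p^\triv$ and the residual $\cir$-action on $\j_p^{\t\Cp}$; modulo $(p,v_1)$ all the groups involved are finite, and one identifies $\varphi$ from the $\E_\infty$-structure — it is assembled from the Tate diagonal and the $p$-fold multiplication, so on $\K(1)$-locally trivial input it is governed by the familiar $p$-th power operations in $\pi_*(\KU_p)$, equivalently by the $\Z_p^\times$-action via $\J_p \iso \KU_p^{\h\Gamma}$ — and one checks that the induced map on $(-)^{\t\Cp}$ is an equivalence after $/(p,v_1)$. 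In effect this is a form of the Segal conjecture for $\j_p$, the reduction modulo $(p,v_1)$ both annihilating the $\K(1)$-local part of everything in sight and rendering the surviving homotopy finite.

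The main obstacle is exactly this computation: making the Tate-valued Frobenius of $\j_p^\triv$, reduced modulo $(p,v_1)$, explicit enough to see that it induces an equivalence on $\Cp$-Tate constructions. No formal property of the shift construction suffices here — one must genuinely use the chromatic structure of $\j_p$ (its description via $\KU_p$ and the Adams operations, or the image-of-J description of its homotopy), and indeed the naive analogue fails at the prime $2$ (cf. \cref{i--tcz--two,i--pv--two}). A secondary technical point, dealt with in the review in \cref{j}, is the verification that boundedness below together with $\Cp$-Tate vanishing really forces nilpotence, i.e. that the cofree genuine $\cir$-spectrum attached to $K$ has vanishing geometric fixed points at every nontrivial closed subgroup.
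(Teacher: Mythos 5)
Your reduction has a genuine gap at the bridging step: you claim that, for a bounded-below $p$-torsion $\cir$-equivariant spectrum, vanishing of the $\Cp$-Tate construction implies nilpotence. That principle is false, and it is not what the paper's nilpotence theory provides. Nilpotence in the sense of \cref{j--nil--def} is membership in a \emph{thick} tensor ideal, i.e.\ a uniform finiteness condition; by the criterion of \cref{j--nil--or} it amounts (after base change to a complex-oriented ring) to a single power of the orientation class $t$ acting by zero on $(-)^{\h\cir}$. Tate vanishing is strictly weaker. For a concrete counterexample, take $X = \bigoplus_{n \ge 1} \F_p \otimes \Sigma^\infty_+ S^{2n-1}$, the odd spheres carrying their standard free $\cir$-actions: $X$ is connective and killed by $p$, each summand is a finite free $\cir$-CW spectrum, so $X^{\t\Cp} \iso 0$ (and indeed $X^{\t\cir} \iso 0$; the Tate constructions commute with this sum by the uniform bound below), yet $t$ acts on $\pi_*(X^{\h\cir}) \supseteq \bigoplus_n \F_p[t]/t^n$ with unbounded order, so $X$ is \emph{not} in the thick tensor ideal generated by $\S[\cir]$. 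Your closing gloss — vanishing geometric fixed points at all nontrivial closed subgroups — characterizes the \emph{localizing} subcategory generated by free spectra, not the thick ideal, and that weaker conclusion does not suffice for the theorem: the whole point of the nilpotence statement is the ``in particular'' clause for an \emph{arbitrary} $\cir$-equivariant $X$, which uses that nilpotence is a tensor-ideal condition; knowing only $(K_{\j_p^\triv}/(p,v_1))^{\t\Cp} \iso 0$ gives no control over $(K_{\j_p^\triv} \otimes X)^{\t\cir}/(p,v_1)$.

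Relatedly, the statement you propose to prove by ``direct computation'' — that $\j_p^\triv \to \sh(\j_p^\triv)$ is an equivalence after $(-)^{\t\Cp}$ (even integrally, let alone mod $(p,v_1)$) — is already contained in \cref{i--main--thhZp}, so your route would reduce \cref{i--nil--thm} to something weaker than, and essentially subsumed by, what is already known, while the genuine content of \cref{i--nil--thm} is precisely the uniform strengthening from Tate vanishing to thick-ideal membership. That is why the paper's proof does not argue via Tate constructions at all: it tracks the $t$-action explicitly, working with $\j_{p,0} = \tau_{\ge0}(\KU_p^{\h\Gamma_0})$, the identification $\KU_p^{\B\cir} \iso \KU_p\pow{q-1}$, and Adams-operation computations on $\pi_*((-)^{\h\cir})$ modulo $(p,\o\beta)$ (\cref{j--ku--nil,j--j--nil-lemma}) to show $t$ acts by zero on the relevant total cofiber, then descends to $\j_p$ by the $\F_p^\times$-fixed-point retraction and a d\'evissage using $v_1 = \o\beta^{\,p-1}$. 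To repair your argument you would need to replace the Tate-vanishing criterion by an argument establishing a uniform bound on the $t$-torsion of $K_{\j_p^\triv}^{\h\cir}/(p,v_1)$ — which is exactly the computation the paper performs.
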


% --------------------------------------------------------------------

\subsection{Application: $\K(1)$-local $\TC$ and K-theory}
\label{i--k1k}

As alluded to at the beginning of \cref{i--nil}, the fact that there is a map of cyclotomic $\E_\infty$-rings $\Z_p^\triv \to \THH(\F_p)$ that becomes an equivalence upon applying $\smash{(X \otimes -)^{\t\cir}/p}$ for any $\cir$-equivariant spectrum $X$ is an ingredient in the proof of the following result (inspired by earlier work of Bloch--Esnault--Kerz \cite{bek--deformation} and Beilinson \cite{beilinson--fiber}):

\begin{theorem}[Antieau--Mathew--Morrow--Nikolaus \citebare{ammn--beilinson}]
  \label{i--k1k--ammn}
  For $R$ a ring, there is a natural fiber square of spectra
  \[
    \begin{tikzcd}
      \Q \otimes \pcpl{\TC(R)} \ar[r] \ar[d] &
      \Q \otimes \pcpl{\TC(R/pR)} \ar[d] \\
      \Q \otimes \pcpl{\HC^-(R)} \ar[r] &
      \Q \otimes \pcpl{\HP(R)}
    \end{tikzcd}
  \]
  in which the upper horizontal map is induced by the reduction map $R \to R/pR$ and the lower horizontal and left vertical maps are the canonical ones. In particular, there is a natural equivalence between the fiber of the upper horizontal map with \[
    \Q \otimes \pcpl{\HC(R)}[1].
  \]
  If $R$ is commutative and henselian along $(p)$, then the induced square
  \[
    \begin{tikzcd}
      \Q \otimes \pcpl{\K(R)} \ar[r] \ar[d] &
      \Q \otimes \pcpl{\K(R/pR)} \ar[d] \\
      \Q \otimes \pcpl{\HC^-(R)} \ar[r] &
      \Q \otimes \pcpl{\HP(R)}
    \end{tikzcd}
  \]
  is also cartesian; in particular, the fiber of the upper horizontal map here has the same description as above.
\end{theorem}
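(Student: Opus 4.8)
The plan is to establish the first (``$\TC$'') square directly from the cyclotomic theory of $\THH$, and then to deduce the second (``$\K$'') square from it by henselian rigidity.

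\emph{The $\TC$ square.} Recall the Nikolaus--Scholze formula \cite{nikolaus-scholze--TC}: for any ring $R$ there is a natural fiber sequence $\pcpl{\TC(R)} \to \TC^{-}(R;p) \xrightarrow{\varphi - \can} \TP(R;p)$ with $\TC^{-}(R;p) = \pcpl{\bigl(\THH(R)^{\h\cir}\bigr)}$ and $\TP(R;p) = \pcpl{\bigl(\THH(R)^{\t\cir}\bigr)}$, exhibiting $\pcpl{\TC(R)}$ as the value at the cyclotomic spectrum $\THH(R)$ of an exact functor $\TC$ on bounded-below cyclotomic spectra. The symmetric monoidality of $\THH$ and the identity $R/pR = R \otimes_\Z \F_p$ give $\THH(R/pR) \iso \THH(R) \otimes_{\THH(\Z)} \THH(\F_p)$, so that, applying the exact functor $\TC$ to the fiber sequence $\THH(R) \otimes_{\THH(\Z)} C \to \THH(R) \to \THH(R/pR)$ with $C := \fib(\THH(\Z) \to \THH(\F_p))$, the fiber of $\pcpl{\TC(R)} \to \pcpl{\TC(R/pR)}$ is identified with $\TC(\THH(R) \otimes_{\THH(\Z)} C)$. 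The problem thus becomes to compute this with $\Q$-coefficients and match it with $\Q \otimes \pcpl{\HC(R)}[1]$, compatibly with the canonical maps.

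\emph{Using the structure of $\THH(\F_p)$.} Here I would bring in the Nikolaus--Scholze description $\THH(\F_p) \iso \sh(\Z_p^\triv)$ of \cref{i--main--thhFp}, together with the nilpotence, as a $\cir$-equivariant spectrum, of the fiber of $\Z_p^\triv \to \sh(\Z_p^\triv)$ --- the $\THH(\F_p)$-analogue of \cref{i--nil--thm}, due to Antieau--Mathew--Morrow--Nikolaus \cite{ammn--beilinson}. The subtlety is that the functor $\TC$ above does not commute with rationalization, since the intermediate Tate constructions entering its definition are generally not bounded below; so computing $\TC(\THH(R) \otimes_{\THH(\Z)} C)_\Q$ is not formal. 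The nilpotence statement is exactly what tames this: it forces the relevant Tate constructions, after $p$-completion and inverting $p$, to be insensitive to the difference between $\THH(\F_p)$ and $\Z_p^\triv$, which reduces the computation to one involving only Hochschild homology over $\Z$ --- via the Hochschild--Kostant--Rosenberg equivalence $\THH(R)_\Q \iso \HH(R/\Z)_\Q$ and the vanishing of $\pcpl{\HH}$ of an $\F_p$-algebra after $\otimes \Q$ --- and, once it is carried out, identifies the fiber of $\pcpl{\TC(R)} \to \pcpl{\TC(R/pR)}$ with $\fib\bigl(\pcpl{\HC^{-}(R)} \to \pcpl{\HP(R)}\bigr)_\Q \iso \Q \otimes \pcpl{\HC(R)}[1]$. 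One then tracks naturality in $R$ and compatibility with the linearization maps $\pcpl{\TC^{\pm}(R)} \to \pcpl{\HC^{\pm}(R)}$ to obtain the first square itself.

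\emph{The $\K$ square.} For the second square I would appeal to the Dundas--Goodwillie--McCarthy theorem in the refined form of the Clausen--Mathew--Morrow theorem on henselian pairs: since $R$ is commutative and henselian along $(p)$, the pair $(R, pR)$ is henselian, and the cyclotomic trace induces an equivalence $\fib\bigl(\pcpl{\K(R)} \to \pcpl{\K(R/pR)}\bigr) \iso \fib\bigl(\pcpl{\TC(R)} \to \pcpl{\TC(R/pR)}\bigr)$. The square of $\K$-theories maps to the square of $\TC$-theories via the trace and has the same bottom row; on the fibers of the horizontal maps this comparison is the above equivalence, so the $\K$-square is cartesian because the $\TC$-square is, and the fiber of its upper horizontal map is again $\Q \otimes \pcpl{\HC(R)}[1]$.

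\emph{The main obstacle.} The heart of the matter is the middle step: extracting, from the Nikolaus--Scholze description of $\THH(\F_p)$ and the nilpotence of the fiber of $\Z_p^\triv \to \sh(\Z_p^\triv)$, enough control over $\TC(\THH(R) \otimes_{\THH(\Z)} C)_\Q$ to reduce it to Hochschild homology over $\Z$. This is a rational, structural form of the crystalline--de Rham comparison, and the reason it can be proved this way --- without invoking prismatic cohomology --- is precisely that $\THH(\F_p)$ is so explicit. The remaining ingredients (the Nikolaus--Scholze formula, the Hochschild--Kostant--Rosenberg degeneration, the henselian-pair input) are comparatively formal, though the bookkeeping needed to produce the squares, rather than merely the fiber of the top row, requires care.
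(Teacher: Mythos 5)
This statement is not proved in the paper at all: \cref{i--k1k--ammn} is quoted from \cite{ammn--beilinson} as an input, and what the paper itself proves is the height-$1$ analogue \cref{i--k1k--main}, by adapting the same argument (see \cref{k1k--cyc,k1k--ring}). Your sketch is, in outline, a reconstruction of that argument --- nilpotence of the fiber of $\Z_p^\triv \to \THH(\F_p)$ mod $p$ to kill the relevant Tate construction, rational identifications of the bottom row with $\HC^-$ and $\HP$, and Clausen--Mathew--Morrow together with Dundas--Goodwillie--McCarthy for the K-theoretic square --- so the strategy is the intended one, and the K-theory step in particular is complete and correct as you state it.

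As written, though, there are two genuine gaps in the $\TC$ part. First, the identity $R/pR = R \otimes_\Z \F_p$ is only valid for the derived quotient: if $R$ has $p$-torsion then $R \otimes^{\L}_\Z \F_p$ has $R[p]$ in degree $1$, so $\THH(R/pR) \not\iso \THH(R) \otimes_{\THH(\Z)} \THH(\F_p)$ in general, whereas the theorem is stated for the ordinary quotient; an additional comparison is needed here (this point is addressed in \cite{ammn--beilinson}). Second, the step you yourself flag as the main obstacle is the actual content of the theorem and is left as a plan. In particular, the square does not arise by ``tracking naturality of linearization maps'': in \cite{ammn--beilinson} (and in the paper's height-$1$ adaptation, \cref{k1k--cyc--main-sq}) one builds the auxiliary square whose top row is $\pcpl{\TC(\THH(R)\otimes\Z_p^\triv)} \to \pcpl{\TC(\THH(R)\otimes\THH(\F_p))}$ and whose bottom row is $\pcpl{((\THH(R)\otimes\Z_p^\triv)^{\h\cir})} \to \pcpl{((\THH(R)\otimes\Z_p^\triv)^{\t\cir})}$, and its rational cartesianness uses two specific facts: the nilpotence statement, so that $\pcpl{((\THH(R)\otimes\fib(\o\alpha))^{\t\cir})} \iso 0$; and that $\can : (\THH(R)\otimes\THH(\F_p))^{\h\cir} \to (\THH(R)\otimes\THH(\F_p))^{\t\cir}$ becomes an equivalence after $p$-completion and rationalization, because its fiber is a shift of $(\THH(R)\otimes\THH(\F_p))_{\h\cir}$, an $\F_p$-module. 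One must then still identify the bottom row rationally with $\pcpl{\HC^-(R)}$ and $\pcpl{\HP(R)}$, which requires comparing $\THH(R)\otimes_\S\Z \iso \HH((R\otimes_\S\Z)/\Z)$ with $\HH(R/\Z)$ after $p$-completion and rationalization; this is standard but is exactly the computation your sketch defers. So: right route, matching the cited source, but with one incorrect identification (derived versus ordinary quotient) and the central identifications planned rather than carried out.
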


In \cref{k1k}, we apply arguments from \cite{ammn--beilinson} together with our \cref{i--main--thhZp,i--nil--thm} to prove the following variant of \cref{i--k1k--ammn}, for $\K(1)$-localized topological cyclic homology and algebraic K-theory of connective ring spectra.

\begin{theorem}
  \label{i--k1k--main}
  Assume that $p$ is odd. Then for $R$ a connective $\E_1$-ring, there are natural fiber squares of spectra
  \[
    \begin{tikzcd}
      \L_{\K(1)}\TC(R) \ar[r] \ar[d] &
      \L_{\K(1)}\TC(\pi_0(R)) \ar[d] \\
      \pcpl{\TC^-(\L_{\K(1)}R)} \ar[r] &
      \pcpl{\TP(\L_{\K(1)}R)}
    \end{tikzcd}
  \]
  and
  \[
    \begin{tikzcd}
      \L_{\K(1)}\K(R) \ar[r] \ar[d] &
      \L_{\K(1)}\K(\pi_0(R)[1/p]) \ar[d] \\
      \pcpl{\TC^-(\L_{\K(1)}R)} \ar[r] &
      \pcpl{\TP(\L_{\K(1)}R)}
    \end{tikzcd}
  \]
  in which the upper horizontal maps are induced by the truncation map $R \to \pi_0(R)$ and the localization map $\pi_0(R) \to \pi_0(R)[1/p]$ and the lower horizontal maps are the canonical one. In particular, there are natural equivalences between the fibers of these upper horizontal maps with
  \[
    \pcpl{(\THH(\L_{\K(1)}R)_{\h\cir})}[1].
  \]
\end{theorem}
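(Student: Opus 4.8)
The plan is to adapt to chromatic height~$1$ the argument of Antieau--Mathew--Morrow--Nikolaus for \cref{i--k1k--ammn}, with \cref{i--main--thhZp,i--nil--thm} taking over the roles played there by the Nikolaus--Scholze description of $\THH(\F_p)$ and by the mod-$p$ nilpotence statement of \cite{ammn--beilinson}.

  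First I would reduce the K-theory square to the $\TC$-square. The map $\L_{\K(1)}\K(\pi_0(R)) \to \L_{\K(1)}\K(\pi_0(R)[1/p])$ is an equivalence: by Thomason--Neeman localization its fiber is the algebraic K-theory of the category of perfect $\pi_0(R)$-modules supported on $(p)$, which is exhausted by full subcategories on which $p$ acts nilpotently, and $T(1)$-local (hence $\K(1)$-local) K-theory vanishes on such categories by the theorem of Clausen--Mathew--Naumann--Noel. On the other hand, the Dundas--Goodwillie--McCarthy theorem applied along the Postnikov tower $R \to \dotsb \to \tau_{\leq n}(R) \to \dotsb$, whose successive stages are square-zero extensions, together with Postnikov convergence after $p$-completion, shows that the square with vertices $\K(R)$, $\K(\pi_0 R)$, $\TC(R)$, $\TC(\pi_0 R)$ is cartesian after $p$-completion, hence after $\L_{\K(1)}$. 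Combining these two facts identifies the fiber of the upper map of the K-theory square --- compatibly with the maps down to $\pcpl{\TC^-(\L_{\K(1)}R)}$ --- with that of the $\TC$-square, so it suffices to treat the $\TC$-square.

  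For the $\TC$-square the four maps are the evident natural ones, built from the cyclotomic trace and the cofiber sequence $\TC \to \TC^- \xrightarrow{\can} \TP$ and from functoriality along $R \to \L_{\K(1)}R$ together with the base change $\THH(-/\S) \to \THH(-/\L_{\K(1)}\S)$; so the square commutes by construction and one has only to prove it is cartesian. By the norm cofiber sequence $\THH_{\h\cir}[1] \to \TC^- \to \TP$, the fiber of its lower map is $\pcpl{(\THH(\L_{\K(1)}R)_{\h\cir})}[1]$, so the concluding ``in particular'' will follow once the square is cartesian. By exactness of $\TC$ on $\CycSpt$ and of $\L_{\K(1)}$, the fiber of the upper map is $\L_{\K(1)}\TC(F)$, where $F := \fib\bigl(\THH(R) \to \THH(\pi_0 R)\bigr)$ in $\CycSpt$; since $F$ is bounded below (both terms are connective and the map is an isomorphism on $\pi_0$), the Nikolaus--Scholze formula computes $\pcpl{\TC(F)}$, hence $\L_{\K(1)}\TC(F)$, as the fiber of $\varphi - \can \colon F^{\h\cir} \to F^{\t\cir}$, with $\varphi$ induced by the cyclotomic Frobenius. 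The theorem thus reduces to a single identification, $\L_{\K(1)}\TC(F) \iso \pcpl{(\THH(\L_{\K(1)}R)_{\h\cir})}[1]$, compatibly with the canonical maps.

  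The heart of the matter, and the step I expect to be the main obstacle, is controlling the cyclotomic Frobenius on $F$ after $\K(1)$-localization. Because $\pi_0(R)$ is an ordinary ring, $\pcpl{\THH(\pi_0 R)}$ is a module over $\pcpl{\THH(\Z)}$, which by \cref{i--main--thhZp} is $\sh(\j_p^\triv)$; so $F$ is, away from the contribution of $\THH(R)$ itself, assembled from $\sh(\j_p^\triv)$-linear cyclotomic spectra. The map $\j_p^\triv \to \sh(\j_p^\triv)$ has fiber $K_{\j_p^\triv}$, and \cref{i--nil--thm} provides the key control on it: $K_{\j_p^\triv}/(p,v_1)$ is nilpotent as an $\cir$-equivariant spectrum, so that for every $\cir$-equivariant spectrum $X$ the map $\j_p^\triv \otimes X \to \sh(\j_p^\triv) \otimes X$ is an equivalence after $(-)^{\t\cir}/(p,v_1)$. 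Feeding this into the computation of $\L_{\K(1)}\TC(F)$ --- using the height-$1$ telescope conjecture to pass between $\K(1)$-local and mod-$p$ $v_1$-periodic information, and the $\K(1)$-local vanishing of $\Cp$-Tate constructions of $\K(1)$-local spectra (ambidexterity) --- one can carry out that computation as if $\sh(\j_p^\triv)$ were replaced by the trivial cyclotomic $\E_\infty$-ring $\j_p^\triv$, and, exactly as in \cite[\textsection 5]{ammn--beilinson} with $\j_p^\triv$ in the role of $\Z_p^\triv$, the resulting ``de Rham'' expression is $\pcpl{(\THH(\L_{\K(1)}R)_{\h\cir})}[1]$. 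The substance of the proof is making this replacement precise while keeping track coherently of the cyclotomic and $\cir$-equivariant structures through it and through the Tate-vanishing inputs; \cref{i--nil--thm} is exactly the ingredient that licenses it, just as the mod-$p$ nilpotence of $\fib(\Z_p^\triv \to \THH(\F_p))$ does at height~$0$.
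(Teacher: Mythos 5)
You correctly identify the overall strategy (reduce K to TC via Dundas--Goodwillie--McCarthy and the equivalence $\L_{\K(1)}\K(\pi_0 R) \simeq \L_{\K(1)}\K(\pi_0 R[1/p])$, then run an AMMN-style argument powered by \cref{i--main--thhZp,i--nil--thm}), and the K-to-TC reduction is essentially what the paper does. But your treatment of the TC-square has a gap already at the level of its construction: the right-hand vertical map $\L_{\K(1)}\TC(\pi_0(R)) \to \pcpl{\TP(\L_{\K(1)}R)}$ is \emph{not} an evident natural map, and the square does not ``commute by construction''---there is no ring map between $\pi_0(R)$ and $\L_{\K(1)}R$ in either direction, so neither functoriality nor base change produces it. Building this map is part of the content of the theorem: in the paper it comes from \cref{k1k--cyc--main-sq} (applied to $X=\THH(R)$), where one composes $\TC(X\otimes\pcpl{\THH(\Z)}) \to (X\otimes\pcpl{\THH(\Z)})^{\h\cir} \to (X\otimes\pcpl{\THH(\Z)})^{\t\cir}$ with the \emph{inverse} of the equivalence $\pcpl{((X\otimes\j_p^\triv)^{\t\cir})} \isoto \pcpl{((X\otimes\pcpl{\THH(\Z)})^{\t\cir})}$ supplied by \cref{i--main--thhZp,i--nil--thm,j--nil--mod-p-v1}, and then passes to $\pcpl{\TP(\L_{\K(1)}R)}$ via \cref{k1k--ring--K1hS1tS1}. (The same subtlety is present in the AMMN square you are imitating, whose right vertical map is likewise non-obvious.)

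More seriously, your key claim $\L_{\K(1)}\TC(F) \simeq \pcpl{(\THH(\L_{\K(1)}R)_{\h\cir})}[1]$, for $F = \fib(\THH(R)\to\THH(\pi_0 R))$, does not follow from \cref{i--nil--thm} by the replacement argument you sketch. That nilpotence statement only controls the difference between $X$ and $X\otimes\pcpl{\THH(\Z)}$, i.e.\ between $R$ and $R\otimes_\S\Z$: what it yields is the fiber sequence $\L_{\K(1)}(F_{\h\cir})[1] \to \L_{\K(1)}\TC(F) \to \L_{\K(1)}\TC(F\otimes\pcpl{\THH(\Z)})$ of \cref{k1k--cyc--main-seq}, and the third term is the $\K(1)$-local relative $\TC$ of the map of connective $\E_1$-$\Z$-algebras $R\otimes_\S\Z \to \pi_0(R)\otimes_\S\Z$. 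Its vanishing is (via Dundas--Goodwillie--McCarthy) precisely the statement that $\K(1)$-local K-theory is a truncating invariant on connective $\Z$-algebras---the theorem of Bhatt--Clausen--Mathew and Land--Mathew--Meier--Tamme, \cref{k1k--ring--truncating}---which is a deep, independent input that the paper explicitly flags and cites, and which your sketch never invokes (your localization/d\'evissage step only handles $\pi_0(R)\to\pi_0(R)[1/p]$). So your argument bridges $R$ and $R\otimes_\S\Z$ but leaves the bridge from $R\otimes_\S\Z$ to $\pi_0(R)$ unbuilt, and it cannot be extracted from \cref{i--nil--thm} alone. (Two minor points: the telescope-conjecture appeal can be avoided as in \cref{j--nil--mod-p-v1}, and the vanishing you attribute to Clausen--Mathew--Naumann--Noel is really the purity/vanishing statement of the sources just cited; neither of these is a gap.)
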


Note in \cref{i--k1k--main} that the lower rows of the fiber squares (and hence the horizontal fibers) depend only on the $\K(1)$-localization of $R$. In particular, \cref{i--k1k--main} implies that, for $p$ odd, $\K(1)$-local K-theory is a truncating invariant on $\K(1)$-acyclic, connective $\E_1$-rings; this fact was already proved by Land--Mathew--Meier--Tamme in \cite{lmmt--purity} (where it is proved also for $p=2$). We note however that our proof of \cref{i--k1k--main} uses as input the fact that $\K(1)$-local K-theory is a truncating invariant on connective $\E_1$-$\Z$-algebras; for this, we may appeal to \cite{lmmt--purity} or to the work of Bhatt--Clausen--Mathew \cite{bhatt-clausen-mathew--k1k}.

In any case, what is more novel about \cref{i--k1k--main} is that it precisely measures the discrepancy of $\K(1)$-local K-theory from being a truncating invariant on connective $\E_1$-rings in general. In \cref{k1k--eg}, we use the result to calculate the $\K(1)$-local K-theory of several $\E_1$-rings that are not $\K(1)$-acyclic, summarized in the following statement.

\begin{theorem}
  \label{i--k1k--eg}
  Assume that $p$ is odd. Then:
  \begin{enumerate}
  \item \label{i--k1k--eg--j}
    the canonical maps $\L_{\K(1)}\K(\S_p) \to \L_{\K(1)}\K(\j_p) \to \L_{\K(1)}\K(\J_p)$ are equivalences;
  \item \label{i--k1k--eg--ku}
    there are canonical equivalences of spectra
    \begin{align*}
      \L_{\K(1)}\K(\ku_p) &\iso \L_{\K(1)}\K(\Z_p) \oplus \pcpl{\KU_p[\B\cir]}[1], \\
      \L_{\K(1)}\K(\KU_p) &\iso \L_{\K(1)}\K(\ku_p) \oplus \L_{\K(1)}\K(\Z_p)[1].
    \end{align*}
  \end{enumerate}
\end{theorem}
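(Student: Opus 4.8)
The plan is to derive everything from \cref{i--k1k--main}, which for $R$ a connective $\E_1$-ring exhibits $\L_{\K(1)}\K(R)$ as the pullback of $\L_{\K(1)}\K(\pi_0(R)[1/p])$ along $\pcpl{\TC^-(\L_{\K(1)}R)} \to \pcpl{\TP(\L_{\K(1)}R)}$, with the fiber of its upper horizontal map canonically $\pcpl{(\THH(\L_{\K(1)}R)_{\h\cir})}[1]$. Three inputs feed in. First, $\L_{\K(1)}$ annihilates the Eilenberg--MacLane spectrum of $\Z_p$ --- indeed of any $\Z$-module --- so applying \cref{i--k1k--main} to $R = \Z_p$ collapses both lower corners and the horizontal fiber to $0$, giving $\L_{\K(1)}\K(\Z_p) \iso \L_{\K(1)}\K(\Q_p)$. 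Second, $\L_{\K(1)}\S_p \iso \L_{\K(1)}\j_p \iso \J_p$: the fiber of $\j_p = \tau_{\ge0}\J_p \to \J_p$ is $\tau_{\le-1}\J_p \iso \Sigma^{-1}\H\Z_p$ (for $p$ odd, $\pi_{<0}\J_p$ is $\Z_p$ in degree $-1$ and vanishes below), which is $\K(1)$-acyclic by the first point. Third, $\L_{\K(1)}\ku_p \iso \KU_p$.

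For \cref{i--k1k--eg--j}: $\S_p$ and $\j_p$ are connective with $\pi_0 = \Z_p$ and, by the second input, the common $\K(1)$-localization $\J_p$, and the unit $\S_p \to \j_p$ is a $\pi_0$-isomorphism, so it induces the identity on $\L_{\K(1)}(-) = \J_p$ and on $\pi_0(-)[1/p] = \Q_p$; hence the induced map of fiber squares is an equivalence on three corners, and $\L_{\K(1)}\K(\S_p) \to \L_{\K(1)}\K(\j_p)$ is an equivalence. For $\j_p \to \J_p$, \cref{i--k1k--main} does not apply on the nose since $\J_p$ is not connective, but its right-hand terms depend on $R$ only through $\L_{\K(1)}R$ and $\pi_0(R)[1/p]$, which agree for $\j_p$ and $\J_p$; so it is enough to know the square still holds for $R = \J_p$. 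I would obtain this either by checking that the argument for \cref{i--k1k--main} (and the Bhatt--Clausen--Mathew input it uses) tolerates $(-1)$-connective $R$, or by Galois descent along the pro-Galois extension $\J_p \to \KU_p$ with group $\Z_p^\times$ together with \cref{i--k1k--eg--ku}; either way this is a ``purity''-type statement in the spirit of Land--Mathew--Meier--Tamme.

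For \cref{i--k1k--eg--ku}: by the third input, applying \cref{i--k1k--main} to $R = \ku_p$ produces a square with lower row $\pcpl{\TC^-(\KU_p)} \to \pcpl{\TP(\KU_p)}$, and since $\ku_p \to \pi_0(\ku_p) = \Z_p \to \Q_p$ factors the upper map through $\L_{\K(1)}\K(\Z_p) \iso \L_{\K(1)}\K(\Q_p)$, there results a fiber sequence
\[ \pcpl{(\THH(\KU_p)_{\h\cir})}[1] \longrightarrow \L_{\K(1)}\K(\ku_p) \longrightarrow \L_{\K(1)}\K(\Z_p). \]
Now I would identify $\pcpl{(\THH(\KU_p)_{\h\cir})}$ with $\pcpl{\KU_p[\B\cir]}$ and split this sequence. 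For the identification: $\KU_p$ is finite \'etale over the periodic Adams summand $L_p := \KU_p^{\h\F_p^\times} = \ell_p[v_1^{-1}]$ (a Galois extension with group $\F_p^\times$, of degree prime to $p$), so $\THH(\KU_p) \iso \THH(L_p) \otimes_{L_p} \KU_p$ compatibly with the residual circle actions, reducing to computing $\pcpl{(\THH(L_p)_{\h\cir})}$ from the known structure of $\THH$ of the Adams summand --- the circle operator $\sigma$ carries $v_1$ to the exterior class $\lambda_1$, so on homotopy orbits the exterior generators collapse against powers of $v_1$, leaving $\pcpl{L_p[\B\cir]}$, whence the claim after base change to $\KU_p$. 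Granting the splitting, we get $\L_{\K(1)}\K(\ku_p) \iso \L_{\K(1)}\K(\Z_p) \oplus \pcpl{\KU_p[\B\cir]}[1]$. For the last equivalence I would feed this into the Blumberg--Mandell localization (d\'evissage) sequence $\L_{\K(1)}\K(\Z_p) \xrightarrow{\tr} \L_{\K(1)}\K(\ku_p) \to \L_{\K(1)}\K(\KU_p)$ attached to $\KU_p = \ku_p[\beta^{-1}]$ (residue $\ku_p/\beta \iso \H\Z_p$), and verify that $\tr$ is $\K(1)$-locally null: its composite with extension of scalars along $\ku_p \to \Z_p$ is, by additivity, the map on $\L_{\K(1)}\K(\Z_p)$ induced by the functor $M \mapsto M \otimes_{\ku_p} \H\Z_p \iso M \oplus \Sigma^3 M$ on perfect $\Z_p$-modules, hence $\id + (-\id) = 0$, while its composite with the cyclotomic trace into $\pcpl{\TC^-(\KU_p)}$ vanishes as it factors through $\L_{\K(1)}$ of an $\H\Z_p$-linear spectrum; with the splitting this forces $\tr = 0$, so $\L_{\K(1)}\K(\KU_p) \iso \cofib(\tr) \iso \L_{\K(1)}\K(\ku_p) \oplus \L_{\K(1)}\K(\Z_p)[1]$.

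The main obstacle is the treatment of $\ku_p$: the formal pullback machinery reduces the first equivalence to the concrete structure of $\THH(\KU_p)$ with its residual circle action --- the computation of $\pcpl{(\THH(\KU_p)_{\h\cir})}$ and the splitting of the resulting negative-cyclic-to-periodic fiber sequence --- and this is the one place where genuine homotopy-theoretic input enters, namely the structure of topological Hochschild homology of connective complex $K$-theory and its Adams summand. A milder annoyance is that neither $\J_p$ nor $\KU_p$ is connective, which prevents \cref{i--k1k--main} from applying directly and forces the detours through purity/Galois descent and through the localization sequence, respectively.
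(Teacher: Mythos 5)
There are genuine gaps, concentrated exactly at the two places where the paper has to work hardest.

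First, your treatment of the map $\L_{\K(1)}\K(\j_p) \to \L_{\K(1)}\K(\J_p)$ rests on a false computation: you assert that $\pi_{<0}(\J_p)$ is $\Z_p$ in degree $-1$ and vanishes below, but $\pi_*(\J_p)$ contains $\Z/p^{1+v_p(k)}$ in degree $2k(p-1)-1$ for every $k \ne 0$, so $\J_p$ has homotopy in arbitrarily negative degrees and is not bounded below (its $(-1)$-truncation is the ring $\j_{p,\zeta} = \ku_p^{\h\Gamma}$ of the paper, which is not $\J_p$). Consequently your first fallback --- checking that \cref{i--k1k--main} tolerates $(-1)$-connective $R$ --- cannot apply to $R = \J_p$ at all, and Dundas--Goodwillie--McCarthy gives no way to compare $\K(\j_p)$ with $\K(\J_p)$. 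Your second fallback, profinite Galois descent along $\J_p \to \KU_p$, is itself a hard unproven input (continuity and hyperdescent of $\L_{\K(1)}\K$ along a $\Z_p^\times$-pro-Galois extension) and is not established in the proposal. The paper's route is different in substance: it works with the genuinely $(-1)$-connective ring $\j_{p,\zeta} \simeq \tau_{\ge -1}(\J_p)$, computes $\L_{\K(1)}\TC(\j_{p,\zeta})$ using \cref{k1k--cyc--main-seq} together with Land--Tamme and purity, and then passes from $\j_{p,\zeta}$ to $\J_p$ on K-theory by invoking Levy's theorems \cite{levy--K1} (see \cref{k1k--eg--jzeta}); nothing in your sketch substitutes for that last step. (Your argument for $\L_{\K(1)}\K(\S_p) \to \L_{\K(1)}\K(\j_p)$ is fine and matches the paper.)

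Second, for \cref{i--k1k--eg--ku} you literally write ``granting the splitting,'' but the splitting of $\pcpl{\KU_p[\B\cir]}[1] \to \L_{\K(1)}\K(\ku_p) \to \L_{\K(1)}\K(\Z_p)$ is the nontrivial point: one must show the boundary map $\L_{\K(1)}\K(\Z_p) \to \pcpl{\KU_p[\B\cir]}[2]$ is null, which the paper extracts from the explicit boundary-map description for the sphere in \cref{k1k--eg--S} (equivalently from complex orientability of $\ku_p$ via \cref{z--tr--k1-orientation}, cf.\ \cref{k1k--eg--orientable}); this is where the analysis of \cref{z} enters and it cannot be assumed. (Your identification $\pcpl{\THH(\KU_p)} \simeq \KU_p$ is true, but the Adams-summand/homotopy-orbit heuristic is far shakier than the paper's one-line argument from $0$-cotruncatedness of $\KU_p$.) For the second equivalence, your additivity computation controls only transfer-then-restriction, hence only the component of $\tr$ landing in the $\L_{\K(1)}\K(\Z_p)$ summand; your argument for the other component does not close, since nullity of the composite with $\L_{\K(1)}\K(\ku_p) \to \pcpl{\TC^-(\KU_p)}$ only shows that component lifts against $\Nm$ to $\pcpl{(\KU_p^{\t\cir})}[-1]$, and you have not shown such lifts vanish. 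The paper instead computes the other composite, restriction-then-transfer, as multiplication by $[\Z_p] = [\cofib(\beta)] = 0$ in $\K_0(\ku_p)$, and precomposes with the section furnished by the first equivalence, giving a canonical nullhomotopy of the transfer outright (\cref{k1k--eg--KU}); rearranging your argument this way would repair that step, but it still presupposes the splitting you granted.
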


\begin{remark}
  \label{i--k1k--S}
  We have explicit descriptions of all the spectra appearing in \cref{i--k1k--eg}. First, the $\K(1)$-localized cyclotomic trace map $\L_{\K(1)}\K(\Z_p) \to \L_{\K(1)}\TC(\Z_p)$ is an equivalence, and the Dundas--Goodwillie--McCarthy theorem then implies that the same is true when $\Z_p$ is replaced by $\S_p$. Next, \cref{i--tcz--thm} gives an equivalence $\L_{\K(1)}\TC(\Z_p) \iso \J_p \oplus \J_p[1] \oplus \X'$, where $\X'$ is a spectrum noncanonically equivalent to $\KU_p[-1]$ (establishing this equivalence is in fact the penultimate step in the proof of \cref{i--tcz--thm}; see \cref{z--ol--k1}). Finally, our analysis in \cref{z} also affords a description of $\L_{\K(1)}\TC(\S_p)$ as a spectrum: there is a canonical equivalence
  \[
    \L_{\K(1)}\TC(\S_p) \iso \J_p \oplus \J_p[1] \oplus \pcpl{\big(\bigoplus_{n \in \Z_{\ge0}} \KU_p[1]\big)} \oplus \Y',
  \]
  where $\Y'$ is another spectrum noncanonically equivalent to $\KU_p[-1]$; see \cref{z--tr--fiber-description}.
\end{remark}

% --------------------------------------------------------------------

\subsection{Application: revisiting noncommutative crystalline--de Rham comparison}
\label{i--pv}

Finally, in \cref{c}, we apply \cref{i--main--thhZp,i--nil--thm} to give a new proof of a result of Petrov--Vologodsky \cite{petrov-vologodsky--TP} on the comparison between the classical periodic cyclic homology of a stable $\Z_p$-linear $\infty$-category and the topological periodic cyclic homology of its mod $p$ reduction, again for $p$ odd. This result can be regarded as a noncommutative analogue of the comparison between the de Rham cohomology of a $\Z_p$-scheme and the crystalline/prismatic cohomology of its special fiber.

\begin{theorem}[cf. Petrov--Vologodsky \citebare{petrov-vologodsky--TP}]
  \label{i--pv--thm}
  Assume that $p$ is odd. Then for $\cat{C}$ a dualizable $\Z_p$-linear stable, presentable $\infty$-category, there is a natural equivalence
  \[
    \pcpl{\TP(\cat{C} \otimes_{\Z_p} \F_p)} \iso \pcpl{\HP(\cat{C}/\Z_p)},
  \]
  which is lax symmetric monoidal and which recovers the equivalence $\TP(\F_p) \iso \Z_p^{\t\cir} \iso \HP(\Z_p/\Z_p)$ of \cref{i--main--thhFp} in the case $\cat{C} = \Mod_{\Z_p}$.\footnote{Here we take dualizable stable, presentable $\infty$-categories as our domain for Hochschild/cyclic invariants: recall that Hochschild homology relative to an $\E_\infty$-ring $R$ is given by dimension/Euler characteristic in the symmetric monoidal $\infty$-category $\PrL_R$ of $R$-linear stable, presentable $\infty$-categories (see e.g. \cite[\textsection 4.5]{hss--traces}), and that dualizability in this context is independent of $R$, being equivalent to the categorical property of compact assembly, a weakening of compact generation \cite[Theorem D.7.0.7]{lurie--SAG}.}
\end{theorem}

\begin{remark}
  \label{i--pv--linearity}
  \cref{i--pv--thm} is a slight strengthening of the result of \cite{petrov-vologodsky--TP} due to its last clause, establishing $\smash{\Z_p^{\t\cir}}$-linearity (in particular $\Z_p$-linearity) of the equivalence here.
\end{remark}

\begin{remark}
  \label{i--pv--two}
  The conclusion of \cref{i--pv--thm} is false when $p=2$: as noted in \cite[Remark 2.8]{petrov-vologodsky--TP}, there is no ring homomorphism $\pi_0(\HP(\F_2/\Z_2)) \to \Z_2$ (because the divided powers of $2$ do not converge to $0$ in $\Z_2$), while the multiplication map $\F_2 \otimes_{\Z_2} \F_2 \to \F_2$ induces a ring homomorphism $\pi_0(\TP(\F_2 \otimes_{\Z_2} \F_2)) \to \pi_0(\TP(\F_2)) \iso \Z_2$. Any analogue of \cref{i--main--thhZp,i--nil--thm} for $p=2$ must tend to this issue (again, see \cref{i--pr--two} for indication of one such analogue).
\end{remark}

% --------------------------------------------------------------------

\subsection{Future work: relation to prismatization}
\label{i--pr}

Let us end this introduction by indicating one more perspective on \cref{i--main--thhZp}, which will be addressed in detail in forthcoming joint work of the authors with Jeremy Hahn and Allen Yuan.

The theory of ``prismatization'', due to Drinfeld \cite{drinfeld--prismatization} and Bhatt--Lurie \cite{bhatt--F-gauges,bhatt-lurie--APC}, associates to $\Z_p$ a collection of stacks
\[
  \Z_p^\Conj, \quad \Z_p^\HT,\quad  \Z_p^\prism,\quad \Z_p^\Nyg,\quad \Z_p^\Syn,
\]
each equipped with a  line bundle denoted $\O\{1\}$. On the one hand, these objects are a part of arithmetic geometry and number theory, e.g. there is a relation between quasicoherent sheaves on $\Z_p^\Syn$ and crystalline representations of the absolute Galois group of $\Q_p$. At the same time, they are closely connected to the cyclotomic $\E_\infty$-ring $\pcpl{\THH(\Z)}$, by virtue of the motivic filtrations introduced by Bhatt--Morrow--Scholze \cite{bms2}: for example, we have filtrations $\fil^*_\mot \pcpl{\THH(\Z)}$ and $\fil^*_\mot \pcpl{\TC(\Z)}$ with associated graded objects given by
\[
  \gr^n_\mot \pcpl{\THH(\Z)} \iso \RGamma(\Z_p^\Conj;\O\{n\})[2n], \quad
  \gr^n_\mot \pcpl{\TC(\Z)} \iso \RGamma(\Z_p^\Syn;\O\{n\})[2n],
\]
where $\O\{n\}$ denotes $\O\{1\}^{\otimes n}$, for $n \in \Z$.

In fact, the latter connection is even stronger than just stated. In the aforementioned forthcoming work with Hahn and Yuan, we give new constructions of the above stacks directly in terms of the cyclotomic $\E_\infty$-ring $\pcpl{\THH(\Z)}$, by elaborating on the ``even filtration'' construction of \cite{hrw--even}. Combining this with \cref{i--main--thhZp} gives, for $p$ odd, a procedure that begins with the $\E_\infty$-ring $\j_p$, produces the cyclotomic $\E_\infty$-ring $\pcpl{\THH(\Z)}$, and ends with the stack $\Z_p^\Syn$. The even filtration may also be applied directly to $\j_p$, and in fact this recovers a certain stack $\smash{\F_1^\Syn}$ that has been introduced by Lurie (in work yet to be published). 

\begin{remark}
  \label{i--pr--two}
  In contrast to our negative comments in \cref{i--tcz--two,i--pv--two} about the case $p=2$, Lurie has noted that there is a sensible definition of the stack $\smash{\F_1^\Syn}$ for $p=2$, and he has also proved a partial analogue of \cref{i--main--thhZp} at the level of stacks, for all $p$. With Hahn and Yuan, we will use this to formulate variants of other results of this paper as well, at the level of stacks/motivic associated graded objects, for all $p$.
\end{remark}

% --------------------------------------------------------------------

\subsection{Acknowledgements}
\label{i--ack}

We warmly thank Jeremy Hahn, Lars Hesselholt, Jacob Lurie, Akhil Mathew, Thomas Nikolaus, and Allen Yuan for many conversations that contributed to this project. The first stages were based on unpublished work of Nikolaus and by subsequent discussion of those ideas with Yuan; the idea of considering the object $\j_{p,0}$ that appears in \cref{j}, and many other motivating ideas related to this material, were provided by Lurie; and the results in \cref{k1k} were suggested to AR by Hahn, following discussion with Hesselholt and Mathew, while they were independently discovered by SKD. We are also grateful to Ishan Levy, Maxime Ramzi, and Vadim Vologodsky for helpful discussions, to Akhil Mathew and Tomer Schlank for pointing out an error during a talk on this work, and to Lars Hesselholt, John Rognes, Kush Singhal, Ferdinand Wagner, and a referee for corrections on earlier drafts.

SKD's work was supported by NSF DGE-2140743, and AR's by NSF DMS-2103152 and the Bell Systems Fellowship at IAS.

% --------------------------------------------------------------------

\subsection{Conventions}
\label{i--conv}

\begin{enumerate}
\item \label{i--conv--p}
  For the remainder of the paper, $p$ denotes an odd prime number.
\item \label{i--conv--cats}
  We let $\Spc$ denote the $\infty$-category of spaces, $\Spt$ the $\infty$-category of spectra, and $\CAlg$ the $\infty$-category of $\E_\infty$-rings. For $R$ an $\E_\infty$-ring, we let $\Mod_R$ denote the $\infty$-category of $R$-modules in $\Spt$.
\item \label{i--conv--pcpl-cats}
  We let $\pcpl{\Spt}$ and $\pcpl{\CAlg}$ denote the full subcategories of $\Spt$ and $\CAlg$ spanned by the $p$-complete objects, and we let $\pcpl{(-)}$ denote the $p$-completion functor. We write $\S_p$ for $\pcpl{\S}$.
\item \label{i--conv--K1}
  We let $\Spt_{\K(1)}$ denote the full subcategory of $\Spt$ spanned by the $\K(1)$-local spectra (at the fixed prime $p$), and we let $\L_{\K(1)}$ denote the $\K(1)$-localization functor. As in \cref{i--main--j}, we let $\J_p := \L_{\K(1)}\S$ and $\j_p := \tau_{\ge0}(\J_p)$, and we let $\KU_p$ be the $p$-complete complex K-theory spectrum. We will often use without comment the fact that $\K(1)$-localization is ``$p$-completely smashing'', i.e. that for any spectrum $X$, we have a natural equivalence
  \[
    \pcpl{(\J_p \otimes X)} \isoto \L_{\K(1)}X,
  \]
  or equivalently that for any $\J_p$-module $Y$, we have a natural equivalence $\pcpl{Y} \isoto \L_{\K(1)}Y$.
\item \label{i--conv--chains}
  Let $X$ be a spectrum. We let $X[-] : \Spc \to \Spt$ denote the unique colimit preserving functor sending $* \mapsto X$, so that $X[S] \iso X \otimes \Sigma^\infty_+ S$, and we let $X^{(-)} : \Spc^\op \to \Spt$ denote the unique limit preserving functor sending $* \mapsto X$. (Note that we also use the similar notation $X[n]$ to denote the $n$-fold suspension of $X$.) In addition, for $S$ a space, we set $X\{S\} := \fib(X[S] \to X)$, where the map is induced by the projection map $S \to *$; a choice of basepoint of $S$ induces an equivalence $X\{S\} \iso X \otimes \Sigma^\infty S$ and a splitting $X[S] \iso X \oplus X\{S\}$. In case $X$ is equipped with an $\E_\infty$-ring structure, all of these constructions and identifications lift canonically to $\Mod_X$, by virtue of the forgetful functor $\Mod_X \to \Spt$ preserving colimits and limits.
\item \label{i--conv--cyc}
  In this paper, by an \emph{$\cir$-equivariant spectrum} we mean a functor $\B\cir \to \Spt$, the \emph{underlying spectrum} of which is the value of the functor at the canonical basepoint of $\B\cir$. We often abuse notation by identifying an $\cir$-equivariant spectrum with its underlying spectrum.
  
  By a \emph{cyclotomic spectrum} we mean an $\cir$-equivariant spectrum $X$ together with an $\cir$-equivariant map $\phi : X \to X^{\t\Cp}$. The latter is what is referred to as a \emph{$p$-typical cyclotomic spectrum} in \cite{antieau-nikolaus--cyclotomic}, and in general differs slightly from the notion introduced by Nikolaus--Scholze \cite[Definition II.1.1]{nikolaus-scholze--TC}. However, the cyclotomic spectra of primary interest in this paper are $p$-complete and connective, so there is practically no difference \cite[Remark II.1.3]{nikolaus-scholze--TC}. Notationally, we generally identify a cyclotomic spectrum with its underlying $\cir$-equivariant spectrum; and we refer to the map $\phi$ as the \emph{Frobenius} map. We let $\CycSpt$ denote the $\infty$-category of cyclotomic spectra.
\item \label{i--conv--gamma}
  As in \cref{i--main--j}, $\Gamma \subseteq \Z_p^\times$ will denote the subgroup generated by the $(p-1)$-st roots of unity and the element $1+p$. We furthermore let $\Gamma_0 \subseteq \Gamma$ denote the subgroup generated just by the element $1+p$ (so that $\Gamma_0 \cong \Z$), and we make the standard identification $\Gamma \iso \F_p^\times \times \Gamma_0$.

  We will often use the simple observation that the functor of homotopy $\Gamma$-fixed points commutes with limits and colimits of $p$-complete spectra. Indeed, this is true for homotopy $\Gamma_0$-fixed points (even without $p$-completion), because it is a finite (co)limit. It is also true for homotopy $\F_p^\times$-fixed points, because the assumption of $p$-completeness implies that the norm map induces an equivalence between homotopy $\F_p^\times$-orbits and homotopy $\F_p^\times$-fixed points, and the former preserves colimits while the latter preserves limits.

  For example, for $X$ an $\cir$-equivariant spectrum with $\Gamma$-action, we have a natural equivalence $(X^{\h\Gamma})^{\t\cir} \simeq (X^{\t\cir})^{\h\Gamma}$ (and similarly for $(-)^{\t\cir}$ replaced by $(-)^{\t\Cp}$). This follows from the above, since homotopy $\Gamma$-fixed points commutes with homotopy $\cir$-orbits and homotopy $\cir$-fixed points (and similarly for $\cir$ replaced by $\Cp$).
\end{enumerate}

\section{Proof of the main theorem}
\label{t}

In this section, we will prove \cref{i--main--thhZp}, describing $\pcpl{\THH(\Z)}$ in terms of $\j_p$. We begin with some preliminary material in in \cref{t--cyc} and \cref{t--tCp}: in the former, we review the constructions of cyclotomic spectra that feature in the statement of \cref{i--main--thhZp}, and in the latter, we calculate the homotopy groups of $\j_p^{\t\Cp}$. We then come to the proof in \cref{t--pf}.

% --------------------------------------------------------------------

\subsection{Constructions of cyclotomic spectra}
\label{t--cyc}

Here we recall the constructions $(-)^\triv$ and $\sh(-)$ in the setting of cyclotomic spectra, following Nikolaus--Scholze.

\begin{construction}[{\citebare[Proposition IV.4.14]{nikolaus-scholze--TC}}]
  \label{t--cyc--triv}
  As $\CycSpt$ is a stable, presentable symmetric monoidal $\infty$-category, there is a unique colimit preserving symmetric monoidal functor $\Spt \to \CycSpt$; we denote it by $(-)^\triv$. Concretely, for a spectrum $X$, the cyclotomic spectrum $X^\triv$ can be described as follows:
  \begin{itemize}
  \item the underlying $\cir$-equivariant spectrum is $X$ with the trivial $\cir$-action;
  \item the Frobenius map $\phi_{X^\triv} : X \to X^{\t\Cp}$ is the composition
    \[
      X \to X^{\B\Cp} \iso X^{\h\Cp} \lblto{\can} X^{\t\Cp},
    \]
    where the first map is induced by the projection to the point, $\B\Cp \to *$, and $\can$ denotes the canonical map from homotopy fixed points to the Tate construction (each of these maps carrying a canonical $\cir$-equivariant structure).
  \end{itemize}
\end{construction}

\begin{remark}
  \label{t--cyc--TC}
  The functor $(-)^\triv : \Spt \to \CycSpt$ defined above admits a right adjoint, namely the functor $\TC : \CycSpt \to \Spt$, given by
  \[
    \TC(X) \iso \uMap_{\CycSpt}(\S^\triv,X) \iso \fib(\phi-\can : X^{\h\cir} \to (X^{\t\Cp})^{\h\cir});
  \]
  here $\phi : X^{\h\cir} \to (X^{\t\Cp})^{\h\cir}$ is obtained by applying $(-)^{\h\cir}$ to the Frobenius map $\phi_X : X \to X^{\t\Cp}$ and $\smash{\can : X^{\h\cir} \to (X^{\t\Cp})^{\h\cir}}$ is given by applying $\smash{(-)^{\h\cir}}$ to the canonical map $\smash{\can : X^{\h\Cp} \to X^{\t\Cp}}$ and precomposing with the canonical equivalence $X^{\h\cir} \iso (X^{\h\Cp})^{\h\cir}$.
\end{remark}

\begin{construction}[{\citebare[Construction IV.4.15]{nikolaus-scholze--TC}}]
  \label{t--cyc--sh}
  Let $X$ be a connective cyclotomic spectrum. Then the cyclotomic spectrum $\sh(X)$ is defined as follows:
  \begin{itemize}
  \item the underlying $\cir$-equivariant spectrum is $\tau_{\ge 0}(X^{\t\Cp})$ with the residual $\cir$-action;
  \item the Frobenius map $\phi_{\sh(X)} : \tau_{\ge 0}(X^{\t\Cp}) \to (\tau_{\ge 0}(X^{\t\Cp}))^{\t\Cp}$ is the composition
    \[
      \tau_{\ge 0}(X^{\t\Cp}) \to X^{\t\Cp} \lblto{(\tau_{\ge 0}(\phi_X))^{\t\Cp}} (\tau_{\ge 0}(X^{\t\Cp}))^{\t\Cp},
    \]
    where the first map the connective cover map and $\tau_{\ge 0}(\phi_X) : X \iso \tau_{\ge 0}(X) \to \tau_{\ge 0}(X^{\t\Cp})$ is the connective cover of the Frobenius map of $X$.
  \end{itemize}
  Furthermore, we define the map of cyclotomic spectra $\phi_X^0 : X \to \sh(X)$ to consist of the $\cir$-equivariant map $\tau_{\ge0}(\phi_X) : X \to \tau_{\ge0}(X^{\t\Cp})$ together with the evidently commutative diagram
  \[
    \begin{tikzcd}
      X \ar[r, "\phi_X"] \ar[d, "\tau_{\ge0}(\phi_X)", swap] &
      X^{\t\Cp} \ar[d, "(\tau_{\ge0}(\phi_X))^{\t\Cp}"] \\
      \tau_{\ge0}(X^{\t\Cp}) \ar[r, "\phi_{\sh(X)}"] &
      (\tau_{\ge0}(X^{\t\Cp}))^{\t\Cp}.
    \end{tikzcd}
  \]
\end{construction}

\begin{remark}
  \label{t--cyc--sh-monoidality}
  Let $\CycSpt_{\ge 0} \subseteq \CycSpt$ denote the full subcategory spanned by connective cyclotomic spectra. Using that the Tate construction $(-)^{\t\Cp}$ and connective cover $\tau_{\ge 0}(-)$ define lax symmetric monoidal endofunctors on the $\infty$-category of $\cir$-equivariant spectra, we see that \cref{t--cyc--sh} defines a lax symmetric monoidal functor $\sh : \CycSpt_{\ge 0} \to \CycSpt_{\ge 0}$ and a lax symmetric monoidal transformation $\phi^0 : \id \to \sh$.
\end{remark}

% --------------------------------------------------------------------

\subsection{The homotopy groups of $\j_p^{\t\Cp}$}
\label{t--tCp}

\begin{notation}
  \label{t--tCp--ku}
  Let $\ku_p := \tau_{\ge 0}(\KU_p)$. We let $\beta \in \pi_2(\ku_p)$ denote the Bott element, so that Bott periodicity gives us a graded ring isomorphism $\pi_*(\ku_p) \iso \Z_p[\beta]$. We let $t \in \smash{\pi_{-2}(\ku_p^{\B\cir})}$ denote the standard complex orientation, and we set $q := \beta t + 1 \in \smash{\pi_0(\ku_p^{\B\cir})}$. Recall that the map $\smash{\ku_p^{\B\cir} \iso \ku_p^{\h\cir} \to \ku_p^{\t\cir}}$ is a localization at the class $t$; we set $\smash{u := t^{-1} \in \pi_2(\ku_p^{\t\cir})}$.
\end{notation}

\begin{notation}
  \label{t--tCp--ell}
  Recall that $\ell_p \iso \smash{\ku_p^{\h\F_p^\times}}$ is the connective, $p$-complete Adams summand. We choose a complex orientation $\smash{t_0 \in \pi_{-2}(\ell_p^{\B\cir})}$ and set $\smash{u_0 := t_0^{-1} \in \pi_2(\ell_p^{\t\cir})}$.
\end{notation}

\begin{notation}
  \label{t--tCp--pow}
  For any symbol $x$, we let $\Z_p\pow{x-1}$ denote the $(p,x-1)$-completion of the polynomial ring $\Z[x]$.
\end{notation}

\begin{notation}
  \label{t--tCp--zetap}
  We use the notation
  \[
    [p]_q := \frac{q^p-1}{q-1} = 1 + q + \cdots + q^{p-1} \in \Z[q]
  \]
  and set $\Z_p[\zeta_p] := \Z_p[q]/([p]_q) \iso \Z_p\pow{q-1}/([p]_q)$, with $\zeta_p$ denoting the residue of $q$ in the quotient, a primitive $p$-th root of unity.
\end{notation}

\begin{proposition}
  \label{t--tCp--oriented}
  We have the following identifications of graded rings:
  \begin{align*}
    \pi_*(\ku_p^{\h\cir}) \iso \Z_p\pow{q-1}[\beta,t]/(\beta t - (q-1))
    \qquad \pi_*(\ku_p^{\t\cir}) \iso \Z_p\pow{q-1}[u^{\pm 1}] \\
    \pi_*(\ku_p^{\t\Cp}) \iso \Z_p[\zeta_p][u^{\pm 1}] 
    \qquad \pi_*(\ell_p^{\t\Cp}) \iso \Z_p[u_0^{\pm 1}]
    \qquad \pi_*(\Z_p^{\t\Cp}) \iso \F_p[u^{\pm 1}].
  \end{align*}
  Moreover, the map
  \[
    \pi_*(\phi_{\ku_p^\triv}) : \pi_*(\ku_p) \to \pi_*(\ku_p^{\t\Cp})
  \]
  sends $\beta \mapsto (\zeta_p-1)u$.
\end{proposition}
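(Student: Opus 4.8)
The plan is to deduce every identification from two standard ingredients. The first is the description, via formal group laws, of the $E$-cohomology of $\CP^\infty = \B\cir$ and of $\B\Cp$ for the complex-oriented $\E_\infty$-rings $E \in \{\ku_p, \ell_p, \Z_p\}$. The second is that the canonical maps $E^{\h\cir} \to E^{\t\cir}$ and $E^{\h\Cp} \to E^{\t\Cp}$ are obtained by inverting the complex orientation $t$: for $\cir$ this is recalled in \cref{t--tCp--ku}, and for $\Cp$ it is the classical computation of the Tate construction from the model $\widetilde{E\Cp} = \colim_n \mathrm{S}^{nL}$ for the defining representation $L$ of $\Cp$, where for our $E$ no further $p$-completion is needed, the localized ring being degreewise finitely generated over $\Z_p$. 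The assertion about $\phi_{\ku_p^\triv}$ is then immediate from \cref{t--cyc--triv}.

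For the circle fixed points, $\pi_*(\ku_p^{\h\cir}) = \ku_p^*(\CP^\infty) = \Z_p[\beta]\pow t$ with $t$ the orientation in degree $-2$; since $q - 1 = \beta t$, inspecting graded pieces identifies this with $\Z_p\pow{q-1}[\beta,t]/(\beta t - (q-1))$ (in degree $2m$ with $m \ge 0$ both sides equal $\beta^m\,\Z_p\pow{q-1}$, symmetrically for $m < 0$). Inverting $t$ turns the relation into $\beta = (q-1)u$ with $u = t^{-1}$, yielding $\pi_*(\ku_p^{\t\cir}) = \Z_p\pow{q-1}[u^{\pm 1}]$.

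For the $\Cp$-Tate constructions, the Gysin sequence of the circle bundle $\cir \to \B\Cp \to \B\cir$, with injectivity of multiplication by the Euler class $e(L^{\otimes p}) \in E^2(\B\cir)$ (leading term $pt$), gives $E^*(\B\Cp) = E^*(\B\cir)/(e(L^{\otimes p}))$; inverting $t$ then gives $\pi_*(E^{\t\Cp}) = \pi_*(E^{\t\cir})/(e(L^{\otimes p})/t)$. For $E = \ku_p$, the multiplicative law yields $q^p = 1 + \beta\,e(L^{\otimes p})$, hence $e(L^{\otimes p}) = t\,[p]_q$ and $\pi_*(\ku_p^{\t\Cp}) = \Z_p\pow{q-1}[u^{\pm1}]/([p]_q) = \Z_p[\zeta_p][u^{\pm1}]$ by \cref{t--tCp--zetap}. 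For $E = \Z_p$, the additive law gives $e(L^{\otimes p}) = pt$, so $\Z_p^{\B\Cp} = \Z_p\pow t/(pt)$ and, inverting $t$, $\pi_*(\Z_p^{\t\Cp}) = \F_p[u^{\pm1}]$. For $E = \ell_p$, we have $\pi_*(\ell_p^{\h\cir}) = \Z_p[v_1]\pow{t_0}$ with $|v_1| = 2p-2$, so $\pi_0 = \Z_p\pow w$ for $w := v_1 t_0^{p-1}$; the coefficient of $t_0^j$ in the $p$-series $[p](t_0)$ lies in $\pi_{2j-2}(\ell_p)$, which vanishes unless $(p-1) \mid (j-1)$, the coefficient of $t_0$ is $p$, and the coefficient of $t_0^p$ is a unit multiple of $v_1$ by the defining property of $v_1$; hence $[p](t_0)/t_0 = h(w)$ with $h = p + (\text{unit})\,w + O(w^2)$ a distinguished polynomial of degree $1$, and $\pi_*(\ell_p^{\t\Cp}) = \Z_p\pow w[u_0^{\pm1}]/(h(w)) = \Z_p[u_0^{\pm1}]$.

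Finally, $\phi_{\ku_p^\triv}$ is by \cref{t--cyc--triv} the composite $\ku_p \to \ku_p^{\B\Cp} \iso \ku_p^{\h\Cp} \lblto{\can} \ku_p^{\t\Cp}$; under the identifications above, $\ku_p^{\t\Cp}$ is the localization of $\ku_p^{\h\cir}$ at $t$ modulo $[p]_q$, carrying the relation $\beta t = q - 1$ with $q \mapsto \zeta_p$, so $\beta \mapsto (q-1)u \mapsto (\zeta_p-1)u$. The step I expect to demand the most care is the second ingredient in the $\Cp$ case -- matching the $\Cp$-Tate construction with the localization of the Borel construction at the orientation, compatibly across $\cir$ and $\Cp$ so that $q$ (hence $\zeta_p$) in $\pi_*(\ku_p^{\t\Cp})$ is literally the class coming from $\pi_*(\ku_p^{\t\cir})$ -- together with the bookkeeping of $p$-completions at each localization; for $\ell_p$ there is the minor additional task of pinning down the shape of the $p$-series.
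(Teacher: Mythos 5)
Your proof is correct and follows essentially the same route as the paper: the paper's own argument likewise reduces everything to Bott periodicity, the multiplicative formal group law $x+y+\beta xy$ for $\ku_p$, and "standard facts about complex oriented ring spectra," and identifies $\pi_*(\phi_{\ku_p^\triv})$ via the same relations $\beta t = q-1$, $u = t^{-1}$, $q \mapsto \zeta_p$. You have simply spelled out those standard facts explicitly (the Gysin sequence for $\B\Cp \to \B\cir$, inverting the orientation to pass to the Tate constructions, and the $p$-series/Weierstrass analysis for $\ell_p$), which is a sound elaboration rather than a different method.
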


\begin{proof}
  The identifications follow from Bott periodicity, the identification of the Quillen formal group law associated to $\ku_p$ (with its complex orientation $t$) with the law $x + y + \beta xy$, and standard facts about complex oriented ring spectra. The claim about the map $\phi_{\ku_p^\triv} : \ku_p \to \ku^{\t\Cp}$ follows from its factorization as the composition
  \[
    \ku_p \to \ku_p^{\B\cir} \iso \ku_p^{\h\cir} \to \ku_p^{\t\cir} \to \ku_p^{\t\Cp},
  \]
  (each map being the canonical one), the relation $\beta t = q-1$ in $\smash{\pi_0(\ku_p^{\h\cir})}$, the definition $u = t^{-1}$ in $\smash{\pi_2(\ku_p^{\t\cir})}$, and the fact that the last map sends $q \mapsto \zeta_p$.
\end{proof}

\begin{notation}
  \label{t--tCp--psi-ntn}
  We use the standard notation $\psi^g$ for the action of $g \in \Z_p^\times$ on $\ku_p$ and on $\ell_p$, and we will use the formulas $\smash{\psi^g(q) = q^g \in \pi_0(\ku_p^{\h\cir})}$ and $\smash{\psi^g(\beta) = g\beta \in \pi_2(\ku_p)}$.
\end{notation}

\begin{lemma}
  \label{t--tCp--psi}
  For each $n \in \Z$, the actions of $\psi^{1+p}$ on $\pi_{2n}(\smash{\ku_p^{\t\Cp}})$ and $\pi_{2n}(\smash{\ell_p^{\t\Cp}})$ are both given by multiplication by $(1+p)^n$.
\end{lemma}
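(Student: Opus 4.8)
The plan is to compute the $\psi^{1+p}$-action directly from the descriptions of $\pi_*(\ku_p^{\t\Cp})$ and $\pi_*(\ell_p^{\t\Cp})$ in \cref{t--tCp--oriented}, tracking the orientation classes. The key observation is that the action of $\psi^{1+p}$ on the Tate construction $\ku_p^{\t\Cp}$ is induced functorially from its action on $\ku_p$ together with its (trivial) action on $\B\Cp$, and the latter is compatible with the $\cir$-equivariant structure that produces $u \in \pi_2(\ku_p^{\t\Cp})$ as (the image of) $t^{-1}$. So I first need to identify how $\psi^{1+p}$ acts on the orientation class $t \in \pi_{-2}(\ku_p^{\B\cir})$, or equivalently on $u \in \pi_2(\ku_p^{\t\Cp})$.

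First I would record the action on $\ku_p^{\h\cir}$: using the formulas $\psi^g(q) = q^g$ and $\psi^g(\beta) = g\beta$ from \cref{t--tCp--psi-ntn} and the relation $\beta t = q - 1$ from \cref{t--tCp--oriented}, one sees that $\psi^{1+p}(t) = \psi^{1+p}(q-1)/\psi^{1+p}(\beta) = (q^{1+p}-1)/((1+p)\beta)$. Modulo the image of $[p]_q$ — i.e. after passing to $\ku_p^{\t\Cp}$, where $q \mapsto \zeta_p$ and hence $\zeta_p^p = 1$ — we get $q^{1+p} - 1 = q\cdot q^p - 1 = q - 1$, so $\psi^{1+p}(t) = (q-1)/((1+p)\beta) = t/(1+p)$ in $\pi_{-2}(\ku_p^{\t\Cp})$. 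Therefore $\psi^{1+p}(u) = (1+p)u$ in $\pi_2(\ku_p^{\t\Cp})$. Since $\pi_*(\ku_p^{\t\Cp}) \iso \Z_p[\zeta_p][u^{\pm1}]$ with $\psi^{1+p}$ fixing $\Z_p[\zeta_p]$ (the action on the cyclotomic base ring $\Z_p[\zeta_p]$ is $\zeta_p \mapsto \zeta_p^{1+p} = \zeta_p$, again using $\zeta_p^p = 1$), it follows that $\psi^{1+p}$ acts on $\pi_{2n}(\ku_p^{\t\Cp}) = \Z_p[\zeta_p]\cdot u^n$ by multiplication by $(1+p)^n$, as claimed.

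For $\ell_p^{\t\Cp}$, I would deduce the statement from the $\ku_p$ case: $\ell_p = \ku_p^{\h\F_p^\times}$ and the $\Z_p^\times$-action restricts compatibly, so $\ell_p^{\t\Cp} \iso (\ku_p^{\t\Cp})^{\h\F_p^\times}$ (using that $(-)^{\h\F_p^\times}$ commutes with $(-)^{\t\Cp}$ on $p$-complete spectra, per \cref{i--conv--gamma}), and $u_0 \in \pi_2(\ell_p^{\t\Cp})$ maps to $u^{p-1} \in \pi_2(\ku_p^{\t\Cp})$ — or more precisely $u_0$ is characterized as the $\F_p^\times$-invariant generator, which lands in the $u^{p-1}\Z_p$-part. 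Then $\psi^{1+p}(u_0) = (1+p)^{p-1} u_0$ — wait, that is not $(1+p)$ but I must double-check: on $\pi_2$ of the Adams summand, $\psi^{1+p}$ acts by $(1+p)^1$, not $(1+p)^{p-1}$, because the grading on the Adams summand is the topological grading, under which $\pi_2(\ell_p^{\t\Cp})$ sits in degree $2$; indeed $\psi^g$ on $\pi_{2n}$ of any of these complex-orientable spectra is multiplication by $g^n$ in the topological degree $2n$, and the Adams summand inherits exactly this. So $\psi^{1+p}$ acts on $\pi_{2n}(\ell_p^{\t\Cp}) = \Z_p \cdot u_0^n$ by $(1+p)^n$ as well. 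The main obstacle is purely bookkeeping: ensuring that the orientation class $t$ (hence $u$, hence $u_0$) is genuinely the one appearing in \cref{t--tCp--oriented} and that the reduction $\zeta_p^p = 1$ is invoked correctly to collapse $q^{1+p}-1$ to $q-1$; everything else is an immediate consequence of the formulas in \cref{t--tCp--psi-ntn}.
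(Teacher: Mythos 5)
Your computation for $\ku_p^{\t\Cp}$ is correct and is essentially the paper's own argument: both proofs come down to the $\psi^{1+p}$-equivariance of the composite $\ku_p \to \ku_p^{\h\cir} \to \ku_p^{\t\Cp}$, the relation $\beta t = q-1$ (equivalently $\beta \mapsto (\zeta_p-1)u$ from \cref{t--tCp--oriented}), the formulas $\psi^{1+p}(q)=q^{1+p}$ and $\psi^{1+p}(\beta)=(1+p)\beta$ from \cref{t--tCp--psi-ntn}, cancellation of the non-zero-divisor $\zeta_p-1$, and $\zeta_p^{1+p}=\zeta_p$ to see that $\psi^{1+p}$ fixes $\pi_0$.

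The $\ell_p$ half, however, is not proved as written. The claim that $u_0 \in \pi_2(\ell_p^{\t\Cp})$ maps to $u^{p-1}$ is impossible for degree reasons ($u^{p-1}$ lies in degree $2p-2$); the actual image of $u_0$ is $c\,u$ for some unit $c \in \Z_p[\zeta_p]$ depending on the chosen orientation $t_0$ (you are probably thinking of $v_1 \mapsto \beta^{p-1}$, which happens one level down). More seriously, the ``general principle'' you fall back on --- that $\psi^g$ acts on $\pi_{2n}$ of any of these complex-orientable spectra by $g^n$ --- is false for the Tate constructions at hand: for $g$ a nontrivial $(p-1)$-st root of unity in $\Z_p^\times$, $\psi^g$ sends $q \mapsto q^g$ and hence does not even fix $\pi_0(\ku_p^{\t\Cp}) \iso \Z_p[\zeta_p]$, and the same cancellation argument as in your $\ku_p$ computation gives $\psi^g(u) = g\tfrac{\zeta_p-1}{\zeta_p^g-1}\,u \neq g\,u$; the clean formula $\psi^{1+p}(u)=(1+p)u$ is special to $g \equiv 1 \pmod p$. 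So invoking this principle for $\ell_p^{\t\Cp}$ amounts to restating the claim rather than proving it. The gap is easy to close, and it is exactly how the paper handles $\ell_p$: the map $\ell_p^{\t\Cp} \to \ku_p^{\t\Cp}$ is $\psi^{1+p}$-equivariant and injective on homotopy groups (its homotopy is the $\F_p^\times$-invariants of $\pi_*(\ku_p^{\t\Cp})$, since $p-1$ is invertible --- this is the fixed-point identification you already cite), and since $\psi^{1+p}$ acts on all of $\pi_{2n}(\ku_p^{\t\Cp})$ by the scalar $(1+p)^n$, it acts by that scalar on every subgroup, in particular on the image of $\pi_{2n}(\ell_p^{\t\Cp})$.
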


\begin{proof}
  The map $\smash{\ell_p^{\t\Cp} \to \ku_p^{\t\Cp}}$ is equivariant with respect to $\psi^{1+p}$ and injective on homotopy groups, so it suffices to show the statement for $\smash{\pi_{2n}(\ku_p^{\t\Cp})}$. Furthermore, by \cref{t--tCp--oriented} and multiplicativity of $\psi^{1+p}$, it suffices to show that $\psi^{1+p}(\zeta_p) = \zeta_p$ and $\psi^{1+p}(u) = (1+p)u$. The former follows from the fact that $\psi^{1+p}(q) = q^{1+p}$ in $\smash{\pi_0(\ku_p^{\h\cir})}$. For the latter, we use that the map $\smash{\phi_{\ku_p^\triv} : \ku_p \to \ku_p^{\t\Cp}}$ is equivariant with respect to $\psi^{1+p}$. Let $\beta' \in \smash{\pi_2(\ku_p^{\t\Cp})}$ denote the image of $\beta \in \pi_2(\ku_p)$ under this map. Since $\psi^{1+p}(\beta) = (1+p)\beta$, we have that $\psi^{1+p}(\beta') = (1+p)\beta'$, and from \cref{t--tCp--oriented} we have the relation $\beta' = (\zeta_p-1)u$. Combining what we have said so far, we find
  \[
    (\zeta_p-1)\psi^{1+p}(u) = \psi^{1+p}((\zeta_p-1)u) = \psi^{1+p}(\beta') = (1+p)\beta' = (1+p)(\zeta_p-1)u,
  \]
  which implies $\psi^{1+p}(u) = (1+p)u$.
\end{proof}

For the next statement we recall that, since $p$ is odd, $\S/p$ admits an $\A_2$-ring structure, i.e. a unital multiplication. Hence, an $\A_2$-ring structure (in particular an $\E_\infty$-ring structure) on a spectrum $R$ induces an $\A_2$-ring structure on $R/p$, further inducing a unital multiplication on the mod $p$ homotopy groups $\pi_*(R/p)$.

\begin{proposition}
  \label{t--tCp--j}
  \begin{enumerate}[leftmargin=*]
  \item \label{t--tCp--j--integral}
    There are isomorphisms of abelian groups
    \[
      \pi_*(\j_p^{\t\Cp}) \iso
      \begin{cases}
        \Z_p & \text{if}\ * = 0 \\
        \Z_p/n\Z_p & \text{if}\ * = 2n-1\ (n \in \Z) \\
        0 & \text{otherwise}
      \end{cases}
    \]
    (to be clear, for $n=0$ we mean $\pi_{-1}(\j_p^{\t\Cp}) \iso \Z_p$).
  \item \label{t--tCp--j--modp}
    There are isomorphisms of abelian groups
    \[
      \pi_*(\j_p^{\t\Cp}/p)
      \iso
      \begin{cases}
        \F_p & \text{if}\ * = 2pm\ \text{or}\ * = 2pm-1\ (m \in \Z) \\
        0 & \text{otherwise},
      \end{cases}
    \]
    and the map $\pi_*(\j_p^{\t\Cp}/p) \to \pi_*(\ell_p^{\t\Cp}/p)$ is an isomorphism in degrees divisible by $2p$.
  \item \label{t--tCp--j--modp-ring}
    There is an isomorphism of graded rings
    \[
      \pi_*(\j_p^{\t\Cp}/p) \iso \F_p[u_1^{\pm 1},w]/(w^2),
    \]
    where $u_1$ has degree $2p$ and $w$ has degree $-1$.
  \end{enumerate}
\end{proposition}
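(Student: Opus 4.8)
The plan is to reduce the whole computation to the homotopy groups of $\ell_p^{\t\Cp}$ together with their $\psi^{1+p}$-action, as supplied by \cref{t--tCp--oriented,t--tCp--psi}. Write $L_p := \KU_p^{\h\F_p^\times}$, so that $\ell_p = \tau_{\ge0}L_p$ (\cref{t--tCp--ell}) and, by \cref{i--main--j}, $\J_p \iso \KU_p^{\h\Gamma} \iso L_p^{\h\Gamma_0} = \fib(\psi^{1+p}-1 : L_p \to L_p)$. Since $\pi_*L_p$ is supported in degrees divisible by $2p-2$, the fiber $(\Sigma^{-1}\tau_{\le-1}L_p)^{\h\Gamma_0}$ of the natural map $\ell_p^{\h\Gamma_0} \to L_p^{\h\Gamma_0} = \J_p$ is concentrated in homotopical degrees $\le -(2p-1)$, so $\tau_{\ge0}(\ell_p^{\h\Gamma_0}) \isoto \j_p$. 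Moreover $\ell_p^{\h\Gamma_0} = \fib(\psi^{1+p}-1 : \ell_p \to \ell_p)$ has $\pi_{-1} \iso \operatorname{coker}(\psi^{1+p}-1 \mid \pi_0\ell_p) \iso \Z_p$ and vanishing homotopy below degree $-1$, so there is a cofiber sequence $\j_p \to \ell_p^{\h\Gamma_0} \to \Sigma^{-1}\Z_p$. Applying the (exact) functor $(-)^{\t\Cp}$ and using that homotopy $\Gamma_0$-fixed points commute with it (\cref{i--conv--gamma}) produces a cofiber sequence
\[
  \j_p^{\t\Cp} \to (\ell_p^{\t\Cp})^{\h\Gamma_0} \lblto{q} \Sigma^{-1}(\Z_p^{\t\Cp}).
\]

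For part (1): by \cref{t--tCp--oriented,t--tCp--psi}, $(\ell_p^{\t\Cp})^{\h\Gamma_0} = \fib(\psi^{1+p}-1 : \ell_p^{\t\Cp} \to \ell_p^{\t\Cp})$ with $\pi_*\ell_p^{\t\Cp} = \Z_p[u_0^{\pm1}]$ and $\psi^{1+p}$ acting on $\pi_{2n}$ by $(1+p)^n$, so (using $v_p((1+p)^n-1) = 1+v_p(n)$ for $n \ne 0$) $\pi_{2n-1}((\ell_p^{\t\Cp})^{\h\Gamma_0}) \iso \Z_p/pn\Z_p$ — reading $\Z_p/0 = \Z_p$ — while $\pi_{2n}$ is $\Z_p$ for $n=0$ and $0$ otherwise. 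The essential point is then that $q$ is surjective on homotopy groups. I would prove this by composing $q$ with the boundary map $\partial : \Sigma^{-1}\ell_p^{\t\Cp} \to (\ell_p^{\t\Cp})^{\h\Gamma_0}$ of the fiber sequence $(\ell_p^{\t\Cp})^{\h\Gamma_0} \to \ell_p^{\t\Cp} \xrightarrow{\psi^{1+p}-1} \ell_p^{\t\Cp}$: one identifies $q\circ\partial$ with $\Sigma^{-1}$ of $(-)^{\t\Cp}$ applied to the truncation $\ell_p \to \Z_p$ (equivalently the quotient $\ell_p \to \ell_p/v_1$), and this truncation is surjective on homotopy after $(-)^{\t\Cp}$ because the cofiber sequence $\Sigma^{2p-2}\ell_p^{\t\Cp} \xrightarrow{v_1} \ell_p^{\t\Cp} \to \Z_p^{\t\Cp}$, together with $\pi_*\ell_p^{\t\Cp} = \Z_p[u_0^{\pm1}]$ and $\pi_*\Z_p^{\t\Cp} = \F_p[u^{\pm1}]$, forces $v_1$ to act on $\pi_*\ell_p^{\t\Cp}$ as $p$ times a unit. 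With $q$ surjective, the long exact sequence of the displayed cofiber sequence splits into short exact sequences, and reading it off yields the stated $\pi_*(\j_p^{\t\Cp})$.

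For part (2): the additive statement follows from part (1) by the Bockstein sequences $0 \to \pi_m(\j_p^{\t\Cp})/p \to \pi_m(\j_p^{\t\Cp}/p) \to \pi_{m-1}(\j_p^{\t\Cp})[p] \to 0$ — using $\pi_{2pm-1}(\j_p^{\t\Cp}) \iso \Z_p/pm\Z_p$, the outer terms contribute $\F_p$ in the disjoint sets of degrees $\{2pm-1\}\cup\{0\}$ and $\{2pm : m\ne0\}$, whose union is $\{2pm\}\cup\{2pm-1\}$. For the comparison with $\ell_p^{\t\Cp}/p$: reducing the cofiber sequence above mod $p$ gives $\j_p^{\t\Cp}/p \to (\ell_p^{\t\Cp}/p)^{\h\Gamma_0} \to \Sigma^{-1}(\F_p^{\t\Cp})$ (using $\Z_p^{\t\Cp}/p \iso \F_p^{\t\Cp}$). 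As $(1+p)^n \equiv 1 \pmod p$, the map $\psi^{1+p}-1$ is zero on $\pi_*(\ell_p^{\t\Cp}/p) = \F_p[u_0^{\pm1}]$, so $(\ell_p^{\t\Cp}/p)^{\h\Gamma_0}\to\ell_p^{\t\Cp}/p$ is an isomorphism in every even degree; and since $\pi_{2pm+1}(\j_p^{\t\Cp}/p) = 0$ while $\pi_{2pm+1}((\ell_p^{\t\Cp}/p)^{\h\Gamma_0}) \iso \pi_{2pm+1}(\Sigma^{-1}\F_p^{\t\Cp}) \iso \F_p$, a short chase in the mod-$p$ long exact sequence shows $(\ell_p^{\t\Cp}/p)^{\h\Gamma_0}\to\Sigma^{-1}\F_p^{\t\Cp}$ vanishes on $\pi_{2pm}$; hence $\j_p^{\t\Cp}/p\to\ell_p^{\t\Cp}/p$ is an isomorphism on $\pi_{2pm}$ for all $m$.

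For part (3): with the (unital) multiplication on $\pi_*(\j_p^{\t\Cp}/p)$, let $w \in \pi_{-1}(\j_p^{\t\Cp}/p)$ be the reduction of a generator of $\pi_{-1}(\j_p^{\t\Cp}) \iso \Z_p$; then $w^2 \in \pi_{-2}(\j_p^{\t\Cp}/p) = 0$. Let $u_1 \in \pi_{2p}(\j_p^{\t\Cp}/p)$ be a generator; by part (2) its image in $\pi_{2p}(\ell_p^{\t\Cp}/p) = \F_p\,u_0^p$ is a unit multiple of $u_0^p$, and comparing products along the isomorphisms on $\pi_{2pm}$ shows $u_1$ is invertible and $\pi_*(\j_p^{\t\Cp}/p)$ is free of rank one on $\{1,w\}$ over $\F_p[u_1^{\pm1}]$, giving $\pi_*(\j_p^{\t\Cp}/p) \iso \F_p[u_1^{\pm1},w]/(w^2)$. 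The step I expect to be the main obstacle is the surjectivity of $q$ in part (1) — equivalently, pinning down that $v_1$ acts as $p$ times a unit on $\pi_*\ell_p^{\t\Cp}$ — since this is precisely what makes the integral answer come out as the B\"okstedt pattern $\Z_p/n\Z_p$ rather than $\Z_p/pn\Z_p$, and it is what drives parts (2) and (3); the remaining steps are bookkeeping with \cref{t--tCp--oriented,t--tCp--psi}.
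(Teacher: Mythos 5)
Your proposal is correct, and all three parts go through; it reaches the paper's computation by a slightly different decomposition. The paper starts from the (essentially definitional) cartesian square with vertices $\j_p$, $\ell_p^{\h\Gamma_0}$, $\Z_p$, $\Z_p^{\h\Gamma_0}$, applies $(-)^{\t\Cp}$, and reads off everything from the resulting Mayer--Vietoris long exact sequence, the only arithmetic input being that $(1+p)^n-1$ and $pn$ have the same $p$-adic valuation. You instead use the cofiber sequence $\j_p \to \ell_p^{\h\Gamma_0} \to \Z_p[-1]$ (after the small connectivity check that $\tau_{\ge0}(\ell_p^{\h\Gamma_0}) \simeq \j_p$), and the extra ingredient you then need --- surjectivity of $q$ on homotopy of Tate constructions --- you supply by factoring through the truncation $\ell_p \to \Z_p \simeq \ell_p/v_1$ and observing that the identifications in \cref{t--tCp--oriented} force $v_1$ to map to $p$ times a unit in $\pi_{2p-2}(\ell_p^{\t\Cp})$. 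The two routes are interchangeable (your sequence is what the paper's square becomes after taking fibers over $\Z_p$), but yours trades the six-term bookkeeping for short exact sequences plus an explicit ``$v_1 \equiv p$'' statement, which makes visible why the B\"okstedt pattern $\Z_p/n$ rather than $\Z_p/pn$ appears; the paper's version needs no separate surjectivity lemma because the relevant maps in its exact sequences are the evident reductions. One small wording point in your part (2): the map $(\ell_p^{\t\Cp}/p)^{\h\Gamma_0} \to \Sigma^{-1}\F_p^{\t\Cp}$ vanishes on $\pi_{2pm}$ for degree reasons alone (the target is zero there); what your chase actually needs is that the boundary $\pi_{2pm+1}(\Sigma^{-1}\F_p^{\t\Cp}) \to \pi_{2pm}(\j_p^{\t\Cp}/p)$ is zero, which does follow from $\pi_{2pm+1}(\j_p^{\t\Cp}/p)=0$ as you indicate --- or, even more cheaply, since both $\pi_{2pm}$ groups are $\F_p$ and your map is onto, it is automatically an isomorphism. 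Parts (2) and (3) otherwise proceed exactly as in the paper.
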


\begin{proof}
  By definition of $\j_p$ and the triviality of the action of $\Z_p^\times$ on $\pi_0(\ku_p) \iso \pi_0(\ell_p) \iso \Z_p$, we have a cartesian square of $\E_\infty$-rings
  \[
    \begin{tikzcd}
      \j_p \ar[r] \ar[d] &
      \ell_p^{\h\Gamma_0} \ar[d] \\
      \Z_p \ar[r] &
      \Z_p^{\h\Gamma_0}.
    \end{tikzcd}
  \]
  Applying $(-)^{\t\Cp}$ to this diagram and commuting $(-)^{\t\Cp}$ with the finite limit $(-)^{\h\Gamma_0}$, we obtain a cartesian square
  \[
    \begin{tikzcd}
      \j_p^{\t\Cp} \ar[r] \ar[d] &
      (\ell_p^{\t\Cp})^{\h\Gamma_0} \ar[d] \\
      \Z_p^{\t\Cp} \ar[r] &
      (\Z_p^{\t\Cp})^{\h\Gamma_0}.
    \end{tikzcd}
  \]
  Passing to the associated long exact sequence of homotopy groups and using \cref{t--tCp--oriented} and \cref{t--tCp--psi} to compute the homotopy groups of $(\ell_p^{\t\Cp})^{\h\Gamma_0}$, $\Z_p^{\t\Cp}$, and $(\Z_p^{\t\Cp})^{\h\Gamma_0}$, as well as the maps among them, we obtain the following:
  \begin{itemize}[leftmargin=*]
  \item For nonzero $n \in \Z$, we have an exact sequence
    \[
      \Z_p/((1+p)^{n+1}-1)\Z_p \to \F_p \to \pi_{2n}(\j_p^{\t\Cp}) \to \F_p \to \F_p \to \pi_{2n-1}(\j_p^{\t\Cp}) \to \Z_p/((1+p)^n-1)\Z_p \to \F_p
    \]
    in which the first and last maps are the reduction maps and the map $\F_p \to \F_p$ is the identity. Together with the fact that $(1+p)^n-1$ and $pn$ have equal $p$-adic valuation (here we use that $p$ is odd), this gives the desired calculation of $\pi_*(\j_p^{\t\Cp})$ for degrees $*$ other than $0$ and $-1$.
  \item We have an exact sequence
    \[
      \Z_p/((1+p)-1)\Z_p \to \F_p \to \pi_0(\j_p^{\t\Cp}) \to \Z_p \oplus \F_p \to \F_p \to \pi_{-1}(\j_p^{\t\Cp}) \to \Z_p \to \F_p
    \]
    in which the first map is the obvious isomorphism, the last map is the reduction map, and the map $\Z_p \oplus \F_p \to \F_p$ is the sum of the reduction map and the identity map. This gives the desired calculation of $\pi_*(\j_p^{\t\Cp})$ for $* \in \{0,-1\}$.
  \end{itemize}
  This analysis proves \cref{t--tCp--j--integral} and \cref{t--tCp--j--modp}. For \cref{t--tCp--j--modp-ring}, we note that \cref{t--tCp--j--modp}, together with \cref{t--tCp--oriented}, gives us an isomorphism of graded rings $\smash{\pi_{2p*}(\j_p^{\t\Cp}/p) \iso \F_p[u_1^{\pm 1}]}$ (where $u_1$ maps to $\smash{u_0^p \in \pi_{2p}(\ell_p^{\t\Cp}/p)}$). Multiplication by $u_1$ must then give isomorphisms $\pi_{2pm-1}(\j_p^{\t\Cp}/p) \iso \pi_{2p(m+1)-1}(\j_p^{\t\Cp}/p)$, so it remains only to observe that a generator $w$ in degree $-1$ must satisfy $w^2=0$, as the group in degree $-2$ is zero.
\end{proof}

% --------------------------------------------------------------------

\subsection{The proof}
\label{t--pf}

\begin{lemma}
  \label{t--pf--LQ-uniqueness}
  Let $0 \le n \le \infty$ and let $A$ be a $p$-complete $\E_n$-ring such that the canonical map $\pi_m(A) \to \pi_m(\L_{\K(1)}A)$ is an isomorphism for $m \ge 2$. Then there is a unique map of $\E_n$-rings $\j_p \to A$; in other words, the space of $\E_n$-ring maps $\j_p \to A$ is contractible.
\end{lemma}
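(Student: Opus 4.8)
The plan is to show directly that the space of $\E_n$-ring maps $\Map_{\CAlg_n}(\j_p,A)$ is contractible, by comparing $A$ with its $\K(1)$-localization. The hypothesis says exactly that $A\to\L_{\K(1)}A$ induces an equivalence on $2$-connective covers, $\tau_{\ge2}A\isoto\tau_{\ge2}(\L_{\K(1)}A)$, and hence that the commuting square
\[
  \begin{tikzcd}
    A \ar[r] \ar[d] & \L_{\K(1)}A \ar[d]\\
    \tau_{\le1}A \ar[r] & \tau_{\le1}(\L_{\K(1)}A)
  \end{tikzcd}
\]
is a pullback of $\E_n$-rings: its two vertical fibres are both identified with $\tau_{\ge2}A\simeq\tau_{\ge2}(\L_{\K(1)}A)$, and limits of $\E_n$-rings are computed on underlying spectra. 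Since $\Map_{\CAlg_n}(\j_p,-)$ takes this to a pullback square of spaces, it suffices to prove that $\Map_{\CAlg_n}(\j_p,\L_{\K(1)}A)$, $\Map_{\CAlg_n}(\j_p,\tau_{\le1}A)$, and $\Map_{\CAlg_n}(\j_p,\tau_{\le1}(\L_{\K(1)}A))$ are each contractible.

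For the first space: the map $\j_p\to\J_p$ is a $\K(1)$-localization (a standard fact about the connective image-of-J spectrum; its cofibre $\tau_{\le-1}\J_p$ is $\K(1)$-acyclic), and $\J_p=\L_{\K(1)}\S$ is the initial $\K(1)$-local $\E_n$-ring, so by the localization adjunction
\[
  \Map_{\CAlg_n}(\j_p,\L_{\K(1)}A)\simeq\Map_{\CAlg_n(\Spt_{\K(1)})}(\J_p,\L_{\K(1)}A)\simeq *.
\]
The remaining two spaces are both of the form $\Map_{\CAlg_n}(\j_p,\tau_{\le1}B)$ for $B$ a $p$-complete $\E_n$-ring, and I would handle them as follows; oddness of $p$ enters twice. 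First, the lowest-degree positive homotopy group of $\J_p$ lies in degree $2p-3\ge3$, so $\pi_1(\j_p)=0$ and $\tau_{\le1}\j_p\simeq\Z_p$; as $\tau_{\le1}$ is a localization this gives $\Map_{\CAlg_n}(\j_p,\tau_{\le1}B)\simeq\Map_{\CAlg_n}(\Z_p,\tau_{\le1}B)$. Second, the truncation $\tau_{\le1}B\to\pi_0(B)$ is a square-zero extension by $\Sigma\pi_1(B)$, so deformation theory identifies the fibre of $\Map_{\CAlg_n}(\Z_p,\tau_{\le1}B)\to\Map_{\CAlg_n}(\Z_p,\pi_0B)$ over any point with a path space in $\Map_{\Mod_{\Z_p}}(\mathrm{L}_{\Z_p/\S_p},\Sigma^2\pi_1(B))$. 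The ($p$-complete) cotangent complex $\mathrm{L}_{\Z_p/\S_p}$ is at least $(2p-3)$-connective, since the unit $\S_p\to\Z_p$ is an isomorphism on homotopy groups in degrees below $2p-3$ (the first $p$-torsion in $\pi_*\S$ being $\alpha_1$ in degree $2p-3$); as $2<2p-3$, this mapping space is contractible and the fibre in question is a point. Finally $\Map_{\CAlg_n}(\Z_p,\pi_0B)$ is the discrete set $\mathrm{Hom}_{\mathrm{Ring}}(\Z_p,\pi_0B)$, which is a single point because $\pi_0(B)$ is derived $p$-complete and hence canonically and uniquely a $\Z_p$-algebra. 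Thus $\Map_{\CAlg_n}(\j_p,\tau_{\le1}B)\simeq *$ for $B\in\{A,\L_{\K(1)}A\}$, and altogether $\Map_{\CAlg_n}(\j_p,A)\simeq *\times_* *\simeq *$.

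The main obstacle is the deformation-theoretic step: one must correctly identify the controlling $\Z_p$-module as $\Sigma^2\pi_1(B)$ and establish the connectivity bound on $\mathrm{L}_{\Z_p/\S_p}$ --- and it is precisely here, through the $(2p-3)$-connectedness of $\S_p\to\Z_p$ (equivalently, the vanishing of the $p$-primary stable stems below degree $2p-3$), that the hypothesis $p$ odd is essential; for $p=2$ the bound fails already at degree $2$ and the argument breaks, consistently with the caveats recorded in the introduction. The other ingredient, that $\tau_{\le-1}\J_p$ is $\K(1)$-acyclic so that $\L_{\K(1)}\j_p\simeq\J_p$, is standard and should simply be cited.
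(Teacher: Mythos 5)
Your overall strategy is the same as the paper's: compare $A$ with $\L_{\K(1)}A$ and a low truncation via a cartesian square, check contractibility of the mapping space corner by corner, and use that the unit $\S \to \j_p$ is a $\K(1)$-local equivalence (coconnective fiber) to handle the $\K(1)$-local corner. Two points deserve attention. First, the lemma allows non-connective $A$ (and is applied to $\pcpl{\TC(\Z)}$, which is only $(-1)$-connective), and for a non-connective $\E_n$-ring the spectrum $\tau_{\le 1}A$ carries no natural $\E_n$-ring structure --- truncation is only compatible with the multiplication on connective objects --- so your square is not literally a diagram of $\E_n$-rings, and likewise ``$\tau_{\le1}B \to \pi_0(B)$ is a square-zero extension by $\Sigma\pi_1(B)$'' presupposes $B$ connective. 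The repair is standard and is exactly what the paper does: since $\j_p$ is connective, replace $A$ by $\tau_{\ge 0}(A)$ at the outset (this loses nothing, as $\Map_{\CAlg_n}(\j_p,A)\iso\Map_{\CAlg_n}(\j_p,\tau_{\ge0}A)$), after which all truncations are of connective rings and the cartesianness argument via the $\tau_{\ge2}$ fibers goes through verbatim.

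Second, your handling of the $1$-truncated corners is correct in outline but genuinely different from, and heavier than, the paper's. You reduce to $\Map_{\CAlg_n}(\Z_p,\tau_{\le1}B)$ and run square-zero deformation theory against the cotangent complex; note that for finite $n$ the controlling object is the $\E_n$-cotangent complex of $\Z_p$ over $\S_p$ rather than $\mathrm{L}_{\Z_p/\S_p}$, though the same connectivity estimate (coming from the $(2p-3)$-connectivity of the fiber of $\S_p\to\Z_p$) makes the derivation space contractible, and your appeal to derived $p$-completeness of $\pi_0(B)$ for the base case is legitimate. The paper avoids all of this machinery: it observes that the unit $\S_p\to\j_p$ induces an equivalence on $\tau_{\le1}$ (both truncations are $\Z_p$, using $p$ odd), so mapping $\j_p$ into any $1$-truncated, connective, $p$-complete $\E_n$-ring is the same as mapping $\S_p$, hence $\S$, into it --- uniqueness then follows from initiality of the sphere, with no cotangent complexes or facts about derived $p$-complete rings needed. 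Both arguments locate the role of $p$ odd in the same place (vanishing of $\pi_1$ and of the $p$-primary stable stems below degree $2p-3$), so your proof, with the connective-cover fix inserted, is a valid if more elaborate route to the statement.
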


\begin{proof}
  Since $\j_p$ is connective, a map of $\E_n$-rings $\j_p \to A$ is equivalent to a map of $\E_n$-rings $\j_p \to \tau_{\ge 0}(A)$. The hypothesis on $A$ implies that the commutative square of $\E_n$-rings
  \[
    \begin{tikzcd}
      \tau_{\ge0}(A) \ar[r] \ar[d] &
      \tau_{\ge 0}(\L_{\K(1)} A) \ar[d] \\
      \tau_{\le 1}(\tau_{\ge0}(A)) \ar[r] &
      \tau_{\le 1}(\tau_{\ge 0}(\L_{\K(1)} A))
    \end{tikzcd}
  \]
  is cartesian. It thus suffices to show that there is a unique map of $\E_n$-rings from $\j_p$ to each of the other three objects in the square (where, again, ``unique'' means that the space of $\E_n$-ring maps from $\j_p$ to these objects in the square) is contractible:
  \begin{itemize}
  \item A map $\j_p \to \tau_{\ge 0}(\L_{\K(1)} A)$ is equivalent to a map $\j_p \to \L_{\K(1)}A$. To see that such a map is unique, it suffices to see that the unit map $\S \to \j_p$ is a $\K(1)$-local equivalence. Since the unit map $\S \to \J_p$ is by definition a $\K(1)$-local equivalence, it furthermore suffices to see that that the connective cover map $\j_p \to \J_p$ is a $\K(1)$-local equivalence; this is true because its fiber is coconnective and hence $\K(1)$-acyclic.
  \item Similarly, for any $1$-truncated, connective, $p$-complete $\E_n$-ring $B$, there is a unique map $\j_p \to B$ because the unit map $\S_p \to \j_p$ induces an equivalence on $\tau_{\le1}(-)$. \qedhere
  \end{itemize}
\end{proof}

\begin{proposition}
  \label{t--pf--map}
  There is a unique map of cyclotomic $\E_\infty$-rings $\alpha : \j_p^\triv \to \pcpl{\THH(\Z)}$.
\end{proposition}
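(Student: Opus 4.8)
The plan is to trade a map of cyclotomic $\E_\infty$-rings for a map of $\E_\infty$-rings, using the adjunction recalled in \cref{t--cyc--TC}, and then to invoke \cref{t--pf--LQ-uniqueness}. First I would observe that, since $(-)^\triv \colon \Spt \to \CycSpt$ is symmetric monoidal (\cref{t--cyc--triv}) with right adjoint $\TC$ (\cref{t--cyc--TC}), and since $\S^\triv$ is the unit of $\CycSpt$, the functor $\TC$ is canonically lax symmetric monoidal and the adjunction passes to commutative algebra objects. This gives a natural equivalence
\[
  \Map_{\CAlg(\CycSpt)}(\j_p^\triv, \pcpl{\THH(\Z)}) \iso \Map_{\CAlg(\Spt)}(\j_p, \TC(\pcpl{\THH(\Z)})),
\]
reducing the claim to the statement that there is a unique map of $\E_\infty$-rings $\j_p \to \TC(\pcpl{\THH(\Z)})$.

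Next I would set $A := \TC(\pcpl{\THH(\Z)})$ and note two things: $A$ is $p$-complete, being a limit of $p$-complete spectra; and $A$ is canonically equivalent to $\pcpl{\TC(\Z)}$, a standard fact, since $\TC$ commutes with $p$-completion on bounded-below cyclotomic spectra and $\THH(\Z)$ is connective. Then I would apply \cref{t--pf--LQ-uniqueness} with $n = \infty$: it produces the desired unique map $\j_p \to A$ provided the canonical map $\pi_m(A) \to \pi_m(\L_{\K(1)}A)$ is an isomorphism for $m \ge 2$, i.e.\ provided $\pi_m(\pcpl{\TC(\Z)}) \to \pi_m(\L_{\K(1)}\TC(\Z))$ is an isomorphism for $m \ge 2$.

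This last point is the only genuine input: it is a form of the Lichtenbaum--Quillen property for $\pcpl{\TC(\Z)}$ --- concretely, that the fiber of $\pcpl{\TC(\Z)} \to \L_{\K(1)}\TC(\Z)$ is concentrated in non-positive degrees --- which is originally due to B\"okstedt--Madsen \cite{bokstedt-madsen--TCZ-again,bokstedt-madsen--TCZ} and which I would cite rather than reprove. Everything else is formal; the only places calling for a little care are the passage of the $(-)^\triv \dashv \TC$ adjunction to $\E_\infty$-rings and the identification $\TC(\pcpl{\THH(\Z)}) \iso \pcpl{\TC(\Z)}$, neither of which presents a real difficulty.
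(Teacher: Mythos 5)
Your proposal is correct and follows essentially the same route as the paper: trade the cyclotomic map for an $\E_\infty$-ring map $\j_p \to \pcpl{\TC(\Z)}$ via the $(-)^\triv \dashv \TC$ adjunction, then apply \cref{t--pf--LQ-uniqueness} using the $\TC$-theoretic Lichtenbaum--Quillen statement of B\"okstedt--Madsen. The extra details you supply (the adjunction passing to $\CAlg$, the identification $\TC(\pcpl{\THH(\Z)}) \iso \pcpl{\TC(\Z)}$) are points the paper leaves implicit but are handled correctly.
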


\begin{proof}
  Maps of cyclotomic $\E_\infty$-rings $\j_p^\triv \to \pcpl{\THH(\Z)}$ are equivalent to maps of $\E_\infty$-rings $\j_p \to \pcpl{\TC(\Z)}$, so it suffices to show that there is a unique map of the latter sort. This follows from \cref{t--pf--LQ-uniqueness}, as the canonical map $\pi_m(\pcpl{\TC(\Z)}) \to \pi_m(\L_{\K(1)}\pcpl{\TC(\Z)})$ is an isomorphism for $m \ge 2$---this is a $\TC$-theoretic Lichtenbaum--Quillen statement, due originally to B\"okstedt--Madsen \cite{bokstedt-madsen--TCZ} and revisited by Hesselholt--Madsen \cite{hesselholt-madsen--local-fields,hesselholt-madsen--dRW}, Mathew \cite[\textsection 4]{mathew--K1TR}, and Liu--Wang \cite[Remark 1.7]{liu-wang--TC-local-fields}; it may also be deduced from the work of Bhatt--Morrow--Scholze \cite{bms2} and Bhatt--Lurie \cite{bhatt--F-gauges,bhatt-lurie--APC} on syntomic cohomology (cf. \cite[Remark 2.21]{bhatt--icm}).
\end{proof}

\begin{remark}
  \label{t--pf--map-comparison}
  \cref{t--pf--map} can be compared to the fact that there is a unique map of cyclotomic $\E_\infty$-rings $\o\alpha : \Z_p^\triv \to \THH(\F_p)$, which follows from the computation of $\TC(\F_p)$, as discussed in \cite[\textsection IV.4]{nikolaus-scholze--TC}. In fact, there is precise comparison to make: the diagram of cyclotomic $\E_\infty$-rings
  \[
    \begin{tikzcd}
      \j_p^\triv \ar[r, "\alpha"] \ar[d] &
      \pcpl{\THH(\Z)} \ar[d] \\
      \Z_p^\triv \ar[r, "\o\alpha"] &
      \THH(\F_p)
    \end{tikzcd}
  \]
  is commutative; here the left vertical map is induced by the truncation map $\j_p \to \Z_p$ and the right vertical map is induced by the reduction map $\Z_p \to \F_p$, and the commutativity is again immediate from the computation of $\TC(\F_p)$.
\end{remark}

\begin{proof}[Proof of \cref{i--main--thhZp}]
  Let $\alpha : \j_p^\triv \to \pcpl{\THH(\Z)}$ be as in \cref{t--pf--map}, and consider the resulting commutative diagram of cyclotomic $\E_\infty$-rings
  \[
    \begin{tikzcd}
      \j_p^\triv \ar[r, "\alpha"] \ar[d, "\phi^0", swap] &
      \pcpl{\THH(\Z)} \ar[d, "\phi^0"] \\
      \sh(\j_p^\triv) \ar[r, "\sh(\alpha)"] &
      \sh(\pcpl{\THH(\Z)}).
    \end{tikzcd}
  \]
  The right vertical map is an equivalence---this is the Segal conjecture for $\pcpl{\THH(\Z)}$, proved first by B\"okstedt--Madsen \cite[Lemma 6.5]{bokstedt-madsen--TCZ} and treated again by Mathew \cite[\textsection 5]{mathew--K1TR} and Hahn--Wilson \cite[\textsection 4]{hahn-wilson--redshift}; it follows also from the work of Bhatt--Morrow--Scholze \cite{bms2} and Bhatt--Lurie \cite{bhatt-lurie--APC} on motivic filtrations and prismatic cohomology, cf. \cite[Construction 2.4]{bhatt-mathew--syntomic}. To finish the proof, it will suffice to show that the map $\smash{\alpha^{\t\Cp} : \j_p^{\t\Cp} \to \THH(\Z)^{\t\Cp}}$ is an equivalence (which also implies that the bottom horizontal map $\sh(\alpha)$ is an equivalence, by definition of $\sh(-)$).

  By $p$-completeness, it suffices to prove that the map of spectra $\smash{\alpha^{\t\Cp}/p : \j_p^{\t\Cp}/p \to \THH(\Z)^{\t\Cp}/p}$ is an equivalence. We will show that it induces isomorphisms on homotopy groups. By \cref{t--tCp--j}, we have an isomorphism of graded rings
  \[
    \pi_*(\j_p^{\t\Cp}/p) \iso \F_p[u_1^{\pm 1},w]/(w^2),
  \]
  and moreover the $p$-Bockstein map $\smash{\pi_{2p}(\j_p^{\t\Cp}/p) \to \pi_{2p-1}(\j_p^{\t\Cp}/p)}$ is an isomorphism. The same description applies to $\pi_*(\THH(\Z)^{\t\Cp}/p)$, by the work of B\"okstedt \cite{bokstedt--thhfp} and B\"okstedt--Madsen \cite{bokstedt-madsen--TCZ} (again, in place of the latter, one may alternatively appeal to the work of Bhatt--Morrow--Scholze \cite{bms2} and Bhatt--Lurie \cite{bhatt-lurie--APC}). It is immediate from the ring structures that $\pi_*(\alpha^{\t\Cp}/p)$ is an isomorphism in degrees divisible by $2p$. The $p$-Bockstein relation then implies it is also an isomorphism in degree $2p-1$, and hence in all other degrees.
\end{proof}

\section{$\TC(\Z)$}
\label{z}

In this section, we use \cref{i--main--thhZp} to analyze $\pcpl{\TC(\Z)}$, our goal being to prove \cref{i--tcz--thm}. The full analysis requires some digression into background material, so we present the main steps of the argument in \cref{z--ol} while deferring a couple of proofs to the subsequent subsections.

% --------------------------------------------------------------------

\subsection{The main argument}
\label{z--ol}

The bulk of our analysis will concern the $\K(1)$-localization of $\TC(\Z)$. We begin by introducing a few key constructions.

\begin{notation}
  \label{z--ol--p-cover}
  For any $\E_\infty$-ring $R$, we let $\smash{[p]^* : R^{\B\cir} \to R^{\B\cir}}$ and $\smash{[p]_* : R[\B\cir] \to R[\B\cir]}$ denote the maps induced by the $p$-fold covering map $[p] : \cir \to \cir$. For $R \in \{\j_p,\J_p,\ku_p,\KU_p\}$, we implicitly regard $R$ as equipped with trivial $\cir$-action, and we freely make the canonical identifications $R_{\h\cir} \iso R[\B\cir]$ and $\smash{R^{\h\cir} \iso R^{\B\cir}}$.
\end{notation}

\begin{remark}
  \label{z--ol--p-cover-graded}
  Let us note something about the map $[p]_* : R[\B\cir] \to R[\B\cir]$ that will be used a couple of times later in this subsection: it is the filtered colimit of maps $[p]_{*,n} : R[\CP^n] \to R[\CP^n]$ for $n \in \Z_{\ge 0}$, where we have $[p]_{*,0} \iso \id$ and commutative diagrams
  \[
    \begin{tikzcd}
      R[\CP^n] \ar[r] \ar[d, "{[p]_{*,n}}"] &
      R[\CP^{n+1}] \ar[r] \ar[d, "{[p]_{*,n+1}}"] &
      R[2n+2] \ar[d, "p^{n+1}"] \\
      R[\CP^n] \ar[r] &
      R[\CP^{n+1}] \ar[r] &
      R[2n+2]
    \end{tikzcd}
  \]
  in which the rows are cofiber sequences and the rightmost vertical map denotes multiplication by $p^{n+1}$. See \cite[Lemma IV.3.5]{nikolaus-scholze--TC} for similar discussion.
\end{remark}

\begin{lemma}
  \label{z--ol--KU-phi}
  There is a unique $\Gamma$-equivariant map of $\E_\infty$-$\KU_p$-algebras $\smash{\Phi : \pcpl{(\KU_p^{\t\cir})} \to \pcpl{(\KU_p^{\t\cir})}}$ making the following diagram of $\Gamma$-equivariant $\E_\infty$-$\KU_p$-algebras commute:
  \[
    \begin{tikzcd}
      \KU_p^{\h\cir} \ar[r, "{[p]^*}"] \ar[d, "\can", swap] &
      \KU_p^{\h\cir} \ar[d, "\can"] \\
      \pcpl{(\KU_p^{\t\cir})} \ar[r, "\Phi"] &
      \pcpl{(\KU_p^{\t\cir})}.
    \end{tikzcd}
  \]
\end{lemma}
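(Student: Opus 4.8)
The plan is to obtain $\Phi$ as the unique arrow extending $[p]^*$ along the localization map $\can$; the only genuine work will be an invertibility check on $\pi_0$.

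First I would record the relevant universal property. The map $\can : \KU_p^{\h\cir} \to \pcpl{(\KU_p^{\t\cir})}$ exhibits its target as the $p$-completed localization of $\KU_p^{\h\cir}$ at the complex-orientation class $t \in \pi_{-2}(\KU_p^{\h\cir})$ — equivalently, since $\beta \in \pi_2(\KU_p)$ is a unit by Bott periodicity and $q - 1 = \beta t$, at the class $q - 1 \in \pi_0(\KU_p^{\h\cir})$; this is the $\KU_p$-analogue of the facts about $\ku_p$ recorded in \cref{t--tCp--ku} and \cref{t--tCp--oriented}. Moreover this localization takes place within $p$-complete $\Gamma$-equivariant $\E_\infty$-$\KU_p$-algebras: each $\psi^g$ ($g \in \Gamma$) carries $t$ to $t$ times a unit — explicitly $\psi^g(q-1) = q^g - 1 = (q-1)\cdot(\text{unit of }\Z_p\pow{q-1})$ since $g$ is a $p$-adic unit — and $\KU_p^{\h\cir}$ is already $p$-complete. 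So composition with $\can$ identifies, for any $p$-complete $\Gamma$-equivariant $\E_\infty$-$\KU_p$-algebra $Z$, the space $\Map(\pcpl{(\KU_p^{\t\cir})}, Z)$ with the union of components of $\Map(\KU_p^{\h\cir}, Z)$ on the maps sending $t$ to a unit. In particular $\can$ is an epimorphism, so $\Phi$ is unique if it exists, and $\Phi$ exists precisely when the $\Gamma$-equivariant $\E_\infty$-$\KU_p$-algebra map $\can \circ [p]^*$ sends $t$ to a unit of $\pi_*(\pcpl{(\KU_p^{\t\cir})})$.

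Next I would run that check. Since $[p]^*$ is the cotensor of $\KU_p$ with $\B[p] : \B\cir \to \B\cir$ and $\B[p]$ pulls back the tautological line bundle to its $p$-th tensor power, on $\pi_0$ we have $[p]^*(q) = q^p$; combined with $t = \beta^{-1}(q-1)$ and the $\KU_p$-linearity of $[p]^*$ this gives
\[
  [p]^*(t) = \beta^{-1}(q^p - 1) = \beta^{-1}(q-1)\,[p]_q = t\cdot[p]_q, \qquad [p]_q := 1 + q + \cdots + q^{p-1}.
\]
As $\can(t)$ is a unit by construction, it remains to check that $\can([p]_q)$ is a unit in $\pi_0(\pcpl{(\KU_p^{\t\cir})})$. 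Here I would use $q^p - 1 \equiv (q-1)^p \pmod p$, giving $[p]_q \equiv (q-1)^{p-1} \pmod p$ and hence $\can([p]_q) \equiv \can(q-1)^{p-1} \pmod p$; since $\can(q-1) = \beta\,\can(t)$ is a product of units, $\can([p]_q)$ is a unit modulo $p$, hence a unit because $\pcpl{(\KU_p^{\t\cir})}$ is $p$-complete. Therefore $\can([p]^*(t)) = \can(t)\,\can([p]_q)$ is invertible, the factorization $\Phi$ exists, and — being essentially unique — it is automatically a map of $\Gamma$-equivariant $\E_\infty$-$\KU_p$-algebras.

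The one step with real content is the invertibility of $\can([p]_q)$, where the crucial input is that $\beta$, and therefore $q-1$, becomes invertible after passing to $\KU_p^{\t\cir}$ — this is precisely where Bott periodicity of $\KU_p$ enters, in contrast with the connective or mod-$p$ situations; note that $p$-completion is genuinely needed, as $[p]_q$ is not already a unit in $\KU_p^{\t\cir}$. The part I expect to require the most care is instead the bookkeeping: verifying that $\can$ has the stated universal property as a localization of \emph{$\Gamma$-equivariant $\E_\infty$-$\KU_p$-algebras}. That will be formal, reducing to the universal property of inverting an element in an $\E_\infty$-ring together with the facts that $t$ is $\Gamma$-stable up to units and that $\KU_p$-linearity and $p$-completeness are visibly preserved.
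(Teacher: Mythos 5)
Your proof is correct and is essentially the paper's own argument: identify $\can$ as the ($p$-completed) localization of $\KU_p^{\h\cir}$ at $t$, equivalently at $q-1=\beta t$, and then check that $[p]^*$ carries this class to a unit of $\pi_0(\pcpl{(\KU_p^{\t\cir})})$ using $q^p-1\equiv (q-1)^p \pmod p$ together with $p$-completeness (your version via $[p]_q\equiv (q-1)^{p-1}$ is the same computation up to the unit $q-1$). The extra care you take with the $\Gamma$-equivariant, $\KU_p$-linear form of the universal property is exactly what the paper leaves implicit, so there is no substantive difference in route.
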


\begin{proof}
  The map $\smash{\can : \KU_p^{\h\cir} \to \KU_p^{\t\cir}}$ is a localization of $\E_\infty$-rings, at the element $t \in \pi_{-2}(\KU_p^{\h\cir})$, or equivalently at the element $\smash{q-1 = \beta t\in \pi_0(\KU_p^{\h\cir})}$ (for these elements see \cref{t--tCp--ku}). Thus, what we need to check is that $\smash{[p]^* : \KU_p^{\h\cir} \to \KU_p^{\h\cir}}$ sends $q-1$ to a unit. It sends $q-1 \mapsto q^p-1$, and the congruence $q^p-1 \equiv (q-1)^p$ modulo $p$ implies that $q^p-1$ is invertible in the $p$-complete ring $\smash{\pi_0(\pcpl{(\KU_p^{\t\cir})}) \iso \pcpl{\Z_p\lau{q-1}}}$.
\end{proof}

\begin{construction}
  \label{z--ol--phi}
  Let $\smash{\Phi : \pcpl{(\KU_p^{\t\cir})} \to \pcpl{(\KU_p^{\t\cir})}}$ be as in \cref{z--ol--KU-phi}. Taking fixed points for the action of $\Gamma$, we obtain a map of $\E_\infty$-rings $\varphi: \pcpl{(\J_p^{\t\cir})} \to \pcpl{(\J_p^{\t\cir})}$ making the following diagram of $\E_\infty$-rings commute:
  \begin{equation}
    \label{z--ol--phi--compatibility}
    \begin{tikzcd}
      \J_p^{\h\cir} \ar[r, "{[p]^*}"] \ar[d, "\can", swap] &
      \J_p^{\h\cir} \ar[d, "\can"] \\
      \pcpl{(\J_p^{\t\cir})} \ar[r, "\varphi"] &
      \pcpl{(\J_p^{\t\cir})}.
    \end{tikzcd}
  \end{equation}
\end{construction}

We now explain how the preceding constructions are relevant to understanding $\L_{\K(1)}\TC(\Z)$.

\begin{theorem}
  \label{z--ol--phi-TC}
  Let $\varphi: \pcpl{(\J_p^{\t\cir})} \to \pcpl{(\J_p^{\t\cir})}$ be as in \cref{z--ol--phi}. Then there are canonical equalizer diagrams of $\E_\infty$-rings
  \[
    \begin{tikzcd}
      \L_{\K(1)}\TC(\Z) \ar[r] &
      \pcpl{(\J_p^{\t\cir})} \ar[r, "\id", shift left] \ar[r, "\varphi"', shift right] & \pcpl{(\J_p^{\t\cir})},
    \end{tikzcd}
    \quad
    \begin{tikzcd}
    \L_{\K(1)}\TC(\S) \ar[r] &
      \J_p^{\h\cir} \ar[r, "\can", shift left] \ar[r, "{\can \circ [p]^*}"', shift right] & \pcpl{(\J_p^{\t\cir})},
    \end{tikzcd}
  \]
  under which the map $\L_{\K(1)}\TC(\S) \to \L_{\K(1)}\TC(\Z)$ induced by the unit map $\S \to \Z$ canonically identifies with the map induced by $\can : \J_p^{\h\cir} \to \pcpl{(\J_p^{\t\cir})}$ and the commutativity of \cref{z--ol--phi--compatibility}.
\end{theorem}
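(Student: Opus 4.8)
The plan is to combine the Nikolaus--Scholze formula $\TC(X) \iso \fib(\varphi - \can : X^{\h\cir} \to (X^{\t\Cp})^{\h\cir})$ of \cref{t--cyc--TC} (equivalently $\fib(\varphi - \can : \TC^-(X) \to \TP(X))$, using the identification $(X^{\t\Cp})^{\h\cir} \iso X^{\t\cir}$ for bounded below $X$) with the main theorem, and to identify each term after applying the exact functor $\L_{\K(1)}$. Take $X = \pcpl{\THH(\Z)}$; by \cref{i--main--thhZp} it is $\sh(\j_p^\triv)$ as a cyclotomic $\E_\infty$-ring, the canonical map $\j_p^\triv \to \sh(\j_p^\triv)$ becomes an equivalence after $(-)^{\t\cir}$ and $(-)^{\t\Cp}$, and $\THH(\Z)$ is an $H\Z$-algebra, so it --- and every $\cir$-homotopy-orbit spectrum built from it --- is $\K(1)$-acyclic. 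For the sphere one runs the same scheme on $\THH(\S) = \S$ with its Tate-diagonal cyclotomic structure, comparing via the map of cyclotomic $\E_\infty$-rings $\THH(\S) \to \THH(\Z)$.

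\emph{The $\TC(\Z)$ identification.} First I would show $\L_{\K(1)}\TP(\Z) \iso \pcpl{(\J_p^{\t\cir})}$: by \cref{i--main--thhZp} we have $\pcpl{\TP(\Z)} \iso \j_p^{\t\cir}$, and the fiber of $\j_p^{\t\cir} \to \J_p^{\t\cir}$ is $(\tau_{\le -1}\J_p)^{\t\cir}$. Now $\tau_{\le -1}\J_p$ has, in each finite range, only finitely many nonzero homotopy groups, and these lie in degrees whose consecutive gaps exceed $1$ (for $p$ odd, they are $\ge 2(p-1)$), so its Postnikov $k$-invariants vanish for connectivity reasons and it is a product of shifted Eilenberg--MacLane spectra, in particular an $H\Z$-module with trivial $\cir$-action; hence $(\tau_{\le -1}\J_p)^{\t\cir}$ is an $H\Z$-module, is $\K(1)$-acyclic, and $\L_{\K(1)}(\j_p^{\t\cir}) \iso \L_{\K(1)}(\J_p^{\t\cir}) = \pcpl{(\J_p^{\t\cir})}$ (the last step by \cref{i--conv--K1}, as $\J_p^{\t\cir}$ is a $\J_p$-module). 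Next, the fiber of $\can : \TC^-(\Z) \to \TP(\Z)$ is $\pcpl{\THH(\Z)}_{\h\cir}[1]$, which is $\K(1)$-acyclic; so $\L_{\K(1)}\can : \L_{\K(1)}\TC^-(\Z) \isoto \pcpl{(\J_p^{\t\cir})}$ is an equivalence, and transporting $\L_{\K(1)}\varphi$ across it presents $\L_{\K(1)}\TC(\Z)$ as $\fib(\id - \varphi' : \pcpl{(\J_p^{\t\cir})} \to \pcpl{(\J_p^{\t\cir})})$ for a canonical self-map $\varphi'$.

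\emph{Identifying $\varphi'$ with $\varphi$.} This is the crux. I would unwind the cyclotomic Frobenius of $\sh(\j_p^\triv)$ through \cref{t--cyc--sh}: it is assembled from the \emph{trivial} Frobenius $\phi_{\j_p^\triv} : \j_p \to \j_p^{\t\Cp}$, which factors as $\j_p \to \j_p^{\h\Cp} \lblto{\can} \j_p^{\t\Cp}$. Applying $(-)^{\h\cir}$, feeding through the identifications of the previous paragraph, and invoking the standard comparison between the trivial cyclotomic Frobenius and the $p$-fold covering map $[p]$ of the circle --- the $[p]^*$ of \cref{z--ol--p-cover}, as in the discussion around \cref{z--ol--p-cover-graded} --- one finds that after $\L_{\K(1)}$ the Frobenius becomes $(\L_{\K(1)}\can)^{-1} \circ \can \circ [p]^*$ with $\can\circ[p]^* : \J_p^{\h\cir} \to \pcpl{(\J_p^{\t\cir})}$. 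That is exactly the datum recorded by the commutative square \cref{z--ol--phi--compatibility}, which by the uniqueness in \cref{z--ol--KU-phi} characterizes $\varphi$ of \cref{z--ol--phi}; hence $\varphi' = \varphi$, and the first displayed equivalence follows.

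\emph{The $\TC(\S)$ case and compatibility.} Since $\S$ is not an $H\Z$-module, $\can : \TC^-(\S) \to \TP(\S)$ is not a $\K(1)$-equivalence; instead one identifies $\L_{\K(1)}\TC^-(\S) \iso \J_p^{\h\cir}$ and $\L_{\K(1)}\TP(\S) \iso \pcpl{(\J_p^{\t\cir})}$ (the latter since $\K(1)$-locally $(-)^{\t\cir}$ of $\THH(\S)$ may be compared to that of $\THH(\Z)$ --- a point needing a short separate argument --- and the former being the $\K(1)$-local computation of $\THH(\S)^{\h\cir}$, obtained from \cref{i--main--thhZp} together with the comparison $\THH(\S)\to\THH(\Z)$), and one checks, again via \cref{z--ol--p-cover-graded} and the Segal conjecture for $\S$, that the $\K(1)$-localized Frobenius of $\THH(\S)$ is $\can\circ[p]^*$. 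This gives $\L_{\K(1)}\TC(\S) \iso \fib(\can - \can\circ[p]^* : \J_p^{\h\cir} \to \pcpl{(\J_p^{\t\cir})})$, and the map $\L_{\K(1)}\TC(\S) \to \L_{\K(1)}\TC(\Z)$ is the induced map of fiber sequences given by $\can : \J_p^{\h\cir} \to \pcpl{(\J_p^{\t\cir})}$ and $\id$ on $\pcpl{(\J_p^{\t\cir})}$ --- the square commuting precisely by \cref{z--ol--phi--compatibility}. I expect the main obstacle to be the Frobenius bookkeeping in the third paragraph --- keeping all cyclotomic structures straight through $\sh$ and through the passage to $\Gamma$-homotopy fixed points defining $\varphi$, so that $\varphi'$ is literally (not merely up to an unidentified unit) the map of \cref{z--ol--phi} --- together with, for the sphere, pinning down the two $\K(1)$-local identifications of $\TC^-(\S)$ and $\TP(\S)$, where the relevant $\K(1)$-acyclicity statements concern homotopy \emph{fixed} points and so are not formal.
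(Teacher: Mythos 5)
Your skeleton (localize the Nikolaus--Scholze fiber sequence, use \cref{i--main--thhZp} to replace the Frobenius of $\pcpl{\THH(\Z)}$ by data built from $\j_p^\triv$, then compare with the sphere via the unit) is the same as the paper's, and your construction of the self-map $\varphi'$ agrees with the paper's intermediate map. The genuine gap is exactly at the step you call the crux: you conclude $\varphi' = \varphi$ because $\varphi'$ fits into a square of the shape \cref{z--ol--phi--compatibility}, ``which by the uniqueness in \cref{z--ol--KU-phi} characterizes $\varphi$.'' But the uniqueness in \cref{z--ol--KU-phi} is a statement about $\Gamma$-equivariant $\E_\infty$-$\KU_p$-algebra self-maps of $\pcpl{(\KU_p^{\t\cir})}$, and it holds only because $\can : \KU_p^{\h\cir} \to \KU_p^{\t\cir}$ is a localization of $\E_\infty$-rings (inverting $t$, equivalently $q-1$), so maps out of $\pcpl{(\KU_p^{\t\cir})}$ are determined by their restriction along $\can$. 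No such epimorphism property is available for $\can : \J_p^{\h\cir} \to \pcpl{(\J_p^{\t\cir})}$, and $\varphi$ is \emph{defined} in \cref{z--ol--phi} as $\Phi^{\h\Gamma}$; the square \cref{z--ol--phi--compatibility} is a property of $\varphi$, not a characterization of it. So commutativity of your square does not pin down $\varphi'$. This is precisely why the paper's proof runs the whole construction a second time for $\ku_p^\triv \to \sh(\ku_p^\triv)$: there the analogous map $\Phi_0$ does fit into the square of \cref{z--ol--KU-phi}, the uniqueness there gives $\Phi_0 \iso \Phi$, and the $\J_p$-level square is then recovered by applying $(-)^{\h\Gamma}$, using compatibility of the two constructions along the $\Gamma$-equivariant map $\j_p \to \ku_p$ (together with the observation that $\can : (\tau_{\ge0}(\ku_p^{\t\Cp}))^{\h\cir} \to (\tau_{\ge0}(\ku_p^{\t\Cp}))^{\t\cir}$ is a $\K(1)$-local equivalence because $\tau_{\ge0}(\ku_p^{\t\Cp})$ is a $\Z$-module). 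Some argument of this kind is needed; as written, $\varphi' = \varphi$ does not follow.

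Two further points. For the sphere you propose to identify $\L_{\K(1)}\TC^-(\S) \iso \J_p^{\h\cir}$ and $\L_{\K(1)}\TP(\S) \iso \pcpl{(\J_p^{\t\cir})}$ directly; this requires commuting $\L_{\K(1)}$ past the infinite limit $(-)^{\h\cir}$ applied to $\S$, whose $\K(1)$-acyclization is not bounded above, and you flag this as ``not formal'' but leave it open. The paper avoids it entirely: the map $\TC(\S^\triv) \to \TC(\j_p^\triv)$ is a $\K(1)$-local equivalence by \cite[Remark 2.4]{ammn--beilinson} together with the unit $\S_p \to \j_p$ being a $\K(1)$-local equivalence, after which every acyclicity statement needed concerns a coconnective or $\Z$-linear fiber and follows from \cref{z--ol--coconnective}. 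Finally, your claim that $\tau_{\le -1}(\J_p)$ is a product of shifted Eilenberg--MacLane spectra because its $k$-invariants ``vanish for connectivity reasons'' is unjustified (gaps of size $2p-2$ do not force vanishing; the relevant groups contain classes such as $\beta P^1$), but it is also unnecessary: coconnectivity of the fiber of $\j_p \to \J_p$ already yields $\K(1)$-local equivalences on $(-)^{\h\cir}$, $(-)_{\h\cir}$, and hence $(-)^{\t\cir}$, which is all you use.
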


\begin{remark}
  \label{z--ol--hesselholt-madsen}
  The spectrum $\pcpl{(\J_p^{\t\cir})}$ was studied by Hesselholt--Madsen \cite{hesselholt-madsen--J}. The introduction of that paper states that it was motivated partially by the connection between this object and $\TC(\Z)$ suggested by the work of B\"okstedt--Madsen \cite{bokstedt-madsen--TCZ}, though no precise connection was known to them. \cref{z--ol--phi-TC} now gives a precise connection.
\end{remark}

Our proof of \cref{z--ol--phi-TC} will repeatedly use the following fact.

\begin{lemma}
  \label{z--ol--coconnective}
  Let $\psi : X \to Y$ be a map of $\cir$-equivariant spectra with coconnective fiber. Then $\psi$ as well as the induced maps
  \[
    \psi_{\h\cir} : X_{\h\cir} \to Y_{\h\cir}, \quad
    \psi^{\h\cir} : X^{\h\cir} \to Y^{\h\cir}, \quad
    \psi^{\t\cir} : X^{\t\cir} \to Y^{\t\cir}
  \]
  are $\K(1)$-local equivalences.
\end{lemma}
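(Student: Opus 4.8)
The plan is to reduce everything to the single observation that a coconnective spectrum is $\K(1)$-acyclic, together with the fact that the three functors $(-)_{\h\cir}$, $(-)^{\h\cir}$, $(-)^{\t\cir}$ all preserve fiber sequences. First I would let $F := \fib(\psi)$, which by hypothesis is coconnective, and note that $F$ being coconnective means $F$ is built (as a limit) from Eilenberg--MacLane spectra $\Sigma^{-n}\mathrm{H}\pi_{-n}(F)$ with $n \ge 0$; since $\KU/p$ has trivial homotopy in negative degrees after smashing with any such shifted $\mathrm{H}A$ — more precisely, since $\K(1)_*(\mathrm{H}A) = 0$ for any abelian group $A$, and $\K(1)$-acyclics are closed under limits of uniformly bounded-above towers and under shifts — we get $\L_{\K(1)}F \iso 0$, i.e. $F$ is $\K(1)$-acyclic. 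Equivalently, $\psi$ is a $\K(1)$-local equivalence. (One has to be slightly careful that coconnectivity is preserved, not just bounded-above-ness, but the standard Postnikov/Whitehead argument applies: $F \iso \lim_n \tau_{\le n}F$ and each layer is a shifted $\mathrm{H}A$ in nonpositive degree, and $\K(1)$-local acyclicity passes to this limit because the tower is pro-constant enough — alternatively just cite that $\L_{\K(1)}$ kills all $\mathrm{H}A$ and hence all connective-cover-complementary pieces.)

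Next, for the three induced maps: the fiber of $\psi_{\h\cir}$ is $F_{\h\cir}$, the fiber of $\psi^{\h\cir}$ is $F^{\h\cir}$, and the fiber of $\psi^{\t\cir}$ is $F^{\t\cir}$, because homotopy orbits, homotopy fixed points, and the Tate construction are all exact functors on $\cir$-equivariant spectra. So it suffices to show each of $F_{\h\cir}$, $F^{\h\cir}$, $F^{\t\cir}$ is $\K(1)$-acyclic. For $F^{\t\cir}$ this follows immediately from the previous two once we handle them, via the fiber sequence $F_{\h\cir}[1] \to F^{\h\cir} \to F^{\t\cir}$ (the norm sequence) — if the first two terms are $\K(1)$-acyclic so is the third. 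For $F_{\h\cir}$: since $F$ is coconnective, $\Sigma^\infty_+\B\cir \otimes F$ is still bounded above (smashing a connective space spectrum with a coconnective spectrum keeps it coconnective, as $\pi_*(\Sigma^\infty_+\B\cir)$ is concentrated in nonnegative degrees and bounded-above-ness of the second factor controls the product), hence $F_{\h\cir}$ is coconnective, hence $\K(1)$-acyclic by the first paragraph. For $F^{\h\cir} \iso F^{\B\cir}$: this is a limit of the tower with layers $F[-2n]$ for $n \ge 0$ (the skeletal filtration of $\B\cir$, i.e. $F^{\CP^n}$ with successive fibers $F[-2n]$), each of which is coconnective, so $F^{\B\cir}$ is a limit of coconnective spectra with connectivity tending to $-\infty$ at a linear rate; since $\K(1)$-localization commutes with such limits (or: since $\K(1)$-acyclics form a localizing subcategory closed under the relevant limits — the Milnor sequence obstruction vanishes $\K(1)$-locally as $\lim^1$ of $\K(1)$-acyclics is $\K(1)$-acyclic), we conclude $F^{\B\cir}$ is $\K(1)$-acyclic.

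The one genuine subtlety — the step I expect to need the most care — is the claim that $F^{\h\cir}$ is $\K(1)$-acyclic when $F$ is merely coconnective rather than, say, bounded. The homotopy fixed point spectral sequence for $F^{\h\cir}$ can in principle have contributions in arbitrarily negative degrees, so one cannot argue by naive boundedness; the right framework is that $\K(1)$-acyclic spectra are closed under limits indexed by towers whose connectivity goes to $-\infty$ at least linearly (equivalently, "$\K(1)$-locally constant pro-spectra"), which applies to the $\B\cir$-skeletal tower since the $n$-th term has fiber in degree $-2n$. Concretely one can phrase this as: $\L_{\K(1)}(F^{\B\cir}) \iso \lim_n \L_{\K(1)}(F^{\CP^n}) \iso \lim_n 0 \iso 0$, where the first equivalence uses that $\L_{\K(1)}$ commutes with this particular limit because the connectivity of the cofibers $F[-2n]$ diverges. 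Everything else is routine exactness bookkeeping, and the coconnectivity hypothesis is exactly what makes all three functors land in $\K(1)$-acyclics.
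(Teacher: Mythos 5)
Your overall skeleton (pass to the fiber $F$, handle the three functors separately, finish with the norm cofiber sequence) is essentially the paper's, but one step as written is false: the claim that smashing a connective spectrum with a coconnective one yields a coconnective spectrum, which you use to conclude that $F_{\h\cir}$ is coconnective. This fails already for $\mathrm{H}\F_p \otimes \mathrm{H}\F_p$ (the dual Steenrod algebra, nonzero in arbitrarily high positive degrees), and in the case at hand $\Sigma^\infty_+\B\cir \otimes \mathrm{H}A$ has homotopy $H_*(\B\cir;A)$, again unbounded above; note also that $F_{\h\cir} \iso F \otimes \Sigma^\infty_+\B\cir$ only for the trivial action, and the skeletal filtration that exists in general has graded pieces $F[2n]$ with $n \ge 0$ --- the shifts go up, not down, so $F_{\h\cir}$ is genuinely not coconnective. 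The conclusion survives for a different (and simpler) reason, which is exactly what the paper does: homotopy $\cir$-orbits is a colimit, and $\K(1)$-local equivalences (equivalently, $\K(1)$-acyclics) are stable under colimits, so $\psi$ being a $\K(1)$-local equivalence immediately gives that $\psi_{\h\cir}$ is one. With that repair your argument goes through.

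Conversely, the step you single out as the genuine subtlety, $F^{\h\cir}$, is in fact the easy one; your worry conflates ``bounded'' with ``bounded above.'' The standard fact is that coconnective spectra are $\K(1)$-acyclic, and $F^{\h\cir} \iso F^{\B\cir}$ is itself coconnective: by induction each $F^{\CP^n}$ is coconnective (it is an iterated extension by the $F[-2n]$), and the Milnor sequence for $\lim_n F^{\CP^n}$ keeps all homotopy in nonpositive degrees. So no principle about $\L_{\K(1)}$ commuting with the tower limit is needed --- this is how the paper disposes of $\psi$ and $\psi^{\h\cir}$ in a single line; your tower argument can be made rigorous, but its justification reduces to the same coconnectivity observation, so it buys nothing. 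A minor further point: your sketch of ``coconnective implies $\K(1)$-acyclic'' via a limit of truncations is garbled; the elementary argument writes $F \iso \colim_n \tau_{\ge -n}F$ as a filtered colimit of bounded spectra, each built from finitely many Eilenberg--MacLane layers with $\K(1)_*(\mathrm{H}A) = 0$, and uses that $\K(1)$-homology commutes with filtered colimits --- though since the paper also just cites this fact, that is cosmetic. Your treatment of $\psi^{\t\cir}$ via the norm sequence matches the paper.
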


\begin{proof}
  For $\psi$ and $\smash{\psi^{\h\cir}}$ this results from the fact that coconnective spectra are $\K(1)$-acyclic. As $\K(1)$-local equivalences are stable under colimits, we then deduce the claim for $\psi_{\h\cir}$ and $\smash{\psi^{\t\cir}}$.
\end{proof}

\begin{proof}[Proof of \cref{z--ol--phi-TC}]
  We have equivalences of cyclotomic $\E_\infty$-rings $\THH(\S) \iso \S^\triv$ and $\pcpl{\THH(\Z)} \iso \sh(\j_p^\triv)$, the former being inverse to the unit map and the latter that of \cref{i--main--thhZp}. The proof will proceed by analyzing the commutative diagram of cyclotomic $\E_\infty$-rings
  \[
    \begin{tikzcd}
      \S^\triv \ar[r] &
      \j_p^\triv \ar[r] \ar[d] &
      \sh(\j_p^\triv) \ar[d] \\
      &
      \ku_p^\triv \ar[r] &
      \sh(\ku_p^\triv),
    \end{tikzcd}
  \]
  the upper left map being the unit map, the other horizontal maps being the maps $\phi^0$ of \cref{t--cyc--sh}, and the vertical maps being induced by the canonical map $\j_p \to \ku_p$.

  Note first that the map $\TC(\S^\triv) \to \TC(\j_p^{\triv})$ is a $\K(1)$-local equivalence. Indeed, by \cite[Remark 2.4]{ammn--beilinson}, it identifies after $p$-completion with the map $\TC(\S) \to \j_p \otimes \TC(\S)$ induced by the unit map $\S_p \to \j_p$, so it suffices to see that this unit map is a $\K(1)$-local equivalence. As explained in the proof of \cref{t--pf--LQ-uniqueness}, this follows from \cref{z--ol--coconnective}.

  Next, the map $\j_p^\triv \to \sh(\j_p^\triv)$ induces commutative diagrams of $\E_\infty$-rings
  \begin{equation}
      \label{z--ol--phi-TC--two}
      \begin{tikzcd}
      \j_p^{\h\cir} \ar[r] \ar[d, "\can", swap] &
      (\tau_{\ge0}(\j_p^{\t\Cp}))^{\h\cir} \ar[d, "\can"]  \\
      \j_p^{\t\cir} \ar[r] &
      (\tau_{\ge0}(\j_p^{\t\Cp}))^{\t\cir},
    \end{tikzcd}
    \qquad\qquad
    \begin{tikzcd}
      \j_p^{\h\cir} \ar[r] \ar[d, "{\can \circ [p]^*}", swap] &
      (\tau_{\ge0}(\j_p^{\t\Cp}))^{\h\cir} \ar[d, "\phi"]  \ar[dl, "\phi'", swap] \\
      \j_p^{\t\cir} \ar[r] &
      (\tau_{\ge0}(\j_p^{\t\Cp}))^{\t\cir};
    \end{tikzcd}
  \end{equation}
  here the map $\phi'$ is induced by the connective cover map $\tau_{\ge0}(\j_p^{\t\Cp}) \to \j_p^{\t\Cp}$ (see \cref{t--cyc--sh}). By \cref{i--main--thhZp}, the lower horizontal map is an equivalence and $\smash{\can : (\tau_{\ge0}(\j_p^{\t\Cp}))^{\h\cir} \to (\tau_{\ge0}(\j_p^{\t\Cp}))^{\t\cir}}$ is a $\K(1)$-local equivalence: for the latter, note that its fiber identifies with $(\pcpl{\THH(\Z)})_{\h\cir}[1]$, which is a colimit of $\Z$-modules, hence is $\K(1)$-acyclic. We define $\smash{\varphi_0 : \pcpl{(\J_p^{\t\cir})} \to \pcpl{(\J_p^{\t\cir})}}$ to be the map corresponding to the composition
  \[
    \L_{\K(1)}((\tau_{\ge0}(\j_p^{\t\Cp}))^{\t\cir}) \lblto {\can^{-1}} \L_{\K(1)}((\tau_{\ge0}(\j_p^{\t\Cp}))^{\h\cir}) \lblto{\phi} \L_{\K(1)}((\tau_{\ge0}(\j_p^{\t\Cp}))^{\t\cir})
  \]
  under the zig-zag of equivalences
  \[
    \smash{\L_{\K(1)}((\tau_{\ge0}(\j_p^{\t\Cp}))^{\t\cir}) \from \L_{\K(1)}(\j_p^{\t\cir}) \isoto \pcpl{(\J_p^{\t\cir})}}
  \]
  where the second map is an equivalence by \cref{z--ol--coconnective}. That is, $\varphi_0$ is the composition
  \[
    \smash{\pcpl{(\J_p^{\t\cir})} \iso \L_{\K(1)}(\j_p^{\t\cir}) \to \L_{\K(1)}((\tau_{\ge0}(\j_p^{\t\Cp}))^{\t\cir}) \lblto{\can^{-1}} \L_{\K(1)}((\tau_{\ge0}(\j_p^{\t\Cp}))^{\h\cir}) \lblto{\phi'}
    \L_{\K(1)}(\j_p^{\t\cir}) \iso \pcpl{(\J_p^{\t\cir})}.}
  \]

  By the preceding discussion, the commutative squares \cref{z--ol--phi-TC--two} identify upon $\K(1)$-localization with commutative squares
  \begin{equation}
    \label{z--ol--phi-TC--J}
    \begin{tikzcd}
      \J_p^{\h\cir} \ar[r, "\can"] \ar[d, "\can", swap] &
      \pcpl{(\J_p^{\t\cir})} \ar[d, "\id"]  \\
      \pcpl{(\J_p^{\t\cir})} \ar[r, "\id"] &
      \pcpl{(\J_p^{\t\cir})},
    \end{tikzcd}
    \qquad\qquad
    \begin{tikzcd}
      \J_p^{\h\cir} \ar[r, "\can"] \ar[d, "{\can \circ [p]^*}", swap] &
      \pcpl{(\J_p^{\t\cir})} \ar[d, "\varphi_0"]  \\
      \pcpl{(\J_p^{\t\cir})} \ar[r, "\id"] &
      \pcpl{(\J_p^{\t\cir})},
    \end{tikzcd}
  \end{equation}
  the left hand one being the tautological one. Recalling the definition of $\TC$ and the equivalence $\L_{\K(1)}\TC(\S) \iso \L_{\K(1)}\TC(\j_p^\triv)$ from above, all that remains to be shown is that the map $\varphi_0$ and the commutativity of the right hand square in \cref{z--ol--phi-TC--J} are equivalent to the map $\varphi$ and the commutativity of the square \cref{z--ol--phi--compatibility} of \cref{z--ol--phi}.

  To prove this last assertion, we consider the map $\ku_p^\triv \to \sh(\ku_p^\triv)$. As in the discussion with $\j_p$ above, this induces commutative diagrams of $\E_\infty$-rings
  \[
    \begin{tikzcd}
      \ku_p^{\h\cir} \ar[r] \ar[d, "\can", swap] &
      (\tau_{\ge0}(\ku_p^{\t\Cp}))^{\h\cir} \ar[d, "\can"]  \\
      \ku_p^{\t\cir} \ar[r] &
      (\tau_{\ge0}(\ku_p^{\t\Cp}))^{\t\cir},
    \end{tikzcd}
    \qquad\qquad
    \begin{tikzcd}
      \ku_p^{\h\cir} \ar[r] \ar[d, "{\can \circ [p]^*}", swap] &
      (\tau_{\ge0}(\ku_p^{\t\Cp}))^{\h\cir} \ar[d, "\phi"]  \ar[dl, "\phi'", swap] \\
      \ku_p^{\t\cir} \ar[r] &
      (\tau_{\ge0}(\ku_p^{\t\Cp}))^{\t\cir}.
    \end{tikzcd}
  \]
  Here the lower horizontal maps are not equivalences, but $\smash{\can : (\tau_{\ge0}(\ku_p^{\t\Cp}))^{\h\cir} \to (\tau_{\ge0}(\ku_p^{\t\Cp}))^{\t\cir}}$ is again a $\K(1)$-local equivalence, as $\tau_{\ge0}(\ku_p^{\t\Cp})$ is an algebra over $\smash{\tau_{\ge0}(\j_p^{\t\Cp})}$ and hence also a $\Z$-module. We may therefore replicate the second description of the map $\varphi_0$ above to define a map of $\E_\infty$-rings $\smash{\Phi_0 : \pcpl{(\KU_p^{\t\cir})} \to \pcpl{(\KU_p^{\t\cir})}}$, namely the composition
  \begin{align*}
    \pcpl{(\KU_p^{\t\cir})} \iso \L_{\K(1)}(\ku_p^{\t\cir}) \to {}& \L_{\K(1)}((\tau_{\ge0}(\ku_p^{\t\Cp}))^{\t\cir}) \\
    \lblto{\can^{-1}} {}& \L_{\K(1)}((\tau_{\ge0}(\ku_p^{\t\Cp}))^{\h\cir}) \lblto{\phi'}
    \L_{\K(1)}(\ku_p^{\t\cir}) \iso \pcpl{(\KU_p^{\t\cir})},
  \end{align*}
  the first and last equivalences again coming from \cref{z--ol--coconnective}. The preceding commutative diagrams induce a commutative square
  \begin{equation}
    \label{z--ol--phi-TC--KU}
    \begin{tikzcd}
      \smash{\KU_p^{\h\cir}} \ar[r, "\can"] \ar[d, "{\can \circ [p]^*}", swap] &
      \smash{\pcpl{(\KU_p^{\t\cir})}} \ar[d, "\Phi_0"]  \\
      \pcpl{(\KU_p^{\t\cir})} \ar[r, "\id"] &
      \pcpl{(\KU_p^{\t\cir})},
    \end{tikzcd}
  \end{equation}
  identifying $\Phi_0$ with the map $\Phi$ of \cref{z--ol--KU-phi} and the commuting of \cref{z--ol--phi-TC--KU} with that of the square there. Finally, it follows from the compatibility of the constructions of $\varphi_0$ and $\Phi_0$ with respect to the $\Gamma$-equivariant map $\j_p \to \ku_p$ that the right hand square in \cref{z--ol--phi-TC--J} may be recovered by applying $\Gamma$--fixed points to \cref{z--ol--phi-TC--KU}, which finishes the proof.
\end{proof}

To be able to use \cref{z--ol--phi-TC} to calculate $\L_{\K(1)}\TC(\Z)$, we must better understand the map $\varphi$ of \cref{z--ol--phi}. The proof of \cref{z--ol--phi-TC} included a second construction of $\varphi$, and in \cref{z--ta}, we will give a third, invoking the $\K(1)$-local ambidexterity of the space $\B\Cp$ and some generalities on the Tate construction. There we will use this third perspective to prove \cref{z--ol--orbits} below, stating which requires us to recall a basic result in $\K(1)$-local stable homotopy theory.

\begin{notation}
  \label{z--ol--q}
  The class $q = \beta t + 1 \in \pi_0(\KU_p^{\B\cir})$ is represented by a map of $\K(1)$-local spectra $\pcpl{\J_p[\B\cir]} \to \KU_p$, unique up to homotopy; we will denote this map also by $q$.
\end{notation}

\begin{proposition}[{Westerland \citebare{westerland--higher-j}}]
  \label{z--ol--Bcir-splitting}
  The map of $\K(1)$-local spectra
  \[
    \begin{psmallmatrix}[p]_*\\q\end{psmallmatrix} : \pcpl{\J_p[\B\cir]} \to \pcpl{\J_p[\B\cir]} \oplus \KU_p
  \]
  is an equivalence.
\end{proposition}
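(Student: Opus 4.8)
Since \cref{z--ol--Bcir-splitting} is due to Westerland \cite{westerland--higher-j}, one option is simply to cite it; the plan I would otherwise follow is as follows. Write $A := \pcpl{\J_p[\B\cir]} \iso \L_{\K(1)}\Sigma^\infty_+\CP^\infty$, an $\E_\infty$-ring. First I would split off the bottom cell: the basepoint of $\CP^\infty$ gives $\Sigma^\infty_+\CP^\infty \iso \S \oplus \Sigma^\infty\CP^\infty$, hence $A \iso \J_p \oplus \widetilde A$ with $\widetilde A := \L_{\K(1)}\Sigma^\infty\CP^\infty$. Since the $p$-th power map $[p] : \CP^\infty \to \CP^\infty$ fixes the basepoint, $[p]_*$ is block diagonal for this splitting---the identity on $\J_p$ together with a self-map $\widetilde{[p]_*}$ of $\widetilde A$---and $q$ restricts on the $\J_p$-summand to the unit map $\J_p \to \KU_p$, as the tautological bundle restricts to the trivial bundle over a point. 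A routine change of coordinates on the target then reduces the claim to showing that $\begin{psmallmatrix}\widetilde{[p]_*}\\\widetilde q\end{psmallmatrix} : \widetilde A \to \widetilde A \oplus \KU_p$ is an equivalence, where $\widetilde q := q|_{\widetilde A}$.

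Next I would bring in Snaith's theorem: the map $\Sigma^\infty_+\CP^\infty \to \KU$ classifying the tautological line bundle---which is $q$, since $[L] = \beta t + 1$---exhibits $\KU$ as the localization inverting the Bott class $\beta \in \pi_2\Sigma^\infty_+\CP^\infty$. As $\beta$ is a reduced class it acts by zero on the $\S$-summand, so $p$-completely and $\K(1)$-locally $\widetilde q$ exhibits $\KU_p$ as $\widetilde A[\beta^{-1}]$, and $\fib(\widetilde q)$ is the (derived) $\beta$-power-torsion in $\widetilde A$. I would then use the elementary fact that a map of the form $\begin{psmallmatrix}f\\g\end{psmallmatrix} : X \to X \oplus Y$ is an equivalence if and only if the composite $\fib(g) \to X \xrightarrow{f} X$ is an equivalence; applied with $f = \widetilde{[p]_*}$ and $g = \widetilde q$, and then passing to cofibers via the octahedral axiom, the claim becomes the assertion that the natural map $\cofib(\widetilde{[p]_*} : \widetilde A \to \widetilde A) \to \Sigma\KU_p$ induced by $\widetilde q$ is an equivalence.

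This last identification is the crux, and is the step I expect to be the main obstacle. Here one exploits the interaction of $\widetilde{[p]_*}$ with the cell filtration of $\CP^\infty$: by the homological analogue of \cref{z--ol--p-cover-graded}, $\widetilde A$ carries an exhaustive filtration with $\gr_n \iso \Sigma^{2n}\J_p$ for $n \ge 1$, on which $\widetilde{[p]_*}$ acts by $p^n$ while multiplication by $\beta$ carries $\gr_n$ to $\gr_{n+1}$---reflecting that $[p]^*$ sends $q \mapsto q^p$, as in the proof of \cref{z--ol--KU-phi}. One then checks the required equivalence after reduction mod $p$, which is legitimate since all the spectra involved are $p$-complete: mod $p$ the map $\widetilde{[p]_*}$ becomes essentially a Frobenius, by the congruence $q^p - 1 \equiv (q-1)^p \pmod p$, and both $\cofib(\widetilde{[p]_*})/p$ and $\Sigma\KU_p/p$ can then be computed directly from the multiplicative formal group law and matched up via $\widetilde q$. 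The real work is in making this infinite filtration interact cleanly with the $\beta$-localization and with $\widetilde{[p]_*}$---controlling extensions rather than just the associated graded---which is precisely what Westerland's argument does.
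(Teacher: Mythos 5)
Your formal reductions are correct as far as they go: splitting off the bottom cell, the observation that a map $\begin{psmallmatrix}f\\g\end{psmallmatrix} : X \to X \oplus Y$ is an equivalence if and only if $\fib(g) \to X \lblto{f} X$ is, and the octahedral rotation identifying the claim with the assertion that $\widetilde q$ induces an equivalence $\cofib(\widetilde{[p]_*}) \iso \KU_p[1]$. But these steps only repackage the proposition in an equivalent form; all of its content sits in that last assertion, and you do not prove it --- you explicitly defer "the real work" to Westerland. That is a genuine gap. Moreover, the strategy you sketch for the crux is unlikely to close as stated: mod $p$, the map $\widetilde{[p]_*}$ acts on each associated graded piece of the skeletal filtration by $p^n \equiv 0$ (cf.\ \cref{z--ol--p-cover-graded}), so no Frobenius is visible on the associated graded at all --- the congruence $q^p - 1 \equiv (q-1)^p$ is a statement about $\KU_p$-cohomology of all of $\B\cir$, i.e.\ it lives precisely in the extension/assembly data you acknowledge not controlling. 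In addition, the skeletal filtration does not interact simply with $\K(1)$-localization (a colimit of $\K(1)$-local spectra need not be $\K(1)$-local), and $\beta$-multiplication on the associated graded is multiplication by $n+1$ from $\gr_n$ to $\gr_{n+1}$, so the $\beta$-localization is likewise invisible gradedwise. A smaller imprecision: $\beta$ does not act by zero on the $\S$-summand; rather $\beta$-multiplication followed by the augmentation is null, which is what gives $A[\beta^{-1}] \iso \widetilde A[\beta^{-1}]$ and hence, with Snaith, the identification of $\widetilde q$ as a localization (after suitable $p$-completion of the telescope).

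For comparison, the paper's proof bypasses all of this: since source and target are $\K(1)$-local, it suffices to check that the map is an isomorphism on $\KU_p$-cohomology; both sides are even, with $\KU_p^0(\pcpl{\J_p[\B\cir]}) \iso \Z_p\pow{q-1} \iso \Z_p\pow{\Z_p}$ and $\KU_p^0(\KU_p) \iso \Z_p\pow{\Z_p^\times}$, and under these identifications the map is induced by the isomorphism of profinite sets $\Z_p \amalg \Z_p^\times \to \Z_p$ given by multiplication by $p$ on the first summand and the inclusion on the second. If you want a self-contained argument rather than an outright citation of Westerland (which is a legitimate option, and is how the paper attributes the statement), this cohomological check is the efficient way to finish; if you pushed your filtration-plus-mod-$p$ sketch through, you would in effect be forced into this same computation, at which point the Snaith-theorem detour buys nothing.
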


\begin{proof}
  It suffices to check that the map induces an isomorphism on $\KU_p$-cohomology. The $\KU_p$-cohomology of $\pcpl{\J_p[\B\cir]}$ is the $\KU_p$-cohomology of $\B\cir$, which is even periodic and given in degree zero by $\Z_p\pow{q-1}$. The $\KU_p$-cohomology of $\KU_p$ is also even periodic, and given in degree zero by the completed group ring $\Z_p\pow{\Z_p^\times}$. Rewriting $\Z_p\pow{q-1}$ as the completed group ring $\Z_p\pow{\Z_p}$ (see \cref{j--ku--spherical-pow}), the map in degree zero $\KU_p$-cohomology identifies with the map of abelian groups
  \[
    \Z_p\pow{\Z_p} \times \Z_p\pow{\Z_p^\times} \iso \Z_p\pow{\Z_p \amalg \Z_p^\times} \to \Z_p\pow{\Z_p}
  \]
  induced by the isomorphism of profinite sets $\Z_p \amalg \Z_p^\times \to \Z_p$ given by multiplication by $p$ on the $\Z_p$ summand and the inclusion on the $\Z_p^\times$ summand.
\end{proof}

\begin{remark}
  \label{z--ol--Bcir-splitting-alt}
  The commutative group structure of $\B\cir$ determines an $\E_\infty$-ring structure on $\pcpl{\J_p[\B\cir]}$, with respect to which the maps $[p]_* : \pcpl{\J_p[\B\cir]} \to \pcpl{\J_p[\B\cir]}$ and $q : \pcpl{\J_p[\B\cir]} \to \KU_p$ canonically promote to $\E_\infty$-ring maps. Thus, \cref{z--ol--Bcir-splitting} in fact determines a product decomposition of $\pcpl{\J_p[\B\cir]}$ as an $\E_\infty$-ring. In \cite[\textsection 4.2]{carmeli-schlank-yanovski--cyclotomic}, Carmeli--Schlank--Yanovski give a different perspective on this product decomposition, which we will review in our proof of \cref{z--ta--nu}.

  We note in addition that the analysis of $\pcpl{\J_p[\B\cir]} \iso \L_{\K(1)}(\S[\B\cir])$ goes back to Ravenel's original work \cite[\textsection 9]{ravenel--localization}, and that results of a similar form to \cref{z--ol--Bcir-splitting} above and \cref{z--ol--Bcir-infinite-splitting} below may also be found in work of Hansen \cite{hansen--K-localizations} and Hesselholt--Madsen \cite{hesselholt-madsen--J}.
\end{remark}

\begin{notation}
  \label{z--ol--nu}
  In light of \cref{z--ol--Bcir-splitting}, we let $\nu : \pcpl{\J_p[\B\cir]} \to \pcpl{\J_p[\B\cir]}$ be the unique such map of spectra with identifications $[p]_* \circ \nu \iso \id$ and $q \circ \nu \iso 0$, and we let $\xi : \KU_p \to \pcpl{\J_p[\B\cir]}$ be the unique such map of spectra with identifications $[p]_* \circ \xi \iso 0$ and $q \circ \xi \iso \id$.
\end{notation}

\begin{lemma}
  \label{z--ol--orbits}
  The map of spectra $\pcpl{\J_p[\B\cir]}[1] \to \pcpl{\J_p[\B\cir]}[1]$ obtained by taking vertical fibers in \cref{z--ol--phi--compatibility} is canonically equivalent to the suspension of the map $\nu$ of \cref{z--ol--nu}.
\end{lemma}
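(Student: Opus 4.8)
The plan is to reduce, via the norm fibre sequence and the splitting \cref{z--ol--Bcir-splitting}, to two identities for a single self-map of $\pcpl{\J_p[\B\cir]}$, and then to verify those using the $\K(1)$-local ambidexterity of $\B\Cp$ — the ``third perspective'' indicated in the text. Since $\J_p^{\h\cir}$ is a cofiltered limit of copies of $\J_p$ it is already $p$-complete, so the $p$-completed norm fibre sequence for the trivial $\cir$-action reads
\[
  \pcpl{\J_p[\B\cir]}[1] \xrightarrow{\Nm} \J_p^{\h\cir} \xrightarrow{\can} \pcpl{(\J_p^{\t\cir})}.
\]
Both vertical maps of \cref{z--ol--phi--compatibility} are this map $\can$, so the square extends canonically to a map of fibre sequences whose middle and right components are $[p]^*$ and $\varphi$ and whose left component is the map of the lemma; write $g$ for the desuspension of that left component, a self-map of $\pcpl{\J_p[\B\cir]}$. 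Post-composing with the equivalence $\begin{psmallmatrix}[p]_*\\ q\end{psmallmatrix}$ of \cref{z--ol--Bcir-splitting}, we see that the lemma is equivalent to the assertion $g \simeq \nu$, and that this in turn amounts to producing equivalences $[p]_* \circ g \simeq \id$ and $q \circ g \simeq 0$.

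To compute $g$ I would argue as follows. The group extension $\Cp \to \cir \xrightarrow{[p]} \cir$ exhibits $\B[p] \colon \B\cir \to \B\cir$ as a $\B\Cp$-bundle; as $\B\Cp$ is $\pi$-finite it is $\K(1)$-locally ambidextrous, so $\K(1)$-locally the pullback $(\B[p])^*$ on $\Spt_{\K(1)}$-valued local systems on $\B\cir$ admits a two-sided adjoint, with a norm equivalence $(\B[p])_! \simeq (\B[p])_*$. Set $P := (\B[p])_*(\B[p])^*\J_p$ (the constant system $\J_p$ pushed forward and pulled back), with unit $\J_p \to P$. Then $[p]^*$ is $\lim_{\B\cir}$ of this unit (under $\lim_{\B\cir}\circ(\B[p])_* \simeq \lim_{\B\cir}$), and $\varphi$ is the map induced on Tate constructions — here one uses that $\varphi = \Phi^{\h\Gamma}$ is characterized by commutativity with $\can$, and that the ambidexterity construction is $\Gamma$-equivariant. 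Applying naturality of the $\cir$-norm fibre sequence to the unit $\J_p \to P$ and carrying out the remaining identification $\colim_{\B\cir}P \simeq \colim_{\B\cir}\J_p$ — which is where ambidexterity enters — identifies $g$ with the $\K(1)$-local transfer $\tau$ of the $\B\Cp$-bundle $\B[p]$. Making this chain of identifications precise, with no stray homotopy-cardinality factor, is the main obstacle; it rests on the general theory of $\K(1)$-local ambidexterity and, following Carmeli--Schlank--Yanovski, on the fact that the $\K(1)$-local homotopy cardinality $\lvert\B\Cp\rvert_{\K(1)} \in \pi_0(\J_p)$ equals $1$. (Alternatively one can bypass the general machinery and compute $g$, $[p]^*$, $[p]_*$ and $\varphi$ directly after base change to $\KU_p$, where $\can$ is the $p$-completed localization inverting $q - 1$ and $[p]^*$ is $q \mapsto q^p$; this amounts to the same computation in coordinates.)

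Granting the identification $g \simeq \tau$, both required identities follow. First, $[p]_* \circ g$ is the composite of the transfer of $\B[p]$ with the pushforward $[p]_*$ along $\B[p]$; as the composite of pushforward and transfer for a $\pi$-finite bundle is multiplication by the $\K(1)$-local homotopy cardinality of the fibre, and $\lvert\B\Cp\rvert_{\K(1)} = 1$, we get $[p]_* \circ g \simeq \id$. Second, $q \circ g \in \KU_p^0(\B\cir)$ is the image of the class $q$ under the $\KU_p$-cohomology transfer of $\B[p]$; using the splitting $\KU_p^0(\B\Cp) \simeq \Z_p \times \Z_p[\zeta_p]$ one identifies this transfer, up to the automorphism $[p]^* \colon q \mapsto q^p$, with the projection onto the trivial-isotypic summand for the symmetry $q \mapsto \zeta_p q$ of the $p$-fold cover — concretely it sends $q^{pk} \mapsto q^k$ and $q^j \mapsto 0$ for $p \nmid j$ — and in particular it annihilates the class $q$, so $q \circ g \simeq 0$. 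By the reduction of the first paragraph this gives $g \simeq \nu$, whence the map of the lemma is canonically $\nu[1]$, the homotopy being the one assembled from the two displayed equivalences.
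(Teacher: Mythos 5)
Your strategy is the same as the paper's: reduce via \cref{z--ol--Bcir-splitting} to the two identities $[p]_*\circ g\simeq\id$ and $q\circ g\simeq 0$ characterizing $\nu$, identify $g$ with the ambidextrous wrong-way map of the $\B\Cp$-bundle $\B[p]$, and then verify the identities by $\K(1)$-local ambidexterity. The paper outsources exactly these steps: the identification of the fiber map with the explicit ambidextrous construction to \cite{raksit--tate} (\cref{z--ta--outsource}), and the identities themselves to Carmeli--Schlank--Yanovski (\cref{z--ta--CSY}). Your treatment of the first identity via $\lvert\B\Cp\rvert_{\K(1)}=1$ is correct (and citing CSY for that cardinality is legitimate, as the paper does). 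But the identification $g\simeq$ (transfer of $\B[p]$), which you yourself call ``the main obstacle'', is left as a sketch: the map on vertical fibers depends on the commuting homotopy in \cref{z--ol--phi--compatibility}, not just on the four maps, so you must match the ambidexterity-produced square \emph{with its filling} against the paper's. That is done by the uniqueness in \cref{z--ol--KU-phi}, which is uniqueness among $\Gamma$-equivariant $\E_\infty$-$\KU_p$-algebra maps; to invoke it you need the ambidextrous transformation to be suitably multiplicative/linear over $[p]^*$ (the lax symmetric monoidal structure of \cref{z--ta--outsource}\cref{z--ta--outsource--monoidal}, i.e.\ the projection formula for the $\K(1)$-local norm) and you need the compatibility of its vertical fibers with \cref{z--ta--orbits-map} (\cref{z--ta--outsource}\cref{z--ta--outsource--fiber}). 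None of this is supplied; it is precisely the content of \cite{raksit--tate}.

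The more serious gap is your argument for $q\circ g\simeq 0$. You assert that the $\KU_p$-cohomology transfer of $\B[p]$ is ``projection onto the trivial-isotypic summand for the symmetry $q\mapsto\zeta_p q$'', sending $q^{pk}\mapsto q^k$ and $q^j\mapsto 0$ for $p\nmid j$. There is no automorphism $q\mapsto\zeta_p q$ of $\KU_p^0(\B\cir)\iso\Z_p\pow{q-1}$ (no $\zeta_p$ is present), and the heuristic conflates the classical covering transfer of $[p]:\cir\to\cir$ with the exotic wrong-way map of $\B[p]$, whose fiber is $\B\Cp$ and which exists only after $\K(1)$-localization. Note that $T(q^{pk})=q^k$ follows from Frobenius reciprocity once $T(1)=1$, and $T(q^j)=0$ for $p\nmid j$ follows from $T(q)=0$ by naturality in $[j]_*$; but $T(q)=0$ itself is the nontrivial point and does not follow from the cardinality statement (a priori $g\simeq\nu+\xi\circ(q\circ g)$ for an arbitrary class $q\circ g$). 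Computing it amounts to inverting the $\K(1)$-local $\Cp$-norm on $\KU_p$ and evaluating the resulting pairing --- exactly the Carmeli--Schlank--Yanovski computation cited in \cref{z--ta--CSY}. So either cite their result for this identity as well, or replace the isotypic-projection remark by an actual computation (e.g.\ using the rationality, hence $p$-complete vanishing, of $\KU_p^{\t\Cp}$ to invert the norm explicitly); as written, this step is asserted rather than proved.
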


It will be helpful to also record the following reformulation of \cref{z--ol--Bcir-splitting}.

\begin{proposition}
  \label{z--ol--Bcir-infinite-splitting}
  Let $c_* : \pcpl{\J_p[\B\cir]} \to \J_p$ be the map induced by the projection map $c : \B\cir \to *$, and recall that $\pcpl{\J_p\{\B\cir\}}$ denotes the fiber of $c_*$ (\cref{i--conv}\cref{i--conv--chains}). Then there is a canonical nullhomotopy of $c_* \circ \xi : \KU_p \to \J_p$, and the induced map
  \[
    (\xi \circ \nu^{\circ n})_{n \in \Z_{\ge0}} : \pcpl{\big(\bigoplus_{n \in \Z_{\ge0}} \KU_p\big)} \to \pcpl{\J_p\{\B\cir\}}
  \]
  is an equivalence.
\end{proposition}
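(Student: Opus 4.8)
The plan is to obtain the nullhomotopy by a short formal manipulation, and to obtain the equivalence by iterating Westerland's splitting (\cref{z--ol--Bcir-splitting}) and passing to a filtered colimit. For the nullhomotopy: the composite $\B\cir \xrightarrow{\B[p]} \B\cir \to *$ is canonically the projection, so $c_* \circ [p]_* \simeq c_*$, and precomposing with $\xi$ and using $[p]_* \circ \xi \simeq 0$ (\cref{z--ol--nu}) gives the desired $c_* \circ \xi \simeq c_* \circ [p]_* \circ \xi \simeq 0$. Running the same argument with $\nu$ in place of $\xi$ and $[p]_* \circ \nu \simeq \id$ gives $c_* \circ \nu \simeq c_*$, hence $c_* \circ \nu^{\circ n} \circ \xi \simeq 0$ for all $n$; so the maps $\nu^{\circ n} \circ \xi : \KU_p \to \pcpl{\J_p[\B\cir]}$ lift canonically along $\pcpl{\J_p\{\B\cir\}} = \fib(c_*)$ and, after $p$-completing the source, assemble into the map in the statement.

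For the equivalence: by \cref{z--ol--Bcir-splitting,z--ol--nu}, $\nu$ and $\xi$ are the two components of an inverse $\pcpl{\J_p[\B\cir]} \oplus \KU_p \isoto \pcpl{\J_p[\B\cir]}$ to $\begin{psmallmatrix}[p]_*\\ q\end{psmallmatrix}$, so there is a split cofiber sequence $\KU_p \xrightarrow{\xi} \pcpl{\J_p[\B\cir]} \xrightarrow{[p]_*} \pcpl{\J_p[\B\cir]}$ with chosen section $\nu$ of $[p]_*$. Applying this decomposition to the image of $\nu$ and iterating $n$ times yields, for each $n$, a cofiber sequence
\[
  \bigoplus_{k=0}^{n-1} \KU_p \xrightarrow{(\nu^{\circ k} \circ \xi)_k} \pcpl{\J_p[\B\cir]} \xrightarrow{[p]_*^{\circ n}} \pcpl{\J_p[\B\cir]},
\]
and these are compatible as $n$ grows via the evident inclusions on the left, the identity in the middle, and $[p]_*$ on the right (cf. \cref{z--ol--p-cover-graded}). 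Taking the colimit over $n$, and using that filtered colimits are exact, gives a cofiber sequence
\[
  \bigoplus_{k \ge 0} \KU_p \xrightarrow{(\nu^{\circ k} \circ \xi)_k} \pcpl{\J_p[\B\cir]} \to T, \qquad T := \colim\big(\pcpl{\J_p[\B\cir]} \xrightarrow{[p]_*} \pcpl{\J_p[\B\cir]} \xrightarrow{[p]_*} \cdots\big).
\]
Granting that $\pcpl{T} \simeq \J_p$ compatibly with $c_*$, $p$-completing this cofiber sequence (valid since $p$-completion preserves colimits and $\pcpl{\J_p[\B\cir]}$ is $p$-complete) exhibits $\pcpl{\bigoplus_{k \ge 0} \KU_p}$ as $\fib(c_* : \pcpl{\J_p[\B\cir]} \to \J_p) = \pcpl{\J_p\{\B\cir\}}$, and unwinding the construction identifies the resulting equivalence with the map in the statement.

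It remains to identify $\pcpl{T}$. As $T$ is a $\J_p$-module, $\pcpl{T} \simeq \L_{\K(1)} T$ (by the $p$-complete smashing property of $\K(1)$-localization; see \cref{i--conv}). Now $\pcpl{\J_p[\B\cir]} \simeq \L_{\K(1)}\Sigma^\infty_+ \B\cir$, with $[p]_*$ induced by the multiplication-by-$p$ map on $\B\cir \simeq K(\Z,2)$, and $\L_{\K(1)}\Sigma^\infty_+$ preserves colimits, so $\L_{\K(1)} T \simeq \L_{\K(1)}\Sigma^\infty_+ K(\Z[1/p],2)$, with $c_*$ becoming the augmentation to $\L_{\K(1)}\S = \J_p$. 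This augmentation is an equivalence because $\widetilde{\K(1)}_*(K(\Z[1/p],2)) \cong \colim\big(\widetilde{\K(1)}_*(\CP^\infty), [p]_*\big) = 0$: indeed $[p]^* : \KU_p^0(\CP^\infty) = \Z_p\pow{x} \to \Z_p\pow{x}$ sends $x \mapsto (1+x)^p - 1 \equiv x^p \pmod p$, so dually $[p]_*$ carries each element $\beta_j$, $j \ge 1$, of the standard $\K(1)_*$-basis of $\widetilde{\K(1)}_*(\CP^\infty)$ to a unit multiple of $\beta_{j/p}$ if $p \mid j$ and to $0$ otherwise, and hence every $\beta_j$ is eventually annihilated in the colimit.

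The step I expect to be the main obstacle is precisely this identification of $\pcpl{T}$: beyond the (routine but fiddly) bookkeeping needed to set up the iterated splitting and to match the final equivalence with the named map $(\nu^{\circ n} \circ \xi)_n$, one must carefully pin down the telescope $T$ together with the behaviour of $c_*$ on it, which is where the computation of the $p$-th power map on $\K(1)_*(\CP^\infty)$ enters.
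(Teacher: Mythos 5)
Your argument is correct, and its overall shape matches the paper's: the same canonical homotopy $c_* \simeq c_* \circ [p]_*$ produces the nullhomotopy (and, together with $[p]_*\circ\nu \simeq \id$, the lifts of $\nu^{\circ n}\circ\xi$ against $c_*$ --- your $\nu^{\circ n}\circ\xi$ being the type-correct reading of the statement's $\xi\circ\nu^{\circ n}$), and in both proofs the equivalence is obtained by iterating the splitting of \cref{z--ol--Bcir-splitting} and passing to $p$-completed colimits, so that everything reduces to showing that $c_*$ identifies the $p$-completed telescope of $[p]_*$ on $\pcpl{\J_p[\B\cir]}$ with $\J_p$. Where you genuinely diverge is in how that last identification is proved. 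The paper deduces it from \cref{z--ol--p-cover-graded}: on the skeletal filtration of $\B\cir$, the map $[p]_*$ acts on the $(2n+2)$-cell subquotient by $p^{n+1}$, so the reduced part of the telescope becomes infinitely $p$-divisible and dies after $p$-completion --- an argument internal to what is already set up and requiring no homology computation. You instead rewrite the $p$-completed telescope as $\L_{\K(1)}\Sigma^\infty_+ K(\Z[1/p],2)$, with $K(\Z[1/p],2) = \colim(\B\cir \lblto{p} \B\cir \lblto{p} \cdots)$, and show the augmentation to $\J_p$ is an equivalence because the reduced $\K(1)$-homology of $K(\Z[1/p],2)$ vanishes, via the mod $p$ formula $[p]^*(x) = (1+x)^p - 1 \equiv x^p$ on $\K(1)^0(\CP^\infty)$ and its dual effect on the basis $\{\beta_j\}$ of $\K(1)_*(\CP^\infty)$; that computation is correct (an Anderson--Hodgkin-type vanishing), and your intermediate reductions --- $\pcpl{T} \simeq \L_{\K(1)}T$ since $T$ is a $\J_p$-module, and $\L_{\K(1)}T \simeq \L_{\K(1)}\Sigma^\infty_+ K(\Z[1/p],2)$ since $\K(1)$-local equivalences are closed under colimits --- are sound. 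The trade-off: the paper's route is more elementary and works cell by cell before any localization, while yours costs a $\K(1)$-homology calculation but yields the pleasant byproduct, compatible with the augmentation, that $\J_p \simeq \L_{\K(1)}\Sigma^\infty_+ K(\Z[1/p],2)$, a statement in the spirit of the references mentioned in \cref{z--ol--Bcir-splitting-alt} but not recorded explicitly in the paper.
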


\begin{proof}
   We have a canonical homotopy $c_* \iso c_* \circ [p]_*$, inducing the desired nullhomotopy of $c_* \circ \xi$. Next, we consider the following commutative diagram:
   \[
    \begin{tikzcd}[row sep=large,column sep=large]
      0 \ar[r] \ar[d] &
      \KU_p \ar[r, "{\begin{psmallmatrix}0 \\ \id\end{psmallmatrix}}"] \ar[d, "{\begin{psmallmatrix}0\\\id\end{psmallmatrix}}"] &
      \KU_p \oplus \KU_p \ar[r] \ar[d, "{\begin{psmallmatrix}0 \amp 0 \\ \id \amp 0 \\ 0 \amp \id\end{psmallmatrix}}"] &
      \cdots \\
      \pcpl{\J_p[\B\cir]} \ar[r, "{\begin{psmallmatrix}[p]_*\\q\end{psmallmatrix}}"] \ar[d, "\id"] &
      \pcpl{\J_p[\B\cir]} \oplus \KU_p \ar[r, "{\begin{psmallmatrix}[p]_* \amp 0 \\ q \amp 0 \\ 0 \amp \id\end{psmallmatrix}}"] \ar[d, "{\begin{psmallmatrix}\id \amp 0\end{psmallmatrix}}"] &
      \pcpl{\J_p[\B\cir]} \oplus \KU_p \oplus \KU_p \ar[r] \ar[d, "{\begin{psmallmatrix}\id \amp 0 \amp 0 \end{psmallmatrix}}"]  &
      \cdots \\
      \pcpl{\J_p[\B\cir]} \ar[r, "{[p]_*}"] \ar[d, "c_*"] &
      \pcpl{\J_p[\B\cir]} \ar[r, "{[p]_*}"] \ar[d, "c_*"] &
      \pcpl{\J_p[\B\cir]} \ar[r] \ar[d, "c_*"] &
      \cdots \\
      \J_p \ar[r, "\id"] &
      \J_p \ar[r, "\id"] &
      \J_p \ar[r] &
      \cdots
    \end{tikzcd}
  \]
  The vertical maps between the first three rows form a fiber sequence, and all the horizontal maps in the second row are equivalences (induced by that of \cref{z--ol--Bcir-splitting}). The claim now follows from taking $p$-completed colimits of the rows, noting that the map from the third row to the fourth induces an equivalence on $p$-completed colimits by \cref{z--ol--p-cover-graded}.
\end{proof}

The last ingredients we will need in our calculation of $\L_{\K(1)}\TC(\Z)$ concern the $\cir$-transfer map. Let us recall one description of it, which is how it will arise in the arguments below.

\begin{notation}
  \label{z--ol--basepoint}
  Let $R$ be an $\E_\infty$-ring. We let $b_* : R \to R[\B\cir]$ and $b^* : R^{\B\cir} \to R$ denote the maps induced by the basepoint $b : * \to \B\cir$. The \emph{$\cir$-transfer map} $\tr : R[\B\cir][1] \to R$ is the composition
  \[
    R[\B\cir][1] \lblto{\Nm} R^{\B\cir} \lblto{b^*} R,
  \]
  where $\Nm$ is the \emph{$\cir$-norm map}, i.e. the map whose cofiber is $R^{\t\cir}$. The \emph{reduced $\cir$-transfer map} $\o\tr : R\{\B\cir\}[1] \to R$ (appearing in \cref{i--tcz--tcs}) is the restriction of $\tr$ (recall from \cref{i--conv}\cref{i--conv--chains} that $R\{\B\cir\} = \fib(R[\B\cir] \to R)$).
\end{notation}

In \cref{z--tr}, we will review the $\cir$-norm and -transfer maps in more detail and prove \cref{z--ol--transfer} below.

\begin{notation}
  \label{z--ol--tau}
  We define $\tau : \KU_p \to \J_p[-1]$ to be the composition
  \[
    \KU_p \lblto{\xi} \pcpl{\J_p[\B\cir]} \lblto{\tr} \J_p[-1],
  \]
  where $\xi$ is as in \cref{z--ol--nu} and $\tr$ is the $\cir$-transfer map of \cref{z--ol--basepoint}.
\end{notation}

\begin{lemma}
  \label{z--ol--transfer}
  Suppose given a nullhomotopy of the composition $\smash{\J_p \lblto{1} \KU_p \lblto{\tau} \J_p[-1]}$, where $1$ denotes the unit map and $\tau$ is as in \cref{z--ol--tau}, inducing a map $\tau' : \KU_p/\J_p \to \J_p[-1]$. Then there exists (noncanonically) an equivalence $\cofib(\tau') \iso \KU_p[-1]$.
\end{lemma}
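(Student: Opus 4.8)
The plan is to compute $\pi_*(\cofib(\tau'))$ directly from the cofiber sequence $\KU_p/\J_p \xrightarrow{\tau'} \J_p[-1] \to \cofib(\tau')$, show its homotopy agrees with that of $\KU_p[-1]$, and then promote this to an equivalence via a $\KU_p$-homology computation. A preliminary remark is that the hypothesis is essentially automatic and the conclusion of the hypothesis essentially canonical: since $\J_p = \L_{\K(1)}\S$ is the unit of the $\K(1)$-local category, $F(\J_p,-)$ is the identity there, so $[\J_p,\J_p[-1]] \iso \pi_1(\J_p)$, $[\J_p,\J_p[-2]] \iso \pi_2(\J_p)$, and $[\J_p[1],\J_p[-1]] \iso \pi_{-2}(\J_p)$ all vanish (as $p$ is odd). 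Hence $\tau\circ 1$ is canonically nullhomotopic and $\tau'$ is the essentially unique lift of $\tau$ along $\KU_p \to \KU_p/\J_p$. I would also record two facts. First, $\pi_*(\KU_p/\J_p)$ is $\Z_p$ for $*$ even and $0$ for $*$ odd: this follows from the long exact sequence for $\J_p \xrightarrow{1} \KU_p$, using that the unit is an isomorphism on $\pi_0$, an isomorphism $\Z_p \iso \pi_{-1}(\J_p)$ onto the source of the degree-$(-1)$ boundary, zero on $\pi_{\ge 1}$ (where $\pi_{\ge1}(\J_p)=\pi_{\ge1}(\j_p)$ is torsion while $\pi_{\mathrm{odd}}(\KU_p)=0$), and zero on $\pi_{\le -2}$. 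Second, the cofiber of the unit $\J_p \to \KU_p$ is noncanonically again $\KU_p$: decomposing $\KU_p \iso \bigoplus_{0\le i\le p-2} \Sigma^{2i}(\KU_p^{\h\F_p^\times})$ into $\F_p^\times$-eigenspaces ($\F_p^\times \subseteq \Gamma$ as in \cref{i--main--j}), the unit factors through the $i=0$ summand, and $\J_p \to \KU_p^{\h\F_p^\times}$ sits in the fiber sequence $\J_p \to \KU_p^{\h\F_p^\times} \xrightarrow{\psi^{1+p}-1} \KU_p^{\h\F_p^\times}$ (using $\J_p = \KU_p^{\h\Gamma} = (\KU_p^{\h\F_p^\times})^{\h\Gamma_0}$ and $\Gamma_0 \iso \Z$), so $\cofib(\J_p \to \KU_p^{\h\F_p^\times}) \iso \KU_p^{\h\F_p^\times}$.

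The main step is the claim that $\tau'_*\colon \pi_{2k}(\KU_p/\J_p) \to \pi_{2k}(\J_p[-1]) = \pi_{2k+1}(\J_p)$ is surjective for every $k\in\Z$. Granting this, the long exact sequence of the cofiber sequence above immediately yields $\pi_*(\cofib(\tau')) \iso \Z_p$ for $*$ odd and $0$ for $*$ even, i.e.\ the homotopy of $\KU_p[-1]$. To establish the claim I would compare $\tau'_*$ with $\tau_* = \tau'_*\circ\pi_*$, where $\pi_*\colon \pi_{2k}(\KU_p) \to \pi_{2k}(\KU_p/\J_p)$ has finite cokernel for every $k\ne 0$ while $\pi_{2k+1}(\J_p)=0$ for $k=0$; so it suffices that $\tau_*\colon \pi_{2k}(\KU_p)\to\pi_{2k+1}(\J_p)$ be surjective for $k\ne 0$, i.e.\ an isomorphism $\Z_p\iso\Z_p$ for $k=-1$ and a surjection onto the image-of-J group $\pi_{2k+1}(\j_p)$ for $k\ge 1$. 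Unwinding $\tau = \tr\circ\xi$ from \cref{z--ol--nu,z--ol--basepoint} (with $\xi$ the $q$-section of \cref{z--ol--Bcir-splitting} and $\tr$ the $\cir$-transfer), this is the classical assertion that the $\cir$-transfer detects the $p$-primary image of J in each degree; I would deduce it from the review of the $\cir$-transfer and of $\pcpl{\J_p}$ in \cref{z--tr} (and compare the appearance of exactly the image-of-J pattern in \cref{t--tCp--j}).

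Finally, I would upgrade the homotopy-group calculation to an equivalence by passing to $\KU_p$-homology $\KU_p^\vee_*(-) := \pi_*(\KU_p\otimes -)^\wedge_p$. For parity reasons $\tau'$ induces the zero map on $\KU_p^\vee_*$ (the source is concentrated in even degrees, the target in odd), so the long exact sequence of Morava modules for $\KU_p/\J_p \xrightarrow{\tau'}\J_p[-1]\to\cofib(\tau')$ breaks into short exact sequences, exhibiting $\KU_p^\vee_*(\cofib(\tau'))$ as an extension of $\KU_p^\vee_{*-1}(\KU_p/\J_p)$ by $\KU_p^\vee_*(\J_p[-1]) \iso \pi_*(\KU_p)[-1]$. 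Meanwhile the long exact sequence for $\J_p \xrightarrow{1}\KU_p$, with $\eta_R := \KU_p^\vee_*(1)$ split injective as a $\pi_*(\KU_p)$-module map (via the multiplication), identifies $\KU_p^\vee_*(\KU_p/\J_p)$ with $\operatorname{coker}(\eta_R)$. Thus $\KU_p^\vee_*(\cofib(\tau'))$ is an extension of $\operatorname{coker}(\eta_R)[-1]$ by $\pi_*(\KU_p)[-1]$; provided this extension is the one realizing $\Map^{\mathrm{cts}}(\Z_p^\times,\pi_*(\KU_p))[-1]$, it is free of rank one as a Morava module, hence isomorphic to $\KU_p^\vee_*(\KU_p[-1])$. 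Since that Morava module is coinduced from the trivial subgroup, it is injective, so the isomorphism is realized by a map of spectra, which is then a $\KU_p$-homology equivalence and therefore, both sides being $\K(1)$-local, an equivalence $\cofib(\tau')\iso\KU_p[-1]$.

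I expect the principal obstacle to be the input to the second paragraph: pinning down the effect of $\tau$ (equivalently of the $\cir$-transfer composed with the $q$-section) on homotopy groups and recognizing the image-of-J pattern in each degree. A secondary point needing care is the last paragraph's verification that the extension of Morava modules computing $\KU_p^\vee_*(\cofib(\tau'))$ is the free one rather than a nontrivial self-extension; I would settle this by tracking the boundary map of the cofiber sequence on $\KU_p$-homology, or by checking that the relevant $\operatorname{Ext}^1$-group of Morava modules vanishes in the pertinent bidegree.
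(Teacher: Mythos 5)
Your bookkeeping reductions are fine (the identification $\KU_p/\J_p \iso \KU_p$ via the eigenspace splitting, the reduction of surjectivity of $\tau'_*$ to surjectivity of $\tau_*$), but the proposal has a genuine gap exactly where you flag it: the assertion that $\tau_* : \pi_{2k}(\KU_p) \to \pi_{2k+1}(\J_p)$ surjects onto the image-of-J groups for $k \ge 1$ and is an isomorphism $\Z_p \to \Z_p$ for $k = -1$ is never proved, and it cannot simply be ``deduced from the review of the $\cir$-transfer and of $\pcpl{\J_p}$ in \cref{z--tr}'': that material only constructs the norm/transfer and relates nullhomotopies of the transfer to complex orientations, and the $k=-1$ case you need is not the classical positive-stem image-of-J detection statement in any case. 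The paper supplies precisely this missing input by a different mechanism: \cref{z--tr--J-nonzero} shows the transfer is nonzero mod $p$ --- if it vanished, \cref{z--tr--null-orientation,z--tr--null-ring} would make $\J_p/p$ complex orientable over $\J_p$, contradicting $\pi_{-2}((\J_p/p)^{\KU_p}) = 0$ together with \cref{z--ol--Bcir-infinite-splitting} --- and \cref{z--tr--J-description} upgrades this to the statement that $\tau$ is not divisible by $p$. Your proposal contains no substitute for this argument, so its central step is unsupported.

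Second, even granting that surjectivity, your endgame is both heavier than necessary and incomplete. Since you already know $\KU_p/\J_p \iso \KU_p$ noncanonically, the efficient move --- and the paper's --- is to compute the mapping group: by \cref{z--tr--generator,z--tr--noncanonical}, $[\KU_p/\J_p,\J_p[-1]] \iso \pi_{-1}(\J_p^{\KU_p}) \iso \Z_p$, generated (up to an equivalence of the source and a Bott twist) by the boundary map $\partial_g$ of the fiber sequence $\J_p \to \KU_p \lblto{\psi^g-\id} \KU_p$, whose cofiber is visibly a shift of $\KU_p$. Hence ``$\tau'$ not divisible by $p$'' immediately gives $\cofib(\tau') \iso \KU_p[-1]$, with no Morava-module step, no extension problem, and no realization question. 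Your route instead needs (a) the unproved detection statement above and (b) the identification of the extension of $\KU_p$-homology Morava modules as the coinduced one, which you explicitly leave open; and you are right that it cannot be skipped, since homotopy groups alone do not determine a $\K(1)$-local spectrum. Those two deferred points are exactly the mathematical content of the lemma, so as it stands the proof is not complete.
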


We now put together the results above to obtain the following description of $\L_{\K(1)}\TC(\Z)$.

\begin{theorem}
  \label{z--ol--k1}
  \begin{enumerate}[leftmargin=*]
  \item \label{z--ol--k1--description}
    There is a canonical equivalence of spectra
    \[
      \L_{\K(1)}\TC(\Z) \iso \J_p \oplus \J_p[1] \oplus \X',
    \]
    where $\X' \iso \cofib(\tau')$ for $\tau'$ a map as in \cref{z--ol--transfer} (so that $\X'$ is noncanonically equivalent to $\KU_p[-1]$).
  \item \label{z--ol--k1--linearization}
    With respect to the equivalence in \cref{z--ol--k1--description} and the equivalence
    \[
      \L_{\K(1)}\TC(\S) \iso \J_p \oplus \J_p[1] \oplus \L_{\K(1)}\Y
    \]
    determined by \cref{i--tcz--tcs}, the map $\L_{\K(1)}\TC(\S) \to \L_{\K(1)}\TC(\Z)$ induced by the unit map $\S \to \Z$ is canonically equivalent to the direct sum of the identity maps on $\J_p$ and $\J_p[1]$ with a certain map $\L_{\K(1)}\Y \to \X'$.
  \end{enumerate}
\end{theorem}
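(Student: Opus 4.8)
The plan is to derive everything from \cref{z--ol--phi-TC}, which presents $\L_{\K(1)}\TC(\Z)$ as $\fib(\id-\varphi\colon\pcpl{(\J_p^{\t\cir})}\to\pcpl{(\J_p^{\t\cir})})$ and $\L_{\K(1)}\TC(\S)$ as $\fib(\can\circ(\id-[p]^*)\colon\J_p^{\h\cir}\to\pcpl{(\J_p^{\t\cir})})$, with the comparison map induced by $\can$ and the square \cref{z--ol--phi--compatibility}. Since the right-hand side for $\TC(\S)$ is known explicitly --- it is the B\"okstedt--Hsiang--Madsen splitting \cref{i--tcz--tcs} --- the idea is to compute the \emph{cofiber} of the comparison map $\L_{\K(1)}\TC(\S)\to\L_{\K(1)}\TC(\Z)$; both the direct-sum decomposition of $\L_{\K(1)}\TC(\Z)$ and the linearization statement will fall out of this.

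First I would bring in the $\cir$-norm cofiber sequence $\pcpl{\J_p[\B\cir]}[1]\to\J_p^{\h\cir}\lblto{\can}\pcpl{(\J_p^{\t\cir})}$, equipped with the compatible self-maps $\Sigma\nu$, $[p]^*$, and $\varphi$ --- the identification of the self-map on the fiber term with $\Sigma\nu$ being exactly \cref{z--ol--orbits}. Since the fiber of a map of cofiber sequences is again a cofiber sequence, applying $g\mapsto\fib(\id-g)$ levelwise yields a cofiber sequence $\fib(\id-\Sigma\nu\colon\pcpl{\J_p[\B\cir]}[1])\to\fib(\id-[p]^*\colon\J_p^{\h\cir})\to\L_{\K(1)}\TC(\Z)$. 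Factoring $\can\circ(\id-[p]^*)$ through $\id-[p]^*$ and $\can$ and applying the octahedral axiom exhibits the same middle term $\fib(\id-[p]^*\colon\J_p^{\h\cir})$ mapping to $\L_{\K(1)}\TC(\S)$ with cofiber $\pcpl{\J_p[\B\cir]}[1]$, compatibly with the comparison map. Chasing the resulting diagram identifies $\cofib(\L_{\K(1)}\TC(\S)\to\L_{\K(1)}\TC(\Z))$, up to a suspension, with $\cofib(\pcpl{\J_p[\B\cir]}\to\Sigma\fib(\id-\nu\colon\pcpl{\J_p[\B\cir]}))$, reducing the problem to understanding the single self-map $\nu$ of $\pcpl{\J_p[\B\cir]}$.

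Next I would unwind $\nu$ using two splittings of $\pcpl{\J_p[\B\cir]}$: the basepoint splitting $\pcpl{\J_p[\B\cir]}\iso\J_p\oplus\pcpl{\J_p\{\B\cir\}}$, and \cref{z--ol--Bcir-infinite-splitting}, which identifies $\pcpl{\J_p\{\B\cir\}}$ with $\pcpl{\big(\bigoplus_{n\ge0}\KU_p\big)}$ and $\nu$ restricted there with the one-sided shift. Relative to these, $\nu$ is block lower-triangular: the identity on the $\J_p$-summand (using $c_*\circ\nu\iso c_*$, which follows from the homotopy $c_*\iso c_*\circ[p]_*$ of \cref{z--ol--Bcir-infinite-splitting} together with $[p]_*\circ\nu\iso\id$), the shift on $\pcpl{\big(\bigoplus_{n\ge0}\KU_p\big)}$, and off-diagonal block $\xi\circ\unit$ up to sign (read off from the Westerland splitting \cref{z--ol--Bcir-splitting} using $q\circ b_*\iso\unit$). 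Consequently $\id-\nu$ factors through the $\pcpl{\J_p\{\B\cir\}}$-summand, where $\id-\text{shift}$ telescopes to a map with cofiber $\KU_p$ and the triangular structure collapses the remaining data to the unit $\J_p\lblto{1}\KU_p$; this gives $\cofib(\id-\nu\colon\pcpl{\J_p[\B\cir]})\iso\J_p\oplus(\KU_p/\J_p)$, and dually exhibits $\fib(\id-\nu\colon\pcpl{\J_p[\B\cir]})$ as an extension of $\J_p[-1]$ by $(\KU_p/\J_p)[-1]$. Feeding this back into the previous step exhibits $\L_{\K(1)}\TC(\Z)$ as a direct sum $\J_p\oplus\J_p[1]\oplus\X'$ in which the $\J_p\oplus\J_p[1]$ is the image of the corresponding summands of $\L_{\K(1)}\TC(\S)$ --- both arising from the basepoint of $\B\cir$, where all the self-maps act trivially --- and on which the comparison map restricts to the identity; this is part \cref{z--ol--k1--linearization}.

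Finally, to put $\X'$ in the stated form, I would match the complementary pieces against the $\cir$-transfer: unwinding the previous steps through \cref{z--ol--Bcir-infinite-splitting} and the definition of $\tau$ in \cref{z--ol--tau}, the surviving comparison map on the complementary summands is, up to a suspension, the composite $\J_p\lblto{1}\KU_p\lblto{\tau}\J_p[-1]$, and a short computation (using that this composite is detected by the reduced transfer $\o\tr$, which vanishes on the relevant summand) provides a nullhomotopy of it. \cref{z--ol--transfer} then identifies $\X'$ with $\cofib(\tau')$ and supplies the noncanonical equivalence $\X'\iso\KU_p[-1]$. I expect the main obstacle to be the bookkeeping running through the two middle steps --- carrying the three self-maps coherently through the norm cofiber sequence and the two splittings while keeping the suspensions straight --- and, most delicately, recognizing the surviving comparison map as the transfer composite $\tau$ itself, rather than just as a map with the correct source and target; this recognition is what makes \cref{z--ol--transfer} applicable, and hence what yields both the precise description of $\X'$ in part \cref{z--ol--k1--description} and the identification of the last summand of the comparison map in part \cref{z--ol--k1--linearization}.
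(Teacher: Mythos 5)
Your argument is essentially the paper's own proof: it uses the same presentation of the two localized $\TC$'s from \cref{z--ol--phi-TC}, the same norm-sequence comparison with the compatible self-maps $\Sigma\nu$, $[p]^*$, $\varphi$ (via \cref{z--ol--orbits}), the same two splittings of $\pcpl{\J_p[\B\cir]}$ (basepoint plus \cref{z--ol--Bcir-infinite-splitting}) reducing $\id-\nu$ to a triangular matrix with cofiber $\J_p \oplus \KU_p/\J_p$, and the same final recognition that the residual component is a map as in \cref{z--ol--transfer}. The only point where you are thinner than the paper is the identification $\fib(\id-[p]^* : \J_p^{\h\cir} \to \J_p^{\h\cir}) \iso \J_p \oplus \J_p[-1]$: this needs $\id-[p]^*$ to be invertible on the reduced part of $\J_p^{\B\cir}$ (which the paper extracts from \cref{z--ol--p-cover-graded} via the cartesian basepoint squares), not merely that the self-maps act trivially at the basepoint.
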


\begin{proof}
  We consider the commutative diagrams
  \begin{equation}
    \label{z--ol--k1--fibers}
    \begin{tikzcd}
      0 \ar[r] \ar[d] &
      \pcpl{\J_p[\B\cir]}[1] \ar[d, "\Nm"] \\
      \J_p^{\B\cir} \ar[r, "{\id-[p]^*}"] \ar[d, "\id", swap] &
      \J_p^{\B\cir} \ar[d, "\can"] \\
      \J_p^{\B\cir} \ar[r, "{\can - \can \circ [p]^*}"] &
      \J_p^{\t\cir},
    \end{tikzcd}
    \qquad\qquad
    \begin{tikzcd}
      \pcpl{\J_p[\B\cir]}[1] \ar[r, "\id-\nu"] \ar[d, "\Nm", swap] &
      \pcpl{\J_p[\B\cir]}[1] \ar[d, "\Nm"] \\
      \J_p^{\B\cir} \ar[r, "{\id-[p]^*}"] \ar[d, "\can", swap] &
      \J_p^{\B\cir} \ar[d, "\can"] \\
      \J_p^{\t\cir} \ar[r, "{\id - \varphi}"] &
      \J_p^{\t\cir},
    \end{tikzcd}
  \end{equation}
  where the columns are fiber sequences, $\varphi$ is as in \cref{z--ol--phi}, and $\nu$ is as in \cref{z--ol--nu,z--ol--orbits}. By \cref{z--ol--phi-TC}, the fibers of the bottom rows are canonically equivalent to $\L_{\K(1)}\TC(\S)$ and $\L_{\K(1)}\TC(\Z)$, respectively, and moreover the map $\L_{\K(1)}\TC(\S) \to \L_{\K(1)}\TC(\Z)$ induced by the unit map $\S \to \Z$ corresponds under these identifications to the map induced by the canonical map from the left hand diagram to the right hand diagram. We will understand these bottom fibers (and the map between them) by analyzing the fibers of the upper two rows (and the maps among them).

  Regarding the common middle rows in \cref{z--ol--k1--fibers}, it follows from \cref{z--ol--p-cover-graded} that the commutative squares
  \[
    \label{z--ol--k1--Bcir}
    \begin{tikzcd}
      \J_p \ar[r, "0"] \ar[d, "b_*"] &
      \J_p \ar[d, "b_*"] \\
      \J_p[\B\cir] \ar[r, "{\id-[p]_*}"] &
      \J_p[\B\cir],
    \end{tikzcd}
    \qquad
    \begin{tikzcd}
      \J_p^{\B\cir} \ar[r, "{\id-[p]^*}"] \ar[d, "b^*"] &
      \J_p^{\B\cir} \ar[d, "b^*"] \\
      \J_p \ar[r, "0"] &
      \J_p
    \end{tikzcd}
  \]
  (the right square being dual to the left) are cartesian. This identifies the fibers of the middle rows in \cref{z--ol--k1--fibers} with $\J_p \oplus \J_p[-1]$.

  Let us now analyze the left hand diagram in \cref{z--ol--k1--fibers}; this is simply a review of the identification of $\L_{\K(1)}\TC(\S)$ stated in \cref{z--ol--k1--linearization}. The composition
  \[
    \pcpl{\J_p[\B\cir]} \lblto{\Nm} \J_p^{\B\cir} \lblto{b^*} \J_p
  \]
  is the $\cir$-transfer map $\tr$, so taking horizontal fibers in the left hand diagram in \cref{z--ol--k1--fibers} gives a cofiber sequence
  \begin{equation}
    \label{z--ol--k1--S-tr}
    \pcpl{\J_p[\B\cir]} \lblto{\begin{psmallmatrix}0\\\tr\end{psmallmatrix}} \J_p \oplus \J_p[-1] \to \L_{\K(1)}\TC(\S).
  \end{equation}
  Invoking the splitting $\J_p[\B\cir] \iso \J_p \oplus \J_p\{\B\cir\}$ induced by the basepoint, and using that the restriction of the transfer map along $b_* : \J_p \to \J_p[\B\cir]$ is the Hopf map $\eta$ (\cref{z--tr--trivial}), which is canonically null because $p$ is odd, the preceding cofiber sequence may be rewritten as a cofiber sequence
  \[
  \J_p \oplus \pcpl{\J_p\{\B\cir\}} \lblto{\begin{psmallmatrix}0 \amp 0 \\ 0 \amp \o\tr\end{psmallmatrix}} \J_p \oplus \J_p[-1] \to \L_{\K(1)}\TC(\S).
  \]
  This recovers the splitting $\L_{\K(1)}\TC(\S) \iso \J_p \oplus \J_p[1] \oplus \L_{\K(1)}\Y$.

  We now move to the right hand diagram in \cref{z--ol--k1--fibers}. Using the splitting $\J_p[\B\cir] \iso \J_p \oplus \J_p\{\B\cir\}$ and \cref{z--ol--Bcir-infinite-splitting}, we obtain an equivalence
  \[
    \begin{psmallmatrix}b_* & (\xi \circ \nu^{\circ n})_{n \in \Z_{\ge0}}\end{psmallmatrix} : 
    \J_p \oplus \pcpl{\big(\bigoplus_{n \in \Z_{\ge0}} \KU_p\big)} \isoto \pcpl{\J_p[\B\cir]}.
  \]
  This identification carries the map $\id - \nu : \pcpl{\J_p[\B\cir]} \to \pcpl{\J_p[\B\cir]}$ to the map
  \[
    \begin{psmallmatrix}0 \amp 0 \\ \theta \amp \id - \sigma\end{psmallmatrix} : \J_p \oplus \pcpl{\big(\bigoplus_{n \in \Z_{\ge0}} \KU_p\big)} \to \J_p \oplus \pcpl{\big(\bigoplus_{n \in \Z_{\ge0}} \KU_p\big)},
  \]
  where $\theta : \J_p \to \bigoplus_{n \in \Z_{\ge0}} \KU_p$ is the composition of the unit map $1 : \J_p \to \KU_p$ with the inclusion of the factor indexed by $0$ and $\sigma : \bigoplus_{n \in \Z_{\ge0}} \KU_p \to \bigoplus_{n \in \Z_{\ge0}} \KU_p$ is the shift operator sending the factor indexed by $n$ to the factor indexed by $n+1$ by the identity map; the description for the first column of the matrix here comes from the canonical homotopies $[p]_* \circ b_* \iso b_*$ and $q \circ b_* \iso 1$, which determine a homotopy $b_* = \nu \circ b_* + \xi \circ 1$ and hence a homotopy $(\id - \nu) \circ b_* \iso \xi \circ 1$. The cofiber of the above map identifies in an evident manner with $\J_p \oplus \KU_p/\J_p$. Considering now all of the horizontal fibers in the right hand diagram in \cref{z--ol--k1--fibers}, we obtain a cofiber sequence
  \[
    \J_p \oplus \KU_p/\J_p \lblto{\begin{psmallmatrix}0 \amp 0 \\ 0 \amp \tau'\end{psmallmatrix}} \J_p \oplus \J_p[-1] \to \L_{\K(1)}\TC(\Z).
  \]
  Here the nullhomotopy of the upper left entry of the matrix follows from the construction; the nullhomotopies of the lower left and upper right entries use the vanishing of the composition $\tr \circ b_*$, as in the previous paragraph; and the map $\tau' : \KU_p/\J_p \to \J_p[-1]$ is a map as in \cref{z--ol--transfer}. This gives \cref{z--ol--k1--description}. We then see \cref{z--ol--k1--linearization} by comparing the analyses in this paragraph and the previous one.
\end{proof}

Finally, after recalling one more general fact, we may deduce the main result of the section, on $\pcpl{\TC(\Z)}$ before $\K(1)$-localization.

\begin{lemma}[{\citebare[Lemma 2.5 or Remark 2.14]{cmm--rigidity}}]
  \label{z--ol--connective}
  Let $M$ be a connective, $p$-complete cyclotomic spectrum. Then $\TC(M)$ is $(-1)$-connective.
\end{lemma}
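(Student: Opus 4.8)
The plan is to deduce this from the Nikolaus--Scholze description of $\TC$ of a bounded-below $p$-complete cyclotomic spectrum together with a connectivity analysis of the canonical and Frobenius maps. Since $M$ is connective (in particular bounded below) and $p$-complete, \cite{nikolaus-scholze--TC} provides a fiber sequence
\[
  \TC(M) \to M^{\h\cir} \xrightarrow{\varphi-\can} M^{\t\cir},
\]
where $\can \colon M^{\h\cir}\to M^{\t\cir}$ is the canonical map and $\varphi$ is built from the cyclotomic Frobenius (its target being identified with $M^{\t\cir}$ via the Tate orbit lemma, valid as $M$ is bounded below). Passing to the long exact sequence of homotopy groups, it suffices to show that $\varphi-\can$ is injective on $\pi_i$ for $i\le-2$ and surjective on $\pi_i$ for $i\le-1$; since both are implied by ``$\varphi-\can$ is an isomorphism on $\pi_i$ for all $i\le-1$'', I would prove the latter.

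First I would control $\can$ using the $\cir$-norm cofiber sequence $M_{\h\cir}[1]\to M^{\h\cir}\xrightarrow{\can}M^{\t\cir}$: as $M$ is connective, $M_{\h\cir}[1]$ is $1$-connective, so $\can$ is an isomorphism on $\pi_i$ for every $i\le0$. Hence, for $i\le0$, one may identify $\pi_i(\varphi-\can)$ with $\id-T_i$, where $T_i:=\can^{-1}\circ\varphi$ is an endomorphism of $\pi_i M^{\h\cir}$, and it remains to show $\id-T_i$ is an isomorphism for $i\le-1$.

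For this — the heart of the matter — I would invoke the complete ``Tate filtrations'' on $M^{\h\cir}$ and $M^{\t\cir}$, with $\gr^n\simeq M[-2n]$ (nonzero only for $n\ge0$ on $M^{\h\cir}$), complete precisely because $M$ is bounded below; the canonical map is filtered and an isomorphism on $\gr^n$ for $n\ge0$, so $\can$ and its inverse on $\pi_i$ ($i\le0$) respect these filtrations, and for $i<0$ the resulting filtration on $\pi_i M^{\h\cir}$ is concentrated in strictly positive weights. The key structural input is that the cyclotomic Frobenius \emph{strictly increases} the Tate filtration — a manifestation of its compatibility with the $p$-fold cover $[p]\colon\cir\to\cir$, which multiplies the degree-$2$ generator of $\mathrm H^*(\B\cir)$ by $p$, exactly as exploited in \cref{z--ol--p-cover-graded,z--ol--phi}. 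Granting this, $T_i=\can^{-1}\varphi$ is topologically nilpotent on the complete, positively-filtered group $\pi_i M^{\h\cir}$ for $i<0$, so $\id-T_i$ is invertible there (inverse $\sum_{k\ge0}T_i^k$); hence $\varphi-\can$ is an isomorphism on $\pi_i$ for $i\le-1$, and therefore $\pi_i\TC(M)=0$ for $i\le-2$.

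The main obstacle is this last step: for a \emph{general} cyclotomic spectrum one must set up carefully the complete Tate filtrations on $M^{\h\cir}$ and $M^{\t\cir}$ and verify that $\varphi$ strictly raises the filtration in the relevant range (for $\THH$ of rings this is part of the ``Nygaard'' package; in general one needs only the weaker increasing property). A structurally different organization, perhaps closer to \cite{cmm--rigidity}, is a dévissage: since $\TC$ preserves limits, $\TC(M)\simeq\lim_n\TC(\tau_{\le n}M)$, which reduces the problem to a connectivity estimate for $\TC$ of the discrete cyclotomic spectra occurring as Postnikov layers of $M$ in $\CycSpt$ — but that estimate ultimately rests on the same behaviour of $\varphi$ relative to $\can$.
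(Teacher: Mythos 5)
Your skeleton is the standard one and is fine as far as it goes (the paper itself gives no argument for this lemma, simply citing Clausen--Mathew--Morrow): for connective $p$-complete $M$ one has $\TC(M) \iso \fib(\varphi - \can : M^{\h\cir} \to M^{\t\cir})$ via the Tate orbit lemma, the fiber $M_{\h\cir}[1]$ of $\can$ is $1$-connective, so $\can$ is an isomorphism on $\pi_i$ for $i \le 0$, and everything reduces to showing $\id - \can^{-1}\varphi$ is invertible on $\pi_i$ for $i \le -1$. The genuine gap is the step you yourself flag and then "grant": the claim that the cyclotomic Frobenius \emph{strictly increases} the Tate filtration is false in general, and in particular fails for exactly the cyclotomic spectra at the center of this paper. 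For a trivial cyclotomic spectrum such as $\j_p^\triv$ (or $\S_p^\triv$), the map $\varphi$ identifies, after the Tate-orbit identification, with $\can \circ [p]^*$, and $[p]^*$ \emph{preserves} the weight filtration, acting on $\gr^n \iso M[-2n]$ by multiplication by $p^n$ (this is precisely the content of \cref{z--ol--p-cover-graded}, dualized). Multiplying the degree-$2$ generator of $\mathrm{H}^*(\B\cir)$ by $p$ does not raise filtration unless $p$ kills the relevant graded piece (as it does for $\F_p$-algebras, which is presumably the source of the intuition); for $M = \j_p^\triv$ the weight-$1$ piece of $\pi_{-2}(M^{\h\cir})$ is $\Z_p$ and $T$ acts on it by $p$, staying in the same filtration. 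So filtration-completeness alone cannot make $T$ topologically nilpotent; the actual mechanism is $p$-adic ($T$ is divisible by $p^n$ on the weight-$n$ piece), and the convergence of $\sum_k T^k$ must invoke (derived) $p$-completeness intertwined with completeness of the filtration.

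Repairing this is not a cosmetic matter: for a general NS cyclotomic spectrum the Frobenius $\varphi_p : M \to M^{\t\Cp}$ is arbitrary equivariant data, so the honest statement is only that $\varphi = (\varphi_p)^{\h\cir}$ preserves the weight filtration with graded maps $\varphi_p[-2n] : M[-2n] \to M^{\t\Cp}[-2n]$; one must then analyze how the weight filtration on $(M^{\t\Cp})^{\h\cir}$ refines to the Tate filtration on $M^{\t\cir}$ under the Tate-orbit equivalence, which is where the growing $p$-divisibility appears, and finally run the geometric-series argument at the level of complete filtered ($p$-complete) spectra rather than on individual homotopy groups (where completeness of the induced filtration needs a separate $\lim^1$ argument, and "$p$-complete spectrum" does not mean classically $p$-complete homotopy groups). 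Your alternative dévissage through the Postnikov tower is a reasonable organizational idea (and the Milnor $\lim^1$ term can be controlled since the layer contributions make the towers Mittag--Leffler), but, as you note, it does not remove the need for the same estimate in the Eilenberg--MacLane case, so as written the proposal does not yet contain a proof of the key invertibility.
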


\begin{proof}[Proof of \cref{i--tcz--thm}]
  By \cref{z--ol--connective}, the spectra $\pcpl{\TC(\S)} \iso \TC(\pcpl{\THH(\S)})$ and $\pcpl{\TC(\Z)} \iso \TC(\pcpl{\THH(\Z)})$ are $(-1)$-connective. Moreover, since the map $\pcpl{\THH(\S)} \to \pcpl{\THH(\Z)}$ is $(2p-3)$-connective, the map $\pcpl{\TC(\S)} \to \pcpl{\TC(\Z)}$ is $(2p-4)$-connective. We thus obtain a commutative diagram
  \[
    \begin{tikzcd}
      \pcpl{\TC(\S)} \ar[r] \ar[d] &
      \pcpl{\TC(\Z)} \ar[r] \ar[d] &
      \tau_{\ge-1}(\L_{\K(1)}\TC(\Z)) \ar[d] \\
      \tau_{\le 1}(\pcpl{\TC(\S)}) \ar[r, "\sim"] &
      \tau_{\le 1}(\pcpl{\TC(\Z)}) \ar[r] &
      \tau_{\le 1}\tau_{\ge-1}(\L_{\K(1)}\TC(\Z)),
    \end{tikzcd}
  \]
  where the lower left horizontal map is an equivalence by the preceding connectivity assertion, and the right hand square is induced by the $\K(1)$-localization map $\pcpl{\TC(\Z)} \to \L_{\K(1)} \TC(\Z)$. As discussed in the proof of \cref{t--pf--map}, this localization map induces isomorphisms on homotopy groups in degrees $\ge 2$, i.e. the induced map on vertical fibers in the right hand square is an equivalence, and hence that square is cartesian.

  Applying \cref{i--tcz--tcs,z--ol--k1}, the outer rectangle in the above diagram identifies canonically with a diagram
  \begin{equation}
    \label{z--ol--final--outer}
    \begin{tikzcd}
      \S_p \oplus \S_p[1] \oplus \pcpl{\Y} \ar[r] \ar[d] &
      \tau_{\ge-1}(\J_p) \oplus \tau_{\ge-1}(\J_p[1]) \oplus \tau_{\ge-1}(\X') \ar[d] \\
      \Z_p \oplus \Z_p[1] \oplus \Z_p[-1] \ar[r] &
      \tau_{\le1}\tau_{\ge-1}(\J_p) \oplus \tau_{\le1}\tau_{\ge-1}(\J_p[1]) \oplus \tau_{\le1}\tau_{\ge-1}(\X').
    \end{tikzcd}
  \end{equation}
  Here $\Y$ denotes the fiber of the reduced $\cir$-transfer map $\o\tr : \S\{\B\cir\}[1] \to \S$ and $\X'$ is a spectrum noncanonically equivalent to $\KU_p[-1]$; in the lower left hand corner, we have applied the canonical identifications
  \[
    \tau_{\le1}(\S_p) \iso \Z_p, \quad
    \tau_{\le1}(\S_p[1]) \iso \Z_p[1], \quad
    \tau_{\le1}(\pcpl{\Y}) \iso \Z_p[-1];
  \]
  and all the maps in the diagram are diagonal with respect to the written direct sum decompositions, with the first two factors of the horizontal maps being induced by the unit map $\S_p \to \J_p$.

  Putting together the previous two paragraphs, we obtain an identification
  \[
    \pcpl{\TC(\Z)} \iso \j_p \oplus \j_p[1] \oplus \X,
  \]
  where $\X$ is the spectrum defined by the pullback diagram
  \[
    \begin{tikzcd}
    \X \ar[r] \ar[d] &
    \tau_{\ge -1}(\X') \ar[d] \\
    \Z_p[-1] \ar[r] &
    \tau_{\le1}(\tau_{\ge -1}(\X'))
    \end{tikzcd}
  \]
  arising from the third factors in \cref{z--ol--final--outer}, and we also obtain the decomposition of the map $\pcpl{\TC(\S)} \to \pcpl{\TC(\Z)}$ as described in the theorem statement.

  To finish the proof, we give a noncanonical description of $\X$. Choosing an equivalence $\X' \iso \KU_p[-1]$ gives us a pullback diagram
  \[
    \begin{tikzcd}
    \X \ar[r] \ar[d] &
    \ku_p[-1] \ar[d] \\
    \Z_p[-1] \ar[r] &
    \tau_{\le1}(\ku_p[-1]).
    \end{tikzcd}
  \]
  We claim that the horizontal maps here are isomorphisms on $\pi_{-1}$. Once this is established, it is straightforward to use the decomposition $\ku_p \iso \bigoplus_{0 \le k \le p-2} \ell_p[2k]$ to obtain the desired equivalence $\X \iso \bigoplus_{0 \le k \le p, k \notin \{1,p-1\}} \ell_p[2k-1]$. On $\pi_{-1}$, the horizontal maps give an endomorphism of $\Z_p$, so it suffices to check that this is nonzero modulo $p$. For this, since the upper horizontal map is an isomorphism on $\pi_{2p-3}$, it suffices to check that $v_1$ acts nontrivially on $\pi_{-1}(\X/p)$. This is verified by Rognes \cite[Proof of Proposition 3.3]{rognes--whitehead}, by comparison with $\pi_{-1}(\Y/p)$. Alternatively, that $\pi_*(\TC(\Z)/p)$ is a free $\F_p[v_1]$-module is also proved by Liu--Wang \cite[Theorem 1.1]{liu-wang--TC-local-fields}.
\end{proof}

% --------------------------------------------------------------------

\subsection{Ambidexterity}
\label{z--ta}

Our aim in this subsection is to prove \cref{z--ol--orbits}, which amounts to a description of the map $\smash{\varphi : \pcpl{(\J_p^{\t\cir})} \to \pcpl{(\J_p^{\t\cir})}}$ of \cref{z--ol--phi}. In light of the diagram \cref{z--ol--phi--compatibility}, we may think of $\varphi$ as expressing a functoriality of the Tate construction with respect to the map of groups $[p] : \cir \to \cir$. The same can be said about the map $\smash{\Phi : \pcpl{(\KU_p^{\t\cir})} \to \pcpl{(\KU_p^{\t\cir})}}$ of \cref{z--ol--phi}. What we will discuss in this subsection is that this functoriality in fact canonically exists for all $\K(1)$-local spectra with $\cir$-action, by virtue of the map $[p] : \B\cir \to \B\cir$ being \emph{ambidextrous} for the $\infty$-category $\Spt_{\K(1)}$ of $\K(1)$-local spectra.

Let us begin by reviewing a more basic instance of this phenomenon.

\begin{remark}
  \label{z--ta--warmup}
  Let $f : H \to G$ be a map of finite groups, and let $X$ be a spectrum with $G$-action, which determines also an $H$-action by restriction along $f$. Then the standard functoriality of limits and colimits gives natural maps $f^* : X^{\h G} \to X^{\h H}$ and $f_* : X_{\h H} \to X_{\h G}$, and the former natural transformation is canonically lax symmetric monoidal. Assuming that $f$ is \emph{injective}, there is a canonical lax symmetric monoidal transformation $f^* : X ^{\t G} \to X^{\t H}$ making the diagram
  \begin{equation}
    \label{z--ta--warmup--tate}
    \begin{tikzcd}
      X^{\h G} \ar[r, "f^*"] \ar[d, "\can", swap] &
      X^{\h H} \ar[d, "\can"] \\
      X^{\t G} \ar[r, "f^*"] &
      X^{\t H}
    \end{tikzcd}
  \end{equation}
  commute.

  Ignoring lax symmetric monoidal structures, we may rephrase this by taking vertical fibers: under the assumption that $f$ is injective, there is a ``wrong way'' map on orbits $f^* : X_{\h G} \to X_{\h H}$ making the diagram
  \begin{equation}
    \label{z--ta--orbits}
    \begin{tikzcd}
      X_{\h G} \ar[r, "f^*"] \ar[d, "\Nm_G", swap] &
      X_{\h H} \ar[d, "\Nm_H"] \\
      X^{\h G} \ar[r, "f^*"] &
      X^{\h H}
    \end{tikzcd}
  \end{equation}
  commute. This can be understood as follows. Recall that the norm map $\Nm_G$ is induced by the endomorphism of $X$ given by the following composition:
  \[
    X \lblto{\delta} \prod_G X \iso \bigoplus_G X \lblto{\epsilon} X,
  \]
  with $\delta$ the diagonal map, $\epsilon$ the summation map, and the middle equivalence given by the (inverse to the) canonical map $\bigoplus_G X \to \prod_G X$ (an equivalence because $G$ is finite); the same goes for $\Nm_H$. With this in mind, pondering the diagram \cref{z--ta--orbits} suggests what the above wrong way map is: it is also a kind of norm map, over the set of orbits $G/H$. More precisely, it is given by applying $(-)_{\h G}$ to the composition
  \[
    X \lblto{\delta} \prod_{G/H} X \iso \bigoplus_{G/H} X.
  \]
  Again, the crucial mechanism here is the identification between finite coproducts and finite products of spectra.

  Now, we could define the map $f^* : X_{\h G} \to X_{\h H}$ as in the previous paragraph and then check that the diagram \cref{z--ta--orbits} commutes. Taking vertical cofibers in that diagram, we would obtain the map $f^* : X^{\t G} \to X^{\t H}$; however, from this perspective, the lax symmetric monoidal structure on this map is unclear. Of course, the lax symmetric monoidal structure on the Tate construction itself is unclear from its definition as the cofiber of the norm map.

  So, for the matter of lax symmetric monoidal structures, another perspective is needed. One account can be found in the work of Nikolaus--Scholze \cite[\textsection I.3]{nikolaus-scholze--TC}; there, the canonical map $\can : X^{\h G} \to X^{\t G}$ is characterized by a universal property, or rather two universal properties: one with lax symmetric monoidal structure and one without. These universal properties can be used to check that there is fact a unique natural transformation $f^* : X^{\t G} \to X^{\t H}$ making the diagram \cref{z--ta--warmup--tate} commute, and that it carries a unique lax symmetric monoidal structure making that commutative diagram one of lax symmetric monoidal functors. The first uniqueness assertion implies that the wrong way map on orbits described above is the unique such map making \cref{z--ta--orbits} commute.
\end{remark}

Here we are interested not in an injective map of finite groups but in the finite covering map $[p] : \cir \to \cir$. If we tried to run through the discussion in \cref{z--ta--warmup} in this case, then in place of the the finite orbit set $G/H$ we would find the classifying space $\B\Cp$, this being the fiber of the induced map on classifying spaces $[p] : \B\cir \to \B\cir$. And so where we identified a product over $G/H$ with a coproduct over $G/H$, we would need to identify a limit over $\B\C_p$ with a colimit over $\B\C_p$, i.e. homotopy $\Cp$-fixed points with homotopy $\Cp$-orbits. This is not something we can do generally in the $\infty$-category of spectra, but we can after $\K(1)$-localization:

\begin{theorem}[Hovey--Sadofsky \citebare{hovey-sadofsky--blueshift}]
  \label{z--ta--ambidexterity}
  Let $G$ be a finite group and let $X$ be a $\K(1)$-local spectrum with $G$-action. Then the norm map $\Nm_G : X_{\h G} \to X^{\h G}$ is a $\K(1)$-local equivalence.
\end{theorem}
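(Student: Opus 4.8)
The plan is to recast the statement and reduce to $G = \Cp$. Since homotopy fixed points of $\K(1)$-local spectra are again $\K(1)$-local and $X^{\t G} \iso \cofib(\Nm_G : X_{\h G} \to X^{\h G})$, the assertion that $\Nm_G$ is a $\K(1)$-local equivalence is equivalent to the assertion that $X^{\t G}$ is $\K(1)$-acyclic; and since $X^{\t G}$ is in any case automatically $\K(m)$-acyclic for all $m \ge 2$, the genuine content is a height-$1$ ``blueshift''. So I would prove: for every finite group $G$ and every $\K(1)$-local $X$ with $G$-action, $X^{\t G}$ is $\K(1)$-acyclic.

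The first reduction is group-theoretic and routine. Every $\K(1)$-local spectrum is $p$-complete, hence so is $X^{\t G}$. If $p \nmid |G|$, then $|G| \cdot \id_{X^{\t G}}$ is the composite $X^{\t G} \to X^{\t\{e\}} \to X^{\t G}$ of restriction and transfer and is therefore null, as $X^{\t\{e\}} \iso 0$; since $|G|$ is invertible on the $p$-complete spectrum $X^{\t G}$, this gives $X^{\t G} \iso 0$. For general $G$, picking a $p$-Sylow $S \le G$, the identity $\mathrm{tr}^G_S \circ \mathrm{res}^G_S = [G:S] \cdot \id$ on $X^{\t G}$ together with the invertibility of $[G:S]$ exhibits $X^{\t G}$ as a retract of $X^{\t S}$, so we may assume $G$ is a $p$-group. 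For $G$ a nontrivial $p$-group, induct on $|G|$: choose a central $Z \cong \Cp$ and use the transitivity of the norm, $\Nm_G \simeq \Nm_{G/Z} \circ (\Nm_Z)_{\h(G/Z)}$, together with the facts that homotopy $G/Z$-orbits preserve $\K(1)$-local equivalences (as $\K(1)$-acyclic spectra are closed under colimits), that $X^{\h Z}$ is again $\K(1)$-local, and that the case $Z \cong \Cp$ is the base case below. This leaves $G = \Cp$.

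The base case $G = \Cp$ is the heart of the matter, and I would organize it so as to isolate a single hard input. Consider the functor $X \mapsto \pcpl{(X^{\t\Cp})}$ on $\Fun(\B\Cp, \Spt_{\K(1)})$. It is exact; the source is generated under colimits and shifts by the free object $\J_p \otimes \Sigma^\infty_+ \Cp$ (restriction to the underlying spectrum is conservative with left adjoint $Y \mapsto Y \otimes \Sigma^\infty_+ \Cp$, and $\Spt_{\K(1)}$ is generated under colimits and shifts by $\J_p$); and the functor vanishes on this free object, since the Tate construction of a free $\Cp$-spectrum is zero (for $\Cp$ finite it is also coinduced, so its norm map is an equivalence). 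So the only thing left to check is that the functor preserves colimits --- equivalently, that homotopy $\Cp$-fixed points commute with colimits in $\Spt_{\K(1)}$ --- after which the functor, being exact and colimit-preserving and vanishing on a generating object, is identically zero. (Alternatively one can descend along $\J_p \to \KU_p$ to reduce the base case to $\KU_p$-modules with $\Cp$-action, where it comes down to the vanishing $\KU_p^{\t\Cp} \iso 0$, i.e.\ the statement that the norm $(\KU_p)_{\h\Cp} \to \KU_p^{\h\Cp}$ is an equivalence.)

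The main obstacle is precisely that last point: the $\K(1)$-local finiteness of $\B\Cp$ --- that $\K(1)^*(\B\Cp)$ is a finite free module over $\K(1)^*$ (Ravenel), so that $\L_{\K(1)}\Sigma^\infty_+ \B\Cp$ is dualizable in $\Spt_{\K(1)}$ and homotopy $\Cp$-fixed points there commute with colimits; equivalently, the vanishing $\KU_p^{\t\Cp} \iso 0$, a $\K(1)$-local self-duality of $\B\Cp$ in the spirit of the Atiyah--Segal completion theorem. This is the genuinely chromatic ingredient, and it is exactly what has no analogue integrally: as recalled in \cref{z--ta--warmup}, what allows the Tate construction to ``disappear'' $\K(1)$-locally --- and hence the norm to be an equivalence --- is that $\B\Cp$ behaves ``$\K(1)$-locally finitely'', so that homotopy $\Cp$-fixed points and orbits agree there. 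Conceptually this whole circle of ideas is subsumed by the Hopkins--Lurie result that every $\pi$-finite space, in particular every $\B G$, is ambidextrous in $\Spt_{\K(1)}$, from which the theorem is immediate.
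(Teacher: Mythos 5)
The paper does not actually prove this statement---it is quoted from Hovey--Sadofsky and used as a black box---so your proposal has to be judged against the known proofs rather than against an argument in the text. Your reductions are the standard ones and are essentially fine: the Sylow reduction and the induction through a central $\Cp$ via transitivity of norms appear in all treatments (one caveat: the identity $\mathrm{tr}\circ\mathrm{res}=[G:S]\cdot\id$ holds on Tate cohomology, i.e.\ on the $E_2$-page, and is not literally a spectrum-level identity---the spectrum-level composite is the action of a Burnside element---but this is repairable by a convergence argument with the $p$-completed Tate spectral sequence). The problem is the base case $G=\Cp$, which you correctly identify as the heart and then do not prove.

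The gap is the step ``homotopy $\Cp$-fixed points commute with colimits in $\Spt_{\K(1)}$ because $\L_{\K(1)}\Sigma^\infty_+\B\Cp$ is dualizable.'' Ravenel's finiteness of $\K(1)^*(\B\Cp)$ controls only the \emph{trivial} local system: it says $F(\B\Cp_+,-)$ commutes with $\K(1)$-local colimits. What your generation argument needs is the statement for \emph{arbitrary} local systems, i.e.\ that the trivial representation $\J_p$ is a perfect module over the $\K(1)$-local group algebra $\L_{\K(1)}\S[\Cp]$; and that assertion is essentially equivalent to the theorem you are trying to prove (it is equivalent to the vanishing, after $\K(1)$-localization, of the Tate construction on a generator, hence on everything). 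So the proposal reduces the theorem to an equivalent statement and then cites a computation that does not bridge trivial versus twisted coefficients. The same issue shows up in your ``alternative'' route: the claim $\KU_p^{\t\Cp}\iso 0$ is false---for the trivial action one has $\pi_0(\KU_p^{\t\Cp})\iso \Q_p(\zeta_p)$, so the Tate construction is nonzero but rational, and the norm $(\KU_p)_{\h\Cp}\to\KU_p^{\h\Cp}$ is only a $\K(1)$-local equivalence, not an equivalence; moreover passing from the trivial-coefficient statement for $\KU_p$ back to arbitrary $\K(1)$-local coefficients requires a descent along $\J_p\to\KU_p$ that again hinges on commuting Tate-type limits with the relevant (co)limits. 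Every actual proof in the literature supplies a genuine extra ingredient exactly at this point: Greenlees--Sadofsky and Hovey--Sadofsky analyze the Tate spectral sequence with arbitrary coefficients using the finiteness of $\K(1)_*(\B\Cp)$ in a more serious way; Kuhn and Clausen--Mathew use the nilpotence/thick subcategory theorem; Hopkins--Lurie and Carmeli--Schlank--Yanovski run an induction through ambidexterity at lower stages. As written, your argument is circular at its crucial step, and the subsidiary claim $\KU_p^{\t\Cp}\iso 0$ is incorrect as stated.
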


\begin{remark}
  \label{z--ta--ambidexterity-generalizations}
  \cref{z--ta--ambidexterity} has several generalizations. First, the cited work of Hovey--Sadofsky in fact proves the same result with $\K(1)$ replaced by $\K(n)$ for any nonnegative integer $n$. Second, work of Kuhn \cite{kuhn--blueshift} strengthens the theorem to apply with $\K(n)$ replaced by $\mathrm{T}(n)$. Third, work of Hopkins--Lurie \cite{hopkins-lurie--ambidexterity} generalizes the result in a different direction, replacing the finite group $G$, or rather its classifying space $\B G$, with an arbitrary $\pi$-finite space; they also introduce the term \emph{ambidexterity} for this phenomenon. And finally, work of Carmeli--Schlank--Yanovski \cite{carmeli-schlank-yanovski--ambidexterity} gives a common generalization of all of these results.
\end{remark}

As suggested by the comments above, \cref{z--ta--ambidexterity} allows us to adapt the explicit construction of the wrong way map on orbits  in \cref{z--ta--warmup} to our setting. An abstract account of this construction, as well as of the results of Nikolaus--Scholze mentioned at the end of \cref{z--ta--warmup}, which is applicable in the present situation, may be found in \cite[\textsection\textsection 1.8, 1.9, 2.1]{raksit--tate}. We summarize the relevant points here.

\begin{notation}
  \label{z--ta--p-functors}
  The map $[p] : \B\cir \to \B\cir$ induces a restriction functor $\smash{[p]^* : \Spt^{\B\cir} \to \Spt^{\B\cir}}$, a left Kan extension functor $\smash{[p]_\sharp : \Spt^{\B\cir} \to \Spt^{\B\cir}}$, and finally a right Kan extension functor $\smash{[p]_* : \Spt^{\B\cir} \to \Spt^{\B\cir}}$. The standard functoriality of limits and colimits defines natural maps $[p]^* : X^{\h\cir} \to [p]^*(X)^{\h\cir}$ and $[p]_* : [p]^*(X)_{\h\cir} \to X_{\h\cir}$ for $\smash{X \in \Spt^{\B\cir}}$ (which specialize to the maps of \cref{z--ol--p-cover}).
\end{notation}

\begin{remark}
  \label{z--ta--kan-concrete}
  Recall that the Kan extension functors of \cref{z--ta--p-functors} are given by the formation of colimit and limit over the fiber $\B\Cp$ of $[p] : \B\cir \to \B\cir$. That is, for $X$ an $\cir$-equivariant spectrum,  $[p]_\sharp(X)$ and $[p]_*(X)$ refer to $X_{\h\Cp}$ and $X^{\h\Cp}$ with their residual $\cir$-actions. We note also that the norm map $\Nm_{\Cp} : X_{\h\Cp} \to X^{\h\Cp}$ canonically refines to an $\cir$-equivariant map (namely the norm map $\Nm_{[p]} : [p]_\sharp(X) \to [p]_*(X)$ associated to the map $[p] : \B\cir \to \B\cir$).
\end{remark}

\begin{construction}[{\citebare[Construction 1.9.2]{raksit--tate}}]
  \label{z--ta--orbits-map}
  Let $X$ be a $\K(1)$-local $\cir$-equivariant spectrum. With \cref{z--ta--ambidexterity}, \cref{z--ta--p-functors}, and \cref{z--ta--kan-concrete} in mind, we define the map $[p]^* : \pcpl{(X_{\h\cir})} \to \pcpl{([p]^*(X)_{\h\cir})}$ to be that obtained by applying $\pcpl{((-)_{\h\cir})}$ to the composition
  \[
    X \to X^{\B\Cp} \lbliso{\Nm_{\Cp}} \pcpl{(X[\B\Cp])}.
  \]
  Here the $\cir$-actions on $X[\B\Cp]$ and $X^{\B\Cp}$ are given by the constructions $[p]_\sharp[p]^*(X)$ and $[p]_*[p]^*(X)$; and the first map is induced by the projection map $\B\Cp \to *$, or in other words is the unit map $X \to [p]_*[p]^*(X)$.
\end{construction}

\begin{example}
  \label{z--ta--orbits-map-eg}
  Taking $X$ to be $\J_p$ with trivial action in \cref{z--ta--orbits-map}, we obtain a map $[p]^* : \pcpl{\J_p[\B\cir]} \to \pcpl{\J_p[\B\cir]}$.
\end{example}

\begin{proposition}[{\citebare[Theorem 1.9.3 and Example 2.1.19]{raksit--tate}}]
  \label{z--ta--outsource}
  The following statements hold:
  \begin{enumerate}[leftmargin=*]
  \item \label{z--ta--outsource--map}
    There is a unique natural transformation $[p]^* : \pcpl{((-)^{\t\cir})} \to \pcpl{([p]^*(-)^{\t\cir})}$ of functors from $\smash{\Spt_{\K(1)}^{\B\cir}}$ to $\Spt_{\K(1)}$ such that the diagram
    \[
      \begin{tikzcd}
        (-)^{\h\cir} \ar[r, "{[p]^*}"] \ar[d, "\can", swap] &
        {[p]^*(-)^{\h\cir}} \ar[d, "\can"] \\
        \pcpl{((-)^{\t\cir})} \ar[r, "{[p]^*}"] &
        \pcpl{([p]^*(-)^{\t\cir})}
      \end{tikzcd}
    \]
    of such functors commutes.
  \item \label{z--ta--outsource--monoidal}
    The natural transformation $[p]^* : \pcpl{((-)^{\t\cir})} \to \pcpl{([p]^*(-)^{\t\cir})}$ of \cref{z--ta--outsource--map} carries a unique lax symmetric monoidal structure making the above commutative diagram one of lax symmetric monoidal functors.
  \item \label{z--ta--outsource--fiber}
    The natural transformation $\pcpl{((-)_{\h\cir})} \to \pcpl{([p]^*(-)_{\h\cir})}$ obtained by taking vertical fibers in the diagram of \cref{z--ta--outsource--map} canonically identifies with the map $[p]^*$ of \cref{z--ta--orbits-map}.
  \end{enumerate}
\end{proposition}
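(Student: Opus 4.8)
The plan is to deduce all three parts from the universal property of the Tate construction of Nikolaus--Scholze \cite[\textsection I.3]{nikolaus-scholze--TC}, in the abstract form recorded in \cite{raksit--tate}: the pair consisting of the functor $\pcpl{((-)^{\t\cir})}\colon \Spt_{\K(1)}^{\B\cir} \to \Spt_{\K(1)}$ and the natural transformation $\can\colon (-)^{\h\cir} \Rightarrow \pcpl{((-)^{\t\cir})}$ is \emph{initial} among pairs $(F,\eta)$ with $F$ an exact functor $\Spt_{\K(1)}^{\B\cir} \to \Spt_{\K(1)}$ and $\eta\colon (-)^{\h\cir} \Rightarrow F$ a natural transformation whose composite with the $\cir$-norm $\Nm\colon \pcpl{((-)_{\h\cir})}[1] \Rightarrow (-)^{\h\cir}$ is null; there is likewise a lax symmetric monoidal refinement, in which $F$, $\eta$, and $\can$ carry lax symmetric monoidal structures. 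The one ingredient specific to the present situation is the observation -- underlying \cref{z--ta--orbits-map} and the fixed-points variant used in the statement -- that, by the $\K(1)$-local ambidexterity of $\B\Cp$ (\cref{z--ta--ambidexterity}), both operations are realizations of a \emph{single} lax symmetric monoidal natural self-transformation $\vartheta_p$ of $\id_{\Spt_{\K(1)}^{\B\cir}}$: the ``$\K(1)$-local $\Cp$-trace'', assembled from the maps induced by $\B\Cp \to \ast$ and the inverse of the norm equivalence $\Nm_{\Cp}\colon \pcpl{((-)_{\B\Cp})} \xrightarrow{\ \sim\ } \pcpl{((-)^{\B\Cp})}$, with the property that applying $(-)^{\h\cir}$ produces the map $[p]^*$ appearing in \cref{z--ta--outsource--map} and applying $\pcpl{((-)_{\h\cir})}$ produces the map $[p]^*$ of \cref{z--ta--orbits-map} (and hence, via \cref{z--ta--CSY}, the map $\nu$ of \cref{z--ol--nu}).

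Granting this, existence in \cref{z--ta--outsource--map} and \cref{z--ta--outsource--monoidal} is obtained by \emph{defining} $[p]^*\colon \pcpl{((-)^{\t\cir})} \Rightarrow \pcpl{((-)^{\t\cir})}$ to be $\pcpl{((\vartheta_p)^{\t\cir})}$, the ($p$-completed) Tate construction applied to $\vartheta_p$. The displayed square then commutes -- as a diagram of functors, and with the evident lax symmetric monoidal structure -- simply by naturality of $\can$ (as a lax symmetric monoidal transformation) with respect to the lax symmetric monoidal transformation $\vartheta_p$: both composites equal $\can \circ (\vartheta_p)^{\h\cir}$. For uniqueness, note that $\can \circ (\vartheta_p)^{\h\cir}\colon (-)^{\h\cir} \Rightarrow \pcpl{((-)^{\t\cir})}$ has null composite with $\Nm$: naturality of $\Nm$ gives $(\vartheta_p)^{\h\cir}\circ\Nm \simeq \Nm\circ (\vartheta_p)_{\h\cir}[1]$, and then $\can\circ\Nm \simeq 0$. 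Thus $(\pcpl{((-)^{\t\cir})}, \can\circ(\vartheta_p)^{\h\cir})$ lies in the category appearing in the universal property, and any natural transformation $g$ making the displayed square commute is exactly a morphism $(\pcpl{((-)^{\t\cir})},\can) \to (\pcpl{((-)^{\t\cir})}, \can\circ(\vartheta_p)^{\h\cir})$ there; initiality gives a unique such $g$, which is \cref{z--ta--outsource--map}. The same argument with lax symmetric monoidal decorations throughout, together with the fact that the resulting lax symmetric monoidal map has underlying transformation the one just constructed (by the uniqueness in \cref{z--ta--outsource--map}), gives \cref{z--ta--outsource--monoidal}.

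For \cref{z--ta--outsource--fiber}: the transformation $\vartheta_p$ acts on the entire $\cir$-norm cofiber sequence of functors $\pcpl{((-)_{\h\cir})}[1] \lblto{\Nm} (-)^{\h\cir} \lblto{\can} \pcpl{((-)^{\t\cir})}$ -- the last term being $\cofib(\Nm)$ -- by the triple $\bigl(\pcpl{((\vartheta_p)_{\h\cir})}[1],\ (\vartheta_p)^{\h\cir},\ \pcpl{((\vartheta_p)^{\t\cir})}\bigr)$, a map of cofiber sequences whose right-hand square is precisely the square of \cref{z--ta--outsource--map}. Since a cofiber sequence in a stable $\infty$-category is also a fiber sequence, taking vertical fibers in that square recovers $\Nm$ as the fiber of $\can$ and the leftmost component $\pcpl{((\vartheta_p)_{\h\cir})}[1]$ as the induced natural transformation on fibers. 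Desuspending and recalling that $\pcpl{((\vartheta_p)_{\h\cir})}$ is by construction the map $[p]^*$ of \cref{z--ta--orbits-map} yields the claim. (This also reproves, as a byproduct, the $\cir$-analogue of the square \cref{z--ta--orbits} used above.)

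The step I expect to be the real obstacle is the preliminary one packaged above: pinning down $\vartheta_p$, showing its lax symmetric monoidal structure exists, and identifying its fixed-point and orbit realizations with the maps already in play -- in effect, transporting the Nikolaus--Scholze account of the functoriality of the Tate construction in an injective map of finite groups to the setting of the $\B\Cp$-fibration $\B[p]\colon \B\cir \to \B\cir$. This requires the full strength of $\K(1)$-local (higher) ambidexterity \cite{hopkins-lurie--ambidexterity,carmeli-schlank-yanovski--ambidexterity} and a Beck--Chevalley-type compatibility between the relative $\Cp$-norm and the $\cir$-norm, and it is also where one must check -- less formally than in the finite-group case -- that $\pcpl{((-)^{\t\cir})}$ genuinely satisfies the hypotheses of the universal property for the circle; this is the content of \cite{raksit--tate}. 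Once that is in hand, \cref{z--ta--outsource--map,z--ta--outsource--monoidal,z--ta--outsource--fiber} are formal consequences of naturality together with the universal property of $\can$.
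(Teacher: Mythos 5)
The paper itself gives no argument for this statement: it is quoted verbatim from the forthcoming note \cite{raksit--tate}, so there is no internal proof to match yours against. Judged on its own terms, your proposal has a genuine gap at its foundation. You reduce everything to a single natural self-transformation $\vartheta_p$ of $\id_{\Spt_{\K(1)}^{\B\cir}}$ (unit into the $\B\Cp$-cotensor, inverse $\Cp$-norm, counit out of the tensor) and claim that $(\vartheta_p)^{\h\cir}$ is the top map $[p]^*$ and that $\pcpl{((\vartheta_p)_{\h\cir})}$ is the map of \cref{z--ta--orbits-map}. Neither claim is tenable. First, \cref{z--ta--orbits-map} contains no counit: it applies $\pcpl{((-)_{\h\cir})}$ to the map $X \to \pcpl{(X_{\B\Cp})}$, whose target is a \emph{different} object, and the entire subtlety---the part your $\vartheta_p$ erases---is the change-of-group bookkeeping along $\B[p] : \B\cir \to \B\cir$ that makes the resulting map an endomorphism of $\pcpl{(X_{\h\cir})}$. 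Second, and decisively, a transformation obtained by applying $(-)^{\h\cir}$ (or orbits) objectwise to an endomorphism of $X$ is far too rigid. Since $\pi_0(\J_p^{\B\cir}) \iso \Z_p$, generated by the unit (this follows from \cref{z--ol--Bcir-infinite-splitting} together with the vanishing of $\pi_0(\J_p^{\KU_p})$ recorded in \cref{z--tr--noncanonical}, or from computing the $\Gamma$-invariants of $\Z_p\pow{q-1}$), naturality at maps $\unit \to X$ from the unit forces any transformation of the form $(\vartheta_X)^{\h\cir}$ to act on $\pi_0(X^{\h\cir})$ by one fixed scalar $c \in \Z_p$, the same for all $X$. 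But the $[p]^*$ required in \cref{z--ta--outsource--map} must restrict, on trivial-action objects, to restriction along $\B[p]$---on $\KU_p$ it sends $q \mapsto q^p$ in $\Z_p\pow{q-1}$, which is exactly how the proposition is played off against \cref{z--ol--KU-phi} in the proof of \cref{z--ol--orbits}---and that is not multiplication by a scalar. Likewise, evaluating your proposed orbit-level map at $\J_p$ would give a scalar multiple of the identity of $\pcpl{\J_p[\B\cir]}$, whereas by \cref{z--ta--CSY} it must be $\nu$, which satisfies $[p]_* \circ \nu \iso \id$ and $q \circ \nu \iso 0$ and so is no scalar.

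Because the existence of the square, the verification that $\can \circ [p]^*$ kills the norm, and part \cref{z--ta--outsource--fiber} are all routed through $\vartheta_p$ in your write-up, the error is not a repairable local step. What remains---the universal property of the $p$-completed $\cir$-Tate construction and the genuine change-of-group functoriality supplied by $\K(1)$-local ambidexterity---is precisely what you yourself defer to \cite{raksit--tate}, which is also all the paper does, so the proposal does not yet constitute a proof. Any correct argument must treat $[p]^*$ as functoriality in the group direction (a restriction/transfer along the isogeny $[p]$, as in the finite-group discussion of \cref{z--ta--warmup}, where the two sides of the comparison are a priori different functors that only afterwards get identified), rather than as an operation applied to each object inside $\Spt_{\K(1)}^{\B\cir}$; the scalar constraint above shows that no objectwise-induced transformation can have the required effect.
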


We now use this result to settle a matter from \cref{z--ol}.

\begin{proposition}
  \label{z--ta--nu}
  The map $[p]^* : \pcpl{\J_p[\B\cir]} \to \pcpl{\J_p[\B\cir]}$ of \cref{z--ta--orbits-map-eg} is canonically equivalent to the map $\nu$ of \cref{z--ol--nu}.
\end{proposition}

\begin{proof}
  In this argument, for a map of spaces $f : T \to S$, we will use $f^*$, $f_\sharp$, and $f_*$ to denote the associated restriction, left Kan extension, and right Kan extension functors relating $\Spt^T_{\K(1)}$ and $\smash{\Spt^S_{\K(1)}}$ (so there is an implicit $p$-completion/$\K(1)$-localization in left Kan extension here, in contrast with elsewhere in the paper). We begin by recalling a description of $\nu$ from work of Carmeli--Schlank--Yanovski. To avoid confusion, let us denote the delooping of $[p] : \B\cir \to \B\cir$ by $[p]' : \B^2\cir \to \B^2\cir$, and let us write $a' : * \to \B^2\Cp$ and $b' : * \to \B^2\cir$ for the canonical basepoints. We may then form the following diagram of cartesian squares:
  \[
    \begin{tikzcd}
      \B\Cp \ar[r, "\beta"] \ar[d, "\pi", swap] &
      \B\cir \ar[r, "c"] \ar[d, "{[p]}", swap] &
      * \ar[d, "a'"] \\
      * \ar[r, "b"] &
      \B\cir \ar[r, "e"] \ar[d, "c", swap] &
      \B^2\Cp \ar[r, "\pi'"] \ar[d, "\beta'"] &
      * \ar[d, "b'"] \\
      &
      * \ar[r, "b'"] &
      \B^2\cir \ar[r, "{[p]'}"] &
      \B^2\cir.
    \end{tikzcd}
  \]
  Set $L := b'_\sharp(\J_p)$, which has underlying spectrum
  \[
    b'^*(L) = b'^*b'_\sharp(\J_p) \iso c_\sharp c^*(\J_p) \iso \pcpl{\J_p[\B\cir]}.
  \]
  The canonical homotopy $[p]'b' \iso b'$ induces an equivalence $[p]'_\sharp(L) \iso L$, and we deduce that $[p]'_\sharp(L)$ and $[p]'^*[p]'_\sharp(L)$ also have underlying spectrum $\pcpl{\J_p[\B\cir]}$. The unit map $\smash{\eta^\sharp_{[p]'} : L \to [p]'^*[p]'_\sharp(L)}$ has underlying map of spectra $[p]_* : \pcpl{\J_p[\B\cir]} \to \pcpl{\J_p[\B\cir]}$. Unravelling \cite[Propositions 4.5 and 4.11]{carmeli-schlank-yanovski--cyclotomic} and \cite[Proposition 4.3.4 and Remark 4.3.5]{carmeli-schlank-yanovski--height}, and using that the $\Spt_{\K(1)}$-cardinality of $\B\Cp$ is equal to $1$ \cite[Definition 2.1.5 and Proposition 2.2.5]{carmeli-schlank-yanovski--height}, shows that the composition
  \begin{equation}
    \label{z--ta--nu--CSY}
    [p]'^*[p]'_\sharp(L) \lblto{\Nm_{[p]'}} [p]'^*[p]'_*(L) \lblto{\epsilon^*_{[p]'}} L,
  \end{equation}
  where $\epsilon^*_{[p]'}$ is the counit map, has underlying map of spectra $\nu : \pcpl{\J_p[\B\cir]} \to \pcpl{\J_p[\B\cir]}$.

  We now explain how to identify the above description of $\nu$ with $[p]^*$. We have $b' \iso \beta'a'$, inducing an equivalence $L \iso \beta'_\sharp a'_\sharp(\J_p)$; at the level of underlying spectra, this is the equivalence
  \[
    \pcpl{\J_p[\B\cir]} \iso c_\sharp [p]_\sharp(\J_p) \iso \pcpl{((\pcpl{\J_p[\B\Cp]})_{\h\cir})}
  \]
  which was involved in \cref{z--ta--orbits-map}. We will use this expression for $L$ together with a couple of coherence properties of norm maps to recast the composition \cref{z--ta--nu--CSY}. First, we have the following commutative diagram of functors from $\smash{\Spt_{\K(1)}^{\B^2\Cp}}$ to $\smash{\Spt_{\K(1)}^{\B^2\cir}}$:
  \[
    \begin{tikzcd}
      {[p]'^*[p]'_\sharp \beta'_\sharp} \ar[d, "\Nm_{[p]'}", swap] &
      {[p]'^*b'_\sharp\pi'_\sharp} \ar[l, "\sim", swap] \ar[d, "\Nm_{\pi'}", swap] &
       \\
      {[p]'^*[p]'_* \beta'_\sharp} \ar[d, "\epsilon^*_{[p]'}", swap] &
      {[p]'^*b'_\sharp\pi'_*} \ar[l] &
      \beta'_\sharp\pi'^*\pi'_\sharp \ar[ul, "\sim", swap] \ar[d, "\Nm_{\pi'}"] \\
      \beta'_\sharp &
      &
      \beta'_\sharp\pi'^*\pi'_*. \ar[ll, "\epsilon^*_{\pi'}"] \ar[ul, "\sim", swap]
    \end{tikzcd}
  \]
  Here the diagonal arrows are induced by the projection isomorphism $\beta'_\sharp \pi'^* \isoto [p]'^*b'_\sharp$ and the right hand parallelogram commutes by naturality; the unlabelled arrow is induced by the exchange map $b'_\sharp \pi'_* \to [p]'_* \beta'_\sharp$, which is defined so that the lower trapezoid commutes; and the commutativity of the upper left square is established for example in \cite[Construction 1.6.6]{raksit--tate}. Second, we have the following commutative diagram of functors from $\smash{\Spt_{\K(1)}}$ to $\smash{\Spt_{\K(1)}^{\B^2\Cp}}$:
  \[
    \begin{tikzcd}
      &
      \pi'^*\pi'_\sharp a'_\sharp \ar[dr, "\Nm_{\pi'}", bend left=10] \ar[d, "\Nm_{\pi'a'}", swap] \ar[dl, "\sim", swap, bend right=10] &
       \\
      \pi'^* \ar[r, "\sim"] \ar[d, "\eta^*_{a'}", swap] &
      \pi'^*\pi'_*a'_* \ar[d, "\epsilon^*_{\pi'}"] &
      \pi'^*\pi'_* a'_\sharp \ar[l, "\Nm_{a'}", swap] \ar[d, "\epsilon^*_{\pi'}"] \\
      a'_*a'^*\pi'^* \ar[r, "\sim"] &
      a'_* &
      a'_\sharp. \ar[l, "\Nm_{a'}", swap]
    \end{tikzcd}
  \]
  Here the arrows labelled as equivalences come from $\pi'a' : * \to *$ being the identity map, which also gives the commutativity of the left half of the diagram; in the right half, the lower square commutes by naturality and the upper triangle commutes by e.g. \cite[Construction 1.7.8]{raksit--tate}.

  In combination, the preceding two commutative diagrams identify \cref{z--ta--nu--CSY} with the image under $\smash{\beta'_\sharp : \Spt_{\K(1)}^{\B^2\Cp} \to \Spt_{\K(1)}^{\B^2\cir}}$ of the composition
  \[
    \pi'^*(\J_p) \lblto{\eta^*_{a'}} a'_*a'^*\pi'^*(\J_p) \iso a'_*(\J_p) \lbliso{\Nm_{a'}} a'_\sharp(\J_p).
  \]
  After further applying the forgetful functor $\smash{b'^* : \Spt_{\K(1)}^{\B^2\cir} \to \Spt_{\K(1)}}$, this recovers the map $[p]^* : \pcpl{\J_p[\B\cir]} \to \pcpl{\J_p[\B\cir]}$ as defined in \cref{z--ta--orbits-map}.
\end{proof}

\begin{remark}
  \label{z--ta--nu-alt}
  By definition of the map $\nu$, \cref{z--ta--nu} is equivalent to the commutativity of the following diagram:
  \[
    \begin{tikzcd}
      &
      &
      \pcpl{\J_p[\B\cir]} \\
      \pcpl{\J_p[\B\cir]} \ar[r, "{[p]^*}"] \ar[urr, "\id", bend left=10] \ar[drr, "0", swap, bend right=10] &
      \pcpl{\J_p[\B\cir]} \ar[ur, "{[p]_*}", swap] \ar[dr, "q"] \\
      &
      &
      \KU_p.
    \end{tikzcd}
  \]
  It's possible to prove this directly, rather than appealing to the description of $\nu$ from \cite{carmeli-schlank-yanovski--height,carmeli-schlank-yanovski--cyclotomic} as we did above. The commutativity of the upper triangle boils down to the calculation of the $\Spt_{\K(1)}$-cardinality of $\B\Cp$ cited in the proof above. For the lower triangle, we are interested in the image of the class $q$ under a wrong way map on $\KU_p$-cohomology associated to $[p] : \B\cir \to \B\cir$. This wrong way map can be understood in representation theoretic terms, as in work of Yuan \cite[Example 2.16]{yuan--semiadditive}, and the desired commutativity results from the fact that the tautological complex character of $\cir$ (which determines the class $q$) has vanishing $\Cp$-fixed points.
\end{remark}

\begin{proof}[Proof of \cref{z--ol--orbits}]
  Let $[p]^* : \pcpl{((-)^{\t\cir})} \to \pcpl{([p]^*(-)^{\t\cir})}$ be as in \cref{z--ta--outsource}. As this map is natural and lax symmetric monoidal, applying it to the $\Gamma$-equivariant $\K(1)$-local $\E_\infty$-ring $\KU_p$ (equipped with trivial $\cir$-action) must recover the map $\smash{\Phi : \pcpl{(\KU_p^{\t\cir})} \to \pcpl{(\KU_p^{\t\cir})}}$ of \cref{z--ol--KU-phi}, by virtue of the uniqueness assertion in that statement. Again by naturality, it follows that the map $\smash{\varphi : \pcpl{(\J_p^{\t\cir})} \to \pcpl{(\J_p^{\t\cir})}}$ obtained by applying $\Gamma$-fixed points also identifies with $[p]^*$. It thus follows from \cref{z--ta--outsource}\cref{z--ta--outsource--fiber} that the map $\pcpl{\J_p[\B\cir]} \to \pcpl{\J_p[\B\cir]}$ obtained by taking vertical fibers in the diagram \cref{z--ol--phi--compatibility} identifies with the map $[p]^*$ of \cref{z--ta--orbits-map-eg}, and hence with the map $\nu$ by \cref{z--ta--nu}.
\end{proof}

% --------------------------------------------------------------------

\subsection{The $\cir$-transfer}
\label{z--tr}

In this subsection we will review one construction of the $\cir$-norm and -transfer maps in the $\infty$-category of spectra, along with their relation to the theory of complex orientations, and then we will give a description of the $\K(1)$-localized $\cir$-transfer map, in particular proving \cref{z--ol--transfer}.

\begin{notation}
  \label{z--tr--dual}
  In this subsection, for $R$ an $\E_\infty$-ring and $X$ an $R$-module, we will write $\dual{X}$ for the dual module $\uMap_{\Mod_R}(X,R)$ (the base $R$ to be understood from context).
\end{notation}

\begin{construction}
  \label{z--tr--cir-dual}
  The basepoint of $\cir$ induces a decomposition $\S[\cir] \iso \S \oplus \S\{\cir\} \iso \S \oplus \S[1]$; let us denote by $c : \S[\cir] \to \S$ and $d : \S[\cir] \to \S[1]$ the associated projection maps. The latter is adjoint to a map $\S \to \dual{\S[\cir]}[1]$, which extends uniquely to an $\cir$-equivariant map
  \[
    e : \S[\cir] \to \dual{\S[\cir]}[1].
  \]
  Said differently, $e$ is adjoint to the composition
  \[
    \S[\cir] \otimes \S[\cir] \iso \S[\cir \times \cir] \lblto{m} \S[\cir] \lblto{d} \S[1],
  \]
  where $m$ is induced by the multiplication map $\cir \times \cir \to \cir$.

  The map $e$ is an equivalence. Indeed, the equivalence $\S[\cir] \iso \S \oplus \S[1]$ induces an equivalence $\S[\cir] \otimes \S[\cir] \iso \S \oplus \S[1] \oplus \S[1] \oplus \S[2]$, under which the composition $d \circ m$ above identifies with the map
  \[
    \S \oplus \S[1] \oplus \S[1] \oplus \S[2] \lblto{\begin{psmallmatrix}0 \amp \id \amp \id \amp \eta\end{psmallmatrix}} \S[1],
  \]
  where $\eta$ denotes the Hopf map; and thus, under the induced equivalence $\dual{\S[\cir]}[1] \iso (\S \oplus \S[-1])[1] \iso \S[1] \oplus \S$, the map $e$ identifies with the map
  \[
    \S \oplus \S[1] \lblto{\begin{psmallmatrix}0 \amp \id \\ \id \amp \eta \end{psmallmatrix}} \S[1] \oplus \S,
  \]
  which is an equivalence.
\end{construction}

\begin{construction}
  \label{z--tr--nm-tr}
  As in \cref{z--tr--cir-dual}, let $c : \S[\cir] \to \S$ be map induced by the projection map $\cir \to *$; note that this is canonically $\cir$-equivariant. Applying duality and the equivalence $e$ of \cref{z--tr--cir-dual}, we obtain the composition of $\cir$-equivariant maps
  \[
    \S \lblto{\dual{c}} \dual{\S[\cir]} \lblto{e^{-1}} \S[\cir][-1];
  \]
  let us denote the composite map by $f : \S \to \S[\cir][-1]$.

  Now, recalling that $\cir$-equivariant spectra may be identified with modules over the group algebra $\S[\cir]$, we may consider, for any $\cir$-equivariant spectrum $X$, the composition
  \[
    X_{\h\cir}[1] \iso X \otimes_{\S[\cir]} \S[1] \lblto{f} X \otimes_{\S[\cir]} \S[\cir] \iso X.
  \]
  We denote this composite map by $\tr : X_{\h\cir}[1] \to X$; this is the \emph{$\cir$-transfer map}. It comes by construction as an $\cir$-equivariant map, where the source has trivial action, and so it factors uniquely through a map $\Nm : X_{\h\cir}[1] \to X^{\h\cir}$; this is the \emph{$\cir$-norm map}.
\end{construction}

\begin{remark}
  \label{z--tr--trivial}
  Let $R$ be an $\E_\infty$-ring. Equipping it with the trivial $\cir$-action and specializing \cref{z--tr--nm-tr} to this case, we obtain the norm map $\Nm : R[\B\cir][1] \to R^{\B\cir}$ and the transfer map $\tr : R[\B\cir][1] \to R$ appearing in \cref{z--ol--basepoint}. We note that the discussion in \cref{z--tr--cir-dual} implies that the composition
  \[
    R[1] \lblto{b_*} R[\B\cir][1] \lblto{\tr} R
  \]
  canonically identifies with the Hopf map $\eta$ (recall that $b_*$ denotes the map induced by the basepoint of $\B\cir$).
\end{remark}

We now explain the relation of the $\cir$-transfer map to complex orientations.

\begin{notation}
  \label{z--tr--rep}
  We let $\V$ denote the tautological representation of $\cir$ on $\mathbb{C}$, which we regard as a vector bundle on $\B\cir$. We let $\S^\V$ denote the associated stable sphere bundle, an $\cir$-equivariant spectrum with underlying spectrum $\S[2]$. And we let $\Th_{\B\cir}(\V)$ denote the associated Thom spectrum, given by $(\S^\V)_{\h\cir}$. Replacing $\V$ by its virtual negative $-\V$ gives rise similarly to $\S^{-\V}$ and $\Th_{\B\cir}(-\V)$.
\end{notation}

\begin{remark}
  \label{z--tr--euler}
  Recall that we have a cofiber sequence of $\cir$-equivariant spectra
  \[
    \S[\cir] \lblto{c} \S \to \S^\V,
  \]
  sometimes called the \emph{Euler sequence}. Applying $(-)_{\h\cir}$, we obtain a cofiber sequence
  \[
    \S \lblto{b_*} \S[\B\cir] \to \Th_{\B\cir}(\V).
  \]
  This gives an equivalence $\Th_{\B\cir}(\V) \iso \S\{\B\cir\}$, under which the quotient map $\S^\V \to \Th_{\B\cir}(\V)$ identifies (non-equivariantly) with the map $\S[2] \iso \S\{\sph^2\} \to \S\{\B\cir\}$ induced by the map of pointed spaces $\sph^2 \iso \Sigma \cir \to \B\cir$ adjoint to the unit equivalence $\cir \isoto \Omega \B\cir$.
\end{remark}

\begin{definition}
  \label{z--tr--orientation}
  Let $R$ be an $\E_\infty$-ring and let $X$ be an $R$-module. A \emph{complex orientation of $X$ over $R$} is an equivalence of $\cir$-equivariant $R$-modules $X \otimes \S^\V \iso X[2]$; in light of \cref{z--tr--euler}, this is equivalent to a map of $R$-modules $X \otimes_R R\{\B\cir\} \to X[2]$ extending the canonical equivalence $X \otimes_R R\{\sph^2\} \isoto X[2]$. We say that $X$ is \emph{complex orientable over $R$} if it admits a complex orientation over $R$.
\end{definition}

\begin{remark}
  \label{z--tr--orientation-ring}
  In the situation of \cref{z--tr--orientation}, suppose that $X$ is equipped with an $\A_2$-algebra structure in $\Mod_R$, i.e. maps of $R$-modules $v : R \to X$ and $w : X \otimes_R X \to X$ and a commutative diagram
  \[
    \begin{tikzcd}
      X \ar[r, "\id \otimes v"] \ar[dr, "\id", swap] &
      X \otimes_R X \ar[d, "w"] \\
      &
      X.
    \end{tikzcd}
  \]
  Then a complex orientation of $X$ over $R$ is equivalent to a map of $R$-modules $R\{\B\cir\} \to X[2]$ extending the map $\smash{R\{\sph^2\} \iso R[2] \lblto{v} X[2]}$. This is a more standard notion of complex orientation. Note that it does not actually depend on $R$.
\end{remark}

\begin{proposition}
  \label{z--tr--null}
  Let $R$ be an $\E_\infty$-ring and let $X$ be an $R$-module. Then a nullhomotopy of the transfer map $\tr : X \otimes_R R[\B\cir][1] \to X$ is equivalent to an equivalence of $\cir$-equivariant $R$-modules $X \otimes \S^{-\V} \iso X[-2]$.
\end{proposition}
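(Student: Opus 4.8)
The plan is to realize both a nullhomotopy of $\tr$ and a trivialization of $X\otimes\S^{-\V}$ as a splitting of one and the same cofiber sequence of $R$-modules. First I would recall the $\cir$-equivariant cofiber sequence underlying the construction of the transfer: by \cref{z--tr--cir-dual} and \cref{z--tr--nm-tr}, dualizing the Euler sequence $\S[\cir]\xrightarrow{c}\S\to\S^\V$ of \cref{z--tr--euler} and applying the equivalence $e$ produces a cofiber sequence of $\cir$-equivariant spectra $\S^{-\V}\to\S\xrightarrow{f}\S[\cir][-1]$, with $f$ as in \cref{z--tr--nm-tr} and $\S^{-\V}=\fib(f)$; rotating, $\cofib(f[1]\colon\S[1]\to\S[\cir])\iso\S^{-\V}[2]$. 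Base-changing to $R$ and tensoring over $R$ with $X$ produces a cofiber sequence of $\cir$-equivariant $R$-modules
\[
  X\otimes\S^{-\V}\to X\xrightarrow{f_X} X\otimes_R R[\cir][-1],
\]
where $X\otimes_R R[\cir]$ is the induced $\cir$-equivariant $R$-module on $X$; its homotopy $\cir$-orbits are canonically $X$, so those of $X\otimes_R R[\cir][-1]$ are $X[-1]$.

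Next I would pass to homotopy $\cir$-orbits. By its definition in \cref{z--tr--nm-tr}, $\tr$ is the result of applying $X\otimes_{\S[\cir]}(-)$ to $f[1]\colon\S[1]\to\S[\cir]$ (using $X\otimes_{\S[\cir]}\S[1]\iso X\otimes_R R[\B\cir][1]$ and $X\otimes_{\S[\cir]}\S[\cir]\iso X$); and for the trivial-action module $X$ there is a projection-formula equivalence $X\otimes_{\S[\cir]}M\iso(X\otimes M)_{\h\cir}$, natural in the $\cir$-equivariant spectrum $M$. Hence applying $(-)_{\h\cir}$ to $f_X$ recovers a desuspension of $\tr$, and $\cofib(\tr)=X\otimes_{\S[\cir]}\cofib(f[1])\iso(X\otimes\S^{-\V})_{\h\cir}[2]$. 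This yields a cofiber sequence of $R$-modules
\[
  X\otimes_R R[\B\cir][1]\xrightarrow{\tr} X\xrightarrow{p}(X\otimes\S^{-\V})_{\h\cir}[2],
\]
and tracking the rotations identifies $p$ with the double suspension of the canonical comparison map from the underlying $R$-module of $X\otimes\S^{-\V}$ to its homotopy $\cir$-orbits.

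Finally I would use the formal fact that for a cofiber sequence $A\xrightarrow{\phi}B\xrightarrow{q}C$ of $R$-modules the space of nullhomotopies of $\phi$ is naturally equivalent to the space of retractions of $q$, i.e.\ of pairs $(\rho\colon C\to B,\ \rho\circ q\simeq\id_B)$. Applied to the sequence above, a nullhomotopy of $\tr$ is the same datum as an $R$-module map $\rho\colon(X\otimes\S^{-\V})_{\h\cir}\to X[-2]$ whose precomposition with the canonical comparison map is $\id_{X[-2]}$. By the adjunction between homotopy $\cir$-orbits and the trivial-action functor, such a $\rho$ is the same datum as a $\cir$-equivariant $R$-module map $X\otimes\S^{-\V}\to X[-2]$ inducing the identity on underlying $R$-modules --- that is, an equivalence of $\cir$-equivariant $R$-modules $X\otimes\S^{-\V}\iso X[-2]$ extending the canonical identification of underlying modules. (In particular one exists exactly when the other does.)

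The first and last steps are formal; the crux is the middle one --- reconciling the pointwise tensor in $X\otimes\S^{-\V}$ with the relative tensor over $\S[\cir]$ that defines $\tr$ (via the projection formula for the trivial-action module $X$), and bookkeeping the suspensions so as to pin down the cofiber map $p$ as the claimed comparison map. Alternatively, one could tensor with the invertible object $\S^\V$ to rephrase the target equivalence as a complex orientation $X\otimes\S^\V\iso X[2]$ and quote \cref{z--tr--orientation}, but the route above appears the most direct.
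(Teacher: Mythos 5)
Your argument is correct and is essentially the paper's own proof: both dualize the Euler sequence to the fiber sequence $\S^{-\V} \to \S \lblto{f} \S[\cir][-1]$, pass to homotopy $\cir$-orbits (equivalently tensor with the Thom spectrum $\Th_{\B\cir}(-\V)$) to exhibit $\tr$ in a cofiber sequence whose third map is the quotient/comparison map, and then identify a nullhomotopy of $\tr$ with a splitting of that map, i.e.\ an equivalence $X \otimes \S^{-\V} \iso X[-2]$. Your use of the projection formula and the $(-)_{\h\cir} \dashv \triv$ adjunction just spells out slightly more explicitly the step the paper phrases as ``a splitting of the quotient map''.
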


\begin{proof}
  Dual to the discussion in \cref{z--tr--euler}, we have a fiber sequence $\smash{\S^{-\V} \to \S \lblto{f} \S[\cir][-1]}$, which upon applying $(-)_{\h\cir}$ gives a fiber sequence $\smash{\Th_{\B\cir}(-\V) \to \S[\B\cir] \lblto{\tr} \S[-1]}$, where the boundary map $\S[-2] \to \Th_{\B\cir}(-\V)$ is the quotient map (regarding $\S[-2]$ as the underlying spectrum of $\S^{-\V}$). Tensoring with $X$ and rotating, we have a fiber sequence
  \[
    X \otimes_R R[\B\cir][-1] \lblto{\tr} X[-2] \to X \otimes \Th_{\B\cir}(-\V),
  \]
  Giving a nullhomotopy of the first map is equivalent to giving a splitting $X \otimes \Th_{\B\cir}(-\V) \to X[-2]$ of the second map. As stated above, the second map is the quotient map, so such a splitting is equivalent to an equivalence of $\cir$-equivariant $R$-modules $X \otimes \S^{-\V} \iso X[-2]$. 
\end{proof}

\begin{corollary}
  \label{z--tr--null-orientation}
  Let $R$ be an $\E_\infty$-ring and let $X$ be an $R$-module. Then:
  \begin{enumerate}
  \item \label{z--tr--null-orientation--maps}
    there are canonical maps from the space of nullhomotopies of the transfer map
    \[
      \tr : X \otimes_R R[\B\cir][1] \to X
    \]
    to the space of complex orientations of $\dual{X}$ over $R$ and from the space of complex orientations of $X$ over $R$ to the space of nullhomotopies of the transfer map
    \[
      \tr : \dual{X} \otimes_R R[\B\cir][1] \to \dual{X};
    \]
  \item \label{z--tr--null-orientation--equivalence}
    if $X$ is a reflexive $R$-module, the two maps in \cref{z--tr--null-orientation--maps} are equivalences.
  \end{enumerate}
\end{corollary}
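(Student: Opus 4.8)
The plan is to reduce both parts to formal manipulations with $R$-linear duality on $\cir$-equivariant $R$-modules, on top of the two reformulations already in hand: by \cref{z--tr--null} a nullhomotopy of $\tr : M \otimes_R R[\B\cir][1] \to M$ is the same datum as an equivalence $M \otimes \S^{-\V} \iso M[-2]$ of $\cir$-equivariant $R$-modules, and by \cref{z--tr--orientation} a complex orientation of $M$ over $R$ is the same datum as an equivalence $M \otimes \S^{\V} \iso M[2]$. The only properties of $\dual{(-)}$ I need are that it commutes with shifts, $\dual{(M[n])} \iso \dual{M}[-n]$, and with invertible twists: since $\S^{\pm\V}$ is invertible with $\dual{(\S^{\V})} \iso \S^{-\V}$, there is a natural equivalence $\dual{(M \otimes \S^{\pm\V})} \iso \dual{M} \otimes \S^{\mp\V}$ for arbitrary $M$ (no dualizability hypothesis on $X$ enters here). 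Conceptually, $\dual{(-)}$ swaps the Euler cofiber sequence of \cref{z--tr--euler}, which underlies the notion of complex orientation, with its dual fiber sequence, which underlies the vanishing of $\tr$; that is why the output of the construction below involves $\dual{X}$.

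For \cref{z--tr--null-orientation--maps}, take a nullhomotopy of $\tr$ on $X$; by \cref{z--tr--null} it is an equivalence $X \otimes \S^{-\V} \iso X[-2]$, and applying $\dual{(-)}$ and the identifications above turns it into an equivalence $\dual{X} \otimes \S^{\V} \iso \dual{X}[2]$, i.e.\ by \cref{z--tr--orientation} a complex orientation of $\dual{X}$. Dually, applying $\dual{(-)}$ to a complex orientation of $X$ --- an equivalence $X \otimes \S^{\V} \iso X[2]$ --- yields an equivalence $\dual{X} \otimes \S^{-\V} \iso \dual{X}[-2]$, i.e.\ a nullhomotopy of $\tr$ on $\dual{X}$. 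Both assignments are manifestly natural in $X$, since every ingredient is, so we obtain the two canonical maps asserted.

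For \cref{z--tr--null-orientation--equivalence}, suppose $X$ is reflexive; since $\dual{(-)}$ commutes with shifts and with the twists $\S^{\pm\V}$, each of $X[-2]$, $X[2]$, and $X \otimes \S^{\pm\V}$ is reflexive as well. For reflexive $R$-modules $A$ and $B$, the map $\dual{(-)}$ from the space of equivalences $A \iso B$ to the space of equivalences $\dual{B} \iso \dual{A}$ and the analogous map in the reverse direction (via $\dual{(\dual{A})} \iso A$ and $\dual{(\dual{B})} \iso B$) have both composites equal to the identity, because double-dualizing a morphism recovers it under the reflexivity identifications; hence each is an equivalence. Specializing to $(A,B) = (X \otimes \S^{-\V}, X[-2])$ and to $(A,B) = (X \otimes \S^{\V}, X[2])$ and composing with the equivalences of \cref{z--tr--null,z--tr--orientation} shows that the two maps of \cref{z--tr--null-orientation--maps} are equivalences. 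I expect the only real work to be the setup: pinning down $R$-linear duality as a functor on $\cir$-equivariant $R$-modules precisely enough that the twist and shift compatibilities above, and the ``double dual recovers the original morphism'' statement for reflexive objects, are literally available; granting that, the argument is entirely formal.
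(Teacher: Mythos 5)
Your proposal is correct and is essentially the paper's own argument: the proof there simply compares \cref{z--tr--orientation} and \cref{z--tr--null} and observes that the maps are given by $R$-linear duality, which is exactly what you spell out (including the key points that dualizing exchanges $\S^{\pm\V}$-twists and shifts without any dualizability hypothesis, and that reflexivity makes the dualization maps on spaces of equivalences invertible).
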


\begin{proof}
  This follows from comparing \cref{z--tr--orientation,z--tr--null}, the maps in the claim being given by $R$-linear duality.
\end{proof}

\begin{remark}
  \label{z--tr--null-ring}
  We make a parallel to \cref{z--tr--orientation-ring} in the situation of \cref{z--tr--null}: supposing that $X$ is equipped with an $\A_2$-algebra structure in $\Mod_R$, a nullhomotopy of the transfer map $\tr : X \otimes_R R[\B\cir][1] \to X$ is equivalent to a nullhomotopy of the composite
  \[
    R[\B\cir][1] \lblto{\tr} R \to X
  \]
  (the second map being the unit of the $\A_2$-algebra structure).
\end{remark}

Our goal for the remainder of the subsection is to analyze the $\K(1)$-local transfer map which featured in \cref{z--ol}.

\begin{notation}
  \label{z--tr--generator}
  Recall that $\J_p \iso \KU_p^{\h\Gamma}$. A choice of element $g \in \Gamma$
  which topologically generates $\Z_p^\times$ gives a fiber sequence
  \[
    \J_p \lblto{1} \KU_p \lblto{\psi^g - \id} \KU_p.
  \]
  This determines an equivalence $\rho_g : \KU_p/\J_p \iso \KU_p$ and a boundary map $\partial_g : \KU_p \to \J_p[1]$. We let $\partial : \KU_p/\J_p \to \J_p[1]$ denote the tautological boundary map, so that $\partial_g \iso \partial \circ \rho_g^{-1}$.
\end{notation}

\begin{remark}
  \label{z--tr--noncanonical}
  By applying $(-)^{\KU_p}$ to the fiber sequence of \cref{z--tr--generator}, one finds that the map of $\KU_p$-modules $\smash{\KU_p[-1] \to \J_p^{\KU_p}}$ classifying $\partial_g$ is an equivalence. More concretely, we have that $\pi_*(\J_p^{\KU_p})$ is concentrated in odd degrees, with $\pi_{2n-1}(\J_p^{\KU_p})$ being a free $\Z_p$-module of rank one for each $n \in \Z$, generated by the class corresponding to the composition
  \[
    \KU_p \lblto{\beta^n} \KU_p[-2n] \lblto{\partial_g} \J_p[-2n+1].
  \]
  We will use the following consequence of this calculation below: if $X$ is a spectrum equivalent (noncanonically) to $\KU_p$ and $\chi : X \to \J_p[-1]$ is a map that is not divisible by $p$, then $\chi$ is homotopic to the composition of an equivalence $X \to \KU_p[-2]$ with $\partial_g : \KU_p[-2] \to \J_p[-1]$, and hence $\cofib(\chi)$ is equivalent (noncanonically) to $\KU_p[-1]$.
\end{remark}

\begin{lemma}
  \label{z--tr--J-nonzero}
  The composition $\J_p[\B\cir][1] \lblto{\tr} \J_p \to \J_p/p$ (the second map being the tautological one) is nonzero.
\end{lemma}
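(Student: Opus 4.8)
The plan is to reformulate the statement as a question about complex orientability via \cref{z--tr--null-ring,z--tr--null-orientation,z--tr--null}, and then to obstruct the relevant orientation by a cell-by-cell analysis of $\CP^\infty$ driven by the image of $J$.

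First I would apply \cref{z--tr--null-ring} with $R = \J_p$ and $X = \J_p/p$; here $\J_p/p = \J_p\otimes\S/p$ is an $\A_2$-algebra in $\Mod_{\J_p}$, since $\S/p$ carries a unital multiplication for $p$ odd, with unit the canonical map $v\colon\J_p\to\J_p/p$. That identifies nullhomotopies of the composite $\J_p[\B\cir][1]\xrightarrow{\tr}\J_p\xrightarrow{v}\J_p/p$ with nullhomotopies of the transfer $\tr\colon(\J_p/p)\otimes_{\J_p}\J_p[\B\cir][1]\to\J_p/p$. Since $\J_p/p$ is a reflexive $\J_p$-module with $\dual{(\J_p/p)}\simeq(\J_p/p)[-1]$, \cref{z--tr--null-orientation} in turn identifies the latter space with the space of complex orientations of $(\J_p/p)[-1]$ over $\J_p$. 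Unwinding \cref{z--tr--orientation} and shifting, such an orientation is exactly an extension of the identity map $(\J_p/p)[2]\to(\J_p/p)[2]$ --- defined on the bottom cell $(\J_p/p)\otimes_{\J_p}\J_p\{\sph^2\}\simeq(\J_p/p)[2]$ of $(\J_p/p)\otimes_{\J_p}\J_p\{\B\cir\}\simeq(\J_p/p)\wedge\Sigma^\infty\CP^\infty$ --- to a map of $\J_p$-modules $(\J_p/p)\wedge\Sigma^\infty\CP^\infty\to(\J_p/p)[2]$. So it suffices to show no such extension exists.

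I would then run the evident obstruction theory over the skeleta $\Sigma^\infty\CP^1\subseteq\Sigma^\infty\CP^2\subseteq\cdots$. The obstruction to extending over the $2(j{+}1)$-cell lies in $\pi_{2j-1}\bigl(\uMap_{\J_p}(\J_p/p,\J_p/p)\bigr)\cong\pi_{2j-1}(\J_p/p)\oplus\pi_{2j}(\J_p/p)$, which vanishes for $1\le j\le p-2$ because $\pi_*(\J_p/p)$ is concentrated in degrees $\equiv 0,-1\pmod{2p-2}$; hence the identity extends over $\Sigma^\infty\CP^{p-1}$. For $\Sigma^\infty\CP^p$ the obstruction is the composite of $(\J_p/p)\wedge\partial\colon(\J_p/p)[2p-1]\to(\J_p/p)\wedge\Sigma^\infty\CP^{p-1}$, where $\partial\colon\Sigma^\infty S^{2p-1}\to\Sigma^\infty\CP^{p-1}$ is the attaching map of the top cell, with the extension constructed so far (which restricts to the identity on the bottom cell). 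The key input is that, $p$-locally, $\partial$ factors through the bottom cell $\Sigma^\infty S^2\hookrightarrow\Sigma^\infty\CP^{p-1}$ --- the obstruction to lifting $\partial$ down the skeleta is measured by $\pi_*(\S)_{(p)}$ in degrees strictly between $0$ and $2p-3$, all of which vanish --- and that the resulting class in $\pi_{2p-1}(\Sigma^\infty S^2)_{(p)}\cong\pi_{2p-3}(\S)_{(p)}\cong\Z/p$ is nonzero: if $\partial$ were $p$-locally null then $\Sigma^\infty\CP^p$ would be, $p$-locally, the wedge $\Sigma^\infty\CP^{p-1}\vee S^{2p}$, contradicting the nontriviality of $P^1\colon H^2(\CP^p;\F_p)\to H^{2p}(\CP^p;\F_p)$. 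Thus $\partial$ is $p$-locally a unit multiple of the image-of-$J$ generator $\alpha_1\in\pi_{2p-3}(\S)_{(p)}$, and the obstruction over $\Sigma^\infty\CP^p$ is (a unit multiple of) the class of $\alpha_1$ acting on $\J_p/p$. This is nonzero: by \cref{i--main--j-description} the unit $\S\to\j_p$ sends $\alpha_1$ to a generator of $\pi_{2p-3}(\J_p)\cong\Z/p$, which maps isomorphically onto $\pi_{2p-3}(\J_p/p)$ (using $\pi_{2p-4}(\J_p)=0$ and that $p$ kills $\pi_{2p-3}(\J_p)$). Hence the identity does not extend, no complex orientation exists, and the composite in the statement is nonzero.

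The main obstacle is the attaching-map computation: pinning down that $\partial$, $p$-locally, is a \emph{generator} of $\pi_{2p-3}(\S)_{(p)}$ on the bottom cell, not merely some element of $\Z/p$. The two ingredients --- vanishing of $\pi_j(\S)_{(p)}$ for $0<j<2p-3$ and nontriviality of $P^1$ on $H^*(\CP^p;\F_p)$ --- are classical, but assembling them is really the statement that the bottom few cells of $\CP^\infty$ realize the first $p$-primary Adams element. (Alternatively one could extract the nonvanishing of this obstruction from Rognes's analysis of the low-degree homotopy of $\TC(\Z)$ and the spectrum $\Y$ of \cref{i--tcz--tcs}, but the projective-space argument is more self-contained.)
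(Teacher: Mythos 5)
Your proposal is correct, and its first half coincides with the paper's: both reduce the statement, via \cref{z--tr--null-ring,z--tr--null-orientation}, to showing that $\J_p/p$ (equivalently its dual/shift) is not complex orientable over $\J_p$. Where you genuinely diverge is in how non-orientability is proved. The paper disposes of this in one line using machinery already assembled in \cref{z--ol}: by \cref{z--ol--Bcir-infinite-splitting} the $p$-completion of $\J_p\{\B\cir\}$ splits as a $p$-completed sum of copies of $\KU_p$, and $\pi_{-2}((\J_p/p)^{\KU_p}) = 0$ by \cref{z--tr--noncanonical}, so there are \emph{no} nonzero maps $\J_p\{\B\cir\} \to \J_p/p[2]$ at all---in particular no orientation. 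You instead run a skeletal obstruction theory over $\Sigma^\infty\CP^\infty$: the intermediate obstructions vanish because $\pi_n(\J_p/p)=0$ for $1 \le n \le 2p-4$, and the obstruction at $\CP^p$ is (a unit times) the action of $\alpha_1$ on $\J_p/p$, which is nonzero because the unit $\S \to \j_p$ detects $\alpha_1$ in $\pi_{2p-3}$ (\cref{i--main--j-description}) and $\pi_{2p-3}(\J_p) \to \pi_{2p-3}(\J_p/p)$ is injective. This is correct; the compression of the attaching map of the top cell of $\CP^p$ to the bottom cell and its identification with $\alpha_1$ via $P^1$ are classical, and---the one place to be explicit---that same compression is what makes the $\CP^p$-obstruction depend only on the bottom-cell restriction, hence be nonzero for \emph{every} extension over $\CP^{p-1}$ restricting to the identity on the bottom cell, which is what rules out an orientation outright. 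Comparing the two routes: yours is more elementary and self-contained (it needs only the vanishing of low-degree $p$-local stable stems, the homotopy of $\J_p$ through degree $2p-3$, and the $P^1$-detection of $\alpha_1$, and avoids the Westerland/ambidexterity input behind \cref{z--ol--Bcir-infinite-splitting}), while the paper's is shorter given that splitting is already in hand and proves the stronger statement that the entire group of homotopy classes of maps $\J_p\{\B\cir\} \to \J_p/p[2]$ vanishes, rather than merely the non-existence of an orientation.
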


\begin{proof}
  (We will implicitly invoke \cref{z--tr--orientation-ring,z--tr--null-ring} here, noting that $\S/p$ admits an $\A_2$-ring structure, as $p$ is odd.) Suppose that it were zero. Applying \cref{z--tr--null-orientation} with $R = \J_p$ and $X = \J_p/p$, we deduce that $\dual{(\J_p/p)}$ is complex orientable over $\J_p$. Since $\dual{(\J_p/p)} \iso \J_p/p[-1]$, this implies that $\J_p/p$ is complex orientable over $\J_p$. But this is false: there are no nonzero maps $\J_p\{\B\cir\} \to \J_p/p[2]$, by virtue of \cref{z--ol--Bcir-infinite-splitting} and the fact that $\smash{\pi_{-2}((\J_p/p)^{\KU_p}) \iso 0}$ (\cref{z--tr--noncanonical}).
\end{proof}

\begin{lemma}
  \label{z--tr--nu}
  Let $\nu : \pcpl{\J_p[\B\cir]} \to \pcpl{\J_p[\B\cir]}$ be as in \cref{z--ol--nu}. Then there is a canonical homotopy $\tr \circ \nu \iso \tr : \pcpl{\J_p[\B\cir]}[1] \to \J_p$.
\end{lemma}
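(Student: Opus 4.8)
The plan is to deduce the claim from \cref{z--ol--orbits}, which realizes $\Sigma\nu$ as the map on vertical fibers in the square \cref{z--ol--phi--compatibility}, together with the description $\tr \iso b^* \circ \Nm$ of the $\cir$-transfer map recorded in \cref{z--ol--basepoint}. With these two inputs in hand, the homotopy $\tr \circ \nu \iso \tr$ will reduce to the single elementary observation that $b^* \circ [p]^* \iso b^*$ as maps $\J_p^{\h\cir} \to \J_p$, with $[p]^*$ the map of \cref{z--ol--p-cover}.

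In detail: the vertical maps in \cref{z--ol--phi--compatibility} are the canonical maps $\can : \J_p^{\h\cir} \to \pcpl{(\J_p^{\t\cir})}$, whose common fiber is $\pcpl{\J_p[\B\cir]}[1]$, the fiber inclusion being the $\cir$-norm map $\Nm$ of \cref{z--ol--basepoint}. (There is no discrepancy between the norm map and any $p$-completion of it: $\J_p$ is bounded below, being $(-1)$-connective, and has degreewise finitely generated homotopy groups, so a finite-stage Atiyah--Hirzebruch argument shows $\J_p[\B\cir]$ is already $p$-complete, and hence so is $\J_p^{\t\cir} \iso \cofib(\Nm)$.) Taking vertical fibers in \cref{z--ol--phi--compatibility} therefore yields a commutative square
\[
  \begin{tikzcd}
    \pcpl{\J_p[\B\cir]}[1] \ar[r, "\Sigma\nu"] \ar[d, "\Nm", swap] &
    \pcpl{\J_p[\B\cir]}[1] \ar[d, "\Nm"] \\
    \J_p^{\h\cir} \ar[r, "{[p]^*}"] &
    \J_p^{\h\cir},
  \end{tikzcd}
\]
in which the identification of the top arrow with $\Sigma\nu$ is the content of \cref{z--ol--orbits}. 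Composing the bottom row with the basepoint map $b^* : \J_p^{\h\cir} \to \J_p$, and using that under the identification $\J_p^{\h\cir} \iso \J_p^{\B\cir}$ the map $[p]^*$ is precomposition with the pointed map $\B\cir \to \B\cir$ induced by $[p] : \cir \to \cir$ while $b^*$ is precomposition with the basepoint $b : * \to \B\cir$, we get $b^* \circ [p]^* \iso b^*$, since the composite of that induced pointed map with $b$ is canonically $b$. Combining this with the square above and the identity $\tr \iso b^* \circ \Nm$ gives
\[
  \tr \circ (\Sigma\nu) \iso b^* \circ \Nm \circ (\Sigma\nu) \iso b^* \circ [p]^* \circ \Nm \iso b^* \circ \Nm \iso \tr,
\]
which, identifying $\Sigma\nu$ with $\nu$ on $\pcpl{\J_p[\B\cir]}[1]$ in the evident way, is the asserted homotopy; each equivalence used here is canonical.

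Granting \cref{z--ol--orbits} --- whose proof in \cref{z--ta} is where the substantive work (the $\K(1)$-local ambidexterity of $\B\Cp$) is done --- the argument above is essentially formal. The one point that warrants attention is the bookkeeping opening the second paragraph: one must be sure that the fiber inclusion of $\can$ is literally the norm map used in the construction of $\tr$, rather than some $p$-completed variant whose source might differ. This is the reason for noting that $\J_p[\B\cir]$ is already $p$-complete.
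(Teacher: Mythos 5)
Your proposal is correct and follows essentially the same route as the paper's proof: the paper also stacks the square obtained from \cref{z--ol--orbits} (vertical fibers of \cref{z--ol--phi--compatibility}, with $\Nm$ as the fiber inclusions) on top of the evident square expressing $b^* \circ [p]^* \iso b^*$, and reads off the claim from $\tr \iso b^* \circ \Nm$. Your extra bookkeeping remark that $\pcpl{\J_p[\B\cir]}$ agrees with $\J_p[\B\cir]$ (so the fiber inclusion really is the norm map) is a harmless, correct addition that the paper leaves implicit.
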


\begin{proof}
  Consider the diagram
  \[
    \begin{tikzcd}
      \pcpl{\J_p[\B\cir]}[1] \ar[r, "\nu"] \ar[d, "\Nm", swap] &
      \pcpl{\J_p[\B\cir]}[1] \ar[d, "\Nm"] \\
      \J_p^{\B\cir} \ar[r, "{[p]^*}"] \ar[d, "b^*", swap] &
      \J_p^{\B\cir} \ar[d ,"b^*"] \\
      \J_p \ar[r, "\id"] &
      \J_p.
    \end{tikzcd}
  \]
  The top square commutes by \cref{z--ol--orbits}, and the bottom square commutes evidently. The resulting commutativity of the outer rectangle gives the claim.
\end{proof}

\begin{theorem}
  \label{z--tr--J-description}
  Under the equivalence (of \cref{z--ol--Bcir-infinite-splitting})
  \[
    (\xi \circ \nu^{\circ n})_{n \in \Z_{\ge0}} : \pcpl{\big(\bigoplus_{n \in \Z_{\ge0}} \KU_p\big)} \isoto \pcpl{\J_p\{\B\cir\}},
  \]
  the reduced $\cir$-transfer map $\o\tr : \pcpl{\J_p\{\B\cir\}} \to \J_p[-1]$ identifies with the composition
  \begin{equation}
    \label{z--tr--J-description--map}
    \pcpl{\big(\bigoplus_{n \in \Z_{\ge0}} \KU_p\big)} \lblto{(\id)_{n \in \Z_{\ge 0}}} \KU_p \lblto{\tau} \J_p[-1],
  \end{equation}
  where $\tau = \tr \circ \xi$ (\cref{z--ol--tau}). Moreover, $\tau$ is not divisible by $p$.
\end{theorem}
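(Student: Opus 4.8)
The plan is to reduce the identification of $\o\tr$ to \cref{z--tr--nu} by a summandwise computation, and then to extract the non-divisibility of $\tau$ from \cref{z--tr--J-nonzero}.

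For the first assertion, I would begin by observing that, since $\J_p[-1]$ is $p$-complete, a map out of $\pcpl{\big(\bigoplus_{n \in \Z_{\ge0}} \KU_p\big)}$ into it is the same datum as a family of maps out of the summands $\KU_p$; hence it suffices to identify, for each $n$, the composite of $\o\tr$ with the $n$-th structure map $\xi \circ \nu^{\circ n} : \KU_p \to \pcpl{\J_p\{\B\cir\}}$ furnished by \cref{z--ol--Bcir-infinite-splitting}. (That this structure map lands in $\pcpl{\J_p\{\B\cir\}}$, as asserted there, uses the canonical nullhomotopy of $c_* \circ \xi$ together with the homotopy $c_* \circ \nu \iso c_*$, which follows from the homotopy $c_* \iso c_* \circ [p]_*$ and the relation $[p]_* \circ \nu \iso \id$.) Since $\o\tr$ is by definition the restriction of the $\cir$-transfer map $\tr : \pcpl{\J_p[\B\cir]}[1] \to \J_p$ along the inclusion $\pcpl{\J_p\{\B\cir\}} \inj \pcpl{\J_p[\B\cir]}$, this composite is $\tr \circ \xi \circ \nu^{\circ n}$; and an $n$-fold application of the homotopy $\tr \circ \nu \iso \tr$ of \cref{z--tr--nu} identifies it with $\tr \circ \xi = \tau$, independently of $n$. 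Therefore $\o\tr$ is exhibited as the constant family $(\tau)_{n \in \Z_{\ge0}}$, which is exactly the composite \cref{z--tr--J-description--map}.

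For the non-divisibility of $\tau$, I would use the computation of $\pi_*(\J_p^{\KU_p})$ recorded in \cref{z--tr--noncanonical}, which gives $[\KU_p, \J_p[-1]] \iso \Z_p$ and $[\KU_p, \J_p[-2]] = 0$; by the long exact sequence attached to $\J_p[-1] \lblto{p} \J_p[-1] \to \J_p/p[-1]$, reduction mod $p$ then identifies $[\KU_p, \J_p/p[-1]]$ with $[\KU_p, \J_p[-1]]/p$, so $\tau$ is divisible by $p$ if and only if the composite $\KU_p \lblto{\tau} \J_p[-1] \to \J_p/p[-1]$ is null. To rule this out, invoke \cref{z--tr--J-nonzero}: the composite $\pcpl{\J_p[\B\cir]}[1] \lblto{\tr} \J_p \to \J_p/p$ is nonzero. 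Splitting $\pcpl{\J_p[\B\cir]} \iso \J_p \oplus \pcpl{\J_p\{\B\cir\}}$ off the basepoint and using (\cref{z--tr--trivial}) that the restriction of $\tr$ along $b_*$ is the Hopf map $\eta$, which vanishes on the $p$-local spectrum $\J_p$ since $p$ is odd, this nonzero composite must be carried by the $\pcpl{\J_p\{\B\cir\}}$-summand; that is, the composite $\pcpl{\J_p\{\B\cir\}}[1] \lblto{\o\tr} \J_p \to \J_p/p$ is nonzero. By the first part, under the identification of \cref{z--ol--Bcir-infinite-splitting} this composite is $\tau \bmod p$ precomposed with the fold map $\pcpl{\big(\bigoplus_n \KU_p\big)} \to \KU_p$, which admits a section; hence $\tau \bmod p \ne 0$, as required.

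I do not expect a genuine obstacle here: the substance of the argument is already contained in \cref{z--tr--nu,z--tr--J-nonzero}, and what remains is assembly. The two points that call for a little care are that the family of summandwise homotopies $\o\tr \circ (\xi \circ \nu^{\circ n}) \iso \tau$ assembles into a homotopy of maps out of the coproduct $\pcpl{\big(\bigoplus_n \KU_p\big)}$ --- which is automatic, since the mapping spaces out of this $p$-completed coproduct into a $p$-complete spectrum form the product of the mapping spaces of the summands --- and the translation between ``$\tau \bmod p \ne 0$'' and ``$\tau$ not divisible by $p$'', for which the rank-one computation of \cref{z--tr--noncanonical} is essential.
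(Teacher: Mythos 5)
Your proof is correct and takes essentially the same route as the paper: the identification of $\o\tr$ comes straight from \cref{z--tr--nu}, and the non-divisibility of $\tau$ is extracted from \cref{z--tr--J-nonzero} via the basepoint splitting and the vanishing of $\eta$ for $p$ odd (the paper argues contrapositively, you argue directly, but the ingredients are identical). Your appeal to \cref{z--tr--noncanonical} for the divisibility/mod-$p$ translation is harmless but not needed, since only the trivial direction (divisible by $p$ implies null mod $p$) enters the argument.
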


\begin{proof}
  The first statement is immediate from \cref{z--tr--nu}. We deduce that if $\tau$ were divisible by $p$, then the reduced transfer map $\o\tr$ would be too. In fact, then the full transfer map $\tr : \J_p[\B\cir][1] \to \J_p$ would be divisible by $p$, since the restriction of $\tr$ along the basepoint $b_* : \J_p \to \J_p[\B\cir]$ is zero (by \cref{z--tr--trivial}, it is given by the Hopf map $\eta$, which is zero here since $p$ is odd). But we know that this is not the case by \cref{z--tr--J-nonzero}.
\end{proof}

\begin{proof}[Proof of \cref{z--ol--transfer}]
  By \cref{z--tr--noncanonical}, it suffices to show that the map $\tau' : \KU_p/\J_p \to \J_p[-1]$ is not divisible by $p$. If it were, then so too would $\tau$, contradicting \cref{z--tr--J-description}.
\end{proof}

\begin{corollary}
  \label{z--tr--fiber-description}
  As in \cref{i--tcz--tcs}, let $\Y$ denote the fiber of the reduced $\cir$-transfer map $\o\tr : \S\{\B\cir\}[1] \to \S$. Then there is canonical equivalence
  \[
    \L_{\K(1)}\Y \iso \pcpl{\big(\bigoplus_{n \in \Z_{\ge 0}} \KU_p[1]\big)} \oplus \Y',
  \]
  where $\Y'$ is a spectrum noncanonically equivalent to $\KU_p[-1]$.
\end{corollary}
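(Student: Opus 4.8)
The plan is to apply the exact functor $\L_{\K(1)}$ to the defining fiber sequence of $\Y$, identify the resulting terms and maps via \cref{i--conv}\cref{i--conv--K1} and \cref{z--tr--J-description}, and then split off the summand governed by $\tau$ using \cref{z--tr--noncanonical}.

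First I would apply $\L_{\K(1)}$ — which preserves (co)fiber sequences — to $\Y \to \S\{\B\cir\}[1] \lblto{\o\tr} \S$. Here $\L_{\K(1)}\S \iso \J_p$ by definition, and $\L_{\K(1)}(\S\{\B\cir\}) \iso \pcpl{\J_p\{\B\cir\}}$: since $\J_p \otimes -$ is exact and $\J_p \otimes \S[\B\cir] \iso \J_p[\B\cir]$ we have $\J_p \otimes \S\{\B\cir\} \iso \J_p\{\B\cir\}$, so this is an instance of the displayed equivalence in \cref{i--conv}\cref{i--conv--K1}. Moreover the reduced $\cir$-transfer map for the $\E_\infty$-ring $\J_p$ with trivial action is obtained from that of $\S$ by tensoring up along $\S \to \J_p$ (immediate from the $\S$-linear construction in \cref{z--tr--nm-tr,z--tr--trivial}), and both its source and target are already $\K(1)$-local. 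Hence one gets a canonical equivalence
\[
  \L_{\K(1)}\Y \iso \fib\big(\o\tr : \pcpl{\J_p\{\B\cir\}}[1] \to \J_p\big).
\]

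Next I would plug in \cref{z--tr--J-description}, shifted by $[1]$: under the equivalence $(\xi \circ \nu^{\circ n})_{n \in \Z_{\ge0}}[1]$, the map $\o\tr$ above is carried to the composite
\[
  \pcpl{\big(\bigoplus_{n \in \Z_{\ge0}} \KU_p[1]\big)} \lblto{(\id)_{n \in \Z_{\ge0}}} \KU_p[1] \lblto{\tau[1]} \J_p,
\]
and \cref{z--tr--J-description} also records that $\tau$, hence $\tau[1]$, is not divisible by $p$. The first map here is the codiagonal (fold) map out of a $p$-completed countable coproduct; it is split surjective (via the inclusion of any single summand), and its fiber is again, canonically after fixing such a section, a $p$-completed countable coproduct of copies of $\KU_p[1]$, i.e. $\pcpl{(\bigoplus_{n \in \Z_{\ge0}} \KU_p[1])}$ up to reindexing. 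For any composable pair $A \lblto{g} B \lblto{h} C$ with $g$ split surjective one has $\fib(hg) \iso \fib(g) \oplus \fib(h)$: writing $A \iso B \oplus \fib(g)$ so that $g$ becomes the projection, $hg$ becomes $h \circ \pr_B$, whose fiber is $\fib(g) \oplus \fib(h)$. Applying this with $g$ the fold map and $h = \tau[1]$ yields a canonical equivalence
\[
  \L_{\K(1)}\Y \iso \pcpl{\big(\bigoplus_{n \in \Z_{\ge0}} \KU_p[1]\big)} \oplus \Y', \qquad \Y' := \fib\big(\tau[1] : \KU_p[1] \to \J_p\big).
\]
Finally, since $\tau : \KU_p \to \J_p[-1]$ is not divisible by $p$, \cref{z--tr--noncanonical} gives $\cofib(\tau) \iso \KU_p[-1]$, hence $\fib(\tau) \iso \KU_p[-2]$ and $\Y' = \fib(\tau)[1] \iso \KU_p[-1]$ noncanonically. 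This proves the corollary.

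The steps are all routine; the one place warranting care is the bookkeeping in the second paragraph — matching $\L_{\K(1)}$ of the sphere-level $\o\tr$ with the $\J_p$-level $\o\tr$ of \cref{z--tr--J-description}, and tracking the degree shift between the two prevailing conventions for $\o\tr$ (source twisted by $[1]$ versus target twisted by $[-1]$) — together with the (harmless) abuse in calling the resulting splitting ``canonical,'' which really depends on the evident choice of a section of the fold map.
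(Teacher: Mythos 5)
Your proposal is correct and follows essentially the same route as the paper: identify the $\K(1)$-localized reduced transfer via \cref{z--tr--J-description} as the fold map followed by $\tau[1]$, split off the fiber of the fold map using the section given by inclusion of a summand, and identify $\Y' = \fib(\tau[1])$ with $\KU_p[-1]$ via \cref{z--tr--noncanonical}. The only cosmetic difference is that the paper identifies the fiber of the fold map through the fiber sequence involving $\id - \sigma$, whereas you identify it as the cofiber of the chosen section (up to reindexing); both are fine, and your explicit matching of the sphere-level $\o\tr$ with the $\J_p$-level one is a point the paper leaves implicit.
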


\begin{proof}
  By the identification \cref{z--tr--J-description--map} in \cref{z--tr--J-description} of the $\K(1)$-localization of $\o\tr$, we have a fiber sequence
  \[
    \pcpl{\big(\bigoplus_{n \in \Z_{\ge 0}} \KU_p[1]\big)} \to \L_{\K(1)}\Y \to \Y',
  \]
  where $\Y'$ is defined to be $\fib(\tau : \KU_p[1] \to \J_p)$ (equivalently $\cofib(\tau : \KU_p \to \J_p[-1])$) and the first term comes from the fiber sequence
  \[
    \pcpl{\big(\bigoplus_{n \in \Z_{\ge0}} \KU_p\big)} \lblto{\id-\sigma} \pcpl{\big(\bigoplus_{n \in \Z_{\ge0}} \KU_p\big)} \lblto{(\id)_{n \in \Z_{\ge 0}}} \KU_p,
  \]
  where $\sigma$ is the shift operator as in the proof of \cref{z--ol--k1}. \cref{z--tr--J-description} also tells us that $\tau$ is not divisible by $p$, so from \cref{z--tr--noncanonical} we know that $\Y'$ is equivalent to $\KU_p[-1]$. A splitting of the first fiber sequence above is induced by a splitting of the second, a canonical one being given by taking the inclusion of the zeroth factor $\KU_p \to \bigoplus_{n \in \Z_{\ge0}} \KU_p$ as a section of the summation map $\bigoplus_{n \in \Z_{\ge0}} \KU_p \to \KU_p$.
\end{proof}

\begin{corollary}
  \label{z--tr--k1-orientation}
  Let $X$ be a $\K(1)$-local $\A_2$-ring. Then the unit map $v : \J_p \to X$ extends to a map of spectra $v' : \KU_p \to X$ if and only if $X$ is complex orientable.
\end{corollary}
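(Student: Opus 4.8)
The plan is to show that both conditions in the statement are equivalent to the vanishing of a single map $\KU_p \to X[1]$, so that the corollary drops out as a chain of equivalences (proving both implications at once). Throughout I would use that a $\K(1)$-local $\A_2$-ring is canonically an $\A_2$-algebra in $\Mod_{\J_p}$, since $\J_p$ is the unit of $\Spt_{\K(1)}$, with unit map $v$, and that $X$ is $p$-complete.

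The first step is to translate complex orientability into the nullity of a transfer map. Since $\S^\V$ is $\otimes$-invertible with inverse $\S^{-\V}$, an equivalence of $\cir$-equivariant $\J_p$-modules $X \otimes \S^\V \iso X[2]$ as in \cref{z--tr--orientation} exists if and only if one of the form $X \otimes \S^{-\V} \iso X[-2]$ exists; by \cref{z--tr--null} (with $R = \J_p$) the latter is equivalent to the nullity of the transfer $\tr : X \otimes_{\J_p} \J_p[\B\cir][1] \to X$, and by \cref{z--tr--null-ring} this is equivalent to the nullity of the composite $\pcpl{\J_p[\B\cir]}[1] \lblto{\tr} \J_p \lblto{v} X$ (passing to the $p$-completion since $X$ is $p$-complete). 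By \cref{z--tr--trivial} the restriction of $\tr$ along $b_* : \J_p[1] \to \pcpl{\J_p[\B\cir]}[1]$ is the Hopf map $\eta$, which is null since $p$ is odd; so, splitting off the basepoint via $\pcpl{\J_p[\B\cir]} \iso \J_p \oplus \pcpl{\J_p\{\B\cir\}}$, this condition is equivalent to the nullity of $\pcpl{\J_p\{\B\cir\}}[1] \lblto{\o\tr} \J_p \lblto{v} X$. Now \cref{z--tr--J-description} together with \cref{z--ol--Bcir-infinite-splitting} presents $\o\tr$ as the split-surjective summation map $\pcpl{\bigoplus_{n \ge 0}\KU_p} \to \KU_p$ followed by $\tau : \KU_p \to \J_p[-1]$, so (after a harmless suspension) the condition becomes the nullity of $v \circ \tau : \KU_p \to X[-1]$; and since $\tau$ is not divisible by $p$ (again \cref{z--tr--J-description}), \cref{z--tr--noncanonical} identifies $\tau$, up to a unit and the Bott equivalence $\KU_p \iso \KU_p[-2]$, with the boundary map $\partial_g$. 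Hence $X$ is complex orientable if and only if $v \circ \partial_g : \KU_p \to X[1]$ is null.

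Finally I would recognize this last condition as precisely the obstruction to extending $v$ along the unit $1 : \J_p \to \KU_p$: by \cref{z--tr--generator}, the fiber sequence $\J_p \lblto{1} \KU_p \lblto{\psi^g - \id} \KU_p$ identifies $\fib(1 : \J_p \to \KU_p)$ with $\KU_p[-1]$, whose structure map to $\J_p$ is the shift of $\partial_g$; thus a map of spectra $v' : \KU_p \to X$ with $v' \circ 1 \iso v$ exists exactly when $v \circ \partial_g : \KU_p \to X[1]$ is null. Concatenating the equivalences above gives the corollary. The step requiring the most care is the first translation — matching the $\V$-versus-$(-\V)$ forms of complex orientation, checking $\A_2$-$\J_p$-linearity of the transfer, and the $p$-completeness bookkeeping — but all of this is already packaged in \cref{z--tr--orientation,z--tr--null,z--tr--null-ring,z--tr--trivial}; the genuinely substantive input is the $\K(1)$-local computation of the $\cir$-transfer in \cref{z--tr--J-description} and the structure of $\J_p^{\KU_p}$ recorded in \cref{z--tr--noncanonical}.
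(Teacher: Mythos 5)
Your proposal is correct and follows essentially the same route as the paper: reduce complex orientability of $X$ to the vanishing of the composite $v \circ \tr$ via \cref{z--tr--null,z--tr--null-ring} (the paper phrases this through \cref{z--tr--null-orientation}, while you use the invertibility of $\S^\V$ directly, a minor presentational difference), and then convert to the extension obstruction along $\J_p \to \KU_p$ using \cref{z--tr--J-description,z--tr--noncanonical}. The extra details you spell out (the reduced-vs-unreduced transfer, split surjectivity of the summation map, and the cofiber-sequence extension criterion) are exactly what the paper leaves implicit.
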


\begin{proof}
  By \cref{z--tr--null-orientation,z--tr--null-ring}, a complex orientation of $X$ is equivalent to a nullhomotopy of the composite $\smash{\pcpl{\J_p[\B\cir]} \lblto{\tr} \J_p[-1] \lblto{v} X[-1]}$. And by \cref{z--tr--J-description,z--tr--noncanonical}, such a nullhomotopy exists if and only if there exists a nullhomotopy of the composite $\smash{\KU_p/\J_p[-1] \lblto{\partial} \J_p \lblto{v} X}$.
\end{proof}

\section{The canonical map $\smash{\j_p^\triv \to \sh(\j_p^\triv)}$}
\label{j}

In this section, we prove \cref{i--nil--thm}, which will be put to use in tandem with \cref{i--main--thhZp} in the next two sections of the paper. We prepare in \cref{j--nil} and \cref{j--cyc} by reviewing the notion of ``nilpotence'' for $\cir$-equivariant spectra and setting a bit of notation that will be used in the remainder of the section; in \cref{j--ku}, we analyze the canonical map $\phi^0 : \ku_p^\triv \to \sh(\ku_p^\triv)$; and in \cref{j--j}, we finally deduce the desired result about the canonical map $\phi^0 : \j_p^\triv \to \sh(\j_p^\triv)$.

% --------------------------------------------------------------------

\subsection{Nilpotence}
\label{j--nil}

Here we follow Mathew \cite[\textsection 4]{mathew--descent-nilpotence}, though the discussion there is in the context of a finite group $G$ rather than the circle group $\cir$.

\begin{notation}
  \label{j--nil--free}
  Consider the $\cir$-equivariant spectrum $\S[\cir]$. For any $\cir$-equivariant $\E_\infty$-ring $R$, we set $R[\cir] := R \otimes \S[\cir] \in \Mod_R(\Spt^{\B\cir})$.
\end{notation}

\begin{definition}
  \label{j--nil--def}
  Let $R$ be an $\cir$-equivariant $\E_\infty$-ring. We say that an $\cir$-equivariant $R$-module $X$ is \emph{nilpotent} if it is contained in the thick tensor ideal of $\smash{\Mod_R(\Spt^{\B\cir})}$ generated by $R[\cir]$, i.e. the thick subcategory generated by objects of the form $R[\cir] \otimes_R Y$ for $\smash{Y \in \Mod_R(\Spt^{\B\cir})}$.
\end{definition}

\begin{remark}
  \label{j--nil--adjunction}
  Let $R \to R'$ be a map of $\cir$-equivariant $\E_\infty$-rings. Then both the base change functor $\smash{\Mod_R(\Spt^{\B\cir}) \to \Mod_{R'}(\Spt^{\B\cir})}$ and the forgetful functor $\smash{\Mod_{R'}(\Spt^{\B\cir}) \to \Mod_R(\Spt^{\B\cir})}$ preserve nilpotent objects.
\end{remark}

\begin{proposition}
  \label{j--nil--tate-vanishing}
  Let $R$ be an $\cir$-equivariant $\E_\infty$-ring and let $X$ be a nilpotent $\cir$-equivariant $R$-module. Then $X^{\t\cir} \iso 0$.
\end{proposition}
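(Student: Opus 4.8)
The plan is to show that the full subcategory of $\Mod_R(\Spt^{\B\cir})$ spanned by those $X$ with $X^{\t\cir} \iso 0$ is a thick subcategory containing each generator $R[\cir] \otimes_R Y$, and hence contains every nilpotent object by \cref{j--nil--def}.

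First I would record that $(-)^{\t\cir} \colon \Spt^{\B\cir} \to \Spt$ is an exact functor. Indeed, it is the cofiber of the norm transformation $\Nm \colon (-)_{\h\cir}[1] \to (-)^{\h\cir}$ (as in \cref{z--tr--nm-tr}), and both $(-)_{\h\cir}$ and $(-)^{\h\cir}$ are exact, being respectively a colimit-preserving and a limit-preserving functor between stable $\infty$-categories; the cofiber of a natural transformation of exact functors to $\Spt$ is again exact. Consequently, the full subcategory of $\Mod_R(\Spt^{\B\cir})$ on the objects $X$ with $X^{\t\cir} \iso 0$ is closed under retracts, suspensions, and cofiber sequences, i.e.\ it is thick.

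Next I would check the generators lie in this subcategory. For any $\cir$-equivariant $R$-module $Y$ there is a canonical equivalence $R[\cir] \otimes_R Y \iso \S[\cir] \otimes Y$ of $\cir$-equivariant spectra, since $R[\cir] = R \otimes \S[\cir]$ and $R \otimes_R Y \iso Y$. So it suffices to see that $(\S[\cir] \otimes Y)^{\t\cir} \iso 0$. This is the one genuine input: $\S[\cir] = \Sigma^\infty_+\cir$ is the free $\cir$-space on a point, so $\S[\cir] \otimes Y$ is an induced $\cir$-equivariant spectrum, and on induced objects the norm map $\Nm$ is an equivalence (Nikolaus--Scholze, Lemma I.3.8); concretely, both $(\S[\cir] \otimes Y)_{\h\cir}[1]$ and $(\S[\cir] \otimes Y)^{\h\cir}$ identify with $Y$ compatibly with $\Nm$, so the cofiber $(\S[\cir] \otimes Y)^{\t\cir}$ vanishes.

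Finally I would conclude: by \cref{j--nil--def} the nilpotent object $X$ lies in the thick subcategory generated by the objects $R[\cir] \otimes_R Y$, and we have just shown that this thick subcategory is contained in the thick subcategory of objects with vanishing $\cir$-Tate construction; hence $X^{\t\cir} \iso 0$. There is no real obstacle here: the argument is a formal manipulation with exact functors and thick subcategories, the only nontrivial ingredient being the standard vanishing of $(-)^{\t\cir}$ on induced $\cir$-equivariant spectra.
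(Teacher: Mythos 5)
Your argument is correct in substance, and it differs from the paper mainly in being self-contained: the paper's proof is a one-line reduction to $R = \S$ via \cref{j--nil--adjunction} (the forgetful functor preserves nilpotent objects, and $(-)^{\t\cir}$ only sees the underlying $\cir$-equivariant spectrum) followed by a citation of Klein's theorem that the Tate construction vanishes on the thick subcategory generated by induced $\cir$-spectra. What you do is essentially reprove that cited result: the vanishing locus of $(-)^{\t\cir}$ is thick because the Tate construction is exact and preserves retracts, the generators $R[\cir] \otimes_R Y \iso \S[\cir] \otimes Y$ are induced, and the (shifted) norm map is an equivalence on induced objects. This buys a proof with no external input beyond the standard vanishing on induced objects, at the cost of redoing what the paper outsources; both routes have the same mathematical core.

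Two small corrections. First, the reference to Nikolaus--Scholze Lemma I.3.8 concerns finite groups; for the circle the norm map carries a shift, $\Nm : X_{\h\cir}[1] \to X^{\h\cir}$, and the vanishing of its cofiber on induced objects is exactly the statement of Klein that the paper cites (it can also be extracted from the self-duality equivalence $e : \S[\cir] \isoto \dual{\S[\cir]}[1]$ of \cref{z--tr--cir-dual}, since for the free module $\S[\cir] \otimes Y$ one computes $\Map_{\S[\cir]}(\S, \S[\cir] \otimes Y) \iso Y[1]$). Second, and relatedly, your identification is off by that shift: one has $(\S[\cir] \otimes Y)_{\h\cir} \iso Y$ and $(\S[\cir] \otimes Y)^{\h\cir} \iso Y[1]$, so it is $(\S[\cir] \otimes Y)_{\h\cir}[1]$ and $(\S[\cir] \otimes Y)^{\h\cir}$ that agree (both are $Y[1]$, compatibly with $\Nm$), not two copies of $Y$. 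Neither point affects the conclusion.
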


\begin{proof}
  By \cref{j--nil--adjunction}, we may assume $R = \S$ (with trivial $\cir$-action). The result is then due to Klein \cite[Corollary 10.2]{klein--dualizing} (or see \cite[\textsection 2.4]{raksit--hhdr}).
\end{proof}

\begin{proposition}[{cf. \citebare[Theorem 4.15]{mathew--descent-nilpotence}}]
  \label{j--nil--or}
  Let $R$ be an $\E_\infty$-ring equipped with a complex orientation $t \in \smash{\pi_{-2}(R^{\B\cir})}$, and let $X$ be an $\cir$-equivariant $R$-module. Then $X$ is nilpotent if and only if the action of $t$ on $X^{\h\cir}$ is nilpotent as a map in $\smash{\Mod_{R^{\B\cir}}}$.
\end{proposition}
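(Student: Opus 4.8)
The plan is to reduce to the case where the complex orientation exhibits $R^{\B\cir}$ as a power series ring over $\pi_*(R)$ in the variable $t$, and to express the Tate construction $(-)^{\t\cir}$ as the $t$-inversion of $(-)^{\h\cir}$. Concretely, recall that the map $\can : R^{\h\cir} \to R^{\t\cir}$ is the localization of $\E_\infty$-rings inverting $t \in \pi_{-2}(R^{\h\cir})$, and more generally for any $\cir$-equivariant $R$-module $X$ one has $X^{\t\cir} \iso (X^{\h\cir})[t^{-1}]$ (this is a standard consequence of the complex orientation; the homotopy fixed point spectral sequence for $X$ is a module over that for $R$, which degenerates to a polynomial/power series ring in $t$, and inverting $t$ kills the lim$^1$ term and produces the two-periodic Tate spectral sequence). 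Granting this, the forward direction is immediate: if $X$ is nilpotent then $X^{\t\cir} \iso 0$ by \cref{j--nil--tate-vanishing}, so $(X^{\h\cir})[t^{-1}] \iso 0$, which means every element of $X^{\h\cir}$ is annihilated by a power of $t$; since $X^{\h\cir}$ is in general not a compact object this requires a small argument (e.g. $X^{\h\cir}$ is a module over $R^{\h\cir}$ and $t$-power torsion is detected on homotopy groups, or one notes $X$ is built from $R[\cir]$ on which $t$ acts as $0$), giving that $t$ acts locally nilpotently — and if one wants genuine nilpotence (a single uniform power) one uses that $X$, being in the thick ideal generated by $R[\cir]$, is a retract of a finite iterated extension of $R[\cir] \otimes_R Y$'s, on each of which $t^{1}$ already acts as zero, so $t^N$ acts as zero on $X^{\h\cir}$ for some finite $N$.

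For the converse — the substantive direction — suppose $t$ acts nilpotently on $X^{\h\cir}$, say $t^N = 0$ there. The idea is to run the argument of Mathew \cite[Theorem 4.15]{mathew--descent-nilpotence}, adapted from a finite group to $\cir$. The key input is the cofiber sequence of $\cir$-equivariant $R$-modules coming from the Euler sequence (\cref{z--tr--euler}): tensoring $\S[\cir] \lblto{c} \S \to \S^\V$ with $R$ gives $R[\cir] \to R \to R \otimes \S^\V$, and $R \otimes \S^\V$ is, via the chosen complex orientation, an invertible $\cir$-equivariant $R$-module whose underlying spectrum is $R[2]$; in fact the complex orientation exactly says that on passing to $(-)^{\h\cir}$ the boundary map $R^{\h\cir}[1] \to R[\cir]^{\h\cir} = R^{\h\cir} \oplus R$ (or rather the relevant connecting map) realizes multiplication by $t$. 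Thus, iterating, for each $k \ge 1$ one gets a cofiber sequence of $\cir$-equivariant $R$-modules
\[
  R \otimes \S^{-k\V} \to \big(\text{a finite $R[\cir]$-cellular object}\big) \to R
\]
—more precisely, $\mathrm{cofib}(R \otimes \S^{-k\V} \to R)$ lies in the thick subcategory generated by $R[\cir]$, because it is built from $k$ copies of the cofiber of $R \otimes \S^{-\V} \to R \otimes \S^{-(j-1)\V}$, each of which is $R[\cir] \otimes_R(\text{invertible})$. Tensoring this cofiber sequence over $R$ with $X$ and applying $(-)^{\h\cir}$, the map $X \otimes R\otimes \S^{-k\V} \to X$ induces on homotopy fixed points (up to the periodicity shift) multiplication by $t^k$ on $X^{\h\cir}$, which is zero for $k = N$. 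Hence the map $X \otimes_R(R\otimes \S^{-N\V}) \to X$ is zero after $(-)^{\h\cir}$; since $R \otimes \S^{-N\V}$ is invertible, this map is (after twisting) a self-map of $X$, and I want to conclude it is already null as a map of $\cir$-equivariant spectra, not merely after $(-)^{\h\cir}$.

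That last inference is where care is needed and is the main obstacle. The cleanest route, following Mathew, is not to work one map at a time but to observe that $t$-nilpotence of $X^{\h\cir}$ forces $X$ itself to be a retract of $\mathrm{cofib}(X \otimes_R(R \otimes \S^{-N\V}) \to X)$: indeed the composite $X \otimes_R(R\otimes\S^{-N\V}) \to X$ is determined by an element of $\pi_0$ of an appropriate mapping spectrum, and the relevant mapping spectrum is itself complete (or bounded) in a way that lets the vanishing after $t$-inversion be upgraded to an actual nullhomotopy — here one uses that all objects in sight are $R$-modules and the orientation makes $t$ act "coordinate-wise", so $\Map_{\Mod_R(\Spt^{\B\cir})}(X \otimes \S^{-N\V}, X)$ has homotopy groups that are $t$-complete modules, on which $t^N X^{\h\cir} = 0$ already pins down the map. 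Then, since $\mathrm{cofib}(X \otimes_R(R \otimes \S^{-N\V}) \to X)$ lies in the thick tensor ideal generated by $R[\cir]$ (being $X$ tensored with $\mathrm{cofib}(R \otimes \S^{-N\V} \to R)$, which we showed is $R[\cir]$-built), and $X$ is a retract of it, $X$ is nilpotent. I would write this up by first isolating the lemma "$\mathrm{cofib}(R \otimes \S^{-k\V}\to R)$ is nilpotent and the map $X \otimes_R(R\otimes\S^{-k\V})\to X$ induces $t^k$ on $(-)^{\h\cir}$", then stating the retract claim and its justification via $t$-completeness of the relevant mapping spectrum, and finally assembling the two directions; the forward direction is the two-line argument via \cref{j--nil--tate-vanishing} noted above, and the bulk of the write-up and the only genuine subtlety is the retract-and-completeness step in the converse.
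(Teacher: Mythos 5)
Your overall skeleton for the converse (iterate the Euler sequence, identify the map $X \otimes \S^{-N\V} \to X$ with multiplication by $t^N$, note that its cofiber is built from $R[\cir]$ in $N$ steps, and conclude by exhibiting $X$ as a retract of that cofiber) is sound, and your forward direction is essentially correct (modulo the small points that retracts and extensions may be interleaved in the thick ideal, and that $t$ acts by zero on $(R[\cir]\otimes_R Y)^{\h\cir}$ because the $R^{\B\cir}$-action on fixed points of an induced object factors through restriction to the basepoint, where the orientation class vanishes). But the step you yourself flag---upgrading ``$t^N$ kills $X^{\h\cir}$'' to a nullhomotopy of the equivariant map $X \otimes \S^{-N\V} \to X$---is a genuine gap, and the justification you offer does not close it. The hypothesis is a statement about $X^{\h\cir} \simeq \uMap_{\Mod_R(\Spt^{\B\cir})}(R,X)$, whereas the element you need to kill lives in $\pi_0\uMap_{\Mod_R(\Spt^{\B\cir})}(X \otimes \S^{-N\V},X) \simeq \pi_{-2N}\big(\uMap_R(X,X)^{\h\cir}\big)$, i.e.\ in the homotopy fixed points of a \emph{different} $\cir$-equivariant $R$-module, namely the endomorphism spectrum with its conjugation action. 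There is no formal implication from $t$-nilpotence on the fixed points of $X$ to $t$-nilpotence (or to vanishing of the image of $t^N$) on the fixed points of $\uMap_R(X,X)$; saying that the homotopy groups of the mapping spectrum are $t$-complete does not ``pin down'' the map, since completeness of the ambient group says nothing about whether this particular element is zero.

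What is actually needed is that $(-)^{\h\cir}$ is fully faithful from $\Mod_R(\Spt^{\B\cir})$ onto the $t$-complete $R^{\B\cir}$-modules, i.e.\ $\uMap_{\Mod_R(\Spt^{\B\cir})}(X,Y) \simeq \uMap_{R^{\B\cir}}(X^{\h\cir},Y^{\h\cir})$: with this, a nullhomotopy of $t^N$ on $X^{\h\cir}$ transports to an equivariant nullhomotopy of $t^N$ on $X$, and your retract argument finishes. This full faithfulness is exactly the content of the symmetric monoidal equivalence $\Mod_R^{\B\cir} \simeq \cpl{(\Mod_{R^{\B\cir}})}_t$ that the paper's one-line proof cites \cite[Theorem 7.43]{mnn--nilpotence-descent} (and it is also how the finite-group analogue \cite[Theorem 4.15]{mathew--descent-nilpotence} is proved); it relies on the complex orientation together with a descent/Borel-completeness argument, and it is the real mathematical content of the proposition rather than a formal consequence of ``$t$ acting coordinate-wise''. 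If you add this input---either by quoting it or by reproving it---your Euler-class argument goes through; indeed, once the equivalence is available, both directions are immediate, which is the route the paper takes.
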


\begin{proof}
  We recall from \cite[Theorem 7.43]{mnn--nilpotence-descent} that the fixed points construction $(-)^{\h\cir}$ defines an equivalence of symmetric monoidal $\infty$-categories $\smash{\Mod_R^{\B\cir} \to \cpl{(\Mod_{R^{\B\cir}})}_t}$, sending $R[\cir]$ to the suspension of $R \iso R^{\B\cir}/t$. This reduces us to checking that an $R^{\B\cir}$-module $X'$ lies in the thick subcategory of $\Mod_{R^{\B\cir}}$ generated by objects of the form $Y'/t$ for $Y' \in \Mod_{R^{\B\cir}}$ if and only if the action of $t$ on $X'$ is nilpotent. This follows from the fact that the latter condition is equivalent to $X'$ being a retract of $X'/t^n$ for some positive integer $n$.
\end{proof}

\begin{remark}
  \label{j--nil--mod-p-v1}
  Let $X$ be an $\cir$-equivariant spectrum. If $X/(p,v_1)$ is nilpotent, then by \cref{j--nil--tate-vanishing}, we have
  \[
    X^{\t\cir}/(p,v_1) \iso (X/(p,v_1))^{\t\cir} \iso 0 \implies \cpl{(X^{\t\cir})_{(p,v_1)}} \iso 0,
  \]
  where $\cpl{(-)}_{(p,v_1)}$ denotes Bousfield localization with respect to $\S/(p,v_1)$. In the following sections, we will want to know that in fact $\smash{\pcpl{(X^{\t\cir})} \iso 0}$. To this end, let us note that certain hypotheses on $X$ imply that the canonical map
  \[
    \smash{\pcpl{(X^{\t\cir})} \to \cpl{(X^{\t\cir})_{(p,v_1)}}}
  \]
  is an equivalence. This holds if and only if $X^{\t\cir}/p$ is local with respect to $\S/(p,v_1)$, which is equivalent to the vanishing of the limit of the diagram
  \[
    \cdots \to X^{\t\cir}/p[3(2p-2)] \lblto{v_1} X^{\t\cir}/p[2(2p-2)] \lblto{v_1} X^{\t\cir}/p[2p-2] \lblto{v_1} X^{\t\cir}/p.
  \]
  This vanishing occurs in particular when:
  \begin{enumerate}
  \item $X$ admits the structure of $\cir$-equivariant $\Z$-module, as then the maps in the above diagram are null;
  \item $X$ is bounded below, as the terms in the diagram
    \[
      \cdots \to X/p[3(2p-2)] \lblto{v_1} X/p[2(2p-2)] \lblto{v_1} X/p[2p-2] \lblto{v_1} X/p
    \]
    are increasing in connectivity, and applying $(-)^{\t\cir}$ to such a diagram results in one whose limit vanishes.
  \end{enumerate}
\end{remark}

% --------------------------------------------------------------------

\subsection{Some notation}
\label{j--cyc}

We will make some use in this section of the following elaboration on the map $\phi^0_X$ of \cref{t--cyc--sh}.

\begin{notation}
  \label{j--cyc--filtered-phi}
  For $X$ a cyclotomic spectrum, we define
  \[
    \phi_X^* := \tau_{\ge*}(\phi_X) : \tau_{\ge*}(X) \to \tau_{\ge*}(X^{\t\Cp}),
    \quad K_X := \cofib(\phi_X^0 : \tau_{\ge0}(X) \to \tau_{\ge0}(X^{\t\Cp})),
  \]
  the former a map of filtered $\cir$-equivariant spectra and the latter an $\cir$-equivariant spectrum.
\end{notation}

\begin{remark}
  \label{j--cyc--filtered-phi-monoidality}
  The construction $X \mapsto \phi_X^*$ above determines a lax symmetric monoidal functor $\smash{\CycSpt \to \Fun(\Delta^1,\FilSpt^{\B\cir})}$, where $\FilSpt$ denotes the $\infty$-category $\Fun(\Z^\op,\Spt)$ of filtered spectra, equipped with the Day convolution symmetric monoidal structure. In particular, for $R$ a cyclotomic $\E_\infty$-ring, the map $\phi_R^0 : \tau_{\ge0}(R) \to \tau_{\ge 0}(R^{\t\Cp})$ is a map of $\cir$-equivariant $\E_\infty$-rings, $K_R$ is an $\cir$-equivariant module over $\tau_{\ge0}(R)$, and for $X$ an $R$-module and $n$ any integer, the map $\phi_X^n : \tau_{\ge n}(X) \to \tau_{\ge n}(X^{\t\Cp})$ is linear over $\phi_R^0$.
\end{remark}

% --------------------------------------------------------------------

\subsection{On $\ku_p^\triv$}
\label{j--ku}

Tate constructions are especially calculable in the setting of complex oriented ring spectra. In this subsection, we make some calculations concerning connective complex K-theory, from which we will then bootstrap in our analysis of $\j_p$ in the next subsection.

\cref{t--tCp--ku,t--tCp--pow,t--tCp--zetap} will be used again here, and sometimes the symbol $q'$ will be used in place of $q$, for reasons that will become clear below.

\begin{proposition}
  \label{j--ku--homotopy}
  We have the following identifications of graded rings:
  \begin{align*}
    &\pi_*(\ku_p^{\h\cir}) \iso \Z_p\pow{q-1}[\beta,t]/(\beta t - (q-1))
    &&\pi_*(\ku_p^{\t\cir}) \iso \Z_p\pow{q-1}[u^{\pm 1}] \\
    &\pi_*(\ku_p^{\t\Cp}) \iso \Z_p[\zeta_p][u^{\pm 1}]
    &&\pi_*((\ku_p^{\t\Cp})^{\h\cir}) \iso \Z_p\pow{q'-1}[u^{\pm 1}] \\
    &\pi_*(\tau_{\ge0}(\ku_p^{\t\Cp})) \iso \Z_p[\zeta_p][u] &&\pi_*((\tau_{\ge0}(\ku_p^{\t\Cp}))^{\h\cir}) \iso \Z_p\pow{q'-1}[u,t]/(ut - [p]_{q'})          
  \end{align*}
  Moreover, the map
  \[
    \pi_*(\phi_{\ku_p^\triv}^0) : \pi_*(\ku_p) \to \pi_*(\tau_{\ge0}(\ku_p^{\t\Cp}))
  \]
  sends $\beta \mapsto (\zeta_p-1)u$ and the map
  \[
    \smash{\pi_*((\phi_{\ku_p^\triv}^0)^{\h\cir}) : \pi_*(\ku_p^{\h\cir}) \to \pi_*((\tau_{\ge0}(\ku_p^{\t\Cp}))^{\h\cir})}
  \]
  sends $t \mapsto t$, $q \mapsto (q')^p$, and $\beta \mapsto (q'-1)u$.
\end{proposition}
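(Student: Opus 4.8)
The argument is entirely complex-orientation-theoretic. Every ring in the statement is (the connective cover of, the $\Cp$-Tate construction of, or the $\cir$-homotopy fixed points of) a complex-orientable ring spectrum with homotopy concentrated in even degrees; since $\cir$ is connected it acts trivially on all homotopy groups, so the relevant homotopy-fixed-point and Tate spectral sequences are concentrated in even total degree and degenerate, and the answers are governed by the multiplicative formal group in the coordinate $q = 1 + \beta t$. Concretely, the three identifications in the top row and the formula $\pi_*(\phi_{\ku_p^\triv})(\beta) = (\zeta_p-1)u$ are re-proved exactly as in \cref{t--tCp--oriented}, from Bott periodicity and the identification of the Quillen formal group law of $\ku_p$ with $x+y+\beta xy$; since $\ku_p$ is connective, $\phi^0_{\ku_p^\triv} = \tau_{\ge0}(\phi_{\ku_p^\triv})$ agrees with $\phi_{\ku_p^\triv}$ on $\pi_{\ge 0}$, which gives the claim for $\phi^0$ on $\beta$, and $\pi_*(\tau_{\ge0}(\ku_p^{\t\Cp})) \iso \Z_p[\zeta_p][u]$ is then immediate by passing to the connective cover of $\pi_*(\ku_p^{\t\Cp}) \iso \Z_p[\zeta_p][u^{\pm1}]$.

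The substantive content is the two homotopy-fixed-point computations. For $(\ku_p^{\t\Cp})^{\h\cir}$: since $\ku_p^\triv$ is bounded below, the Tate orbit lemma of Nikolaus--Scholze (and its standard consequence that $X^{\t\cir} \simeq (X^{\t\Cp})^{\h\cir}$ for bounded-below $X$) gives a canonical equivalence $(\ku_p^{\t\Cp})^{\h\cir} \simeq \ku_p^{\t\cir}$, so its homotopy ring is $\Z_p\pow{q'-1}[u^{\pm1}]$ with $q'$ now denoting the image of the class $q$ of \cref{t--tCp--ku}. For $(\tau_{\ge0}(\ku_p^{\t\Cp}))^{\h\cir}$, I would apply $(-)^{\h\cir}$ to the fiber sequence $\tau_{\ge0}(\ku_p^{\t\Cp}) \to \ku_p^{\t\Cp} \to \tau_{\le-1}(\ku_p^{\t\Cp})$ and compute $\pi_*((\tau_{\le-1}(\ku_p^{\t\Cp}))^{\h\cir})$ from its degenerate homotopy-fixed-point spectral sequence; the resulting long exact sequence exhibits $(\tau_{\ge0}(\ku_p^{\t\Cp}))^{\h\cir}$ as the subring of $\Z_p\pow{q'-1}[u^{\pm1}]$ in which, in each negative even degree $2n$, one allows only multiples of $[p]_{q'}^{-n}u^{n}$ --- that is, as $\Z_p\pow{q'-1}[u,t]/(ut-[p]_{q'})$ with $t := [p]_{q'}u^{-1}$. (The element of $\pi_0$ by which one divides in the long exact sequence is $[p]_{q'}$ precisely because the kernel of $\pi_0(\ku_p^{\t\cir}) \to \pi_0(\ku_p^{\t\Cp}) = \Z_p[\zeta_p]$ is $([p]_{q'})$.)

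It remains to compute $\pi_*((\phi^0_{\ku_p^\triv})^{\h\cir})$, which also serves to pin down $q'$. The key claim is that $(\phi_{\ku_p^\triv})^{\h\cir} : \ku_p^{\h\cir} \to (\ku_p^{\t\Cp})^{\h\cir} \simeq \ku_p^{\t\cir}$ is the composite $\can\circ[p]^*$, where $[p]^* : \ku_p^{\h\cir}\to\ku_p^{\h\cir}$ is induced by $[p]:\B\cir\to\B\cir$ --- this is the circle-group avatar of the $\Cp$-Tate functoriality of \cref{z--ta}, obtained by tracing the trivial-cyclotomic Frobenius through $\cir$-fixed points and the Tate orbit identification, and it matches the construction of $\Phi$ in \cref{z--ol--KU-phi} (cf. also \cref{z--ta--outsource}). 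Granting this, $[p]^*$ acts on the multiplicative formal group by the $p$-power isogeny, hence sends $q\mapsto q^p = (q')^p$ on $\pi_0$ and fixes the Euler class $t\mapsto t$ on $\pi_{-2}$; combining with the formula for $\phi^0$ on $\beta$ and $\phi^0_{\ku_p^\triv}$-linearity gives $\beta\mapsto(q'-1)u$. As a consistency check, feeding the relation $\beta t = q-1$ of $\pi_*(\ku_p^{\h\cir})$ through $(\phi^0)^{\h\cir}$ yields $(q'-1)(ut) = (q')^p-1 = (q'-1)[p]_{q'}$, and cancelling the nonzerodivisor $q'-1$ recovers $ut=[p]_{q'}$.

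The main obstacle is exactly the identification $(\phi_{\ku_p^\triv})^{\h\cir} \simeq \can\circ[p]^*$: one must unwind the definition of the cyclotomic Frobenius of a trivial cyclotomic spectrum, pass to $\cir$-homotopy fixed points, and match the outcome --- via the $\Cp$-ambidexterity/Tate-orbit formalism of \cref{z--ta} --- with the covering map $[p]$, keeping careful track of which Euler class corresponds to which under $(\ku_p^{\t\Cp})^{\h\cir} \simeq \ku_p^{\t\cir}$. Once that is established (and it is close in spirit to \cref{z--ol--KU-phi,z--ta--outsource}), everything else is routine: parity degeneration of the spectral sequences, the connective-cover long exact sequence, and formal-group bookkeeping with $\widehat{\mathbb{G}}_m$.
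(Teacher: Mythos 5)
Your proof is correct and takes essentially the same route as the paper, whose own proof just cites the complex-orientation/formal-group facts from \cref{t--tCp--oriented} together with the identification (used explicitly in \cref{j--ku--orientation}) of $(\phi_{\ku_p^\triv})^{\h\cir}$ with $\can \circ [p]^*$; your appeal to the Nikolaus--Scholze Tate orbit lemma merely makes the equivalence $(\ku_p^{\t\Cp})^{\h\cir} \iso \pcpl{(\ku_p^{\t\cir})}$ explicit (note the $p$-completion in that lemma, harmless here since $\ku_p^{\t\cir}$ is already $p$-complete). One phrasing caveat: literally $[p]^*(t) = [p]_q\,t$, so ``$t \mapsto t$'' must be read as saying that the source orientation class lands on the target's degree $-2$ generator $t = [p]_{q'}u^{-1}$ of $\pi_*((\tau_{\ge0}(\ku_p^{\t\Cp}))^{\h\cir})$, which is exactly what your consistency check $ut = [p]_{q'}$ confirms.
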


\begin{proof}
  This is partially a repetition of and partially an elaboration on \cref{t--tCp--oriented}; it follows from the same facts cited in the proof there.
\end{proof}

\begin{proposition}
  \label{j--ku--shifted-map}
  The map
  \[
    \tau_{\ge2}(\ku_p) \otimes_{\ku_p} \tau_{\ge0}(\ku_p^{\t\Cp}) \to \tau_{\ge2}(\tau_{\ge2}(\ku_p)^{\t\Cp})
  \]
  induced by the linear structure of the map $\smash{\phi^2_{\tau_{\ge2}(\ku_p)^\triv} : \tau_{\ge2}(\ku_p) \to \tau_{\ge2}(\tau_{\ge2}(\ku_p)^{\t\Cp})}$ over the map $\smash{\phi^0_{\ku_p^\triv} : \ku_p \to \smash{\tau_{\ge0}(\ku_p^{\t\Cp})}}$ (see \cref{j--cyc--filtered-phi-monoidality}) is an equivalence.
\end{proposition}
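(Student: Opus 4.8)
The plan is to reduce to a statement about $\pi_*$ and then verify it by a direct calculation with the explicit rings from \cref{j--ku--homotopy}. First I would observe that the target $\tau_{\ge2}(\tau_{\ge2}(\ku_p)^{\t\Cp})$ and the source $\tau_{\ge2}(\ku_p)\otimes_{\ku_p}\tau_{\ge0}(\ku_p^{\t\Cp})$ are both connective (indeed $1$-connective) $p$-complete spectra, so it suffices to check that the map is an equivalence after taking homotopy fixed points $(-)^{\h\cir}$: the fiber is a $1$-connective $\cir$-equivariant spectrum, and a $1$-connective $\cir$-equivariant spectrum $F$ vanishes as soon as $F^{\h\cir}$ vanishes (one can see this by induction up the Postnikov tower of $F$, or equivalently because $(-)^{\h\cir}$ on bounded-below spectra is conservative after noting the homotopy fixed point spectral sequence). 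Actually it is cleaner to work directly with $(-)^{\h\cir}$ throughout, since $(-)^{\t\Cp}$ commutes with the relevant colimit (the relative tensor product $\tau_{\ge 2}(\ku_p)\otimes_{\ku_p}(-)$ is computed by a geometric realization, and $(-)^{\t\Cp}$ commutes with that after $p$-completion as the objects involved are bounded below), so that the $(-)^{\h\cir}$ of the source is $\tau_{\ge2}(\ku_p)^{\h\cir}\otimes_{\ku_p^{\h\cir}}(\tau_{\ge0}(\ku_p^{\t\Cp}))^{\h\cir}$ — or rather the appropriately completed version thereof.

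Second, I would unwind what the map does on homotopy groups using \cref{j--ku--homotopy}. We have $\pi_*(\ku_p^{\h\cir})\iso\Z_p\pow{q-1}[\beta,t]/(\beta t-(q-1))$, and $\tau_{\ge2}(\ku_p)^{\h\cir}$ is, as a module, the ideal generated by $\beta$ (equivalently by $q-1$) — more precisely $\pi_*(\tau_{\ge2}(\ku_p)^{\h\cir})$ is the $(p,q-1)$-completion of $(\beta)\subseteq \Z_p\pow{q-1}[\beta,t]/(\beta t-(q-1))$, concentrated in the appropriate degrees. On the other side, $\pi_*((\tau_{\ge0}(\ku_p^{\t\Cp}))^{\h\cir})\iso\Z_p\pow{q'-1}[u,t]/(ut-[p]_{q'})$, and the map $(\phi^0_{\ku_p^\triv})^{\h\cir}$ sends $t\mapsto t$, $q\mapsto(q')^p$, $\beta\mapsto(q'-1)u$. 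So the map in question, on $\pi_*$, is the map of $\pi_*(\ku_p^{\h\cir})$-algebras
\[
  \pi_*(\tau_{\ge2}(\ku_p)^{\h\cir})\otimes_{\pi_*(\ku_p^{\h\cir})}\Z_p\pow{q'-1}[u,t]/(ut-[p]_{q'}) \longrightarrow \pi_*(\tau_{\ge2}(\tau_{\ge2}(\ku_p)^{\t\Cp})),
\]
and I would compute the target directly: $\tau_{\ge2}(\ku_p)^{\t\Cp}$ is the module over $\ku_p^{\t\Cp}\iso\Z_p[\zeta_p][u^{\pm1}]$ obtained by smashing with $\S^2$ of the fiber of $\ku_p\to\tau_{\le0}(\ku_p)=\Z_p$, so $\pi_*(\tau_{\ge2}(\ku_p)^{\t\Cp})$ is the image of multiplication by $\beta'=(\zeta_p-1)u$ inside $\Z_p[\zeta_p][u^{\pm1}]$, i.e.\ the ideal $((\zeta_p-1)u)$; then $(-)^{\h\cir}$ and $\tau_{\ge2}$ reorganize this as an ideal inside $\Z_p\pow{q'-1}[u,t]/(ut-[p]_{q'})$. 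With both sides made explicit it becomes a matter of checking that the displayed algebra map is an isomorphism: both are, after unwinding, the $(p,q'-1)$-completion of the ideal $((q'-1)u)$ in $\Z_p\pow{q'-1}[u,t]/(ut-[p]_{q'})$ — on the left because tensoring the ideal $(q-1)\subseteq\pi_*(\ku_p^{\h\cir})$ up along $q\mapsto(q')^p$ produces the ideal $((q')^p-1)=((q'-1)\cdot\text{unit}\cdot[p]_{q'})$... here one uses that $(q')^p-1=((q')^p-1)$ and the factorization $q^p-1=(q-1)[p]_q$, so the generator becomes $(q'-1)[p]_{q'}u$, and in the ring $\Z_p\pow{q'-1}[u,t]/(ut-[p]_{q'})$ we have $(q'-1)[p]_{q'}u=(q'-1)t u^2\cdot(\text{something})$... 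I would rather just compare the two ideals as submodules and match generators.

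The main obstacle I expect is exactly this last bookkeeping: correctly identifying the truncation $\tau_{\ge 2}$ on both sides and checking that the ideal generated by the image of $\beta$ under the base-change matches the ideal $((q'-1)u)$ on the nose, with the right completions. The cleanest route is probably to avoid ideals altogether and instead argue via cofiber sequences: write $\tau_{\ge2}(\ku_p)=\fib(\ku_p\to\Z_p)$, so that the source of our map is $\fib\big(\ku_p\otimes_{\ku_p}\tau_{\ge0}(\ku_p^{\t\Cp})\to\Z_p\otimes_{\ku_p}\tau_{\ge0}(\ku_p^{\t\Cp})\big)=\fib\big(\tau_{\ge0}(\ku_p^{\t\Cp})\to \Z_p\otimes_{\ku_p}\tau_{\ge0}(\ku_p^{\t\Cp})\big)$, and the target is $\tau_{\ge2}\fib\big(\tau_{\ge2}(\ku_p)^{\t\Cp}\otimes\,?\,\big)$; chasing the octahedron comparing the two fiber sequences and using that $\phi^0_{\ku_p^\triv}$ is known explicitly on $\pi_*$ reduces everything to the single computed fact $\beta\mapsto(\zeta_p-1)u$ from \cref{j--ku--homotopy} together with connectivity estimates, which dispatches the truncations for free. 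I would then finish by a short homotopy-group check in low degrees to confirm the $\tau_{\ge2}$'s on both sides agree.
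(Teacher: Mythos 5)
There is a genuine gap in your main route, and it sits in the two reduction steps rather than in the arithmetic. First, the passage to $(-)^{\h\cir}$ is both unnecessary and not justified as stated: in the Borel setting ($\cir$-equivariant spectra are functors $\B\cir \to \Spt$), equivalences are detected on \emph{underlying} spectra, so the natural move is to check the map on ordinary homotopy groups, not on homotopy fixed points; and the conservativity of $(-)^{\h\cir}$ on bounded-below $\cir$-spectra that you appeal to is not the formal HFPSS statement you suggest --- the bottom class can support differentials (already for the free module $\S[\cir]$, whose fixed points are $\S[-1]$, the bottom class of $\pi_0$ does not survive), so ``induction up the Postnikov tower'' does not dispatch it. Second, and more seriously, the identification of $(\text{source})^{\h\cir}$ with (a completion of) $\tau_{\ge2}(\ku_p)^{\h\cir}\otimes_{\ku_p^{\h\cir}}(\tau_{\ge0}(\ku_p^{\t\Cp}))^{\h\cir}$ is asserted via a false general principle: $(-)^{\h\cir}$ does \emph{not} commute with geometric realizations of uniformly bounded-below diagrams (fixed points for the circle do not preserve connectivity, e.g.\ $(\Sigma^n H\Z)^{\h\cir}$ has homotopy in all degrees $n-2k$), and your parenthetical justification concerns $(-)^{\t\Cp}$, which is not the functor at issue. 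The only reason such a formula is harmless here is that $\tau_{\ge2}(\ku_p)\iso\ku_p[2]$ via multiplication by $\beta$, so the relative tensor product is just a shift of $\tau_{\ge0}(\ku_p^{\t\Cp})$ --- but you never invoke this, and once you do, the whole fixed-point detour becomes superfluous. Finally, the ideal-matching computation is left unfinished exactly at the step carrying the content.

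For comparison, the paper's proof is the non-equivariant version of your closing ``cleaner route'': since $\tau_{\ge2}(\ku_p)\iso\ku_p[2]$, the source is $\tau_{\ge0}(\ku_p^{\t\Cp})[2]$, with $\pi_{2n}\iso\Z_p[\zeta_p]$ for $n\ge1$; the target is computed from the cofiber sequence $\tau_{\ge2}(\ku_p)\to\ku_p\to\Z_p$ by applying $(-)^{\t\Cp}$ and using that $\pi_*(\ku_p^{\t\Cp})\iso\Z_p[\zeta_p][u^{\pm1}]\to\pi_*(\Z_p^{\t\Cp})\iso\F_p[u^{\pm1}]$ sends $\zeta_p\mapsto1$, so that $\pi_{2n}(\tau_{\ge2}(\tau_{\ge2}(\ku_p)^{\t\Cp}))$ injects into $\pi_{2n}(\tau_{\ge0}(\ku_p^{\t\Cp}))$ with image $(\zeta_p-1)u^n\cdot\Z_p[\zeta_p]$ for $n\ge1$ and $0$ for $n=0$; and by \cref{j--ku--homotopy} the map in question is multiplication by $(\zeta_p-1)u$, which is an isomorphism onto that image because $\zeta_p-1$ is a nonzerodivisor in $\Z_p[\zeta_p]$. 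If you drop the fixed-point reduction, record the equivalence $\tau_{\ge2}(\ku_p)\iso\ku_p[2]$, and carry out this underlying-homotopy computation, your sketch becomes a complete proof.
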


\begin{proof}
  The map $\pi_*(\ku_p^{\t\Cp}) \to \pi_*(\Z_p^{\t\Cp})$ induced by the truncation map $\ku_p \to \Z_p$ identifies with the map $\Z_p[\zeta_p][t^{\pm1}] \to \F_p[t^{\pm1}]$ sending $t \mapsto t$ and $\zeta_p \mapsto 1$. From this and the fiber sequence $\tau_{\ge2}(\ku_p) \to \ku_p \to \Z_p$, we deduce that the map $\smash{\pi_*(\tau_{\ge2}(\tau_{\ge2}(\ku_p)^{\t\Cp})) \to \pi_*(\tau_{\ge0}(\ku_p^{\t\Cp}))}$ induced by the map $\tau_{\ge2}(\ku_p) \to \ku_p$ is an injection, with image in degree $2n$ given as follows: it is $0$ for $n=0$, and it is $(\zeta_p-1)u^n \cdot \Z_p[\zeta_p]$ for $n \ge 1$. The claim follows from putting this together with the fact, from \cref{j--ku--homotopy}, that the map $\pi_*(\ku_p) \to \pi_*(\tau_{\ge0}(\ku_p^{\t\Cp}))$ induced by $\smash{\phi^0_{\ku_p^\triv}}$ identifies with the map $\Z_p[\beta] \to \Z_p[\zeta_p][u]$ sending $\beta \mapsto (\zeta_p-1)u$.
\end{proof}

\begin{notation}
  \label{j--ku--spherical-pow}
  Let $R$ be a $p$-complete $\E_\infty$-ring. For any symbol $x$, we let $R[x]$ denote the monoid $\E_\infty$-ring $R[\N]$ with identification $\pi_0(R[x]) \iso \pi_0(R)[x]$, and we let $R\pow{x-1}$ denote the $(p,x-1)$-completion of $R[x]$. We note that there is a natural identification of $\E_\infty$-$R$-algebras
  \[
    R\pow{x-1} \iso \lim_k R[\Z/p^k]
  \]
  (e.g. by \cite[Proposition 2.2.12]{lurie--elliptic-ii}); this induces an $R$-linear action of $\Z_p^\times$ on $R\pow{x-1}$, with the induced action of $u \in \Z_p^\times$ on $\pi_0(R\pow{x-1}) \iso \pi_0(R)\pow{x-1}$ sending $x \mapsto x^u$.

  Below, we regard $R[q']$ as an $\E_\infty$-algebra over $R[q]$ via the map of $\E_\infty$-rings $R[\N] \to R[\N]$ induced by the map of monoids $\N \to \N$ given by multiplication by $p$, and we similarly regard $R\pow{q'-1}$ as an $\E_\infty$-algebra over $R\pow{q-1}$, compatible with $\Z_p^\times$-actions.
\end{notation}

\begin{proposition}[Lurie]
  \label{j--ku--KU-orientation}
  There is an equivalence of $\E_\infty$-rings with $\Z_p^\times$-action
  \[
    \smash{\KU_p^{\B\cir} \iso \KU_p\pow{q-1};}
  \]
  here the $\Z_p^\times$-action on the left is induced by that on $\KU_p$, and the $\Z_p^\times$-action on the right is the diagonal action determined by the action on $\KU_p$ and the $\KU_p$-linear action discussed in \cref{j--ku--spherical-pow}. Moreover, under this equivalence, the $\Z_p^\times$-equivariant map $\smash{\KU_p^{\B\cir} \to \KU_p^{\B\cir}}$ induced by the $p$-fold cover map $\cir \to \cir$ identifies with the map $\KU_p\pow{q-1} \to \KU_p\pow{q'-1}$.
\end{proposition}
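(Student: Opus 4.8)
The plan is to write down a natural comparison map of $\E_\infty$-rings, check that it is an equivalence using the (complex-oriented) homotopy computation behind \cref{j--ku--homotopy}, and then bootstrap the $\Z_p^\times$-equivariance and the $p$-fold cover statement from the behaviour of the tautological line bundle.

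\emph{The comparison map and the underlying equivalence.} The complex orientation of $\KU_p$, together with the commutative group structure on $\B\cir = K(\Z,2)$, produces a natural map of $\E_\infty$-rings $\KU_p\pow{q-1} \to \KU_p^{\B\cir}$; concretely it classifies the tautological $\KU_p$-line bundle $L$ on $\B\cir$, sending $q \mapsto [L]$. (That it extends to the $(p,q-1)$-completion is automatic: $\KU_p^{\B\cir} \iso \lim_n \KU_p^{\CP^n}$, with each $\KU_p^{\CP^n}$ a finite free $\KU_p$-module on which $[L]-1$ acts nilpotently.) To see that this is an equivalence it suffices to check on homotopy groups. By the $\beta$-periodic form of \cref{j--ku--homotopy} — i.e. the standard computation of $\KU_p^*(\B\cir)$ — the target has $\pi_* \iso \Z_p\pow{q-1}[\beta^{\pm 1}]$, concentrated in even degrees; the source has the same homotopy, e.g. via the identification $\KU_p\pow{q-1} \iso \lim_k \KU_p[\Z/p^k]$ of \cref{j--ku--spherical-pow} together with the Iwasawa identification $\lim_k \Z_p[q]/(q^{p^k}-1) \iso \Z_p\pow{q-1}$; and since the map sends $q \mapsto q$ and $\beta \mapsto \beta$ it induces the evident identification in each degree, hence is an equivalence.

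\emph{Equivariance and the $p$-fold cover.} The $\Z_p^\times$-action on $\KU_p^{\B\cir}$ is the one induced from $\KU_p$; on the tautological bundle the Adams operation $\psi^g$ ($g \in \Z_p^\times$) sends $L \mapsto L^{\otimes g}$, interpreted $p$-adically using that $[L^{\otimes p^k}] \to 1$ in $\Z_p\pow{q-1}$, hence $[L] \mapsto [L]^g$. Thus the equivalence above carries the diagonal action of \cref{j--ku--spherical-pow} (the action on $\KU_p$ together with $q \mapsto q^g$) to the action on $\KU_p^{\B\cir}$; equivalently, it identifies $\mathrm{Spf}(\KU_p^{\B\cir})$ with the formal multiplicative group over $\mathrm{Spf}(\KU_p)$, compatibly with $\Z_p^\times = \mathrm{Aut}(\widehat{\mathbb{G}}_m)$. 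For the final clause: the map $\B\cir \to \B\cir$ induced by the $p$-fold cover $[p]\colon\cir\to\cir$ pulls $L$ back to $L^{\otimes p}$ (equal first Chern classes, since $[p]$ is multiplication by $p$ on $H^2(\B\cir)$), so on $\KU_p$-cochains it sends $[L] \mapsto [L^{\otimes p}] = [L]^p$; naming the coordinate on the target copy $q'$, this is exactly the $\E_\infty$-ring map $\KU_p\pow{q-1} \to \KU_p\pow{q'-1}$ of \cref{j--ku--spherical-pow} (the one with $q \mapsto (q')^p$), and it is $\Z_p^\times$-equivariant because $\B[p]$ is a map of spaces and all of the identifications in play are natural.

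\emph{Main obstacle.} The one step that is not bookkeeping is the $\Z_p^\times$-equivariance of the equivalence: the action on $\KU_p$ is only specified abstractly (a continuous $\E_\infty$-action), so one must genuinely match it with the coordinate action $q \mapsto q^g$. The cleanest route is through the formal multiplicative group, which is essentially the content of Lurie's work on ($\KU$-theoretic) elliptic cohomology and may simply be cited; alternatively one constructs the comparison map $\Z_p^\times$-equivariantly level by level through $\KU_p\pow{q-1} \iso \lim_k \KU_p[\Z/p^k]$, using the $\Z_p^\times$-action on $\Z/p^k$. The remaining ingredients — the homotopy-group comparison (which is the complex-oriented computation underlying \cref{j--ku--homotopy}) and the $[p]$-cover identification — are routine.
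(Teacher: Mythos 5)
Your proposal ultimately rests on the same pillar as the paper: the paper's entire proof is to invoke Lurie's description of $\KU_p$ as the orientation classifier of the formal multiplicative group over $\S_p$ \cite[\textsection 6.5]{lurie--elliptic-ii}, which packages the equivalence, its $\Z_p^\times$-equivariance, and the identification of the $p$-fold-cover map all at once; your homotopy-group check (the Atiyah--Segal computation) and the observation $[p]^*L \iso L^{\otimes p}$ are consistent with this but are not where the content lies.

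The one point to correct is where you locate the nontrivial input. A complex orientation of $\KU_p$ together with the group structure of $\B\cir$ does \emph{not} by itself produce an $\E_\infty$-ring map $\KU_p\pow{q-1} \to \KU_p^{\B\cir}$ with $q \mapsto [L]$: by the universal property of the monoid algebra $\KU_p[q] = \KU_p[\N]$ of \cref{j--ku--spherical-pow}, such a map requires $[L]$ to be a \emph{strictly} multiplicative element, i.e. the map $\B\cir \to \mathrm{GL}_1(\KU_p)$ classifying the tautological unit must be refined to a map of grouplike $\E_\infty$-spaces --- equivalently, the Bott class must lift to $\pi_2(\mathbb{G}_{\mathrm{m}}(\KU_p))$. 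This is precisely the parenthetical remark in the paper's proof, and it is needed already to \emph{construct} the comparison map, not only to establish the $\Z_p^\times$-equivariance (your ``main obstacle''); for a general even-periodic complex-orientable $\E_\infty$-ring no such strict lift exists, so this step cannot be treated as bookkeeping. The same caveat applies to your levelwise alternative through $\KU_p\pow{q-1} \iso \lim_k \KU_p[\Z/p^k]$: getting $\E_\infty$-maps out of the group algebras $\KU_p[\Z/p^k]$ from characters again uses the $\E_\infty$-refinement of $\B\cir \to \mathrm{GL}_1(\KU_p)$. Since the fix you propose --- citing Lurie's formal-multiplicative-group description --- supplies exactly this structure and then also the equivariance, your argument does go through, but in substance it collapses to the paper's one-line proof rather than an independent route.
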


\begin{proof}
  This follows from Lurie's reinterpretation of Snaith's theorem, saying that $\KU_p$ is the orientation classifier of the formal multiplicative group over $\S_p$ \cite[\textsection 6.5]{lurie--elliptic-ii}. (We note that the comparison map here arises from the fact that the Bott class $\beta \in \pi_2(\KU_p)$ is a ``strict unit'' class, i.e. lifts to a class in $\pi_2(\mathbb{G}_{\mathrm{m}}(\KU_p))$.)
\end{proof}

\begin{remark}
  \label{j--ku--KU-orientation-alt}
  The equivalence of \cref{j--ku--KU-orientation} is related via bar-cobar duality to an equivalence of $\E_\infty$-$\KU_p$-algebras $\smash{\KU_p^{\cir} \iso \KU_p[\cir]}$.
\end{remark}

\begin{construction}
  \label{j--ku--orientation}
  We construct a commutative diagram of $\E_\infty$-rings with $\Z_p^\times \times \cir$-action
  \[
    \begin{tikzcd}
      \S_p\pow{q-1} \ar[r] \ar[d, "\sigma", swap] &
      \S_p\pow{q'-1} \ar[d, "\sigma'"] \\
      \ku_p \ar[r, "\phi^0_{\ku_p^\triv}"] &
      \tau_{\ge0}(\ku_p^{\t\Cp})
    \end{tikzcd}
  \]
  in which the upper horizontal map is equipped with trivial $\cir$-equivariant structure.

  The composition
  \[
    \S_p\pow{q-1} \to \KU_p\pow{q-1} \iso \KU_p^{\B\cir} \iso \KU_p^{\h\cir},
  \]
  where the first equivalence is that of \cref{j--ku--KU-orientation}, corresponds to a map of $\E_\infty$-rings with $\Z_p^\times \times \cir$-action $\S_p\pow{q-1} \to \KU_p$, where the source is equipped with the trivial $\cir$-action. As the source is connective, this is furthermore equivalent to a map of $\E_\infty$-rings with $\Z_p^\times \times \cir$-action $\sigma : \S_p\pow{q-1} \to \ku_p$. The remainder of the commutative diagram can be constructed similarly, using the definition of $\phi^0_{\ku_p^\triv}$ in terms of the composition
  \[
    \smash{\ku_p \to \ku_p^{\B\Cp} \iso \ku_p^{\h\Cp} \lblto{\can} \ku_p^{\t\Cp},}
  \]
  the identification of the map $\smash{\ku_p^{\h\cir} \to (\ku_p^{\B\Cp})^{\h\cir}}$ with the map $\smash{\ku_p^{\B\cir} \to \ku_p^{\B\cir}}$ induced by the $p$-fold cover $\cir \to \cir$, and the identification from \cref{j--ku--KU-orientation} of the map $\smash{\KU_p^{\B\cir} \to \KU_p^{\B\cir}}$ induced by the $p$-fold cover $\cir \to \cir$ with the map $\KU_p\pow{q-1} \to \KU_p\pow{q'-1}$.
\end{construction}

\begin{notation}
  \label{j--ku--can-factorization}
  Let $\smash{\phi'_{\ku_p^\triv}} : \ku_p \otimes_{\S_p\pow{q-1}} \S_p\pow{q'-1} \to \tau_{\ge 0}(\ku^{\t\Cp})$ denote the map of $\E_\infty$-rings with $\Z_p^\times \times \cir$-action induced by the commutative diagram of \cref{j--ku--orientation} and let $\smash{K'_{\ku_p^\triv}}$ denote the cofiber of $\smash{\phi'_{\ku_p^\triv}}$.
\end{notation}

\begin{proposition}
  \label{j--ku--nil}
  The $\cir$-equivariant $\ku_p$-module $\smash{K'_{\ku_p^\triv}/(p,\beta)}$ is nilpotent.
\end{proposition}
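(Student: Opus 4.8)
The plan is to reduce to a statement about the $t$-action via \cref{j--nil--or}. Write $\ku_p' := \ku_p \otimes_{\S_p\pow{q-1}} \S_p\pow{q'-1}$; by construction (\cref{j--ku--can-factorization}) $K'_{\ku_p^\triv}$ is the cofiber, in $\Mod_{\ku_p'}(\Spt^{\B\cir})$, of the $\ku_p'$-algebra map $\phi'_{\ku_p^\triv} : \ku_p' \to \tau_{\ge 0}(\ku_p^{\t\Cp})$, so $K'_{\ku_p^\triv}$ and $K'_{\ku_p^\triv}/(p,\beta)$ are $\cir$-equivariant $\ku_p'$-modules. Since $\ku_p'$ receives an $\E_\infty$-ring map from $\ku_p$, it inherits a complex orientation (the image of the standard orientation $t \in \pi_{-2}(\ku_p^{\h\cir})$ of \cref{t--tCp--ku}), so \cref{j--nil--or} applies: $K'_{\ku_p^\triv}/(p,\beta)$ is nilpotent as a $\cir$-equivariant $\ku_p'$-module if and only if $t$ acts nilpotently on $(K'_{\ku_p^\triv}/(p,\beta))^{\h\cir}$, and by \cref{j--nil--adjunction} (restriction of scalars along $\ku_p \to \ku_p'$) this yields nilpotence as a $\ku_p$-module, which is what we want.

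To analyze the $t$-action, note that $(-)^{\h\cir}$, being exact, commutes with cofiber sequences and with the finite colimits $(-)/p$ and $(-)/\beta$, so $(K'_{\ku_p^\triv}/(p,\beta))^{\h\cir} \iso \cofib\bigl((\phi'_{\ku_p^\triv}/(p,\beta))^{\h\cir} : (\ku_p'/(p,\beta))^{\h\cir} \to (\tau_{\ge 0}(\ku_p^{\t\Cp})/(p,\beta))^{\h\cir}\bigr)$. For the source, one first checks that $\S_p\pow{q'-1}$ is free of rank $p$ over $\S_p\pow{q-1}$ — the natural map $\bigoplus_{0 \le i < p} \S_p\pow{q-1} \to \S_p\pow{q'-1}$ spanned by $1, (q'-1), \dots, (q'-1)^{p-1}$ is an equivalence after reduction mod $(p,q-1)$, hence an equivalence by completeness — so $\ku_p'$ is a free $\ku_p$-module, $\pi_0(\ku_p') \iso \Z_p\pow{q'-1}/((q'-1)[p]_{q'})$ is free of rank $p$ over $\Z_p$, and $\ku_p'/(p,\beta)$ is the Eilenberg--MacLane spectrum on the ring $\F_p[q']/((q'-1)^p)$ with trivial $\cir$-action; hence $(\ku_p'/(p,\beta))^{\h\cir}$ has homotopy the polynomial ring on $t$ (in degree $-2$) over $\F_p[q']/((q'-1)^p)$, on which $t$ is a non-zero-divisor. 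For the target, \cref{j--ku--homotopy} gives $\pi_*((\tau_{\ge 0}(\ku_p^{\t\Cp}))^{\h\cir}) \iso \Z_p\pow{q'-1}[u,t]/(ut - [p]_{q'})$ with $\beta$ acting by multiplication by $(q'-1)u$, from which one computes $\pi_*((\tau_{\ge 0}(\ku_p^{\t\Cp})/(p,\beta))^{\h\cir})$ and its $t$-action by chasing the long exact sequences for $/\beta$ and then $/p$.

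Finally, one must identify the map $(\phi'_{\ku_p^\triv}/(p,\beta))^{\h\cir}$ and show its cofiber has nilpotent $t$-action; this is the main obstacle. Both the source and target carry a non-nilpotent $t$-action coming from their parts in non-positive degrees, so the content is to show that $(\phi'_{\ku_p^\triv}/(p,\beta))^{\h\cir}$ is an equivalence in those degrees, so that these parts cancel and the cofiber becomes bounded below with $t$ acting trivially on homotopy — in fact a module over $\ku_p'/(p,\beta) = (\ku_p'/(p,\beta))^{\h\cir}/t$, hence an object of the thick tensor ideal generated by $(\ku_p'/(p,\beta))^{\h\cir}/t$; equivalently $K'_{\ku_p^\triv}/(p,\beta)$ lies in the thick tensor ideal generated by $\ku_p'[\cir]$, i.e. is nilpotent. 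The key structural input for controlling the map is \cref{j--ku--shifted-map}, which identifies the positive-degree part of $\phi^0_{\ku_p^\triv}$ (after base change to $\tau_{\ge 0}(\ku_p^{\t\Cp})$) with an equivalence onto $\tau_{\ge 2}(\tau_{\ge 2}(\ku_p)^{\t\Cp})$; combined with the orientation diagram of \cref{j--ku--orientation} this pins down $\phi'_{\ku_p^\triv}$, and the formulas in \cref{j--ku--homotopy} (that $(\phi^0_{\ku_p^\triv})^{\h\cir}$ sends $t \mapsto t$, $q \mapsto (q')^p$, $\beta \mapsto (q'-1)u$) provide the explicit bookkeeping needed to carry out the cofiber computation.
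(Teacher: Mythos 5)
Your overall strategy is the paper's own: reduce to the $t$-action on homotopy fixed points via \cref{j--nil--or} and compute $\pi_*((-)^{\h\cir})$ of the source and target of $\phi'_{\ku_p^\triv}$ modulo $(p,\beta)$ using \cref{j--ku--homotopy}. Your treatment of the source (freeness of $\S_p\pow{q'-1}$ over $\S_p\pow{q-1}$, hence the identification of $\pi_*(((\ku_p\otimes_{\S_p\pow{q-1}}\S_p\pow{q'-1})/(p,\beta))^{\h\cir})$ with $\F_p[q',t]/((q'-1)^p)$) is correct and agrees with the paper. But the write-up stops exactly where the content begins: you assert that $(\phi'_{\ku_p^\triv}/(p,\beta))^{\h\cir}$ is an equivalence in non-positive degrees and that $t$ acts trivially on the homotopy of its cofiber, and defer the verification to ``explicit bookkeeping.'' That verification is essentially the whole proof. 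Concretely, one must identify the map on $\pi_*((-)^{\h\cir})$ modulo $(p,\beta)$ as the map of graded $\F_p$-algebras $\F_p[q',t]/((q'-1)^p) \to \F_p[q',u,t]/((q'-1)u,\ ut-[p]_{q'})$ sending $t \mapsto t$, $q' \mapsto q'$ (this comes from the formula for $\pi_*((\phi^0_{\ku_p^\triv})^{\h\cir})$ in \cref{j--ku--homotopy} together with linearity over $\S_p\pow{q'-1}$ built into \cref{j--ku--orientation,j--ku--can-factorization}); check that it is an isomorphism in degrees $\le 0$ and injective on homotopy, so that the cofiber has homotopy the cokernel $\{u,u^2,\ldots\}\cdot\F_p[q']/(q'-1)$; and then observe that $t$ kills this cokernel because it is $(q'-1)$-torsion (as $(q'-1)u=0$) while $ut=[p]_{q'}\equiv(q'-1)^{p-1}$ modulo $p$. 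None of this appears in your argument, and it is precisely where the structure of the target ring, and hence the conclusion, is actually used.

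Two smaller points. The claim that the cofiber is ``in fact a module over $(\ku_p'/(p,\beta))^{\h\cir}/t$'' does not follow merely from $t$ acting by zero on homotopy groups; it is also unnecessary, since \cref{j--nil--or}, which you already invoked, only requires nilpotence of the $t$-action on $(K'_{\ku_p^\triv}/(p,\beta))^{\h\cir}$. And the appeal to \cref{j--ku--shifted-map} as the ``key structural input'' is misplaced: that lemma is not needed here (it enters later, via \cref{j--j--diagram-addendum}, in the proof of \cref{j--j--nil-lemma}); what pins down $\phi'_{\ku_p^\triv}$ on homotopy fixed points is just \cref{j--ku--homotopy} together with its $\S_p\pow{q'-1}$-algebra structure.
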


\begin{proof}
  We will use the nilpotence criterion of \cref{j--nil--or}. By \cref{j--ku--homotopy}, the map obtained by applying $\smash{\pi_*((-)^{\h\cir})}$ to $\phi'_{\ku_p^\triv}$ identifies with the map of 
  graded $\Z_p\pow{q'-1}[t]$-algebras
  \[
    \Z_p\pow{q'-1}[\beta,t]/(\beta t - (q-1)) \to
    \Z_p\pow{q'-1}[u,t]/(ut - [p]_{q'})
  \]
  (where $q = (q')^p$) sending $\beta \mapsto (q'-1)u$. It follows that the map obtained by applying $\smash{(-)^{\h\cir}}$ to $\phi'_{\ku_p^\triv}/(p,\beta) \iso \phi'_{\ku_p^\triv} \otimes_{\ku_p} \F_p$ identifies with the map of $\F_p$-algebras
  \[
    \F_p[q',t]/(q-1) \to \F_p[q',u,t]/((q'-1)u, ut-[p]_{q'}).
  \]
  Calculating the cofiber of this map, we identify $\smash{(K'_{\ku_p^\triv}/(p,\beta))^{\h\cir}}$ with the $\F_p$-module
  \[
    \{u,u^2,\ldots\} \cdot \F_p[q']/(q'-1),
  \]
  with action of $t$ given by zero due to the relation $ut =  [p]_{q'} \equiv (q'-1)^{p-1} \pmod p$. Thus, the $\F_p^{\B\cir}$-module structure on $\smash{(K'_{\ku_p^\triv}/(p,\beta))^{\h\cir}}$ factors through the map $\F_p^{\B\cir} \to \F_p$ sending $t \mapsto 0$, proving in particular the desired nilpotence.
\end{proof}

% --------------------------------------------------------------------

\subsection{On $\j_p^\triv$}
\label{j--j}

In this subsection, we prove the result we are after concerning the map $\phi^0 : \j_p^\triv \to \sh(\j_p^\triv)$, \cref{i--nil--thm}. It will be technically convenient to perform our calculations with the following variant of $\j_p$ and then deduce the result for $\j_p$.\footnote{The convenience of this object (for different but closely related purposes) was indicated to us by Lurie.}

\begin{notation}
  \label{j--j--f}
  We define an $\E_\infty$-ring $\j_{p,0} := \tau_{\ge0}(\KU_p^{\h\Gamma_0})$. It has homotopy groups given by
  \[
    \pi_*(\j_{p,0}) \iso
    \begin{cases}
      \Z_p & \text{if}\ *=0 \\
      \Z_p/(pn)\Z_p & \text{if}\ * = 2n-1\ (n \in \Z_{\ge 1}) \\
      0 & \text{otherwise},
    \end{cases}
  \]
  and moreover the boundary map $\pi_2(\KU_p) \to \pi_1(\j_{p,0})$ is the canonical surjection $\Z_p \to \Z_p/p\Z_p$. Noting that the canonical map $\pi_2(\j_{p,0}/p) \to \pi_2(\KU_p/p)$ is an isomorphism, we let $\smash{\o\beta \in \pi_2(\j_{p,0}/p)}$ denote the unique preimage under this map of the reduction of the Bott class $\beta$. The class $\o\beta$ is classified by a map of $\j_{p,0}$-modules $\j_{p,0}/p[2] \to \j_{p,0}/p$, which we will also denote by $\o\beta$. For $n$ a positive integer, we set
  \[
    \j_{p,0}/(p,\o\beta^n) := \cofib(\o\beta^n : \j_{p,0}/p[2n] \to \j_{p,0}/p),
  \]
  and for $X$ a $\j_{p,0}$-module, we set $X/(p,\o\beta^n) := X \otimes_{\j_{p,0}} \j_{p,0}/(p,\o\beta^n)$.
\end{notation}

\begin{lemma}
  \label{j--j--modpbeta}
  For $X$ a $\j_{p,0}$-module, the spectrum $X/(p,\o\beta)$ naturally admits an $\F_p$-module structure.
\end{lemma}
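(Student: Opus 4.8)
The statement to prove is that for $X$ a $\j_{p,0}$-module, the spectrum $X/(p,\o\beta)$ carries a natural $\F_p$-module structure. The plan is to exhibit an $\E_\infty$-ring (or at least $\E_1$-ring) map from some discrete ring mapping to $\F_p$ into the endomorphisms acting on $X/(p,\o\beta)$, or more precisely to show that the $\j_{p,0}$-module structure on $X/(p,\o\beta)$ factors through a map of $\E_\infty$-rings $\j_{p,0} \to \F_p$. First I would observe that, by the very construction $X/(p,\o\beta) = X \otimes_{\j_{p,0}} \j_{p,0}/(p,\o\beta)$, it suffices to put an $\E_1$- (indeed $\E_\infty$-, or at least associative $\j_{p,0}$-algebra) structure on $\j_{p,0}/(p,\o\beta)$ together with a ring map to $\F_p$, since then $X/(p,\o\beta)$ becomes a module over that algebra, and pulling back along the ring map gives the $\F_p$-module structure functorially in $X$. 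So the real content is: $\j_{p,0}/(p,\o\beta)$ admits the structure of an $\E_\infty$-$\j_{p,0}$-algebra, and it is equivalent as such to $\F_p$.

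The key computational input is the homotopy of $\j_{p,0}$ recorded in \cref{j--j--f}: $\pi_0(\j_{p,0}) \iso \Z_p$, $\pi_{2n-1}(\j_{p,0}) \iso \Z_p/pn$, and the boundary map $\pi_2(\KU_p) \to \pi_2(\j_{p,0})$ is the surjection $\Z_p \to \Z_p/p$, so $\pi_2(\j_{p,0}) \iso \Z_p/p = \F_p$ and $\o\beta$ lifts the Bott class to $\pi_2(\j_{p,0}/p)$. Now $\j_{p,0}/p$ is an $\E_\infty$-$\j_{p,0}$-algebra (as $p$ is odd and $\S/p$ is $\E_\infty$ over the sphere after the relevant $p$-completion, or more simply $\j_{p,0}/p \iso \j_{p,0} \otimes_{\S_p} \S_p/p$ and $\S_p/p$ is an $\E_\infty$-ring here since $p$ is odd — in fact one only needs $\A_2$, which is all the paper uses, but $\E_\infty$ is cleanest). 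Then $\o\beta \in \pi_2(\j_{p,0}/p)$ is a homotopy class, and I would like to form the Koszul-type quotient $\j_{p,0}/p /\!\!/ \o\beta$ as an $\E_\infty$-algebra; the cleanest way is: $\j_{p,0}/p$ is an $\E_\infty$-algebra under $\S_p/p$, so $\o\beta$ corresponds to a map of $\E_\infty$-rings $\S_p/p[\o\beta] \to \j_{p,0}/p$ out of a free $\E_1$- (or $\E_\infty$-, using that $p$ odd lets us build the relevant free algebra) algebra on a degree-$2$ class, and $\j_{p,0}/(p,\o\beta)$ is the base change of $\j_{p,0}/p$ along $\S_p/p[\o\beta] \to \S_p/p$ killing $\o\beta$; this base change is an $\E_\infty$- (at least $\E_1$-) $\j_{p,0}$-algebra. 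Concretely, $\j_{p,0}/(p,\o\beta) \iso \j_{p,0}/p \otimes_{\S_p/p[\o\beta]} \S_p/p$.

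It remains to identify this algebra with $\F_p$. For that I would compute $\pi_*(\j_{p,0}/(p,\o\beta))$ using the cofiber sequence $\j_{p,0}/p[2] \xrightarrow{\o\beta} \j_{p,0}/p \to \j_{p,0}/(p,\o\beta)$ together with the homotopy groups $\pi_*(\j_{p,0}/p)$ (which one reads off from the short exact sequences $0 \to \pi_n(\j_{p,0})/p \to \pi_n(\j_{p,0}/p) \to \pi_{n-1}(\j_{p,0})[p] \to 0$ and the groups listed above). The expectation is that $\o\beta$ acts "as much like $\beta$ as possible" — mirroring the analogous computation for $\ku_p/(p,\beta)$ implicit in \cref{j--ku--nil} — so that $\pi_*(\j_{p,0}/(p,\o\beta))$ is concentrated in degrees $0$ and (possibly) a small range, and in fact vanishes in positive degrees, leaving just $\F_p$ in degree $0$; then the unit map $\j_{p,0}/(p,\o\beta) \to$ (its truncation) and the canonical map to $\F_p$ (induced by $\pi_0(\j_{p,0}) \to \F_p$) are equivalences of $\E_\infty$-rings. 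The main obstacle I anticipate is precisely this homotopy-group bookkeeping: one must check that multiplication by $\o\beta$ is injective (or has the expected cokernel) on each $\pi_n(\j_{p,0}/p)$, using that $\o\beta$ restricts to the Bott class on $\KU_p/p$ and controlling the contribution of the torsion groups $\Z_p/pn$ in odd degrees — this is the place where the oddness of $p$ and the specific structure "$\pi_{2n-1} = \Z_p/pn$" (rather than, say, $\Z_p/p$) genuinely enters, and getting the multiplicative structure (not just the additive groups) right is the delicate part. Once the ring identification $\j_{p,0}/(p,\o\beta) \iso \F_p$ is in hand, the naturality in $X$ is automatic from the tensor construction, completing the proof.
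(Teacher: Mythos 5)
Your proposal fails at its central step: $\j_{p,0}/(p,\o\beta)$ is \emph{not} equivalent to $\F_p$. From the homotopy groups recorded in \cref{j--j--f}, $\pi_1(\j_{p,0}) \iso \Z_p/p\Z_p$ (the case $n=1$ of $\pi_{2n-1} \iso \Z_p/(pn)\Z_p$), so $\pi_1(\j_{p,0}/p) \iso \F_p$; and in the long exact sequence for $\j_{p,0}/p[2] \lblto{\o\beta} \j_{p,0}/p \to \j_{p,0}/(p,\o\beta)$ this class cannot be killed, since it would have to be hit from $\pi_{-1}(\j_{p,0}/p) = 0$. Hence $\pi_1(\j_{p,0}/(p,\o\beta)) \iso \F_p \neq 0$: the quotient is $1$-truncated with $\pi_0 \iso \pi_1 \iso \F_p$ (additively $\F_p \oplus \F_p[1]$), not $\F_p$. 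The analogy with $\ku_p/(p,\beta) \iso \F_p$ breaks precisely because of the odd-degree image-of-J classes, beginning in degree $1$. Since your restriction-of-scalars argument rests entirely on the ring identification $\j_{p,0}/(p,\o\beta) \iso \F_p$, the proof collapses at the homotopy-group bookkeeping you deferred. There are secondary problems as well: $\S_p/p$ is not an $\E_\infty$-ring (for odd $p$ the Moore spectrum admits an $\A_{p-1}$- but not an $\A_p$-structure), so the free-$\E_\infty$-algebra/base-change construction of $\j_{p,0}/(p,\o\beta)$ as an $\E_\infty$-$\j_{p,0}$-algebra is not available; and restriction of scalars would require a ring map \emph{from} $\F_p$ into the algebra acting on $X/(p,\o\beta)$, not a map \emph{to} $\F_p$. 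Salvaging your outline would mean showing that $\j_{p,0}/(p,\o\beta)$ is an $\F_p$-algebra (rather than $\F_p$ itself), which is essentially the content of the lemma and no easier.

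The paper's proof avoids putting any ring structure on the quotient. It first observes (from the same homotopy computation) that $\j_{p,0}/(p,\o\beta)$ is $1$-truncated, so its $\j_{p,0}$-module structure is canonically restricted from a $\tau_{\le 1}(\j_{p,0})$-module structure; consequently $X/(p,\o\beta) \iso (X \otimes_{\j_{p,0}} \tau_{\le 1}(\j_{p,0})) \otimes_{\tau_{\le 1}(\j_{p,0})} \j_{p,0}/(p,\o\beta)$ is naturally a $\tau_{\le 1}(\j_{p,0})$-module. Then, using that $p$ is odd, the residual $\F_p^\times$-action (through which $\F_p^\times$ acts nontrivially on $\pi_1$) gives $\tau_{\le 1}(\j_{p,0})^{\h\F_p^\times} \iso \Z_p$, so restriction along the resulting ring map $\Z_p \to \tau_{\le 1}(\j_{p,0})$ produces a natural $\Z_p$-module structure; finally, again since $p$ is odd, $p$ acts as zero on $X/p$ and hence on $X/(p,\o\beta)$, so the $\Z_p$-structure factors through $\F_p$. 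If you want to repair your argument, this is the mechanism to aim for.
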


\begin{proof}
  To begin, we observe that
  \[
    \pi_*(\j_{p,0}/(p,\o\beta)) \iso
    \begin{cases}
      \F_p & \text{if}\ * \in \{0,1\} \\
      0 & \text{otherwise}.
    \end{cases}
  \]
  We deduce that the quotient map $\j_{p,0} \to \j_{p,0}/(p,\o\beta)$ factors canonically through an equivalence
  \[
    \tau_{\le1}(\tau_{\le1}(\j_{p,0})/p) \isoto \j_{p,0}/(p,\o\beta)
  \]
  (here we have used that the action of $p$ on $\j_{p,0}/p$, and hence on $\j_{p,0}/(p,\o\beta)$, is zero, since $p$ is odd).

  Now, $\tau_{\le1}(\j_{p,0})$ is an $\E_\infty$-$\j_{p,0}$-algebra and receives a map of $\E_\infty$-rings from $\tau_{\le1}(\j_p) \iso \Z_p$. We may thus identify $\tau_{\le1}(\j_{p,0})/p$ with the $\E_\infty$-ring $\tau_{\le1}(\j_{p,0}) \otimes_{\Z_p} \F_p$. In combination with the previous paragraph, this produces an $\E_\infty$-$\j_{p,0}$-algebra structure on  $\j_{p,0}/(p,\o\beta)$ together with a map of $\E_\infty$-rings $\F_p \to \j_{p,0}/(p,\o\beta)$. Then $X/(p,\o\beta) \iso X \otimes_{\j_{p,0}} \j_{p,0}/(p,\o\beta)$ is naturally a $\j_{p,0}/(p,\o\beta)$-module, which restricts to an $\F_p$-module.
\end{proof}

\begin{notation}
  \label{j--j--adams}
  As in \cref{t--tCp--psi-ntn}, we will denote the action of an element $g \in \Z_p^\times$ on $\ku_p$ by $\psi^g$; we will denote its action on $\S_p\pow{q-1}$ and $\S_p\pow{q'-1}$ by $\Psi^g$.
\end{notation}

\begin{construction}
  \label{j--j--diagram}
  We construct a commutative diagram of $\cir$-equivariant $\j_{p,0}$-modules
  \[
    \begin{tikzcd}[column sep=large]
      \j_{p,0} \ar[rr, "\phi^0_{\j_{p,0}^\triv}"] \ar[d] &
      &
      \tau_{\ge 0}(\j_{p,0}^{\t\Cp}) \ar[d] \\
      \ku_p \ar[r] \ar[d, "\psi^{1+p}_\circ"] &
      \ku_p \otimes_{\S_p\pow{q-1}} \S_p\pow{q'-1} \ar[r, "\phi'_{\ku_p^\triv}"] \ar[d, "(\psi^{1+p} \otimes \Psi^{1+p})_\circ"] &
      \tau_{\ge0}(\ku_p^{\t\Cp}) \ar[d, "\psi^{1+p}_{\circ\circ}"] \\
      \tau_{\ge2}(\ku_p) \ar[r] &
      \tau_{\ge2}(\ku_p) \otimes_{\S_p\pow{q-1}} \S_p\pow{q'-1} \ar[r, "\phi'_{\tau_{\ge2}(\ku_p)^\triv}"] &
      \tau_{\ge2}(\tau_{\ge2}(\ku_p)^{\t\Cp})
    \end{tikzcd}
  \]
  in which the left and right columns are fiber sequences. (The top rectangle is in fact a pushout of $\E_\infty$-rings, but we will not prove this here.)

  The map $\psi^{1+p}_\circ$ is the unique map lifting the endomorphism $\psi^{1+p}-\id$ of $\ku_p$ along the canonical map $\tau_{\ge2}(\ku_p) \to \ku_p$. Both existence and uniqueness follow from the fiber sequence $\tau_{\ge2}(\ku_p) \to \ku_p \to \Z_p$, and the fact that $\psi^{1+p}$ induces the identity map on $\pi_0(\ku_p) \iso \Z_p$. The maps $\smash{(\psi^{1+p} \otimes \Psi^{1+p})_\circ}$ and $\smash{\psi^{1+p}_{\circ\circ}}$ are defined similarly, the former arising from the endomorphism $\psi^{1+p} \otimes \Psi^{1+p} - \id$ of $\ku_p \otimes_{\S_p\pow{q-1}} \S_p\pow{q'-1}$ and the latter arising from the map $\tau_{\ge0}(\ku_p^{\t\Cp}) \to \tau_{\ge0}(\tau_{\ge 2}(\ku_p)^{\t\Cp})$ induced by $\psi^{1+p}_\circ$.

  The map $\smash{\phi'_{\ku_p^\triv}}$ is as defined in 
  \cref{j--ku--can-factorization} and the map $\smash{\phi'_{\tau_{\ge2}(\ku_p)^\triv}}$ is defined similarly, i.e. by using the map $\smash{\phi^2_{\tau_{\ge2}(\ku_p)^\triv} : \tau_{\ge2}(\ku_p) \to \tau_{\ge2}(\tau_{\ge2}(\ku_p)^{\t\Cp})}$ and its linearity over $\phi^0_{\ku_p^\triv}$ (see \cref{j--cyc}).
\end{construction}

\begin{remark}
  \label{j--j--diagram-addendum}
  Concerning the maps $\smash{\phi'_{\ku_p^\triv}}$ and $\smash{\phi'_{\tau_{\ge2}(\ku_p)^\triv}}$ in \cref{j--j--diagram}: it follows from their definitions and \cref{j--ku--shifted-map} that $\smash{\phi'_{\tau_{\ge2}(\ku_p)^\triv}}$ may be identified with the two-fold suspension of $\phi'_{\ku_p^\triv}$.
\end{remark}

\begin{lemma}
  \label{j--j--nil-lemma}
  Let $K$ be the total cofiber of the commutative square of $\cir$-equivariant $\j_{p,0}$-modules
  \[
    \begin{tikzcd}
      \ku_p \ar[r] \ar[d, "\psi^{1+p}_\circ"] &
      \ku_p \otimes_{\S_p\pow{q-1}} \S_p\pow{q'-1} \ar[d, "(\psi^{1+p} \otimes \Psi^{1+p})_\circ"] \\
      \tau_{\ge2}(\ku_p) \ar[r] &
      \tau_{\ge2}(\ku_p) \otimes_{\S_p\pow{q-1}} \S_p\pow{q'-1}
    \end{tikzcd}
  \]
  contained in the commutative diagram of \cref{j--j--diagram}. Then $K/(p,\o\beta)$ is nilpotent.
\end{lemma}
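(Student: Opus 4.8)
The plan is to identify the total cofiber $K$ by means of the total-cofiber calculus, and then to verify nilpotence with the orientation criterion of \cref{j--nil--or}, ultimately bootstrapping from \cref{j--ku--nil}.

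First I would compute $K$. Both horizontal maps in the square of \cref{j--j--nil-lemma} are base change along $\S_p\pow{q-1}\to\S_p\pow{q'-1}$, and, since $\S_p\pow{q'-1}$ is a finite free $\S_p\pow{q-1}$-module, $N:=\cofib(\S_p\pow{q-1}\to\S_p\pow{q'-1})$ is finite free (of rank $p-1$) and $\tau_{\ge2}(\ku_p)\otimes_{\S_p\pow{q-1}}\S_p\pow{q'-1}\simeq\tau_{\ge2}\bigl(\ku_p\otimes_{\S_p\pow{q-1}}\S_p\pow{q'-1}\bigr)$. Taking horizontal cofibers then identifies $K$ with the cofiber of a map $\ku_p\otimes_{\S_p\pow{q-1}}N\to\tau_{\ge2}(\ku_p)\otimes_{\S_p\pow{q-1}}N$, namely the canonical lift along the truncation map of the restriction to this cofiber of the endomorphism $\psi^{1+p}\otimes\Psi^{1+p}-\id$; equivalently, taking vertical cofibers and using the fiber sequences of \cref{j--j--diagram}, $K\simeq\cofib\bigl(\j_{p,0}\to\fib((\psi^{1+p}\otimes\Psi^{1+p})_\circ)\bigr)[1]$, and the $3\times 3$-lemma supplies a fiber sequence $K[-1]\to\ku_p\otimes_{\S_p\pow{q-1}}N\to\tau_{\ge2}(\ku_p)\otimes_{\S_p\pow{q-1}}N$. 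In each of these descriptions $K$ is a $\cir$-equivariant $\ku_p$-module, so $K/(p,\o\beta)\simeq K/(p,\beta)$, the classes $\o\beta$ and $\beta$ inducing the same self-map on $\ku_p$-modules (see \cref{j--j--f}).

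Next I would prove nilpotence. By \cref{j--nil--or} applied with $R=\ku_p$ and its complex orientation $t$, it suffices to show that $t$ acts nilpotently on $(K/(p,\beta))^{\h\cir}$. Applying $(-)^{\h\cir}$ to the fiber sequence above and reducing mod $(p,\beta)$, \cref{j--ku--homotopy} expresses the relevant groups in terms of the $\F_p$-algebras $\F_p[q',u,t]/\bigl((q'-1)u,\,ut-[p]_{q'}\bigr)$ and its $\tau_{\ge2}$-truncation, exactly as in the proof of \cref{j--ku--nil}; the relation $ut=[p]_{q'}\equiv(q'-1)^{p-1}\pmod p$ then forces $t$ to act nilpotently (in fact by zero) on the module in question. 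Equivalently and more structurally, one can combine the fiber sequence of the previous paragraph with the nilpotence of $K'_{\ku_p^\triv}/(p,\beta)$ from \cref{j--ku--nil}, using that nilpotent $\cir$-equivariant modules are closed under (de)suspension, cofibers, and restriction along $\j_{p,0}\to\ku_p$ (\cref{j--nil--adjunction}), after separately checking that $\ku_p\otimes_{\S_p\pow{q-1}}N$ and $\tau_{\ge2}(\ku_p)\otimes_{\S_p\pow{q-1}}N$, once reduced mod $(p,\beta)$, are themselves nilpotent (again a short $t$-action computation).

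The main obstacle is the twist $\Psi^{1+p}$. The vertical map $(\psi^{1+p}\otimes\Psi^{1+p})_\circ$ is the genuine diagonal Adams operation, not the base change of $\psi^{1+p}_\circ$, so one must keep careful track of how $\Psi^{1+p}$ acts on $N=\S_p\pow{q'-1}/\S_p\pow{q-1}$ — in particular of the interplay between $q'\mapsto q'^{1+p}$ and the relation $q'^p=q$ built into $\S_p\pow{q-1}\to\S_p\pow{q'-1}$ — both when identifying the truncation $\tau_{\ge2}(\ku_p)\otimes_{\S_p\pow{q-1}}\S_p\pow{q'-1}$ and when running the homotopy-fixed-point computation modulo $(p,\beta)$. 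Everything else in the argument is formal manipulation of (co)fiber sequences together with invocations of \cref{j--nil--or,j--ku--nil,j--nil--adjunction}.
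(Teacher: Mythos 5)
Your identification of $K$ as the cofiber of the induced vertical map between the horizontal cofibers is fine, but the nilpotence argument has two genuine gaps. First, you apply \cref{j--nil--or} with $R=\ku_p$, which presupposes that $K/(p,\o\beta)$ is an $\cir$-equivariant $\ku_p$-module; it is not. The vertical maps $\psi^{1+p}_\circ$ and $(\psi^{1+p}\otimes\Psi^{1+p})_\circ$ are lifts of $\psi^{1+p}-\id$ and are only $\j_{p,0}$-linear (an Adams operation minus the identity is not $\ku_p$-linear), so the total cofiber carries only a $\j_{p,0}$-module structure, and $\j_{p,0}$ is not complex orientable. This is exactly the point of \cref{j--j--modpbeta} in the paper: one first shows that $K/(p,\o\beta)$ naturally admits an $\F_p$-module structure (using that $\j_{p,0}/(p,\o\beta)$ is $1$-truncated and that $p$ is odd), and only then is the orientation criterion available. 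Your proposal skips this step.

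Second, the actual nilpotence verification is missing, and both routes you sketch would fail. The algebra $\F_p[q',u,t]/((q'-1)u,\,ut-[p]_{q'})$ is $\pi_*((\tau_{\ge0}(\ku_p^{\t\Cp}))^{\h\cir})$ modulo $(p,\beta)$, which belongs to the analysis of $\phi'_{\ku_p^\triv}$ in \cref{j--ku--nil}; the corners of the present square are $\ku_p$, $\ku_p\otimes_{\S_p\pow{q-1}}\S_p\pow{q'-1}$ and their $2$-connective covers, whose fixed-point homotopy mod $(p,\o\beta)$ is $\F_p[t]$, $\F_p[q',t]/(q-1)$, etc.---there is no class $u$ and no relation $ut=[p]_{q'}$ here, so that relation cannot be what makes $t$ act nilpotently. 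Your fallback, deducing the lemma from nilpotence of the two horizontal cofibers mod $(p,\o\beta)$ plus closure under cofibers, is false: e.g.\ applying $(-)^{\h\cir}$ and reducing mod $(p,\o\beta)$, the top horizontal cofiber has homotopy the cokernel of $\F_p[t]\to\F_p[q',t]/(q-1)$, a free $\F_p[t]$-module of rank $p-1$, so $t$ acts non-nilpotently there. The nilpotence of the total cofiber comes precisely from the twist you defer as ``the main obstacle'': one must compute, as the paper does, that on fixed-point homotopy $(\psi^{1+p}\otimes\Psi^{1+p})_\circ$ sends $t^n(q')^m$ to $t^n(q')^m(q^m-1)$ modulo $(p,(q-1)^2)$, with $q^m-1$ nonzero in $(q-1)\F_p[q']/(q-1)^2$ for $1\le m\le p-1$ (using $p$ odd to control $\psi^{1+p}(t)$). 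This cancellation is what collapses the total cofiber to $\{\beta q',\ldots,\beta(q')^{p-1}\}\cdot\F_p$, concentrated in a single degree with zero $t$-action. Without that computation the lemma is not established; note also that \cref{j--ku--nil} enters the paper's argument only afterwards, in \cref{j--j--nil-result}, where it is combined with the present lemma and \cref{j--j--diagram-addendum}, not as an input to this lemma itself.
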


\begin{proof}
  By \cref{j--j--modpbeta}, $K/(p,\o\beta)$ admits an $\cir$-equivariant $\F_p$-module structure. We may thus use the nilpotence criterion of \cref{j--nil--or}.

  By \cref{j--ku--homotopy}, we have an identification
  \[
    \pi_*(\ku_p^{\h\cir}) \iso \Z_p\pow{q-1}[\beta,t]/(\beta t - (q-1))
  \]
  where $t$ denotes the standard complex orientation; $\pi_*((\ku_p \otimes_{\S_p\pow{q-1}} \S_p\pow{q'-1})^{\h\cir})$ then identifies with the base change of this along $\Z_p\pow{q-1} \to \Z_p\pow{q'-1}$. The canonical map
  \[
    \smash{\pi_*(\tau_{\ge2}(\ku_p)^{\B\cir}) \to \pi_*(\ku_p^{\B\cir})}
  \]
  is an injection, with image in degree $2n$ given as follows: for $n \le 0$, it is $(q-1) t^n \cdot \Z_p\pow{q-1}$, and for $n \ge 1$, it is $\beta^n \cdot \Z_p\pow{q-1}$; the map
  \[
    \smash{\pi_*((\tau_{\ge2}(\ku_p) \otimes_{\S_p\pow{q-1}} \S_p\pow{q'-1})^{\B\cir}) \to \pi_*((\ku_p \otimes_{\S_p\pow{q-1}} \S_p\pow{q'-1})^{\B\cir})}
  \]
  has the same description, but tensored up along $\Z_p\pow{q-1} \to \Z_p\pow{q'-1}$.

  In terms of these identifications, applying $\smash{(-)^{\h\cir}}$ to the square in the statement and reducing modulo $(p,\o\beta)$ results in the commutative square of $\F_p$-modules
  \[
    \begin{tikzcd}
      \F_p[t] \ar[r] \ar[d, "\psi^{1+p}_\circ"] &
      \F_p[q',t]/(q-1) \ar[d, "(\psi^{1+p} \otimes \Psi^{1+p})_\circ"] \\
      \beta \cdot \F_p \oplus \left(\{t, t^2, \ldots\} \cdot \frac{(q-1)\F_p[q]}{(q-1)^2}\right) \ar[r] &
      \beta \cdot \F_p[q']/(q-1) \oplus \left(\{t, t^2, \ldots\} \cdot \frac{(q-1)\F_p[q']}{(q-1)^2}\right).
    \end{tikzcd}
  \]
  Let us calculate the behavior of the vertical maps. On $\smash{\pi_*(\ku_p^{\h\cir})}$, we have
  \[
    \psi^{1+p}(\beta) = (1+p)\beta \equiv \beta \pmod p, \qquad
    \psi^{1+p}(q) = q^{1+p}.
  \]
  Using the relation $\beta t = q-1$, we deduce that
  \[
    \psi^{1+p}(t) \equiv \frac{q^{1+p}-1}{q-1} t \equiv \left(\frac{q^{1+p}-q}{q-1}+1\right)t \equiv (q(q-1)^{p-1}+1)t \equiv t \pmod{p,(q-1)^2}
  \]
  (since $p-1 \ge 2$, as $p$ is odd). Combining this with the formula $\Psi^{1+p}(q') = (q')^{1+p} = q'q$, we find
  \begin{align*}
    (\psi^{1+p} \otimes \Psi^{1+p})_\circ(t^n(q')^m)
    &= \psi^{1+p}(t^n)\Psi^{1+p}((q')^m) - t^n(q')^m \\
    &\equiv t^n(q')^mq^m - t^n(q')^m \pmod{p,(q-1)^2} \\
    &\equiv t^n(q')^m(q^m-1) \pmod{p,(q-1)^2}.
  \end{align*}
  Note that $q^m-1$ is nonzero in $\frac{(q-1)\F_p[q']}{(q-1)^2}$ for $1 \le m \le p-1$. We may now compute the total cofiber of the above square of $\F_p$-modules, taking horizontal and then vertical cofibers; we obtain
  \[
    \smash{\{\beta q', \ldots, \beta (q')^{p-1}\} \cdot \F_p,}
  \]
  concentrated in a single degree, on which $t$ evidently acts by zero.
\end{proof}

\begin{proposition}
  \label{j--j--nil-result}
  The $\cir$-equivariant $\j_{p,0}$-module $\smash{K_{\j_{p,0}^\triv}/(p,\o\beta)}$ (\cref{j--cyc--filtered-phi}) is nilpotent.
\end{proposition}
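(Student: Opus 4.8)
The plan is to extract $K_{\j_{p,0}^\triv}$ from the commutative diagram of \cref{j--j--diagram} by a sequence of (co)fiber manipulations, reducing the claim to \cref{j--j--nil-lemma} together with \cref{j--ku--nil}. All objects below are $\cir$-equivariant $\j_{p,0}$-modules, and we work in $\Mod_{\j_{p,0}}(\Spt^{\B\cir})$, in which nilpotent objects form a thick subcategory (stable under fibers, cofibers, and shifts) and in which $(-)/(p,\o\beta) := (-)\otimes_{\j_{p,0}}\j_{p,0}/(p,\o\beta)$ is an exact functor. Write $\iota\colon\ku_p\to\ku_p\otimes_{\S_p\pow{q-1}}\S_p\pow{q'-1}$ and $\iota'\colon\tau_{\ge2}(\ku_p)\to\tau_{\ge2}(\ku_p)\otimes_{\S_p\pow{q-1}}\S_p\pow{q'-1}$ for the canonical maps, set $K'_{\tau_{\ge2}(\ku_p)^\triv}:=\cofib(\phi'_{\tau_{\ge2}(\ku_p)^\triv})$ and $\tilde K:=\cofib(\phi^2_{\tau_{\ge2}(\ku_p)^\triv})$, and recall from \cref{j--ku--orientation,j--ku--can-factorization} (and the analogue of the latter for $\tau_{\ge2}(\ku_p)$) that the middle two rows of \cref{j--j--diagram} express the factorizations $\phi^0_{\ku_p^\triv}=\phi'_{\ku_p^\triv}\circ\iota$ and $\phi^2_{\tau_{\ge2}(\ku_p)^\triv}=\phi'_{\tau_{\ge2}(\ku_p)^\triv}\circ\iota'$.

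First, the left and right columns of \cref{j--j--diagram} are fiber sequences, and the three horizontal maps $\phi^0_{\j_{p,0}^\triv}$, $\phi^0_{\ku_p^\triv}$, $\phi^2_{\tau_{\ge2}(\ku_p)^\triv}$ (the last two read as composites through the middle column) form a map from the left column to the right column; taking cofibers of these horizontal maps therefore yields a cofiber sequence $K_{\j_{p,0}^\triv}\to K_{\ku_p^\triv}\to\tilde K$, where $K_{\ku_p^\triv}=\cofib(\phi^0_{\ku_p^\triv})$. Applying the octahedral axiom to the factorizations above — functorially in the map of $\Delta^2$-diagrams recorded by the middle two rows of \cref{j--j--diagram}, whose connecting vertical maps are $\psi^{1+p}_\circ$ and $(\psi^{1+p}\otimes\Psi^{1+p})_\circ$ — presents $K_{\ku_p^\triv}$ and $\tilde K$ as the middle terms of cofiber sequences
\[
  \cofib(\iota)\to K_{\ku_p^\triv}\to K'_{\ku_p^\triv}, \qquad \cofib(\iota')\to\tilde K\to K'_{\tau_{\ge2}(\ku_p)^\triv},
\]
together with a map between them. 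Taking fibers of the three vertical maps of this map of cofiber sequences gives a cofiber sequence whose middle term is $\fib(K_{\ku_p^\triv}\to\tilde K)=K_{\j_{p,0}^\triv}$, whose last term is $\fib(K'_{\ku_p^\triv}\to K'_{\tau_{\ge2}(\ku_p)^\triv})$, and whose first term is $\fib(\cofib(\iota)\to\cofib(\iota'))$. The latter is $K[-1]$, where $K$ is the total cofiber of the square in \cref{j--j--nil-lemma}: indeed $\cofib(\iota)$ and $\cofib(\iota')$ are exactly the horizontal cofibers of that square, and the map between them is induced by its vertical maps, so $\cofib(\cofib(\iota)\to\cofib(\iota'))=K$. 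Altogether we obtain a cofiber sequence
\[
  K[-1]\ \longrightarrow\ K_{\j_{p,0}^\triv}\ \longrightarrow\ \fib\big(K'_{\ku_p^\triv}\to K'_{\tau_{\ge2}(\ku_p)^\triv}\big).
\]

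Now apply $(-)/(p,\o\beta)$; exactness gives a cofiber sequence of the same shape, and it suffices to see that its two outer terms become nilpotent. The term $K[-1]/(p,\o\beta)$ is nilpotent by \cref{j--j--nil-lemma} and shift-stability. For the other, observe that $K'_{\ku_p^\triv}$ and $K'_{\tau_{\ge2}(\ku_p)^\triv}$ are $\ku_p$-modules, so — as $\o\beta\in\pi_2(\j_{p,0}/p)$ maps to the residue $\beta$ of the Bott class in $\pi_2(\ku_p/p)$ — reduction modulo $(p,\o\beta)$ agrees on them with reduction modulo $(p,\beta)$; then $K'_{\ku_p^\triv}/(p,\beta)$ is a nilpotent $\cir$-equivariant $\ku_p$-module by \cref{j--ku--nil}, hence so is $K'_{\tau_{\ge2}(\ku_p)^\triv}/(p,\beta)\iso K'_{\ku_p^\triv}/(p,\beta)[2]$ by \cref{j--j--diagram-addendum}, and both remain nilpotent as $\cir$-equivariant $\j_{p,0}$-modules by \cref{j--nil--adjunction}. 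Since $(-)/(p,\o\beta)$ commutes with fibers, $\fib(K'_{\ku_p^\triv}\to K'_{\tau_{\ge2}(\ku_p)^\triv})/(p,\o\beta)$ is a fiber of nilpotent objects, hence nilpotent. Both outer terms of the cofiber sequence being nilpotent, so is the middle term $K_{\j_{p,0}^\triv}/(p,\o\beta)$.

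The substantive content is already packaged in \cref{j--j--nil-lemma,j--ku--nil}; the main thing to get right here is the diagrammatic bookkeeping, in particular choosing the two octahedra compatibly so that they constitute an actual map of cofiber sequences — so that the induced map on first terms is literally the one appearing in \cref{j--j--nil-lemma} — which in the $\infty$-categorical setting follows from the functoriality of the octahedral axiom and so should be essentially formal.
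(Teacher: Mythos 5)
Your proposal is correct and takes essentially the same route as the paper: the paper's proof is exactly the one-line combination of \cref{j--ku--nil}, \cref{j--j--diagram-addendum}, and \cref{j--j--nil-lemma} via the diagram of \cref{j--j--diagram}, and your write-up simply makes the diagrammatic bookkeeping explicit. The extra care you take (functorial choice of the octahedra, and the identification of reduction mod $(p,\o\beta)$ with reduction mod $(p,\beta)$ on $\ku_p$-modules) fills in precisely the details the paper leaves implicit.
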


\begin{proof}
  Using the commutative diagram of \cref{j--j--diagram}, the claim follows from combining \cref{j--ku--nil}, \cref{j--j--diagram-addendum}, and \cref{j--j--nil-lemma}.
\end{proof}

\begin{proof}[Proof of \cref{i--nil--thm}]
  In $\pi_*(\j_{p,0})$ we have $v_1 = (\o{\beta})^{p-1}$. It thus follows from \cref{j--j--nil-result} and a devissage that $K_{\j_{p,0}^\triv}/(p,v_1)$ is nilpotent. The claim follows from this and the fact that
  \[
    \smash{K_{\j_p^\triv} \iso K_{\j_{p,0}^\triv}^{\h\F_p^\times}},
  \]
  which is a retract of $\smash{K_{\j_{p,0}^\triv}}$ (here we have used that $p-1 = |\F_p^\times|$ acts invertibly on $\j_{p,0}$, $\smash{\tau_{\ge0}(\j_{p,0}^{\t\Cp})}$, and $K_{\j_{p,0}^\triv}$, as they are $p$-complete).
\end{proof}

\section{$\K(1)$-local $\TC$ and K-theory}
\label{k1k}

In this section, we discuss some implications of \cref{i--main--thhZp,i--nil--thm} on the behavior of topological cyclic homology and algebraic K-theory after $\K(1)$-localization. As described in \cref{i--k1k}, these implications comprise a height $1$ analogue of the height $0$ results of Antieau--Mathew--Morrow--Nikolaus \cite{ammn--beilinson}; the general results and arguments in \cref{k1k--cyc,k1k--ring} are straightforward adaptations of theirs. In \cref{k1k--eg}, we spell out some specific examples of these results, obtaining new calculations in $\K(1)$-local $\TC$ and K-theory.

%--------------------------------------------------------------------

\subsection{$\TC$ of cyclotomic spectra}
\label{k1k--cyc}

We first formulate a result for the topological cyclic homology of general bounded below cyclotomic spectra.

\begin{notation}
  \label{k1k--cyc--map}
  Recall from \cref{t--pf--map} that there is a unique map of cyclotomic $\E_\infty$-rings $\alpha : \j_p^\triv \to \pcpl{\THH(\Z)}$.
\end{notation}

\begin{lemma}
  \label{k1k--cyc--fib-tS1}
  For $X$ a bounded below $\cir$-equivariant spectrum, $\smash{\pcpl{((X \otimes \fib(\alpha))^{\t\cir})}} \iso 0$.
\end{lemma}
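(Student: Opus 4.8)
The plan is to deduce the statement directly from \cref{i--nil--thm} and the Tate-vanishing criterion for nilpotent $\cir$-equivariant spectra (\cref{j--nil--tate-vanishing}), with the boundedness hypothesis on $X$ entering only at the final step. First I would identify $\fib(\alpha)$ with the $\cir$-equivariant spectrum $K_{\j_p^\triv}$ of \cref{i--nil--thm}: by the proof of \cref{i--main--thhZp}, under the equivalence $\pcpl{\THH(\Z)} \iso \sh(\j_p^\triv)$ the map $\alpha$ corresponds to the canonical map $\phi^0 : \j_p^\triv \to \sh(\j_p^\triv)$ (in the square there relating $\alpha$, $\phi^0$, and $\sh(\alpha)$, the right vertical map $\phi^0$ and the map $\sh(\alpha)$ are equivalences, forcing $\alpha$ to correspond to the left vertical $\phi^0$). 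Hence $\fib(\alpha) \iso K_{\j_p^\triv}$, and \cref{i--nil--thm} tells us that $\fib(\alpha)/(p,v_1)$ is a nilpotent $\cir$-equivariant spectrum.

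Next I would tensor with $X$. Since the nilpotent $\cir$-equivariant spectra form a thick tensor ideal (\cref{j--nil--def}), tensoring with the arbitrary object $X$ preserves nilpotence; as $(-)\otimes X$ commutes with the finite colimit $/(p,v_1)$, the object
\[
  (X \otimes \fib(\alpha))/(p,v_1) \iso X \otimes \bigl(\fib(\alpha)/(p,v_1)\bigr)
\]
is nilpotent. By \cref{j--nil--tate-vanishing} its Tate construction vanishes, and since $(-)^{\t\cir}$ is exact it commutes with $/(p,v_1)$, so this reads $(X\otimes\fib(\alpha))^{\t\cir}/(p,v_1) \iso 0$, equivalently $\cpl{\bigl((X\otimes\fib(\alpha))^{\t\cir}\bigr)_{(p,v_1)}} \iso 0$.

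Finally I would upgrade this to $p$-completed vanishing, which is where the hypothesis on $X$ is used. The map $\alpha$ is a map of connective spectra (both $\j_p$ and $\pcpl{\THH(\Z)}$ are connective), so $\fib(\alpha)$ is bounded below, and hence $X \otimes \fib(\alpha)$ is bounded below. By case (2) of \cref{j--nil--mod-p-v1} the canonical map $\pcpl{\bigl((X\otimes\fib(\alpha))^{\t\cir}\bigr)} \to \cpl{\bigl((X\otimes\fib(\alpha))^{\t\cir}\bigr)_{(p,v_1)}}$ is an equivalence, and combining this with the previous paragraph yields $\pcpl{\bigl((X\otimes\fib(\alpha))^{\t\cir}\bigr)} \iso 0$, as claimed.

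Since every ingredient is already available, I do not expect a genuine obstacle here; the only points demanding care are the bookkeeping that makes $\fib(\alpha)$ literally the object to which \cref{i--nil--thm} applies (matching the identifications of \cref{i--main--thhZp}), and checking the boundedness-below needed to invoke case (2) of \cref{j--nil--mod-p-v1} — which is precisely why the lemma hypothesizes that $X$ is bounded below.
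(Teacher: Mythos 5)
Your proof is correct and is exactly the argument the paper intends: the paper's proof is the one-line ``Combine \cref{i--main--thhZp,i--nil--thm,j--nil--mod-p-v1}'', and your write-up just makes explicit the identification $\fib(\alpha) \iso K_{\j_p^\triv}$, the thick-tensor-ideal step, and the use of case (2) of \cref{j--nil--mod-p-v1} via bounded-belowness of $X \otimes \fib(\alpha)$. No gaps.
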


\begin{proof}
  Combine \cref{i--main--thhZp,i--nil--thm,j--nil--mod-p-v1}.
\end{proof}

\begin{theorem}
  \label{k1k--cyc--main-sq}
  For $X$ a bounded below cyclotomic spectrum, there is a natural map of spectra $\pcpl{\TC(X \otimes \pcpl{\THH(\Z)})} \to \pcpl{((X \otimes \j_p^\triv)^{\t\cir})}$ making the square
  \begin{equation}
    \label{k1k--cyc--TC-tS1-map--diagram}
    \begin{tikzcd}
      \pcpl{\TC(X \otimes \j_p^\triv)} \ar[r] \ar[d] &
      \pcpl{\TC(X \otimes \pcpl{\THH(\Z)})} \ar[d] \\
      \pcpl{((X \otimes \j_p^\triv)^{\h\cir})} \ar[r] &
      \pcpl{((X \otimes \j_p^\triv)^{\t\cir})}
    \end{tikzcd}
  \end{equation}
  commute, the upper horizontal map being induced by $\alpha$ and the left vertical and lower horizontal maps being the canonical ones. Moreover, upon $\K(1)$-localization, this square becomes cartesian and the map $\pcpl{\TC(X)} \to \pcpl{\TC(X \otimes \j_p^\triv)}$ induced by the unit map $\S \to \j_p^\triv$ becomes an equivalence.
\end{theorem}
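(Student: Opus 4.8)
The plan is to set $Z := X \otimes \j_p^\triv$ and $Y := X \otimes \pcpl{\THH(\Z)}$ in $\CycSpt$. By \cref{i--main--thhZp} we may identify $Y$ with $X \otimes \sh(\j_p^\triv)$ and $\alpha$ with the canonical map $\phi^0$ of \cref{t--cyc--sh}, so that tensoring with $X$ yields a map $\alpha_X \colon Z \to Y$ of bounded below cyclotomic spectra with fiber $F := X \otimes \fib(\alpha)$. The one input beyond \cref{i--main--thhZp} is \cref{k1k--cyc--fib-tS1}, which gives $\pcpl{(F^{\t\cir})} \iso 0$; in particular the map $\pcpl{(Z^{\t\cir})} \to \pcpl{(Y^{\t\cir})}$ induced by $\alpha_X$ has null fiber, hence is an equivalence. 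I would then define the map $r$ of the statement as the composite $\pcpl{\TC(Y)} \to \pcpl{(Y^{\h\cir})} \lblto{\can} \pcpl{(Y^{\t\cir})} \isoto \pcpl{(Z^{\t\cir})}$, the first two maps being the canonical ones $\TC \to \TC^- \to \TP$ and the last being the inverse of the equivalence just produced. Commutativity of the square, and its naturality in $X$, is then formal: it follows from naturality of the canonical maps $\TC \to \TC^- \to \TP$ along $\alpha_X$ together with the identity $\can \circ \alpha_X^{\h\cir} \iso \alpha_X^{\t\cir} \circ \can$.

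For cartesianness after $\K(1)$-localization: since $\TC \colon \CycSpt \to \Spt$ is exact and $F = \fib(\alpha_X)$, the fiber of the top row of the square is $\pcpl{\TC(F)}$, while the fiber of the bottom row $\can \colon \pcpl{(Z^{\h\cir})} \to \pcpl{(Z^{\t\cir})}$ is $\pcpl{(Z_{\h\cir})}$; so it suffices to show the induced natural map $\pcpl{\TC(F)} \to \pcpl{(Z_{\h\cir})}$ becomes an equivalence after applying $\L_{\K(1)}$. I would prove this via three identifications. (i) As $F$ is bounded below with $\pcpl{(F^{\t\cir})} \iso 0$, the Nikolaus--Scholze formula $\TC(F) \iso \fib(\varphi - \can \colon F^{\h\cir} \to (F^{\t\Cp})^{\h\cir})$ together with the standard identification $(F^{\t\Cp})^{\h\cir} \iso F^{\t\cir}$ for bounded below $F$ shows that the canonical map $\pcpl{\TC(F)} \to \pcpl{(F^{\h\cir})}$ is an equivalence. (ii) Again by $\pcpl{(F^{\t\cir})} \iso 0$, the norm map $F_{\h\cir} \to F^{\h\cir}$ is a $p$-completion equivalence, so $\pcpl{(F^{\h\cir})} \iso \pcpl{(F_{\h\cir})}$. (iii) Since $(-)_{\h\cir}$ is exact, $F_{\h\cir} \iso \fib(Z_{\h\cir} \to Y_{\h\cir})$; and $Y = X \otimes \pcpl{\THH(\Z)}$ is a module over the $\E_\infty$-$\Z_p$-algebra $\pcpl{\THH(\Z)}$, hence a $\Z_p$-module spectrum, so $Y_{\h\cir}$ is a colimit of $\Z$-modules and therefore $\K(1)$-acyclic, i.e. $\L_{\K(1)}(Y_{\h\cir}) \iso 0$. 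Exactness of $\L_{\K(1)}$ then yields $\L_{\K(1)}(F_{\h\cir}) \isoto \L_{\K(1)}(Z_{\h\cir})$ via the natural map $F_{\h\cir} \to Z_{\h\cir}$. Chaining (i)--(iii)---and checking, via naturality of the canonical map $\TC \to (-)^{\h\cir}$, of the norm, and of $\can$, that the composite equals $\L_{\K(1)}$ of the comparison map $\TC(F) \to Z_{\h\cir}$ above---gives the claim.

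For the last clause, I would use \cite[Remark 2.4]{ammn--beilinson} (as in the proof of \cref{z--ol--phi-TC}): there is a natural equivalence $\pcpl{\TC(X \otimes \j_p^\triv)} \iso \pcpl{(\TC(X) \otimes \j_p)}$ under which the map $\pcpl{\TC(X)} \to \pcpl{\TC(X \otimes \j_p^\triv)}$ induced by the unit $\S^\triv \to \j_p^\triv$ becomes $\TC(X)$ tensored with the unit $\S \to \j_p$, then $p$-completed. After $\L_{\K(1)}$ this is an equivalence, since $\j_p \to \J_p = \L_{\K(1)}\S$ is a $\K(1)$-equivalence and $\K(1)$-localization is $p$-completely smashing (\cref{i--conv}\cref{i--conv--K1}).

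I expect the only real obstacle to be the bookkeeping in step (iii)---confirming that the chain of natural equivalences composes to the canonical comparison map---since the conceptual heart of the argument is just the observation that $\L_{\K(1)}$ annihilates everything built out of $\pcpl{\THH(\Z)}$ (being a $\Z_p$-algebra), which collapses the right-hand column of the square.
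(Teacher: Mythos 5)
Your proposal is correct and is essentially the paper's argument: the map is constructed exactly as in the paper (invert the equivalence $\pcpl{((X \otimes \j_p^\triv)^{\t\cir})} \isoto \pcpl{((X \otimes \pcpl{\THH(\Z)})^{\t\cir})}$ furnished by \cref{k1k--cyc--fib-tS1} and compose with the canonical maps $\TC \to (-)^{\h\cir} \to (-)^{\t\cir}$), and the final clause is handled identically via \cite[Remark 2.4]{ammn--beilinson} and the $\K(1)$-local invertibility of $\S \to \j_p$. The only divergence is organizational: you verify cartesianness by identifying the map on horizontal fibers $\pcpl{\TC(X \otimes \fib(\alpha))} \to \pcpl{((X \otimes \j_p^\triv)_{\h\cir})}$, whereas the paper pastes squares (the $\TC$/$(-)^{\h\cir}$ square is cartesian by replacing $\can$ with $\can - \phi$, and $\can$ for $X \otimes \pcpl{\THH(\Z)}$ is a $\K(1)$-local equivalence); both routes rest on the same three inputs---the Nikolaus--Scholze fiber formula for $\TC$ of bounded below cyclotomic spectra, the vanishing from \cref{k1k--cyc--fib-tS1}, and $\K(1)$-acyclicity of $(X \otimes \pcpl{\THH(\Z)})_{\h\cir}$ as a colimit of $\Z$-modules---so the bookkeeping you flag is routine and not a gap.
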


\begin{proof}
  The map and commutative square are obtained immediately from the commutative diagram
  \begin{equation}
    \label{k1k--cyc--main-sq--construction-diagram}
    \begin{tikzcd}
      \pcpl{\TC(X \otimes \j_p^\triv)} \ar[r, "\alpha"] \ar[d] &
      \pcpl{\TC(X \otimes \pcpl{\THH(\Z)})} \ar[d] \\
      \pcpl{((X \otimes \j_p^\triv)^{\h\cir})} \ar[r, "\alpha"] \ar[d, "\can", swap] &
      \pcpl{((X \otimes \pcpl{\THH(\Z)})^{\h\cir})} \ar[d, "\can"] \\
      \pcpl{((X \otimes \j_p^\triv)^{\t\cir})} \ar[r, "\alpha"] &
      \pcpl{((X \otimes \pcpl{\THH(\Z)})^{\t\cir})}
    \end{tikzcd}
  \end{equation}
  and the fact that the lowest horizontal map is an equivalence (\cref{k1k--cyc--fib-tS1}).

  The cartesianness claim follows from two observations about the diagram \cref{k1k--cyc--main-sq--construction-diagram}:
  \begin{itemize}[leftmargin=*]
  \item The upper square is cartesian. This follows from considering the variant of \cref{k1k--cyc--main-sq--construction-diagram} in which the $\can$ maps are replaced by $\can - \phi$; there the columns are fiber sequences, and the lowest horizontal map remains an equivalence.
  \item The map $\can : (X \otimes \pcpl{\THH(\Z)})^{\h\cir} \to (X \otimes \pcpl{\THH(\Z)})^{\t\cir}$ is a $\K(1)$-local equivalence. Indeed, the fiber identifies with a shift of $(X \otimes \pcpl{\THH(\Z)})_{\h\cir}$, which is a colimit of $\Z$-modules, hence vanishes $\K(1)$-locally.
  \end{itemize}

  That the map $\pcpl{\TC(X)} \to \pcpl{\TC(X \otimes \j_p^\triv)}$ is a $\K(1)$-local equivalence follows from the canonical map $\pcpl{\TC(X)} \otimes \j_p \to \pcpl{\TC(X \otimes \j_p^\triv)}$ being a $p$-complete equivalence \cite[Remark 2.4]{ammn--beilinson} and the unit map $\S \to \j_p$ being a $\K(1)$-local equivalence.
\end{proof}

\begin{corollary}
  \label{k1k--cyc--main-seq}
  For $X$ a bounded below cyclotomic spectrum, there is a natural fiber sequence of spectra
  \[
    \L_{\K(1)}(X_{\h\cir})[1] \to \L_{\K(1)}\TC(X) \to \L_{\K(1)}\TC(X \otimes \pcpl{\THH(\Z)})
  \]
  in which the second map is induced by the unit of $\pcpl{\THH(\Z)}$.
\end{corollary}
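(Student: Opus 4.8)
The plan is to derive this as a formal consequence of \cref{k1k--cyc--main-sq}, by passing to $\K(1)$-localizations and taking fibers of the horizontal maps in the square \cref{k1k--cyc--TC-tS1-map--diagram}.

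Concretely, I would first apply $\L_{\K(1)}$ to the square \cref{k1k--cyc--TC-tS1-map--diagram}; since $p$-completion is a $\K(1)$-local equivalence, the corners become $\L_{\K(1)}\TC(X \otimes \j_p^\triv)$, $\L_{\K(1)}\TC(X \otimes \pcpl{\THH(\Z)})$, $\L_{\K(1)}((X \otimes \j_p^\triv)^{\h\cir})$, and $\L_{\K(1)}((X \otimes \j_p^\triv)^{\t\cir})$, and by \cref{k1k--cyc--main-sq} this square is cartesian. A cartesian square has equivalent horizontal fibers, so $\fib(\L_{\K(1)}\TC(X \otimes \j_p^\triv) \to \L_{\K(1)}\TC(X \otimes \pcpl{\THH(\Z)}))$ is canonically equivalent to $\fib(\L_{\K(1)}((X \otimes \j_p^\triv)^{\h\cir}) \to \L_{\K(1)}((X \otimes \j_p^\triv)^{\t\cir}))$.

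Next I would identify these two fibers. For the right-hand side: writing $Y := X \otimes \j_p^\triv$, exactness of $\L_{\K(1)}$ identifies the fiber with $\L_{\K(1)}(\fib(\can : Y^{\h\cir} \to Y^{\t\cir}))$, and the $\cir$-norm cofiber sequence (used already in the proofs of \cref{z--ol--phi-TC,k1k--cyc--main-sq}) identifies $\fib(\can)$ with $Y_{\h\cir}[1]$. The underlying $\cir$-equivariant spectrum of $Y = X \otimes \j_p^\triv$ is $X \otimes \j_p$ with $\j_p$ carrying the trivial $\cir$-action, and $(-) \otimes \j_p$ commutes with the colimit $(-)_{\h\cir}$, so $Y_{\h\cir} \iso X_{\h\cir} \otimes \j_p$. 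Since the unit map $\S \to \j_p$ is a $\K(1)$-local equivalence — its composite with $\j_p \to \J_p$ is by definition the $\K(1)$-localization of $\S$, and $\j_p \to \J_p$ has coconnective, hence $\K(1)$-acyclic, fiber (cf. the proof of \cref{t--pf--LQ-uniqueness}) — tensoring with $X_{\h\cir}$ shows $\L_{\K(1)}(X_{\h\cir} \otimes \j_p) \iso \L_{\K(1)}(X_{\h\cir})$. Hence the right-hand fiber is $\L_{\K(1)}(X_{\h\cir})[1]$. For the left-hand side: by \cref{k1k--cyc--main-sq} the unit $\S^\triv \to \j_p^\triv$ induces a $\K(1)$-local equivalence $\TC(X) \iso \TC(X \otimes \S^\triv) \isoto \TC(X \otimes \j_p^\triv)$; and precomposing $\alpha$ with this unit is the unique map of cyclotomic $\E_\infty$-rings out of the monoidal unit $\S^\triv \iso \THH(\S)$, i.e. the unit of $\pcpl{\THH(\Z)}$. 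So under these identifications the top horizontal map becomes the map $\L_{\K(1)}\TC(X) \to \L_{\K(1)}\TC(X \otimes \pcpl{\THH(\Z)})$ induced by the unit of $\pcpl{\THH(\Z)}$, and the cofiber sequence of this map has fiber $\L_{\K(1)}(X_{\h\cir})[1]$, as desired. Naturality in $X$ is inherited from that of \cref{k1k--cyc--main-sq}, the $\cir$-norm sequence, and the comparison maps above.

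Since the statement is essentially a repackaging of \cref{k1k--cyc--main-sq}, I do not expect a genuine obstacle; the only points demanding attention are getting the suspension in the $\cir$-norm sequence to land correctly on $\L_{\K(1)}(X_{\h\cir})$ (rather than a desuspension) and justifying that the factor of $\j_p$ may be discarded after $\K(1)$-localization — the latter being exactly the ``$\S \to \j_p$ is a $\K(1)$-equivalence'' device used throughout the paper.
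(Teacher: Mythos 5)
Your argument is correct and is exactly the intended derivation: the paper states this corollary without proof as an immediate consequence of \cref{k1k--cyc--main-sq}, obtained just as you do by taking horizontal fibers in the $\K(1)$-localized cartesian square, identifying $\fib(\can)$ with $(X \otimes \j_p^\triv)_{\h\cir}[1]$ via the $\cir$-norm sequence, and discarding the $\j_p$-factor and passing from $\TC(X \otimes \j_p^\triv)$ to $\TC(X)$ using that $\S \to \j_p$ is a $\K(1)$-local equivalence. Your handling of the suspension and of the identification of the top map with the one induced by the unit of $\pcpl{\THH(\Z)}$ (via initiality of $\S^\triv$) is accurate.
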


% --------------------------------------------------------------------

\subsection{$\TC$ and K-theory of ring spectra}
\label{k1k--ring}

We now apply \cref{k1k--cyc--main-sq} to the study of  $\K(1)$-localized $\TC$ and K-theory of ring spectra. We begin with the following result for $\j_p$-algebras, parallel to the result \cite[Theorem 2.12]{ammn--beilinson} for $\Z$-algebras.

\begin{theorem}
  \label{k1k--ring--jmod}
  For $R$ a connective $\E_1$-$\j_p$-algebra, there is a natural commutative square of spectra
  \[
    \begin{tikzcd}
      \pcpl{\TC(R)} \ar[r] \ar[d] &
      \pcpl{\TC(R \otimes_\S \Z)} \ar[d] \\
      \pcpl{\TC^-(R/\j_p)} \ar[r] &
      \pcpl{\TP(R/\j_p)}
    \end{tikzcd}
  \]
  in which all maps but the right vertical one are the canonical ones. Moreover, this square becomes cartesian upon $\K(1)$-localization.
\end{theorem}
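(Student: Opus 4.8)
The plan is to deduce \cref{k1k--ring--jmod} from \cref{k1k--cyc--main-sq} applied to the bounded-below (since $R$ is connective) cyclotomic spectrum $X = \THH(R)$, after identifying the four corners of the resulting square. For the upper-right corner, use the Künneth equivalence $\THH(R \otimes_\S \Z) \iso \THH(R) \otimes \THH(\Z)$ (geometric realizations commute with $\otimes$ and $\Delta^\op$ is sifted) together with the fact that $p$-completion of a tensor product only sees the $p$-completed factors, so that $\pcpl{\TC(\THH(R) \otimes \pcpl{\THH(\Z)})} \iso \pcpl{\TC(R \otimes_\S \Z)}$ and the top horizontal map becomes $\pcpl{\TC(-)}$ of the unit map $R \to R \otimes_\S \Z$. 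By \cref{k1k--cyc--main-sq}, the upper-left corner $\pcpl{\TC(\THH(R) \otimes \j_p^\triv)}$ receives a $\K(1)$-local equivalence from $\pcpl{\TC(\THH(R))} = \pcpl{\TC(R)}$, so after $\K(1)$-localization it becomes $\L_{\K(1)}\TC(R)$.

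It remains to identify the bottom row after $\K(1)$-localization with $\L_{\K(1)}\TC^-(R/\j_p) \to \L_{\K(1)}\TP(R/\j_p)$. First, because the unit map $\S \to \j_p$ is a $\K(1)$-local equivalence (coconnective fiber, as in the proof of \cref{t--pf--LQ-uniqueness}) and $\j_p$ is connective, hence a filtered colimit of connective finite spectra, one gets $(\THH(R) \otimes \j_p^\triv)^{\h\cir} \iso \THH(R)^{\h\cir} \otimes \j_p$ and similarly for $(-)^{\t\cir}$, and tensoring the $\K(1)$-local equivalence $\S \to \j_p$ with $\THH(R)^{\h\cir}$ (resp. $\THH(R)^{\t\cir}$) shows the bottom corners agree $\K(1)$-locally with $\THH(R)^{\h\cir} = \TC^-(R)$ and $\THH(R)^{\t\cir} = \TP(R)$. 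To pass to the $\j_p$-relative constructions, use the base-change formula $\THH(R/\j_p) \iso \THH(R) \otimes_{\THH(\j_p)} \j_p$, where $\THH(\j_p) \to \j_p$ is the circle-collapse (augmentation) map; writing the relative tensor as the geometric realization of a bar construction, the fiber of $\THH(R) \to \THH(R/\j_p)$ is $\K(1)$-acyclic provided $\THH(\j_p) \to \j_p$ is a $\K(1)$-local equivalence. The latter is soft: it is split by the unit $\j_p \to \THH(\j_p)$, and the cyclic bar construction $\THH(\j_p) = |[n] \mapsto \j_p^{\otimes(n+1)}|$ becomes, after $\K(1)$-localization, the constant simplicial object at $\J_p$, since $\J_p = \L_{\K(1)}\S$ is the unit of the $\K(1)$-local category and hence each face map $\j_p^{\otimes(n+1)} \to \j_p$ is a $\K(1)$-local equivalence; thus $\L_{\K(1)}\THH(\j_p) \iso \J_p$ via the augmentation. (This is the exact height-one analogue of the mechanism behind \cite[Theorem 2.12]{ammn--beilinson}, which instead uses that $\THH(\Z) \to \Z$ is a rational equivalence.) Granting this, $\THH(R) \to \THH(R/\j_p)$ is a $\K(1)$-local equivalence of $\cir$-equivariant spectra, so $\K(1)$-locally the bottom row is $\L_{\K(1)}\TC^-(R/\j_p) \to \L_{\K(1)}\TP(R/\j_p)$ and, by \cref{k1k--cyc--main-sq}, the square is cartesian after $\K(1)$-localization.

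Finally one must produce the un-localized $p$-complete square of the statement itself: the top and bottom horizontal maps are the canonical ones, the left vertical map is the composite $\pcpl{\TC(R)} \to \pcpl{\TC^-(R)} \to \pcpl{\TC^-(R/\j_p)}$ of the canonical projection with the $\j_p$-relativization, and the right vertical map is obtained from the structure map of \cref{k1k--cyc--main-sq} together with the natural map $(\THH(R) \otimes \j_p^\triv)^{\t\cir} \to \THH(R/\j_p)^{\t\cir}$ induced by $\THH(R) \otimes_\S \j_p \to \THH(R) \otimes_{\THH(\j_p)} \j_p$. Commutativity of this square, and its identification upon $\K(1)$-localization with the cartesian square built above (via the $\K(1)$-local equivalences of the previous paragraph), is a diagram chase threading through the commutative diagram \cref{k1k--cyc--main-sq--construction-diagram} used to prove \cref{k1k--cyc--main-sq}, entirely parallel to the proof of \cite[Theorem 2.12]{ammn--beilinson}. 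I expect the bulk of the work to be precisely this last bookkeeping step (assembling the $p$-complete square and checking its commutativity), the one genuinely new ingredient — the $\K(1)$-acyclicity of $\cofib(\j_p \to \THH(\j_p))$ — being handled by the soft argument above.
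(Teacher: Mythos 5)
Your overall architecture is the same as the paper's: apply \cref{k1k--cyc--main-sq} to $X = \THH(R)$, identify the upper right corner via $\THH(R \otimes_\S \Z) \iso \THH(R) \otimes \THH(\Z)$, use the last clause of \cref{k1k--cyc--main-sq} to replace $\pcpl{\TC(\THH(R) \otimes \j_p^\triv)}$ by $\pcpl{\TC(R)}$ after $\K(1)$-localization, and compare the bottom row with the $\j_p$-relative constructions via the canonical $\cir$-equivariant map $\THH(R) \otimes \j_p^\triv \to \THH(R/\j_p)$; your bar-construction argument that $\THH(\j_p) \to \j_p$, and hence this map, is a $\K(1)$-local equivalence is fine and is exactly the input the paper uses. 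The gap is in how you exploit that equivalence: you conclude that ``$\K(1)$-locally the bottom row is $\L_{\K(1)}\TC^-(R/\j_p) \to \L_{\K(1)}\TP(R/\j_p)$,'' i.e.\ that the comparison maps $\pcpl{((\THH(R)\otimes\j_p^\triv)^{\h\cir})} \to \pcpl{\TC^-(R/\j_p)}$ and their Tate analogues are $\K(1)$-local equivalences. This implicitly commutes $\K(1)$-localization past the infinite limits $(-)^{\h\cir}$ and $(-)^{\t\cir}$, which is not legitimate: a $\K(1)$-acyclic fiber need not have $\K(1)$-acyclic homotopy fixed points or Tate construction. The same confusion appears in your earlier assertion $(\THH(R)\otimes\j_p^\triv)^{\h\cir} \iso \THH(R)^{\h\cir}\otimes\j_p$ ``since $\j_p$ is a filtered colimit of finite spectra'': neither smashing with $\j_p$ nor filtered colimits pass through the limit over $\B\cir$. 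Indeed, the paper's \cref{k1k--ring--K1hS1tS1} is formulated as a pullback square rather than an equivalence precisely because such commutations fail.

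The repair is the paper's horizontal-fiber argument. Form the commutative square whose top row is $\can : \pcpl{((\THH(R)\otimes\j_p^\triv)^{\h\cir})} \to \pcpl{((\THH(R)\otimes\j_p^\triv)^{\t\cir})}$ and whose bottom row is $\can : \pcpl{\TC^-(R/\j_p)} \to \pcpl{\TP(R/\j_p)}$, the vertical maps being induced by $\THH(R)\otimes\j_p^\triv \to \THH(R/\j_p)$; one claims only that this square becomes cartesian after $\K(1)$-localization, not that its vertical maps become equivalences. This follows because the horizontal fibers are the suspended homotopy orbits $\pcpl{((\THH(R)\otimes\j_p^\triv)_{\h\cir})}[1] \to \pcpl{(\THH(R/\j_p)_{\h\cir})}[1]$, and $(-)_{\h\cir}$ is a colimit, so it preserves the $\K(1)$-local equivalence of $\cir$-equivariant spectra. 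Pasting this square below the one from \cref{k1k--cyc--main-sq} produces the commutative $p$-complete square of the statement and shows it is cartesian after $\K(1)$-localization, with no need for the row identifications your argument relies on (and which would in any case deliver a different-looking bottom row from the one in the statement, which is the $p$-completed, not $\K(1)$-localized, relative $\TC^-$ and $\TP$).
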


\begin{proof}
  We combine \cref{k1k--cyc--main-sq} (for $X = \THH(R)$) with the commutative square
  \[
    \begin{tikzcd}
      \smash{\pcpl{((\THH(R) \otimes \j_p^\triv)^{\h\cir})}} \ar[r, "\can"] \ar[d] &
      \smash{\pcpl{((\THH(R) \otimes \j_p^\triv)^{\t\cir})}} \ar[d] \\
      \pcpl{\TC^-(R/\j_p)} \ar[r, "\can"] &
      \pcpl{\TP(R/\j_p)}
    \end{tikzcd}    
  \]
  induced by the canonical $\cir$-equivariant map $\THH(R) \otimes \j_p^\triv \to \THH(R/\j_p)$, which becomes cartesian after $\K(1)$-localization. This last claim follows from considering the map induced on horizontal fibers, as the aforementioned $\cir$-equivariant map is a $\K(1)$-local equivalence, and hence so too is the map obtained from this by applying $(-)_{\h\cir}$.
\end{proof}

Next, we prove our result for general connective $\E_1$-rings, \cref{i--k1k--main}. The proof will rely on the following prior result on the $\K(1)$-local K-theory of $\Z$-algebras.

\begin{theorem}[Bhatt--Clausen--Mathew \citebare{bhatt-clausen-mathew--k1k}; Land--Meier--Mathew--Tamme \citebare{lmmt--purity}]
  \label{k1k--ring--truncating}
  For $A$ a connective $\E_1$-$\Z$-algebra, the following canonical maps are equivalences:
  \[
    \L_{\K(1)}\K(A) \to \L_{\K(1)}\K(\pi_0(A)) \to \L_{\K(1)}\K(\pi_0(A)[1/p]).
  \]
\end{theorem}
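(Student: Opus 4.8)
The plan is to treat this as an input from the cited works rather than to reprove it here; for orientation I record the shape of the argument one would run. There are two independent assertions: that $\L_{\K(1)}\K$ is a truncating invariant on connective $\E_1$-$\Z$-algebras, so that $\L_{\K(1)}\K(A) \to \L_{\K(1)}\K(\pi_0(A))$ is an equivalence; and that for a discrete $\Z$-algebra $B$ the localization map $\L_{\K(1)}\K(B) \to \L_{\K(1)}\K(B[1/p])$ is an equivalence.

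For the first assertion, the plan is to reduce to $\TC$ via the Dundas--Goodwillie--McCarthy theorem. Writing $A \simeq \lim_n \tau_{\le n}(A)$ and using that each map $\tau_{\le n}(A) \to \tau_{\le n-1}(A)$ is a square-zero extension, DGM identifies $\fib(\K(\tau_{\le n}A) \to \K(\pi_0 A))$ with $\fib(\TC(\tau_{\le n}A) \to \TC(\pi_0 A))$; passing to the limit over $n$, which both $\K$ and $\TC$ preserve here since the relative terms are increasingly connected, reduces the claim to showing that $\L_{\K(1)}\TC$ is truncating on $\Z$-algebras. For that one analyzes the relative $\cir$-equivariant spectrum $\fib(\THH(A) \to \THH(\pi_0(A)))$ and shows that $\L_{\K(1)}$ of its $\TC^{-}$, $\TP$, and hence $\TC$ all vanish. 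This is where genuine input is needed: either the asymptotic form of the Segal conjecture used by Land--Mathew--Meier--Tamme, or the prismatic/syntomic description of $\TC$ of $\Z$-algebras exploited by Bhatt--Clausen--Mathew --- the upshot in either case being that the relative $\THH$ is, $\K(1)$-locally, built from $\cir$-spectra whose Tate constructions vanish.

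For the second assertion, the plan is to use the localization sequence $\K(\Mod_{B}^{p\text{-}\mathrm{tors}}) \to \K(B) \to \K(B[1/p])$, identify the fiber term with the algebraic K-theory of the category of $p$-power-torsion $B$-modules, and via Quillen d\'evissage reduce to the vanishing of $\L_{\K(1)}\K$ on $\F_p$-algebras. The latter follows from $\pcpl{\K(\F_p)} \simeq \Z_p$ being $\K(1)$-acyclic together with standard descent and vanishing facts for rings of characteristic $p$ (the $\K(1)$-local K-theory of an $\F_p$-algebra sits one chromatic height too low to survive).

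The hard part, and the only genuinely non-formal step, is the vanishing of $\L_{\K(1)}$ applied to the relative $\TC$ over $\pi_0$ --- equivalently, the vanishing of $\L_{\K(1)}\K$ on $\F_p$-algebras. The DGM reduction, the manipulations with Postnikov towers and the localization sequence, and commuting $\L_{\K(1)}$ past the relevant limits and colimits are all routine. Since both of the required $\TC$-vanishing inputs are already available in the literature, the cleanest route in the present paper is simply to cite them.
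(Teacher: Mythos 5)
Your proposal matches the paper exactly: the paper's proof of this statement is a bare citation to \cite[Theorem 1.1]{bhatt-clausen-mathew--k1k} and \cite[Corollary 4.23]{lmmt--purity}, which is precisely the route you settle on. The orientation sketch you give of how those results go is reasonable but plays no role in the paper's argument, so citing the two references is all that is needed here.
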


\begin{proof}
  See \cite[Theorem 1.1]{bhatt-clausen-mathew--k1k} and \cite[Corollary 4.23]{lmmt--purity}.
\end{proof}

\begin{remark}
  \label{k1k--ring--acyclic}
  In fact, \cite[Corollary 4.23]{lmmt--purity} is more general than \cref{k1k--ring--truncating}: there the statement is proved for $A$ any connective, $\K(1)$-acyclic $\E_1$-ring. As alluded to in \cref{i--k1k}, this more general statement can alternatively be deduced from \cref{i--k1k--main} (under our assumption that $p$ is odd), but we are certainly using the statement for $\E_1$-$\Z$-algebras as an input to our proof of the latter.
\end{remark}

\begin{lemma}
  \label{k1k--ring--K1hS1tS1}
  For $X$ an $\cir$-equivariant spectrum, there is a natural fiber square of spectra
  \[
    \begin{tikzcd}
      \L_{\K(1)}((X \otimes \j_p^\triv)^{\h\cir}) \ar[r] \ar[d] &
      \L_{\K(1)}((X \otimes \j_p^\triv)^{\t\cir}) \ar[d] \\
      (\L_{\K(1)}X)^{\h\cir} \ar[r] &
      \pcpl{((\L_{\K(1)}X)^{\t\cir})}
    \end{tikzcd}
  \]
  in which the horizontal maps are the canonical ones.
\end{lemma}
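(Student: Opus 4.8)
The plan is to realize the square in the statement as the $\K(1)$-localization of a strictly commutative square of spectra whose total fiber is $\K(1)$-acyclic, and then to conclude by exactness of $\L_{\K(1)}$. The relevant input is the natural transformation
\[
  u \colon X \otimes \j_p^\triv \to \L_{\K(1)}X
\]
of functors $\Spt^{\B\cir} \to \Spt^{\B\cir}$ given by the composite $X \otimes \j_p^\triv \to X \otimes \J_p^\triv \to \L_{\K(1)}(X \otimes \J_p^\triv) \iso \L_{\K(1)}X$, where the first map is induced by the connective cover $\j_p \to \J_p$, the second is the $\K(1)$-localization map, and the equivalence uses that the unit $X \to X \otimes \J_p^\triv$ is a pointwise $\K(1)$-local equivalence (its fiber is $X \otimes \fib(\S \to \J_p)$, which is $\K(1)$-acyclic since $\fib(\S \to \J_p)$ is and the $\K(1)$-acyclic spectra form a tensor ideal). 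Since the connective cover $\j_p \to \J_p$ is also a $\K(1)$-local equivalence (as recalled in the proof of \cref{t--pf--LQ-uniqueness}), every constituent of $u$ is a pointwise $\K(1)$-local equivalence, so $F := \fib(u)$ is a $\K(1)$-acyclic $\cir$-equivariant spectrum.

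Next I would observe that the square in the statement is obtained by applying $\L_{\K(1)}$ to the evidently commutative square
\[
  \begin{tikzcd}
    (X\otimes\j_p^\triv)^{\h\cir} \ar[r, "\can"] \ar[d, "u^{\h\cir}", swap] &
    (X\otimes\j_p^\triv)^{\t\cir} \ar[d, "u^{\t\cir}"] \\
    (\L_{\K(1)}X)^{\h\cir} \ar[r, "\can"] &
    (\L_{\K(1)}X)^{\t\cir},
  \end{tikzcd}
\]
using two identifications: $(\L_{\K(1)}X)^{\h\cir}$ is already $\K(1)$-local, being a limit of $\K(1)$-local spectra, so it agrees with its own $\K(1)$-localization; and $\pcpl{((\L_{\K(1)}X)^{\t\cir})} \iso \L_{\K(1)}((\L_{\K(1)}X)^{\t\cir})$, because $(\L_{\K(1)}X)^{\t\cir}$ is a module over the $\J_p$-algebra $(\J_p^\triv)^{\t\cir}$ and hence a $\J_p$-module (\cref{i--conv}\cref{i--conv--K1}). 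The resulting vertical maps $\L_{\K(1)}(u^{\h\cir})$ and $\L_{\K(1)}(u^{\t\cir})$ are the canonical ones, and the construction is manifestly natural in $X$. Since $(-)^{\h\cir}$ and $(-)^{\t\cir}$ are exact, taking vertical fibers in the displayed square identifies its total fiber with $\fib(\can \colon F^{\h\cir} \to F^{\t\cir}) \iso F_{\h\cir}[1]$, which is $\K(1)$-acyclic, as $F$ is and homotopy $\cir$-orbits are a colimit.

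Finally I would invoke exactness of $\L_{\K(1)}$: its essential image $\Spt_{\K(1)}$ is a stable subcategory of $\Spt$, so the localization $\Spt \to \Spt_{\K(1)}$ is a finite-colimit-preserving functor of stable $\infty$-categories, hence exact, and composing with the limit-preserving inclusion shows $\L_{\K(1)} \colon \Spt \to \Spt$ preserves finite limits. Consequently the total fiber of the $\K(1)$-localized square is $\L_{\K(1)}(F_{\h\cir}[1]) \iso 0$, i.e.\ the localized square — which, by the previous paragraph, is the square in the statement — is cartesian. I do not anticipate a serious obstacle; the only delicate point is the bookkeeping in the middle paragraph, namely verifying that each corner and edge of $\L_{\K(1)}$ applied to the $u$-square genuinely matches the corresponding corner and edge of the square in the statement.
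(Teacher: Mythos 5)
Your proposal is correct and is essentially the paper's (very brief) argument: the paper likewise induces the square from the localization map $X \otimes \j_p^\triv \to \L_{\K(1)}(X \otimes \j_p^\triv) \iso \L_{\K(1)}X$ and observes that it is cartesian because the map on horizontal fibers is an equivalence, which is the same point you make, since those fibers are shifted homotopy $\cir$-orbits of the $\K(1)$-acyclic fiber of the localization map. Your only deviation is presentational: you compute the total fiber of the unlocalized square and then apply the exact functor $\L_{\K(1)}$, rather than comparing horizontal fibers after localization.
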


\begin{proof}
  The square is induced by the localization map $X \otimes \j_p^{\triv} \to \L_{\K(1)}(X \otimes \j_p^\triv) \iso \L_{\K(1)}X$, and it is cartesian because the map on horizontal fibers is an equivalence.
\end{proof}

\begin{proof}[Proof of \cref{i--k1k--main}]
  The claim follows from considering the commutative diagram
  \[
    \fontsize{8.5}{12}\selectfont
    \begin{tikzcd}[column sep=small,row sep=3.5ex]
      \L_{\K(1)}\K(R) \ar[r] \ar[d] &
      \L_{\K(1)}\K(R \otimes_\S \Z) \ar[r] \ar[d] &
      \L_{\K(1)}\K(\pi_0(R)) \ar[r] \ar[d] &
      \L_{\K(1)}\K(\pi_0(R)[1/p]) \\
      \L_{\K(1)}\TC(R) \ar[r] \ar[d] &
      \L_{\K(1)}\TC(R \otimes_\S \Z) \ar[r] \ar[d] &
      \L_{\K(1)}\TC(\pi_0(R)) \\
      \L_{\K(1)}((\THH(R) \otimes \j_p^\triv)^{\h\cir}) \ar[r] \ar[d] &
      \L_{\K(1)}((\THH(R) \otimes \j_p^\triv)^{\t\cir}) \ar[d] \\
      \pcpl{\TC^-(\L_{\K(1)}R)} \ar[r] &
      \pcpl{\TP(\L_{\K(1)}R)}
    \end{tikzcd}
  \]
  which can be described as follows:
  \begin{itemize}[leftmargin=*]
  \item In the first two rows, the horizontal maps are induced by the canonical maps of $\E_1$-rings (noting that $\pi_0(R \otimes_\S \Z) \iso \pi_0(R)$) and the vertical maps are given by the cyclotomic trace. By the Dundas--Goodwillie--McCarthy theorem \cite{dundas-goodwillie-mccarthy}, the two squares formed by these rows are cartesian. By \cref{k1k--ring--truncating}, the second and third maps in the first row are equivalences, and by cartesianness it follows that the second map in the second row is also an equivalence.
  \item The square formed by the second and third rows is the fiber square of \cref{k1k--cyc--main-sq} (applied to $X = \THH(R)$).
  \item The square formed by the third and fourth rows is the fiber square of \cref{k1k--ring--K1hS1tS1} (applied to $X = \THH(R)$), noting that we have a natural equivalence $\L_{\K(1)}\THH(R) \iso \pcpl{\THH(\L_{\K(1)}R)}$ by virtue of $\K(1)$-localization being $p$-completely smashing (\cref{i--conv}\cref{i--conv--K1}). \qedhere
  \end{itemize}
\end{proof}

%--------------------------------------------------------------------

\subsection{Examples}
\label{k1k--eg}

The general results above give us tools for accessing the $\K(1)$-local $\TC$ and K-theory of a ring spectrum $R$ even when $R$ is not $\K(1)$-acyclic. Here we demonstrate this with a few specific calculations, which together prove in particular \cref{i--k1k--eg}.

\begin{remark}
  \label{k1k--eg--K-TC}
  For any connective $\E_1$-ring $R$ such that $\pi_0(R)$ is commutative and $p$-complete, the $\K(1)$-localized cyclotomic trace map $\L_{\K(1)}\K(R) \to \L_{\K(1)}\TC(R)$ is an equivalence. Indeed, the Dundas--Goodwillie--McCarthy theorem reduces the claim to the case where $R$ is discrete, in which case it follows from the rigidity theorem of Clausen--Mathew--Morrow \cite{cmm--rigidity}; cf. \cite[Proof of Theorem 2.17]{bhatt-clausen-mathew--k1k}. This applies in particular for $R \in \{\S_p,\j_p,\ku_p,\Z_p\}$, examples to be further discussed below.
\end{remark}

\begin{example}
  \label{k1k--eg--S}
  Applying \cref{i--k1k--main} in the case $R = \S$, we obtain a fiber sequence
  \[
    \pcpl{\J_p[\B\cir]}[1] \to \L_{\K(1)}\TC(\S) \to \L_{\K(1)}\TC(\Z);
  \]
  for the fiber term here, we have used the canonical equivalence
  \[
    \pcpl{\THH(\L_{\K(1)}\S)} = \pcpl{\THH(\J_p)} \iso \J_p^\triv,
  \]
  coming from the fact that $\J_p$ is an idempotent algebra in $\pcpl\Spt$.
  
  This fiber sequence was in fact present implicitly in the discussion of \cref{z}; let us make it completely explicit. Comparing its construction with the proof of \cref{z--ol--k1}, we see that it fits into a commutative diagram
  \[
    \begin{tikzcd}
      \pcpl{\J_p[\B\cir]} \ar[r, "\id-\nu"] \ar[d] &
      \pcpl{\J_p[\B\cir]} \ar[r] \ar[d, "\Nm"] &
      \cofib(\id-\nu : \pcpl{\J_p[\B\cir]} \to \pcpl{\J_p[\B\cir]}) \ar[d, "\Nm"] \\
      0 \ar[r] \ar[d] &
      \fib(\id-[p]^* : \J_p^{\B\cir} \to \J_p^{\B\cir})  \ar[r, "\id"] \ar[d] &
      \fib(\id-[p]^* : \J_p^{\B\cir} \to \J_p^{\B\cir}) \ar[d] \\
      \pcpl{\J_p[\B\cir]}[1] \ar[r] &
      \L_{\K(1)}\TC(\S) \ar[r] &
      \L_{\K(1)}\TC(\Z),
    \end{tikzcd}
  \]
  in which each row and column is a fiber sequence. The results of \cref{z} (namely \cref{z--ol--k1,z--tr--J-description,z--tr--fiber-description}) identify this commutative diagram with the following one:
  \[
    \fontsize{9.5}{12}\selectfont
    \begin{tikzcd}[row sep=large, column sep=large]
      \J_p \oplus \pcpl{\big(\bigoplus_{n \in \Z_{\ge0}} \KU_p\big)} \ar[r, "{\begin{psmallmatrix}0 \amp 0 \\ \theta \amp \id - \sigma\end{psmallmatrix}}"] \ar[d] &
      \J_p \oplus \pcpl{\big(\bigoplus_{n \in \Z_{\ge0}} \KU_p\big)} \ar[r, "{\begin{psmallmatrix}\id \amp 0 \\ \theta \amp (\pi)_{n \in \Z_{\ge0}}\end{psmallmatrix}}"] \ar[d, "{\begin{psmallmatrix}0 \amp 0 \\ 0 \amp (\tau)_{n \in \Z_{\ge0}}\end{psmallmatrix}}"] &
      \J_p \oplus \KU_p/\J_p \ar[d, "{\begin{psmallmatrix}0 \amp 0 \\ 0 \amp \tau'\end{psmallmatrix}}"] \\
      0 \ar[r] \ar[d] &
      \J_p \oplus \J_p[-1] \ar[r, "\id"] \ar[d] &
      \J_p \oplus \J_p[-1] \ar[d] \\
      \J_p[1] \oplus \pcpl{\big(\bigoplus_{n \in \Z_{\ge0}} \KU_p[1]\big)} \ar[r, "{\begin{psmallmatrix}0 \amp 0 \\ 0 \amp 0 \\ 0 \amp \id \\ \theta' \amp 0\end{psmallmatrix}}"] &
      \J_p \oplus \J_p[1] \oplus \pcpl{\big(\bigoplus_{n \in \Z_{\ge0}} \KU_p[1]\big)} \oplus \Y' \ar[r, "{\begin{psmallmatrix}\id \amp 0 \amp 0 \amp 0 \\ 0 \amp \id \amp 0 \amp 0 \\ 0 \amp 0 \amp 0 \amp \pi'\end{psmallmatrix}}"] &
      \J_p \oplus \J_p[1] \oplus \X'.
    \end{tikzcd}
  \]
  Here $\sigma : \bigoplus_{n \in \Z_{\ge0}} \KU_p \to \bigoplus_{n \in \Z_{\ge0}} \KU_p$ is the shift operator and $\theta : \J_p \to \bigoplus_{n \in \Z_{\ge0}}$ is the unit map into the zeroth factor, as in the proof of \cref{z--ol--k1}; $\tau : \KU_p \to \J_p[-1]$ is as in \cref{z--ol--tau} and $\Y' = \cofib(\tau)$ as in \cref{z--tr--fiber-description}; $\tau' : \KU_p/\J_p \to \J_p[-1]$ and $\X' = \cofib(\tau')$ are as in the proof of \cref{z--ol--k1}; and $\pi : \KU_p \to \KU_p/\J_p$ is the tautological map and $\pi' : \Y' \to \X'$ the map induced by $\pi$.

  From the above we may identify the boundary map $\L_{\K(1)}\TC(\Z) \to \pcpl{\J_p[\B\cir]}[2]$ of our fiber sequence: it is given by the composition
  \[
    \L_{\K(1)}\TC(\Z) \to \X' \to \KU_p/\J_p[1] \to \J_p[2] \to \pcpl{\J_p[\B\cir]}[2],
  \]
  the first map being the projection, the second and third being the tautological boundary maps, and the fourth induced by the basepoint of $\B\cir$.
\end{example}

\begin{example}
  \label{k1k--eg--j}
  The unit map $\S \to \j_p$ induces equivalences upon applying $\pcpl{\pi_0(-)}$ and $\L_{\K(1)}(-)$, so it follows from \cref{i--k1k--main} that it also induces an equivalence upon applying $\L_{\K(1)}\TC(-)$.
\end{example}

\begin{example}
  \label{k1k--eg--jzeta}
  Following Levy \cite{levy--K1}, we write $\j_{p,\zeta} := \ku_p^{\h\Gamma}$. As a spectrum, this is equivalent to $\tau_{\ge -1}(\J_p)$. In particular, it is $(-1)$-connective, from which it follows that $\THH(\j_{p,\zeta})$ is $(-1)$-connective (e.g. by considering the skeletal filtration on the cyclic bar construction). We may thus apply \cref{k1k--cyc--main-seq}, and we obtain a fiber sequence
  \[
    \pcpl{\J_p[\B\cir]}[1] \to \L_{\K(1)}\TC(\j_{p,\zeta}) \to \L_{\K(1)}\TC(\j_{p,\zeta} \otimes_\S \Z).
  \]
  We claim that the following maps induce equivalences on $\K(1)$-local $\TC$:
  \[
    \j_{p,\zeta} \otimes_\S \Z \iso (\ku_p \otimes_\S \Z)^{\h\Gamma} \to \Z_p^{\h\Gamma}, \qquad
    \Z_p \to \Z_p^{\h\Gamma};
  \]
  here the first map is induced by the truncation map $\ku_p \otimes_\S \Z \to \Z_p$, and $\Gamma$ acts trivially on $\Z_p$, which gives the second map. Note that this claim implies that the connective cover map $\j_p \to \j_{p,\zeta}$ induces an equivalence on $\K(1)$-local $\TC$, by comparison of the associated fiber sequences. Moreover, we deduce from \cite[Theorem A]{levy--K1} that the maps
  \[
    \L_{\K(1)}\K(\j_p) \to \L_{\K(1)}\K(\j_{p,\zeta}) \to \L_{\K(1)}\K(\J_p)
  \]
  are equivalences.

  Let us justify the claim:
  \begin{enumerate}
  \item For the map $(\ku_p \otimes_\S \Z)^{\h\Gamma} \to \Z_p^{\h\Gamma}$, we apply \cite[Theorem B]{levy--K1}, noting that $\L_{\K(1)}\TC(-)$ is truncating on connective $\E_1$-$\Z$-algebras \cite{bhatt-clausen-mathew--k1k,lmmt--purity,mathew--K1TR}.
  \item For the map $\Z_p \to \Z_p^{\h\Gamma}$, the pullback square
    \[
      \begin{tikzcd}
        \TC(\Z_p^{\h\Gamma}) \ar[r] \ar[d]
        & \TC(\Z_p) \ar[d] \\
        \TC(\Z_p) \ar[r] &
        \TC(\Z_p[x])
      \end{tikzcd}
    \]
    of Land--Tamme \cite[Theorem 4.1]{land-tamme--pushouts} reduces us to checking that the map of rings $\Z_p \to \Z_p[x]$ induces an equivalence on $\K(1)$-local $\TC$. This follows from purity equivalence $\L_{\K(1)}\TC(R) \iso \L_{\K(1)}\K(R[1/p])$ for these rings $R$ \cite{bhatt-clausen-mathew--k1k,lmmt--purity}.
  \end{enumerate}
\end{example}

\begin{example}
  \label{k1k--eg--ku}
  The unit map $\S \to \ku_p$ induces a commutative diagram
  \[
    \begin{tikzcd}
      \pcpl{\J_p[\B\cir]}[1] \ar[r] \ar[d] &
      \L_{\K(1)}\TC(\S) \ar[r] \ar[d] &
      \L_{\K(1)}\TC(\Z) \ar[d, "\sim"] \\
      \pcpl{\KU_p[\B\cir]}[1] \ar[r] &
      \L_{\K(1)}\TC(\ku_p) \ar[r] &
      \L_{\K(1)}\TC(\Z_p),
    \end{tikzcd}
  \]
  the rows being the fiber sequences supplied by \cref{i--k1k--main}. The upper row is as in \cref{k1k--eg--S}, and the fiber term in the lower row comes from the equivalence
  \[
    \pcpl{\THH(\L_{\K(1)}\ku_p)} \iso \pcpl{\THH(\KU_p)} \iso \KU_p^\triv,
  \]
  which holds because $\KU_p$ is a $0$-cotruncated object in $\pcpl{\CAlg}$, i.e. $\Map_{\CAlg}(\KU_p,A)$ is discrete for any $p$-complete $\E_\infty$-ring $A$ (see \cite[Theorem 5.0.2]{lurie--elliptic-ii}).

  The description of the boundary map for the upper row in \cref{k1k--eg--S} shows that the boundary map of the lower row is canonically null, i.e. the lower row canonically splits, giving an equivalence
  \[
    \L_{\K(1)} \TC(\ku_p) \iso \L_{\K(1)}\TC(\Z) \oplus \pcpl{\KU_p[\B\cir]}[1].
  \]
\end{example}

\begin{remark}
  \label{k1k--eg--orientable}
  Let $R$ be a complex orientable, connective $\E_1$-ring such that $\pcpl{\pi_0(R)} \iso \Z_p$ (e.g. $R = \MU$). By the same logic as in \cref{k1k--eg--ku}, combined with \cref{z--tr--k1-orientation}, the fiber sequence
  \[
    \pcpl{(\THH(\L_{\K(1)}R)_{\h\cir})}[1] \to \L_{\K(1)}\TC(R) \to \L_{\K(1)}\TC(\pi_0(R))
  \]
  of \cref{i--k1k--main} admits a splitting.
\end{remark}

\begin{example}
  \label{k1k--eg--KU}
  Consider the $\K(1)$-localized localization fiber sequence
  \[
    \L_{\K(1)}\K(\Z_p) \to \L_{\K(1)}\K(\ku_p) \to \L_{\K(1)}\K(\KU_p)
  \]
  of Blumberg--Mandell \cite{blumberg-mandell--localization}. Here the first map is the ($\K(1)$-localized) transfer map in K-theory, induced by restriction along $\ku_p \to \Z_p$. We claim that this first map is canonically null, giving an equivalence
  \[
    \L_{\K(1)}\K(\KU_p) \iso \L_{\K(1)}\K(\ku_p) \oplus \L_{\K(1)}\K(\Z_p)[1].
  \]
  Indeed, from \cref{k1k--eg--ku} we know that the map $\L_{\K(1)}\K(\ku_p) \to \L_{\K(1)}\K(\Z_p)$ canonically splits, so it suffices to see that the composition
  \[
    \L_{\K(1)}\K(\ku_p) \to \L_{\K(1)}\K(\Z_p) \to \L_{\K(1)}\K(\ku_p)
  \]
  is canonically null. This is true even before $\K(1)$-localization: it is given by multiplication by the class $[\Z_p] \in \K(\ku_p)$, which identifies with zero by virtue of the cofiber sequence of $\ku_p$-modules
  \[
    \ku_p[2] \lblto{\beta} \ku_p \to \Z_p.
  \]
\end{example}

\section{Noncommutative crystalline--de Rham comparison}
\label{c}

In this section, we will prove \cref{i--pv--thm}, most of which, we note again, was proved earlier and in a different manner by Petrov--Vologodsky \cite{petrov-vologodsky--TP}. We begin  in \cref{c--nil} with some further general discussion of nilpotence, continuing from \cref{j--nil}. The proof of the theorem is then given in \cref{c--pf}.

% --------------------------------------------------------------------

\subsection{Nilpotence continued}
\label{c--nil}

\begin{definition}
  \label{c--tate-category}
  Let $R$ be an $\cir$-equivariant $\E_\infty$-ring. Let $\smash{\cat{I_R} \subseteq \Mod_R(\Spt^{\B\cir})}$ denote the full subcategory spanned by those $\cir$-equivariant $R$-modules $X$ such that $X/(p,v_1)$ is nilpotent. We define
  \[
    \smash{\cpl{(\Mod_R^{\t\cir})_{(p,v_1)}} := \Mod_R(\Spt^{\B\cir})/\cat{I_R}}
  \]
  to be the Verdier quotient; since $\cat{I_R}$ is a thick tensor ideal, this and the associated quotient functor carry canonical symmetric monoidal structures.
\end{definition}

\begin{lemma}
  \label{c--tate-factorization}
  Let $R$ be an $\cir$-equivariant $\E_\infty$-ring. Then the lax symmetric monoidal functor
  \[
    \cpl{((-)^{\t\cir})}_{(p,v_1)} : \Mod_R(\Spt^{\B\cir}) \to \Spt
  \]
  factors uniquely through the symmetric monoidal quotient functor $\smash{\Mod_R(\Spt^{\B\cir}) \to \cpl{(\Mod^{\t\cir}_R)_{(p,v_1)}}}$.
\end{lemma}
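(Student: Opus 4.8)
\textbf{Proof proposal for \cref{c--tate-factorization}.} The plan is to verify the hypotheses of the universal property of the Verdier quotient. Recall that for a stable $\infty$-category $\cat{C}$ and a thick subcategory $\cat{I} \subseteq \cat{C}$, a functor $F : \cat{C} \to \cat{D}$ factors (uniquely) through the quotient $\cat{C}/\cat{I}$ if and only if $F$ sends every object of $\cat{I}$ to a zero object; and when $F$, $\cat{C}$, $\cat{D}$ are (lax) symmetric monoidal and $\cat{I}$ is a thick tensor ideal, this factorization is canonically (lax) symmetric monoidal, since $\cat{C}/\cat{I}$ with its quotient functor is the universal symmetric monoidal functor out of $\cat{C}$ killing $\cat{I}$. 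So the entire content of the lemma is the claim that the lax symmetric monoidal functor $\cpl{((-)^{\t\cir})}_{(p,v_1)} : \Mod_R(\Spt^{\B\cir}) \to \Spt$ annihilates $\cat{I_R}$, i.e.\ that $\cpl{(X^{\t\cir})}_{(p,v_1)} \iso 0$ whenever $X/(p,v_1)$ is nilpotent.

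First I would unwind the notation: $\cpl{((-)^{\t\cir})}_{(p,v_1)}$ denotes the composite of $(-)^{\t\cir}$ with Bousfield localization at $\S/(p,v_1)$, which is lax symmetric monoidal as a composite of lax symmetric monoidal functors (the Tate construction is lax symmetric monoidal by Nikolaus--Scholze, and smashing localization is symmetric monoidal). Now suppose $X \in \cat{I_R}$, so $X/(p,v_1)$ is nilpotent as an $\cir$-equivariant $R$-module (equivalently, by \cref{j--nil--adjunction}, as an $\cir$-equivariant spectrum). By \cref{j--nil--tate-vanishing} applied to $X/(p,v_1)$, we get $(X/(p,v_1))^{\t\cir} \iso 0$. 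Since $(-)^{\t\cir}$ is exact, it commutes with the finite colimit defining $(-)/(p,v_1)$, so $X^{\t\cir}/(p,v_1) \iso (X/(p,v_1))^{\t\cir} \iso 0$. A spectrum $Y$ with $Y/(p,v_1) \iso 0$ becomes zero after Bousfield localization at $\S/(p,v_1)$: indeed $\S/(p,v_1)$-localization only depends on $\S/(p,v_1) \otimes Y = Y/(p,v_1)$, which vanishes. Hence $\cpl{(X^{\t\cir})}_{(p,v_1)} \iso 0$, as required.

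Having checked that the functor kills $\cat{I_R}$, the existence and uniqueness of the factorization as a functor of $\infty$-categories is the universal property of the Verdier quotient $\Mod_R(\Spt^{\B\cir}) \to \Mod_R(\Spt^{\B\cir})/\cat{I_R} = \cpl{(\Mod_R^{\t\cir})_{(p,v_1)}}$; the compatibility with lax symmetric monoidal structures follows because, as noted in \cref{c--tate-category}, the quotient functor is symmetric monoidal and the quotient is the universal recipient of a symmetric monoidal functor inverting $\cat{I_R}$ (equivalently, killing it, since $\cat{I_R}$ is a thick tensor ideal), so a lax symmetric monoidal functor out of the source killing $\cat{I_R}$ factors uniquely through it as a lax symmetric monoidal functor.

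\textbf{Main obstacle.} There is no serious obstacle: the proof is essentially a bookkeeping exercise combining \cref{j--nil--tate-vanishing}, exactness of $(-)^{\t\cir}$, and the definition of $\S/(p,v_1)$-localization. The only point requiring minor care is the lax symmetric monoidal statement of the universal property — one should cite or recall that a Verdier quotient by a thick tensor ideal, equipped with its natural symmetric monoidal structure, corepresents (lax) symmetric monoidal functors killing the ideal — but this is standard and already invoked in the statement of \cref{c--tate-category} itself.
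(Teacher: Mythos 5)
Your proof is correct and is essentially the paper's argument: the paper's one-line proof cites the definition of the Verdier quotient together with \cref{j--nil--mod-p-v1}, which contains exactly the vanishing statement you spell out (nilpotence of $X/(p,v_1)$ forces $\cpl{(X^{\t\cir})}_{(p,v_1)} \iso 0$ via \cref{j--nil--tate-vanishing} and exactness of $(-)^{\t\cir}$), and then invokes the same (lax symmetric monoidal) universal property of the quotient by a thick tensor ideal.
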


\begin{proof}
  This follows from the definition of $\cpl{(\Mod_R^{\t\cir})_{(p,v_1)}}$ and \cref{j--nil--mod-p-v1}.
\end{proof}

\begin{construction}
  \label{c--tate-adjunction}
  Let $R \to R'$ be a map of $\cir$-equivariant $\E_\infty$-rings. By \cref{j--nil--adjunction}, the base change functor $\smash{\Mod_R(\Spt^{\B\cir}) \to \Mod_{R'}(\Spt^{\B\cir})}$ carries $\cat{I}_R$ into $\cat{I}_{R'}$ and the forgetful functor $\smash{\Mod_{R'}(\Spt^{\B\cir}) \to \Mod_R(\Spt^{\B\cir})}$ carries $\cat{I}_{R'}$ into $\cat{I}_R$. It follows that there is an induced adjunction
  \begin{equation}
    \label{c--tate-adjunction--adj}
    \cpl{(\Mod^{\t\cir}_R)_{(p,v_1)}} \fromto \cpl{(\Mod^{\t\cir}_{R'})_{(p,v_1)}}.
  \end{equation}
\end{construction}

\begin{lemma}
  \label{c--tate-equivalence}
  Let $\gamma : R \to R'$ be a map of $\cir$-equivariant $\E_\infty$-rings. Suppose that $\fib(\gamma) \in \cat{I}_R$. Then the adjunction \cref{c--tate-adjunction--adj} is an equivalence.
\end{lemma}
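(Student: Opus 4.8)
The plan is to show that the unit and counit of the adjunction \cref{c--tate-adjunction--adj} become equivalences in the Verdier quotients, by reducing everything to statements about nilpotence of fibers of the relevant maps. The key input is the hypothesis $\fib(\gamma) \in \cat{I}_R$, i.e. $\fib(\gamma)/(p,v_1)$ is nilpotent.

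\textbf{Step 1: the counit.} Let $F : \Mod_R(\Spt^{\B\cir}) \to \Mod_{R'}(\Spt^{\B\cir})$ be base change and $U$ the forgetful functor. For $Y$ an $R'$-module, the counit $F U Y \to Y$ is the map $R' \otimes_R Y \to Y$; its fiber is $\fib(R' \otimes_R R \to R') \otimes_R Y$ rewritten appropriately, and in any case sits in a cofiber sequence showing it is a module over $\fib(\gamma)$ (more precisely, one checks that $\fib(R' \otimes_R Y \to Y)$ receives a map from $\fib(\gamma) \otimes_R Y$ which is an equivalence, using that $R' \otimes_R Y \iso R' \otimes_R R' \otimes_{R'} Y$ and base-change/projection-formula manipulations). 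Since $\fib(\gamma) \in \cat{I}_R$ and $\cat{I}_R$ is a thick tensor ideal, the fiber of the counit lies in $\cat{I}_R$; hence, after applying $U$ (which carries $\cat{I}_{R'}$ into $\cat{I}_R$ and reflects membership in the relevant sense), the counit becomes an equivalence in $\cpl{(\Mod^{\t\cir}_R)_{(p,v_1)}}$. One must be slightly careful here: we want the counit of the \emph{induced} adjunction on quotients to be an equivalence, and for that it suffices that $\fib(FUY \to Y) \in \cat{I}_{R'}$, which follows since base change carries $\cat{I}_R$ into $\cat{I}_{R'}$ and the fiber is the base change of a module over $\fib(\gamma)$.

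\textbf{Step 2: the unit.} For $X$ an $R$-module, the unit $X \to U F X = R' \otimes_R X$ has fiber $\fib(\gamma) \otimes_R X$ up to a shift, which again lies in $\cat{I}_R$ since $\fib(\gamma) \in \cat{I}_R$ and $\cat{I}_R$ is a tensor ideal. Therefore the unit is an equivalence in $\cpl{(\Mod^{\t\cir}_R)_{(p,v_1)}}$. Combining Steps 1 and 2, the induced adjunction \cref{c--tate-adjunction--adj} is an adjoint equivalence.

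\textbf{Main obstacle.} The genuinely delicate point is the bookkeeping in Step 1: identifying the fiber of the counit $R' \otimes_R Y \to Y$ correctly as something built from $\fib(\gamma)$, and making sure that it lands in $\cat{I}_{R'}$ rather than merely in $\cat{I}_R$ after forgetting. The cleanest route is probably to avoid the counit altogether and argue directly that both functors in \cref{c--tate-adjunction--adj} are fully faithful with essential images all of the target: full faithfulness of the left adjoint (base change) amounts to the unit being an equivalence (Step 2), and then essential surjectivity follows because every $R'$-module $Y$ differs from $R' \otimes_R U Y$ by an object of $\cat{I}_{R'}$ (the fiber being a base change along $R \to R'$ of the $\fib(\gamma)$-module $\fib(\gamma)\otimes_R UY$, hence in $\cat{I}_{R'}$ since base change preserves $\cat I$). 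I would also remark that the symmetric monoidal structures match automatically, since base change is symmetric monoidal and the quotient functors are too, so no extra argument is needed there. Finally one should note the whole argument only uses that $\cat{I}_R$ is a thick tensor ideal containing $\fib(\gamma)$ together with the compatibility of $\cat I$ with base change and restriction from \cref{c--tate-adjunction}, all of which are already in hand.
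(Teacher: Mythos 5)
Your proposal is correct in substance and has the same skeleton as the paper's proof: show the fiber of the unit lies in $\cat{I}_R$ (it is exactly $\fib(\gamma) \otimes_R X$, no shift needed) and the fiber of the counit lies in $\cat{I}_{R'}$, so both become equivalences after passing to the quotients. The only real divergence is in the counit step, and there your identification is slightly off as written: the fiber of $\epsilon_Y : R' \otimes_R Y \to Y$ is \emph{not} the base change $R' \otimes_R (\fib(\gamma) \otimes_R Y)$ of the $R$-module $\fib(\gamma) \otimes_R Y$, which is what your closing paragraph asserts. The correct bookkeeping is: writing $\epsilon_Y \iso m \otimes_{R'} \id_Y$ with $m : R' \otimes_R R' \to R'$ the multiplication, one has $\fib(\epsilon_Y) \iso \fib(m) \otimes_{R'} Y$; since $m$ is a retraction of $\id_{R'} \otimes\, \gamma : R' \to R' \otimes_R R'$, whose fiber is $R' \otimes_R \fib(\gamma)$ by base changing the sequence $\fib(\gamma) \to R \to R'$, one gets $\fib(m) \iso R' \otimes_R \fib(\gamma)$, hence $\fib(\epsilon_Y) \iso (R' \otimes_R \fib(\gamma)) \otimes_{R'} Y \iso \fib(\gamma) \otimes_R Y$, and this lies in $\cat{I}_{R'}$ because $R' \otimes_R \fib(\gamma) \in \cat{I}_{R'}$ (base change preserves $\cat{I}$) and $\cat{I}_{R'}$ is a tensor ideal. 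So the conclusion you need is true, but only after this correction; note also that the equivalence with $\fib(\gamma)\otimes_R Y$ comes from the splitting supplied by the triangle identity, not from an evident direct map. The paper sidesteps this computation entirely: once the unit is an equivalence in the quotient, it reduces the counit to the single object $R'$, observes that $R'$ is in the image of the left adjoint so the counit there is an equivalence by the triangle identity, and then invokes the tensor-ideal property for general $X'$ --- which is essentially the ``avoid the counit'' variant you sketch at the end, and is the cleaner route.
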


\begin{proof}
  For $X$ an $\cir$-equivariant $R$-module, the fiber of the unit map $X \to R' \otimes_R X$ is equivalent to $\fib(\gamma) \otimes_R X$, which by our hypothesis lies in $\cat{I}_R$. This implies that the unit transformation of \cref{c--tate-adjunction--adj} is an equivalence.

  We now address the counit transformation. For $X'$ an $\cir$-equivariant $R'$-module, the counit map $R' \otimes_R X' \to X'$ may be rewritten as $(R' \otimes_R R') \otimes_{R'} X' \to R' \otimes_{R'} X'$. Thus, similarly to the previous paragraph, it suffices to check that the fiber of the map $R' \otimes_R R' \to R'$ lies in $\cat{I}_{R'}$, in other words that the counit transformation of \cref{c--tate-adjunction--adj} is an equivalence on the image of $R'$ in $\smash{\cpl{(\Mod^{\t\cir}_{R'})_{(p,v_1)}}}$. But this object is in the image of the left adjoint functor, so the claim follows from the previous paragraph.
\end{proof}

% --------------------------------------------------------------------

\subsection{The proof}
\label{c--pf}

Recall from \cref{t--pf--map,t--pf--map-comparison} that there are unique maps of cyclotomic $\E_\infty$-rings $\alpha : \j_p^\triv \to \pcpl{\THH(\Z)}$ and $\o\alpha : \Z_p^\triv \to \THH(\F_p)$.

\begin{proposition}
  \label{c--commutative-diagram}
  Consider the diagram of $\cir$-equivariant $\E_\infty$-$\j_p$-algebras
  \[
    \begin{tikzcd}
      \j_p^\triv \ar[r, "\alpha"] \ar[d] &
      \pcpl{\THH(\Z)} \ar[d] \ar[dl] \\
      \Z_p^\triv \ar[r, "\o\alpha"] &
      \THH(\F_p),
    \end{tikzcd}
  \]
  where the left hand vertical map and diagonal map are the truncation maps and the right hand vertical map is induced by the reduction map $\Z \to \F_p$. After applying the quotient functor
  \[
    \smash{\Mod_{\j_p^\triv}(\Spt^{\B\cir}) \to \cpl{(\Mod_{\j_p^\triv}^{\t\cir})_{(p,v_1)}}},
  \]
  the maps $\alpha$ and $\o{\alpha}$ become equivalences and this diagram commutes.
\end{proposition}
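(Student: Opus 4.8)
The plan is to show that $\fib(\alpha)$ and $\fib(\o\alpha)$ lie in the thick tensor ideal $\cat{I}_{\j_p^\triv}$ that is quotiented out, so that both maps become equivalences in $\cpl{(\Mod_{\j_p^\triv}^{\t\cir})_{(p,v_1)}}$, and then to deduce the commutativity of the diagram from this by cancelling the (now invertible) map $\alpha$. For $\alpha$: by \cref{i--main--thhZp}, the equivalence $\pcpl{\THH(\Z)} \iso \sh(\j_p^\triv)$ carries $\alpha$ to the canonical map $\phi^0 \colon \j_p^\triv \to \sh(\j_p^\triv)$ — indeed $\alpha$ is, up to this identification, precisely the map $\phi^0$ featuring in the proof of \cref{i--main--thhZp}. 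Hence $\fib(\alpha) \iso K_{\j_p^\triv}[-1]$ as $\cir$-equivariant $\j_p^\triv$-modules (\cref{j--cyc--filtered-phi}), so $\fib(\alpha)/(p,v_1) \iso (K_{\j_p^\triv}/(p,v_1))[-1]$, which is nilpotent by \cref{i--nil--thm}; thus $\fib(\alpha) \in \cat{I}_{\j_p^\triv}$ and $\alpha$ becomes an equivalence in the quotient.

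For $\o\alpha$: the same argument with \cref{i--main--thhFp} identifies $\o\alpha$ with $\phi^0 \colon \Z_p^\triv \to \sh(\Z_p^\triv) \iso \THH(\F_p)$, so that $\fib(\o\alpha) \iso K_{\Z_p^\triv}[-1]$, which we view as a $\j_p^\triv$-module along the truncation $\j_p \to \Z_p$. By the observation of Antieau--Mathew--Morrow--Nikolaus recalled in \cref{i--nil} (see \cite{ammn--beilinson}), $K_{\Z_p^\triv}/p$ is nilpotent; since $\Z_p^\triv/p \iso \F_p^\triv$ has vanishing homotopy in degree $2p-2$, the element $v_1$ acts by zero on the $\F_p^\triv$-module $K_{\Z_p^\triv}/p$, so $K_{\Z_p^\triv}/(p,v_1) \iso K_{\Z_p^\triv}/p \oplus (K_{\Z_p^\triv}/p)[2p-1]$ is nilpotent as well. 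Hence $\fib(\o\alpha) \in \cat{I}_{\j_p^\triv}$ and $\o\alpha$ also becomes an equivalence in the quotient.

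\textbf{Commutativity.} It suffices to treat the outer square, the upper-left triangle $\j_p^\triv \xrightarrow{\alpha} \pcpl{\THH(\Z)} \to \Z_p^\triv$, and the lower triangle $\pcpl{\THH(\Z)} \to \Z_p^\triv \xrightarrow{\o\alpha} \THH(\F_p)$. The outer square already commutes before passing to the quotient, by \cref{t--pf--map-comparison}. The upper-left triangle also commutes before the quotient: applying $\tau_{\le 0}$ to $\alpha$ and using that $\alpha$ is an isomorphism on $\pi_0$, one gets $\tau_{\le 0}(\alpha) = \id_{\Z_p^\triv}$ (the unique $\E_\infty$-ring endomorphism of $\Z_p$), so by naturality of $\tau_{\le 0}$ the composite $\pcpl{\THH(\Z)} \to \Z_p^\triv$ precomposed with $\alpha$ is the truncation map $\j_p^\triv \to \Z_p^\triv$. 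Combining these two facts, the two composites $\j_p^\triv \to \THH(\F_p)$ obtained from the lower triangle after precomposition with $\alpha$ coincide; since $\alpha$ is an equivalence in the quotient, we may cancel it there to conclude that the lower triangle commutes in the quotient, and hence the whole diagram commutes there.

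\textbf{Main obstacle.} The genuine work is confined to bookkeeping: checking that the nilpotence statements of \cref{i--nil--thm} and of \cite{ammn--beilinson} apply with the module structures relevant here — the objects $K_{\j_p^\triv}$ and $K_{\Z_p^\triv}$ are $\j_p^\triv$- and $\Z_p^\triv$-modules respectively, while membership in $\cat{I}_{\j_p^\triv}$ concerns nilpotence of $\j_p^\triv$-modules, a passage one makes using the forgetful-functor compatibility of \cref{j--nil--adjunction} together with the retract and base-change manipulations already implicit in the cited proofs — and correctly performing the cancellation of $\alpha$ in the Verdier quotient for the final commutativity claim. No new input beyond \cref{i--main--thhZp,i--main--thhFp,i--nil--thm,t--pf--map-comparison} and \cite{ammn--beilinson} should be needed.
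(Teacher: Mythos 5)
Your proposal is correct and follows essentially the same route as the paper: invert $\alpha$ via \cref{i--main--thhZp,i--nil--thm} (identifying $\alpha$ with $\phi^0$), invert $\o\alpha$ via the nilpotence of $\fib(\o\alpha)/p$ from \cite{ammn--beilinson}, and get the lower triangle in the quotient by cancelling the now-invertible $\alpha$ against the outer square (\cref{t--pf--map-comparison}) and the (pre-quotient) upper triangle. The only cosmetic difference is your explicit $v_1$-splitting of $K_{\Z_p^\triv}/(p,v_1)$, which is unnecessary since thick ideals are closed under cofibers, so nilpotence of $\fib(\o\alpha)/p$ already gives nilpotence of $\fib(\o\alpha)/(p,v_1)$.
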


\begin{proof}
  That $\alpha$ is inverted by the quotient functor follows from \cref{i--main--thhZp,i--nil--thm}; and for $\o\alpha$ it follows from the analogous result that $\fib(\o\alpha)/p$ is nilpotent (see \cite[Construction 2.6 and Lemma 2.7]{ammn--beilinson}). The upper triangle evidently commutes in $\smash{\CAlg^{\B\cir}}$, and, as discussed in \cref{t--pf--map-comparison}, so too does the outer square. It follows immediately then that the image of the lower triangle under the quotient functor also commutes.
\end{proof}

\begin{corollary}
  \label{c--general-result}
  For $M$ an $\cir$-equivariant $\pcpl{\THH(\Z)}$-module, there is a natural equivalence
  \[
    \pcpl{((M \otimes_{\pcpl{\THH(\Z)}} \Z_p^\triv)^{\t\cir})} \iso
    \pcpl{((M \otimes_{\pcpl{\THH(\Z)}} \THH(\F_p))^{\t\cir})},
  \]
  which is lax symmetric monoidal and which in the case $M = \pcpl{\THH(\Z)}$ recovers the equivalence $\smash{\Z_p^{\t\cir} \iso \TP(\F_p)}$ induced by the map $\o\alpha$.
\end{corollary}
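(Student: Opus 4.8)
The plan is to transport \cref{c--commutative-diagram} through the symmetric monoidal Verdier quotients $\cpl{(\Mod_R^{\t\cir})_{(p,v_1)}}$ of \cref{c--tate-category} and the lax symmetric monoidal functors $F_R \colon \cpl{(\Mod_R^{\t\cir})_{(p,v_1)}} \to \Spt$ through which, by \cref{c--tate-factorization}, the functor $X \mapsto \cpl{(X^{\t\cir})_{(p,v_1)}}$ on $\Mod_R(\Spt^{\B\cir})$ factors along the quotient functor $Q_R$. Write $T := \pcpl{\THH(\Z)}$ and fix an $\cir$-equivariant $T$-module $M$. First I would reduce to a $(p,v_1)$-local assertion: $M \otimes_T \Z_p^\triv$ is a module over $\Z_p^\triv$ and $M \otimes_T \THH(\F_p)$ a module over $\THH(\F_p)$, and since both $\Z_p^\triv$ and $\THH(\F_p)$ receive maps of $\cir$-equivariant $\E_\infty$-rings from $\Z$ with trivial action---for the latter via $\Z \to \THH(\Z) \to \THH(\F_p)$---both of these modules carry $\cir$-equivariant $\Z$-module structures, so by case (1) of \cref{j--nil--mod-p-v1} the localization maps identify $\pcpl{((-)^{\t\cir})}$ with $\cpl{((-)^{\t\cir})_{(p,v_1)}}$ on each. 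Hence it suffices to produce a natural, lax symmetric monoidal equivalence $\cpl{((M \otimes_T \Z_p^\triv)^{\t\cir})_{(p,v_1)}} \iso \cpl{((M \otimes_T \THH(\F_p))^{\t\cir})_{(p,v_1)}}$, reducing for $M = T$ to $\Z_p^{\t\cir} \iso \TP(\F_p)$.

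The crux is a comparison of base-change functors among the quotient categories. By \cref{i--main--thhZp,i--nil--thm} the fiber of $\alpha \colon \j_p^\triv \to T$ lies in $\cat{I}_{\j_p^\triv}$, so \cref{c--tate-equivalence} gives an equivalence $A \colon \cpl{(\Mod_{\j_p^\triv}^{\t\cir})_{(p,v_1)}} \isoto \cpl{(\Mod_T^{\t\cir})_{(p,v_1)}}$ induced by base change along $\alpha$; and $\fib(\o\alpha) \in \cat{I}_{\Z_p^\triv}$ since $\fib(\o\alpha)/p$ is nilpotent (Antieau--Mathew--Morrow--Nikolaus, as recalled in the proof of \cref{c--commutative-diagram}). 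Let $t \colon T \to \Z_p^\triv$ and $r \colon T \to \THH(\F_p)$ be the truncation and reduction maps; by \cref{c--tate-adjunction}, base change along $t$, $r$, $\o\alpha$ descends to functors $t_\sharp$, $r_\sharp$, $\o\alpha_\sharp$ between the pertinent quotient categories. Now $r \circ \alpha$ agrees with $\o\alpha$ composed with the truncation $\j_p^\triv \to \Z_p^\triv$, by the commuting square of \cref{t--pf--map-comparison}, while $t \circ \alpha$ is that same truncation map (it factors through the $0$-truncation of $\j_p^\triv$); by functoriality of base change this gives $r_\sharp \circ A \iso \o\alpha_\sharp \circ t_\sharp \circ A$, hence $r_\sharp \iso \o\alpha_\sharp \circ t_\sharp$ since $A$ is an equivalence. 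Separately, $\fib(\o\alpha) \in \cat{I}_{\Z_p^\triv}$ implies that for each $X \in \Mod_{\Z_p^\triv}(\Spt^{\B\cir})$ the unit $X \to X \otimes_{\Z_p^\triv} \THH(\F_p)$, having fiber $\fib(\o\alpha) \otimes_{\Z_p^\triv} X \in \cat{I}_{\Z_p^\triv}$, is inverted by $Q_{\Z_p^\triv}$ and hence, via \cref{c--tate-factorization}, by $X \mapsto \cpl{(X^{\t\cir})_{(p,v_1)}}$; this yields $F_{\THH(\F_p)} \circ \o\alpha_\sharp \iso F_{\Z_p^\triv}$.

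Assembling these: $\cpl{((M \otimes_T \THH(\F_p))^{\t\cir})_{(p,v_1)}} = F_{\THH(\F_p)}(r_\sharp(Q_T M)) \iso F_{\THH(\F_p)}(\o\alpha_\sharp(t_\sharp(Q_T M))) \iso F_{\Z_p^\triv}(t_\sharp(Q_T M)) = \cpl{((M \otimes_T \Z_p^\triv)^{\t\cir})_{(p,v_1)}}$, a chain of natural, lax symmetric monoidal equivalences (every functor and transformation in play being of that kind); combined with the first paragraph this is the desired equivalence, and setting $M = T$ it unwinds, through the identifications above, to the $p$-completion of $\o\alpha^{\t\cir} \colon \Z_p^{\t\cir} \to \TP(\F_p)$, i.e. the equivalence of \cref{i--main--thhFp}. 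The main difficulty I anticipate is organizational rather than conceptual: keeping straight which ring a given module is a module over, verifying that the lax symmetric monoidal structures pass coherently through the Verdier quotients and the $F_R$, and---the one genuinely delicate point---handling the fact that $r$ and $\o\alpha \circ t$ need not be homotopic before applying $Q$, which is exactly why the equivalence $A$ (and so \cref{i--main--thhZp,i--nil--thm}) must be brought in rather than a naive factorization of $r$. There is no computational input beyond \cref{i--main--thhZp,i--nil--thm} and the cited result of Antieau--Mathew--Morrow--Nikolaus.
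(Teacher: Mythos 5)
Your proposal is correct and takes essentially the same route as the paper: the paper's proof combines \cref{c--commutative-diagram} (whose key content --- commutativity of the lower triangle after the Verdier quotient --- you re-derive inline at the level of base-change functors from the outer square of \cref{t--pf--map-comparison} and the invertibility of the quotiented $\alpha$) with \cref{c--tate-equivalence}, \cref{c--tate-factorization}, and \cref{j--nil--mod-p-v1}, exactly the ingredients you use. The only difference is packaging: the paper states the key step as $\o\alpha$ becoming an equivalence of algebra objects in $\CAlg(\cpl{(\Mod^{\t\cir}_{\pcpl{\THH(\Z)}})_{(p,v_1)}})$ and then base-changes $M$ along it, whereas you express it as the identification $r_\sharp \iso \o\alpha_\sharp \circ t_\sharp$ of functors between the quotient categories.
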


\begin{proof}
  Combining \cref{c--tate-equivalence,c--commutative-diagram}, we see that $\o\alpha$ defines an equivalence between the images of $\Z_p^\triv$ and $\THH(\F_p)$ in $\smash{\CAlg(\cpl{(\Mod^{\t\cir}_{\pcpl{\THH(\Z)}})}_{(p,v_1)})}$. The claim then follows from \cref{c--tate-factorization,j--nil--mod-p-v1}.
\end{proof}

\begin{proof}[Proof of \cref{i--pv--thm}]
  We apply \cref{c--general-result} to $M = \pcpl{\THH(\cat{C})}$.
\end{proof}

\bigskip

\printbibliography

@article{adams--j-iv,
    title         = "{On the groups $J(X)$---IV}",
    author        = "Adams, J. Frank",
    journal       = "Topology",
    volume        = "5",
    number        = "1",
    year          = "1966",
}

@article{ammn--beilinson,
    title         = "{On the Beilinson fiber square}",
    author        = "Antieau, Benjamin and Mathew, Akhil and Morrow, Mathew and Nikolaus, Thomas",
    journal       = "Duke Mathematical Journal",
    volume        = "171",
    number        = "18",
    year          = "2022",
}

@article{antieau-nikolaus--cyclotomic,
    title         = "{Cartier modules and cyclotomic spectra}",
    author        = "Antieau, Benjamin and Nikolaus, Thomas",
    journal       = "Journal of the American Mathematical Society",
    volume        = "34",  
    number        = "1",
    year          = "2021",
}

@article{beilinson--fiber,
    author        = "Beilinson, Alexander",
    title         = "{Relative continuous $K$-theory and cyclic homology}",
    journal       = "{M\"unster Journal of Mathematics}",
    volume        = "7",
    number        = "1",
    year          = "2014",
}

@incollection{bhatt--icm,
    title         = "Algebraic geometry in mixed characteristic",
    author        = "Bhatt, Bhargav",
    booktitle     = "{International Congress of Mathematicians 2022.
                      Vol. 2. Plenary lectures}",
    year          = "2023",
}

@online{bhatt--F-gauges,
    title         = "Prismatic $F$-gauges",
    author        = "Bhatt, Bhargav",
    year          = "2022",
    url          = "https://www.math.ias.edu/~bhatt/teaching/mat549f22/lectures.pdf",
}

@article{bhatt-clausen-mathew--k1k,
    title         = "{Remarks on K(1)-local K-theory}",
    author        = "Bhatt, Bhargav and Clausen, Dustin and Mathew, Akhil",
    journal       = "Selecta Mathematica",
    volume        = "26",
    number        = "3",
    year          = "2020",
}

@online{bhatt-lurie--APC,
    title         = "Absolute prismatic cohomology",
    author        = "Bhatt, Bhargav and Lurie, Jacob",
    year          = "2022",
    eprinttype    = "arxiv",
    eprint        = "2201.06120",
    eprintclass   = "math.AG",
}

@article{bhatt-mathew--syntomic,
    title         = "{Syntomic complexes and $p$-adic \'{e}tale Tate twists}",
    author        = "Bhatt, Bhargav and Mathew, Akhil",
    journal       = "Forum of Mathematics Pi",
    volume        = "11",
    year          = "2023",
}

@article{bek--deformation,
    title         = "{$p$-adic deformation of algebraic cycle classes}",
    author        = "Bloch, Spencer and Esnault, H\'el\`ene and Kerz, Moritz",
    journal       = "Inventiones Mathematicae",
    volume        = "195",
    number        = "3",
    year          = "2014",
}

@article{blumberg-mandell--KS,
    title         = "{The homotopy groups of the algebraic {$K$}-theory
                      of the sphere spectrum}",
    author        = "Blumberg, Andrew and Mandell, Michael A.",
    journal       = "Geometry \& Topology",
    volume        = "23",
    number        = "1",
    year          = "2019",
}

@article{blumberg-mandell--localization,
    title         = "{The localization sequence for the algebraic {$K$}-theory of
                      topological {$K$}-theory}",
    author        = "Blumberg, Andrew and Mandell, Michael A.",
    journal       = "Acta Mathematica",
    volume        = "200",
    number         = "2",
    year          = "2008",
}

@article{blumberg-mandell--eigensplitting,
    title         = "{The eigensplitting of the fiber of the cyclotomic trace for
                      the sphere spectrum}",
    author        = "Blumberg, Andrew and Mandell, Michael A.",
    journal       = "Transactions of the American Mathematical Society",
    volume        = "376",
    number        = "4",
    year          = "2023",
}

@article{bms2,
    title         = "{Topological Hochschild homology and
                      integral $p$-adic Hodge theory}",
    author        = "Bhatt, Bhargav and Morrow, Matthew and Scholze, Peter",
    journal       = "Publications math{\'e}matiques de l'IH{\'E}S",
    volume        = "129",
    number        = "1",
    pages         = "199--310",
    year          = "2019",
    publisher     = "Springer",
}

@misc{bokstedt--thhfp,
    title         = "{The topological Hochschild homology of $\mathbb{Z}$ and $\mathbb{Z}/p$}", 
    author        = {B\"okstedt, Marcel},
    pubstate      = "Preprint",
}

@article{bokstedt-hsiang-madsen--TC,
    title         = "{The cyclotomic trace and algebraic {$K$}-theory of spaces}",
    author        = "B{\"o}kstedt, Marcel and Hsiang, Wu-chung and Madsen, Ib",
    journal       = "Inventiones Mathematicae",
    volume        = "111",
    year          = "1993",
    number        = "3",
}

@incollection{bokstedt-madsen--TCZ,
    title         = "{Topological cyclic homology of the integers}",
    author        = "B{\"o}kstedt, Marcel and Madsen, Ib",
    booktitle     = "{$K$-theory - Strasbourg, 1992}",
    series        = "Ast\'erisque",
    volume        = "226",
    year          = "1994",
}

@incollection{bokstedt-madsen--TCZ-again,
    title         = "{Algebraic $K$-theory of local number fields: the unramified case}",
    author        = "B{\"o}kstedt, Marcel and Madsen, Ib",
    booktitle     = "{Prospects in topology (Princeton, NJ, 1994)}",
    series        = "Annals of Mathematics Studies",
    volume        = "138",
    year          = "1995",
}

@article{carmeli-schlank-yanovski--ambidexterity,
    title         = "Ambidexterity in chromatic homotopy theory",
    author        = "Carmeli, Shachar and Schlank, Tomer M. and Yanovski, Lior",
    journal       = "Inventiones Mathematicae",
    volume        = "228",
    number        = "3",
    year          = "2022",
}

@article{carmeli-schlank-yanovski--height,
    title         = "Ambidexterity and height",
    author        = "Carmeli, Shachar and Schlank, Tomer M. and Yanovski, Lior",
    journal       = "Advances in Mathematics",
    volume        = "385",
    year          = "2021",
}

@article{carmeli-schlank-yanovski--cyclotomic,
    title         = "Chromatic cyclotomic extensions",
    author        = "Carmeli, Shachar and Schlank, Tomer M. and Yanovski, Lior",
    journal       = "Geometry \& Topology",
    volume        = "28",
    number        = "8",
    year          = "2024",
}

@article{cmm--rigidity,
    title         = "{$K$}-theory and topological cyclic homology of henselian pairs",
    author        = "Clausen, Dustin and Mathew, Akhil and Morrow, Matthew",
    journal       = "Journal of the American Mathematical Society",
    volume        = "34",
    number        = "2",
    year          = "2021",
}

@online{drinfeld--prismatization,
    title         = "Prismatization",
    author        = "Drinfeld, Vladimir",
    year          = "2020",
    eprinttype    = "arxiv",
    eprint        = "2005.04746",
    eprintclass   = "math.AG",
}

@book{dundas-goodwillie-mccarthy,
    title         = "The local structure of algebraic {K}-theory",
    author        = "Dundas, Bj{\slasho}rn Ian and Goodwillie, Thomas G. and McCarthy, Randy",
    series        = "Algebra and Applications",
    volume        = "18",
    year          = "2013",
}

@article{dwyer-mitchell--K-integers,
    author        = "Dwyer, William G. and Mitchell, Stephen A.",
    title         = "{On the $K$-theory spectrum of a ring of algebraic integers}",
    journal       = "$K$-Theory",
    volume        = "14",
    number        = "3",
    year          = "1998",
}

@article{franjou-pirashvili--maclane-Z,
    title         = "{On the Mac Lane cohomology for the ring of integers}",
    author        = "Franjou, Vincent and Pirashvili, Teimuraz",
    journal       = "Topology",
    volume        = "37",
    number        = "1",
    year          = "1998",
}

@article{hahn-wilson--redshift,
    title         = "{Redshift and multiplication for truncated Brown-Peterson spectra}",
    author        = "Hahn, Jeremy and Wilson, Dylan",
    journal       = "Annals of Mathematics",
    volume        = "196",
    number        = "3",
    year          = "2022",
}

@online{hrw--even,
    title         = "A motivic filtration on the topological cyclic homology
                     of commutative rings spectra",
    author        = "Hahn, Jeremy and Raksit, Arpon and Wilson, Dylan",
    year          = "2022",
    eprinttype    = "arxiv",
    eprint        = "2206.11208",
    eprintclass   = "math.KT",
}

@article{hansen--K-localizations,
    title         = "{The $K$-localization of some classifying spaces}",
    author        = "Hansen, Kenneth",
    journal       = "Aarhaus University Preprint Series",
    volume        = "36",
    year          = "1991",
}

@article{hesselholt-madsen--local-fields,
    title         = "{On the $K$-theory of local fields}",
    author        = "Hesselholt, Lars and Madsen, Ib",
    journal       = "Annals of Mathematics",
    volume        = "158",
    number        = "1",
    year          = "2003",
}

@article{hesselholt-madsen--dRW,
    title         = "{On the De Rham-Witt complex in mixed characteristic}",
    author        = "Hesselholt, Lars and Madsen, Ib",
    journal       = "{Annales Scientifiques de l'\'Ecole Normale Sup\'erieure.
                      Quatri\`eme S\'erie}",
    volume        = "37",
    number        = "1",
    year          = "2004",
}

@article{hesselholt-madsen--J,
    title         = "{The $S^1$-Tate spectrum for $J$}",
    author        = "Hesselholt, Lars and Madsen, Ib",
    journal       = "Bolet\'in de la Sociedad Matem\'atica Mexicana. Segunda Serie",
    volume        = "37",
    number        = "1-2",
    year          = "1992",
}

@online{hopkins-lurie--ambidexterity,
    title         = "{Ambidexterity in $K(n)$-Local Stable Homotopy Theory}",
    author        = "Hopkins, Michael and Lurie, Jacob",
    year          = "2013",
    pagination    = "none",
    url           = "https://www.math.ias.edu/~lurie/papers/Ambidexterity.pdf",
}

@article{hovey-sadofsky--blueshift,
    author        = "Hovey, Mark and Sadofsky, Hal",
    title         = "{Tate cohomology lowers chromatic Bousfield classes}",
    journal       = "Proceedings of the American Mathematical Society",
    volume        = "124",
    number        = "11",
    year          = "1996",
}

@article{hss--traces,
    title         = "{Higher traces, noncommutative motives, and
                      the categorified Chern character}",
    author        = "Hoyois, Marc and Scherotzke, Sarah and Sibilla, Nicol\`o",
    journal       = "Advances in Mathematics",
    volume        = "309",
    year          = "2017",
}

@article{klein--dualizing,
    title         = "The dualizing spectrum of a topological group",
    author        = "Klein, John R.",
    journal       = "Mathematische Annalen",
    volume        = "319",
    number        = "3",
    year          = "2001",
}

@article{klein-rognes--linearization,
    author        = "Klein, John R. and Rognes, John",
    title         = "{The fiber of the linearization map $A(*)\to K(\mathbb{Z})$}",
    journal       = "Topology",
    volume        = "36",
    number        = "4",
    year          = "1997",
}

@article{krause-nikolaus--dvrs,
    author        = "Krause, Achim and Nikolaus, Thomas",
    title         = "{B\"{o}kstedt periodicity and quotients of DVRs}",
    journal       = "Compositio Mathematica",
    volume        = "158",
    number        = "8",
    year          = "2022",
}

@article{kuhn--blueshift,
    author        = "Kuhn, Nicholas J.",
    title         = "Tate cohomology and periodic localization of polynomial functors",
    journal       = "Inventiones Mathematicae",
    volume        = "157",
    number        = "2",
    year          = "2004",
}

@online{land-tamme--pushouts,
    title         = "{On the K-theory of pushouts}",
    author        = "Land, Markus and Tamme, Georg",
    year          = "2023",
    eprinttype    = "arxiv",
    eprint        = "2304.12812",
    eprintclass   = "math.KT",
}

@online{levy--K1,
    title         = "{The algebraic K-theory of the $\mathrm{K}(1)$-local sphere via $\mathrm{TC}$}",
    author        = "Levy, Ishan",
    year          = "2022",
    eprinttype    = "arxiv",
    eprint        = "2209.05314",
    eprintclass   = "math.AT",
}

@article{lindenstrauss-madsen--number-rings,
    title         = "{Topological Hochschild Homology of Number Rings}",
    author        = "Lindenstrauss, Ayelet and Madsen, Ib",
    journal       = "Transactions of the American Mathematical Society",
    volume        = "352",
    number        = "5",
    year          = "2000",
}

@article{liu-wang--TC-local-fields,
    title         = "Topological cyclic homology of local fields",
    author        = "Liu, Ruochuan and Wang, Guozhen",
    journal       = "Inventiones Mathematicae",
    volume        = "230",
    number        = "2",
    year          = "2022",
}

@article{lmmt--purity,
    title         = "{Purity in chromatically localized algebraic K-theory}",
    author        = "Land, Markus and Mathew, Akhil and Meier, Lennart and Tamme, Georg",
    journal       = "Journal of the American Mathematical Society",
    volume        = "37",
    number        = "4",
    year          = "2024",
}

@online{lurie--SAG,
    title         = "{Spectral Algebraic Geometry}",
    author        = "Jacob Lurie",
    year          = "2018",
    pagination    = "none",
    url           = "https://www.math.ias.edu/~lurie/papers/SAG-rootfile.pdf",
}

@online{lurie--elliptic-ii,
    title         = "Elliptic Cohomology II: Orientations",
    author        = "Lurie, Jacob",
    year          = "2018",
    url           = "https://www.math.ias.edu/~lurie/papers/Elliptic-II.pdf",
}

@incollection{madsen-schlichtkrull--transfer,
    title         = "{The circle transfer and $K$-theory}",
    author        = "Madsen, Ib and Schlichtkrull, Christian",
    booktitle     = "{Geometry and topology: Aarhus (1998)}",
    series        = "Contemporary Mathematics",
    volume        = "258",
    year          = "2000",
}

@article{mahowald--J-announcement,
    title         = "{The order of the image of the $J$-homomorphisms}",
    author        = "Mahowald, Mark",
    journal       = "Bulletin of the American Mathematical Society",
    volume        = "76",
    number        = "6",
    year          = "1970",
}

@inproceedings{mathew--descent-nilpotence,
    title         = "{Examples of descent up to nilpotence}",
    author        = "Mathew, Akhil",
    booktitle     = "{Geometric and topological aspects of the representation theory
                      of finite groups}",
    series        = "{Springer Proceedings in Mathematics and Statistics}",
    volume        = "242",
    year          = "2018",
}

@article{mathew--K1TR,
    title         = "{On $K(1)$-local TR}",
    author        = "Mathew, Akhil",
    journal       = "Compositio Mathematica",
    volume        = "157",
    number        = "5",
    year          = "2021",
}

@article{mnn--nilpotence-descent,
    title         = "Nilpotence and descent in equivariant stable homotopy theory",
    author        = "Mathew, Akhil and Naumann, Niko and Noel, Justin",
    journal       = "Advances in Mathematics",
    volume        = "305",
    year          = "2017",
}

@online{meyer-wagner--q-hodge,
    title         = "{Derived $q$-Hodge complexes and refined $\mathrm{TC}^-$}",
    author        = "Meyer, Samuel and Wagner, Ferdinand",
    year          = "2024",
    eprinttype    = "arxiv",
    eprint        = "2410.23115",
    eprintclass   = "math.AT",
}

@article{milnor--J,
    title         = "On the Whitehead homomorphism $J$",
    author        = "Milnor, John",
    journal       = "Bulletin of the American Mathematical Society",
    volume        = "64",
    number        = "3",
    year          = "1958",
}

@article{nikolaus-scholze--TC,
    title         = "{On topological cyclic homology}",
    author        = "Nikolaus, Thomas and Scholze, Peter",
    year          = "2018",
    journal       = "Acta Mathematica",
    volume        = "221",
    number        = "2",
    pages         = "203--409",
}

@online{petrov-vologodsky--TP,
    title         = "On the periodic topological cyclic homology of DG categories in characteristic $p$",
    author        = "Petrov, Alexander and Vologodsky, Vadim",
    year          = "2023",
    eprinttype    = "arxiv",
    eprint        = "1912.03246",
    eprintclass   = "math.AG",
}

@online{raksit--hhdr,
    title         = "{Hochschild homology and the derived de Rham complex
                      revisited}",
    author        = "Raksit, Arpon",
    year          = "2020",
    eprinttype    = "arxiv",
    eprint        = "2007.02576",
    eprintclass   = "math.AG",
}

@online{raksit--tate,
    title         = "{Notes on Tate cohomology}",
    author        = "Raksit, Arpon",
    year          = "2026",
    eprinttype    = "arxiv",
    eprint        = "2601.17677",
    eprintclass   = "math.AT",
}

@article {ravenel--localization,
    author        = "Ravenel, Douglas C.",
    title         = "Localization with respect to certain periodic homology theories",
    journal       = "American Journal of Mathematics",
    volume        = "106",
    number        = "2",
    year          = "1984",
}

@article{rognes--whitehead,
    title         = "{The smooth Whitehead spectrum of a point at odd regular primes}",
    author        = "Rognes, John",
    journal       = "Geometry and Topology",
    volume        = "7",
    year          = "2003",
}

@article{rognes--TCZ2,
    title         = "Topological cyclic homology of the integers at two",
    author        = "Rognes, John",
    journal       = "Journal of Pure and Applied Algebra",
    volume        = "134",
    number        = "3",
    year          = "1999",
}

@article{rognes--KZ2,
    title         = "{Algebraic $K$-theory of the two-adic integers}",
    author        = "Rognes, John",
    journal       = "Journal of Pure and Applied Algebra",
    volume        = "134",
    number        = "3",
    year          = "1999",
}

@article{rognes--KS2,
    title         = "{Two-primary algebraic $K$-theory of pointed spaces}",
    author        = "Rognes, John",
    journal       = "Topology",
    volume        = "41",
    number        = "5",
    year          = "2002",
}

@incollection{soule--regulators,
    author        = "Soul\'e, Christophe",
    title         = "{On higher $p$-adic regulators}",
    booktitle     = "{Algebraic $K$-theory, Evanston 1980}",
    series        = "Lecture Notes in Mathematics",
    volume        = "854",
    year          = "1981",
}

@article{thomason--etale,
    author        = "Thomason, Robert W.",
    title         = "{Algebraic $K$-theory and \'etale cohomology}",
    journal       = "{Annales Scientifiques de l'\'Ecole Normale Sup\'erieure.
                      Quatri\`eme S\'erie}",
    volume        = "18",
    number        = "3",
    year          = "1985",
}

@article{westerland--higher-J,
    title         = "{A higher chromatic analogue of the image of $J$}",
    author        = "Westerland, Craig",
    journal       = "Geometry and Topology",
    volume        = "21",
    number        = "2",
    year          = "2017",
}

@article{whitehead--J,
    title         = "On the homotopy groups of spheres and rotation groups",
    author        = "Whitehead, George W.",
    journal       = "Annals of Mathematics",
    volume        = "43",
    number        = "4",
    year          = "1942",
}

@article{yuan--semiadditive,
    title         = "{The Sphere of Semiadditive Height 1}",
    author        = "Yuan, Allen",
    journal       = "International Mathematics Research Notices",
    year          = "2024",
    number        = "1",
}

@online{skd--thesis,
    title         = "Spherochromatism in representation theory and arithmetic geometry",
    author        = "Devalapurkar, Sanath",
    year          = "2025",
    url          = "https://sanathdevalapurkar.github.io/files/thesis.pdf",
}

\end{document}